\DeclareMathAlphabet{\mathpzc}{OT1}{pzc}{m}{it}
\newtheorem{thm}{Theorem}[section]
\newtheorem{thmA}{Theorem}
\newtheorem{cor}[thm]{Corollary}
\newtheorem{lem}[thm]{Lemma}
\newtheorem{conj}[thm]{Conjecture}
\newtheorem{prop}[thm]{Proposition}
\newtheorem{propA}[thmA]{Proposition}
\theoremstyle{definition}
\newtheorem{defn}[thm]{Definition}
\theoremstyle{remark}
\newtheorem{rem}[thm]{Remark}
\newtheorem{exmpl}[thm]{Example}
\numberwithin{equation}{section}
\providecommand\given{}
\newcommand\SetSymbol[1][]{%
\nonscript\:#1\vert
\allowbreak
\nonscript\:
\mathopen{}}
\DeclarePairedDelimiterX\set[1]\{\}{%
\renewcommand\given{\SetSymbol[\delimsize]}
#1
}
\newcommand*{\cat}[1]{\mathbf{#1}} % categories
\newcommand*{\mto}{\rightarrow} % maps
\newcommand*{\cto}{\rightarrowtail} % cofibrations
\newcommand*{\qto}{\twoheadrightarrow} % quotient maps
\newcommand*{\wto}{\xrightarrow{\sim}} % w.e. maps
\newcommand*{\otw}{\xleftarrow{\sim}} % w.e. maps pointing left
\newcommand*{\pair}[2]{\left\langle #1,#2 \right\rangle} % pairing
\newcommand*{\com}[2]{\left[#1,#2\right]} % commutator
\newcommand*{\del}{\partial} % boundary morphism
\newcommand*{\D}{\mathcal{D}} % 1-type
\newcommand*{\comp}{\circ} % composition
\newcommand*{\Int}{\mathbb{Z}} % integers
\newcommand*{\Rat}{\mathbb{Q}} % rationals
\newcommand*{\C}{\mathbb{C}} % complex numbers
\newcommand*{\N}{\mathbb{N}} % natural numbers
\newcommand*{\Real}{\mathbb{R}} % reals
\newcommand*{\FF}{\mathbb{F}} % finite field
\newcommand*{\cmplx}[1]{{#1}^\bullet} % complex
\newcommand*{\tensor}{\otimes} % tensor product
\newcommand*{\ctensor}{\hat{\otimes}} % completed tensor product
\newcommand*{\Ltensor}{\tensor^{\mathbb{L}}} % derived tensor product
\newcommand*{\isomorph}{\cong} % Isomorphie
\newcommand*{\op}{\mathrm{op}} % opposite
\newcommand*{\sm}{\mathrm{sm}} % smooth
\newcommand*{\cont}{\mathrm{cont}} % continuous
\newcommand*{\sesi}{\mathrm{ss}} % semisimplified
\newcommand*{\cycchar}{\kappa} %cyclotomic character
\newcommand*{\algc}[1]{\overline{#1}} % separable closure
\newcommand*{\sheaf}[1]{\mathpzc{#1}} % sheaves
\newcommand{\openideals}{\mathfrak{I}} % lattice of open ideals
\newcommand{\NS}{\mathfrak{N}} % lattice of open subgroups
\newcommand{\Sect}{\Gamma} % section functor
\newcommand{\Sectc}{\Gamma_{\mathrm{c}}} % section functor with proper support
\newcommand{\nr}{\mathrm{nr}} % nonramified closure
\newcommand{\cyc}{\mathrm{cyc}} % cyclotomic
\newcommand{\ncL}{\mathcal{L}} % l-adic L-Function
\newcommand{\X}{\mathcal{X}} % Iwasawa module
\newcommand*{\pdual}[1]{{#1}^{\vee}} % Pontryagin dual
\newcommand{\Motive}{\mathcal{M}} % motive
\newcommand{\IntR}{\mathcal{O}} % ring of integers
\newcommand{\Gm}{{\mathbb{G}_{\mathrm{m}}}} % multiplikative group
\newcommand{\cSK}{\widehat{\SK}} % completed special K-group
\newcommand{\PSp}{\mathbb{P}} % projective space
\newcommand{\dv}{\mathrm{div}} % divisor map
\newcommand{\bh}{d} % connecting homomorphism
\newcommand{\mdual}{\ast} % Matlis dual
\newcommand{\Mdual}{\circledast} % Matlis dual with contragredient action
\newcommand{\ncM}{\mathcal{M}} % modification factor
\newcommand{\Tau}{\mathrm{T}} % greek uppercase Tau
\newcommand{\ringtransf}{\Psi} % map I_{\Lambda}->I_{\Lambda'}
\newcommand{\id}{\mathrm{id}} % identity map
\newcommand{\Frob}{\mathfrak{F}} % geometric Frobenius
\newcommand{\eval}{\Phi} % evaluation map
\newcommand{\ev}{\mathrm{ev}} % evaluation map 2
\newcommand{\aug}{\varphi} % augmentation map
\newcommand{\cc}{\iota} % complex conjugation
\newcommand{\Div}{\sheaf{Div}} % sheaf of divisors
\DeclareMathOperator{\Gl}{Gl} % general linear group
\DeclareMathOperator{\Cyl}{Cyl} % mapping cylinder
\DeclareMathOperator{\CoCyl}{CoCyl} % cocylinder
\DeclareMathOperator{\Cone}{Cone} % mapping cone
\DeclareMathOperator{\CoCone}{CoCone} % cocone
\DeclareMathOperator{\Shift}{\Sigma} % shift
\DeclareMathOperator{\CoShift}{Co\Sigma} % coshift
\DeclareMathOperator{\HF}{H} % cohomology
\DeclareMathOperator{\Hom}{Hom} % morphisms
\DeclareMathOperator{\Tor}{Tor} % torsion functor
\DeclareMathOperator{\Jac}{Jac} % Jacobson radical
\DeclareMathOperator{\im}{im} % image
\DeclareMathOperator{\coker}{coker} % cokernel
\DeclareMathOperator{\Spec}{Spec} % spectrum
\DeclareMathOperator{\God}{G} % Godement resolution
\DeclareMathOperator{\RDer}{R} % right derivation
\DeclareMathOperator{\Gal}{Gal} % Galois group
\DeclareMathOperator{\KTh}{K} % K-theory
\DeclareMathOperator{\SK}{SK} % SK-group
\DeclareMathOperator{\Tate}{T} % Tate module
\DeclareMathOperator{\Res}{Res} % restriction
\DeclareMathOperator{\Ind}{Ind} % induction
\DeclareMathOperator*{\Rlim}{\RDer\varprojlim} % derived limit
\DeclareMathOperator{\sheafHom}{\mathpzc{Hom}} % sheaf hom
\let\Re\relax % undefine real part
\DeclareMathOperator{\Re}{Re} % real part
\newcommand*{\lbl}[1]{\save+<-1pc,0pc>*\hbox{$\scriptstyle{#1}\quad$}\restore}% Labelling left
\newcommand*{\lbu}[1]{\save+<0pc,1pc>*\hbox{$\scriptstyle{#1}$}\restore} % Labelling above
\newcommand{\comment}[1]{\ifdraft{\textcolor{red}{#1}}{}}
\begin{document}

%\shorthandoff{"} % to prevent clashes with xy-pic

\title[$L$-functions for $p$-adic representations]{Non-commutative $L$-functions for $p$-adic representations over totally real fields}%
\author{\sc Malte Witte}%

\address{Malte Witte\newline Ruprecht-Karls-Universit\"at Heidelberg\newline
Mathematisches Institut\newline
Im Neuenheimer Feld 288\newline
D-69120 Heidelberg\newline GERMANY}%
\email{witte@mathi.uni-heidelberg.de}

%\thanks{}%
\subjclass[2000]{11R23 (11R42 19F27)}
%\keywords{}%

\date{\today}%
%\dedicatory{}%
%\commby{}%

\begin{abstract}
We prove a unicity result for the $L$-functions appearing in the non-commutative Iwasawa main conjecture over totally real fields. We then consider continuous representations $\rho$ of the absolute Galois group of a totally real field $F$ on adic rings in the sense of Fukaya and Kato. Using our unicity result, we show that there exists a unique sensible definition of a non-commutative $L$-function for any such $\rho$ that factors through the Galois group of a possibly infinite totally real extension. We also consider the case of CM-extensions and discuss the relation with the equivariant main conjecture for realisations of abstract $1$-motives of Greither and Popescu.
\end{abstract}

% ----------------------------------------------------------------

\maketitle

\section{Introduction}

Let $F_\infty/F$ be an  admissible  $p$-adic Lie extension (in the sense of \cite{Kakde2}) of a totally real field $F$ that is unramified over an open dense subscheme $U$ of the spectrum $X$ of the algebraic integers of $F$ and write $G\coloneqq\Gal(F_\infty/F)$ for its Galois group. We further assume that $p$ is invertible on $U$. The non-commutative main conjecture of Iwasawa theory for $F_\infty/F$ predicts the existence of a non-commutative $p$-adic $L$-function $\ncL_{F_\infty/F}(U,\Int_p(1))$ living in the first algebraic $\KTh$-group $\KTh_1(\Int_p[[G]]_S)$ of the localisation at Venjakob's canonical Ore set $S$ of the profinite group ring
\[
\Int_p[[G]]\coloneqq\varprojlim_{\substack{H\triangleleft G\\ \text{open}}}\Int_p[[G/H]].
\]
This $L$-function is supposed to satisfy the following two properties:

\begin{enumerate}
\item[(1)] It is a characteristic element for the total complex $\RDer\Sectc(U,f_!f^*\Int_p(1))$ of \'etale cohomology with proper support with values in the sheaf corresponding to the first Tate twist of the Galois module $\Int_p[[G]]^\sharp$, on which an element $\sigma$ of the absolute Galois group $\Gal_F$ acts by right multiplication with $\sigma^{-1}$. We denote this sheaf by $f_!f^*\Int_p(1)$.
\item[(2)] It interpolates the values of the complex $L$-functions $L_{X-U}(\rho,s)$ for all Artin representations $\rho$ factoring through $G$.
\end{enumerate}
We refer to Theorem~\ref{thm:Kakdes theorem} for a more precise formulation.

Under the assumption that
\begin{enumerate}
\item[(a)] $p\neq 2$,
\item[(b)] the Iwasawa $\mu$-invariant of any totally real field is zero,
\end{enumerate}
the non-commutative main conjecture is now a theorem, first proved by Ritter and Weiss \cite{RW:MainConjecture}. Almost simultaneously, Kakde \cite{Kakde2} published an alternative proof, building upon unpublished work of Kato \cite{Kato:HeisenbergType} and the seminal article \cite{Burns:MCinNCIwasawaTh+RelConj} by Burns. The book \cite{CSSV:Survey} is a comprehensive introduction to Kakde's work. The vanishing of the $\mu$-invariant is still an open conjecture. We refer to \cite{Mihailescu:MuInvariant} for a recent attempt to settle it.

It turns out that properties (1) and (2) are not sufficient to guarantee the uniqueness of $\ncL_{F_\infty/F}(U,\Int_p(1))$. It is only well-determined up to an element of a subgroup
\[
\cSK_1(\Int_p[[G]])\subset\KTh_1(\Int_p[[G]]_S).
\]
The first objective of this article is to eradicate this indeterminacy. Under the assumptions (a) and (b) we show in Theorem~\ref{thm:modification factors Z1} that if one lets $F_\infty$ vary over all admissible extensions of $F$ and requires a natural compatibility property for the elements $\ncL_{F_\infty/F}(U,\Int_p(1))$, there is indeed a unique choice of such a family.

In the course of their formulation of a very general version of the equivariant Tamagawa number conjecture, Fukaya and Kato introduced in \cite{FK:CNCIT} a certain class of coefficient rings which we call adic rings for short. This class includes among others all finite rings, the Iwasawa algebras of $p$-adic Lie groups and power series rings in a finite number of indeterminates. Our second objective concerns continuous representations $\rho$ of the absolute Galois group $\Gal_F$ over some adic $\Int_p$-algebra $\Lambda$. Assume that $\rho$ is \emph{smooth at $\infty$} in the sense that it factors through the Galois group of some (possibly infinite) totally real extension of $F$ unramified over $U$. As a consequence of Theorem~\ref{thm:modification factors Z1}, we show in Theorem~\ref{thm:general modification factors} and Corollary~\ref{cor:transformation properties for Lfunctions} that there exists a unique sensible assignment of a non-commutative $L$-function
\[
\ncL_{F_\infty/F}(U,\rho(1))\in\KTh_1(\Lambda[[G]]_S)
\]
to any such $\rho$. In the sequel \cite{Witte:zetaisos} to the present article, we will use our result to prove the existence of the $\zeta$-isomorphism for $\rho$ as predicted by Fukaya's and Kato's central conjecture \cite[Conj.~2.3.2]{FK:CNCIT}.

In fact, Corollary~\ref{cor:transformation properties for Lfunctions} applies more generally to perfect complexes $\cmplx{\sheaf{F}}$ of $\Lambda$-adic sheaves on $U$ which are smooth at $\infty$. Moreover, we consider $L$-functions $\ncL_{F_\infty/F}(W,\RDer k_*\cmplx{\sheaf{F}})$ for the total derived pushforward $\RDer k_*\cmplx{\sheaf{F}}$, where $k\colon U\mto W$ is the open immersion into another dense open subscheme $W$ of $X$. The extension $F_\infty/F$ may be ramified over $W-U$, but places over $p$ remain excluded from $W$. We also prove the existence of a dual non-commutative $L$-function $\ncL^{\Mdual}_{F_\infty/F}(W,k_!\cmplx{\sheaf{F}})$ such that $\ncL^{\Mdual}_{F_\infty/F}(W,k_!\cmplx{\sheaf{F}})^{-1}$ is a characteristic element for the complex $\RDer\Sect(W,k_!f_!f^*\cmplx{\sheaf{F}})$ and satisfies the appropriate interpolation property. If $\rho$ is a continuous representation as above and $\check{\rho}$ the dual representation over the opposite ring $\Lambda^\op$, then $\ncL^{\Mdual}_{F_\infty/F}(U,\rho)$ is the image of $\ncL_{F_\infty/F}(U,\check{\rho}(1))$ under the canonical isomorphism
\[
 \KTh_1(\Lambda^\op[[G]]_S)\isomorph\KTh_1(\Lambda[[G]]_S).
\]

As we explain in Corollary~\ref{cor:transformation properties of L-functions CM case}, all of this can be easily extended to the case that $F_\infty$ is a CM field. In this form, our main result also includes non-commutative generalisations of the type of main conjectures that are treated in \cite{GreitherPopescu:EquivariantMC} by Greither and Popescu.

We give a short overview on the content of the article. A central input to the proof of Theorem~\ref{thm:modification factors Z1} is Section~\ref{sec:SK}, in which we show that $\cSK_1(\Int_p[[\Gal(F_\infty/F)]])$ vanishes for sufficiently large extensions $F_\infty/F$. The results of this section apply not only to admissible extensions and might be useful in other contexts as well. In Section~\ref{sec:Framework} we recall the essence of the $\KTh$-theoretic machinery behind the formulation of the main conjecture. We also add some new material, which is needed in the later sections. In Section~\ref{sec:S-torsion exchange property} we prove a technical result that will later be used to relate the cohomology of $f_!f^*\cmplx{\sheaf{F}}$ over $U$ to the cohomology of $\cmplx{\sheaf{F}}$ over the integral closure of $U$ in the cyclotomic extension of $F$. Section~\ref{sec:Duality on the level of Kgroups} contains the construction of the isomorphism $\KTh_1(\Lambda^\op[[G]]_S)\isomorph\KTh_1(\Lambda[[G]]_S)$ on the level of Waldhausen categories. First, we explain in full generality how to identify the $\KTh$-groups of a biWaldhausen category with those of its opposite category and show that this identification is compatible with localisation sequences. Then, we specialise to the case of rings and explain how to identify the opposite category of the category of strictly perfect complexes over $\Lambda[[G]]$ with the category of strictly perfect complexes over $\Lambda^\op[[G]]$. Section~\ref{sec:algebraic Lfunctions} contains an investigation of the base change properties of certain splittings of the boundary map
\[
\bh\colon\KTh_1(\Int_p[[G]]_S)\mto\KTh_0(\Int_p[[G]],S),
\]
extending results from \cite{Burns:AlgebraicLfunctions} and \cite{Witte:Splitting}. With the help of these splittings we are able produce characteristic elements with good functorial properties, which we call non-commutative algebraic $L$-functions, following Burns. Section~\ref{sec:perfect complexes of adic sheaves} recalls the definition of a Waldhausen category modeling the derived category of perfect complexes of $\Lambda$-adic sheaves and the explanation of the property of being smooth at $\infty$. In Section~\ref{sec:duality for adic sheaves} we consider local and global duality theorems for smooth $\Lambda$-adic sheaves. In Section~\ref{sec:AdmissibleExt} we recall the notion of admissible extensions and the definition of the complexes $f_!f^*\cmplx{\sheaf{F}}$ induced by the procovering $f\colon U_{F_\infty}\mto U$. We then show in Section~\ref{sec:S-torsion} that our hypothesis (b) on the vanishing of the $\mu$-invariant implies that for any perfect complex of $\Lambda$-adic sheaves $\cmplx{\sheaf{F}}$ smooth at $\infty$ and any admissible extension $F_\infty/F$, the complexes
$
\RDer\Sectc(W,\RDer k_*\cmplx{\sheaf{F}}(1))
$
have $S$-torsion cohomology. We also prove a unconditional local variant thereof. This local variant permits us to introduce the notion of non-commutative Euler factors by producing canonical characteristic elements for the complexes
$
\RDer\Sect(x,i^*\RDer k_*\cmplx{\sheaf{F}}(1))
$
for any closed point $i\colon x\mto W$ of $W$. Comparing Euler factors with the non-commutative algebraic $L$-functions of these complexes, we obtain certain elements in $\KTh_1(\Lambda[[G]])$, which we call local modification factors. In the same way, we also introduce the notion of dual non-commutative Euler factors by producing canonical characteristic elements for the complexes
$
\RDer\Sect(x,i^!k_!\cmplx{\sheaf{F}})
$
and the corresponding dual local modification factors. The investigation of the Euler factors and local modification factors is carried out in Section \ref{sec:euler factors} and Section~\ref{sec:cyclotomic euler factors}, first in general, then in the special case of the cyclotomic extension.  This is followed by a short reminder on $L$-functions of Artin representations in Section~\ref{sec:Artin reps}.

Section~\ref{sec:main results} contains the main results of this article. We use Kakde's non-commutative $L$-functions and the non-commutative algebraic $L$-function of the complex $\RDer\Sectc(U,f_!f^*\Int_p(1))$ to define global modification factors. Changing the open dense subscheme $U$ is reflected by adding or removing local modification factors. This compatibility allows us to pass to field extensions with arbitrarily large ramification. We can then use the results of Section~\ref{sec:SK} to prove the uniqueness of the family of modification factors for all pairs $(U,F_\infty)$ with $F_\infty/F$ admissible and unramified over $U$. The corresponding non-commutative $L$-functions are the product of the global modification factors and the non-commutative algebraic $L$-functions. We then extend in Theorem~\ref{thm:general modification factors} the definition of global modification factors to $\Lambda$-adic sheaves smooth at $\infty$ by requiring a compatibility under twists with certain bimodules. In the same way, we construct global dual modification factors in Theorem~\ref{thm:general dual modification factors}. In Theorem~\ref{thm:change of base field} we show that the global modification factors are also compatible under changes of the base field $F$. The non-commutative $L$-functions for the complexes $\cmplx{\sheaf{F}}$ are then defined as the product of the algebraic $L$-function and the modification factors. Corollary~\ref{cor:transformation properties for Lfunctions} subsumes the transformation properties of the non-commutative $L$-functions.

In Section~\ref{sec:CM-extensions} we extend our results to the case of CM-extensions of $F$. In the following Section~\ref{sec:cohomology} we consider the cohomology of the complexes featuring in our main conjecture. In the final Section~\ref{sec:GreitherPopescu} we establish the link between our conjecture and the formulation of Greither and Popescu. In an appendix we prove a technical result on localisations in polynomial rings with not necessarily commutative coefficients.

The findings of this article were inspired by the author's analogous results \cite{Witte:UnitLfunctions} on geometric main conjectures in the $\ell=p$ case. Parts of it were developed during his stay at the University of Paderborn in the academic year 2014. He thanks the mathematical faculty and especially Torsten Wedhorn for the hospitality. This work is part of a larger project of the research group \emph{Symmetry, Geometry, and Arithmetics} funded by the DFG.

\subsection*{Notational conventions}

All rings in this article will be associative with identity; a module over a ring will always refer to a left unitary module. If $R$ is a ring, $R^\op$ will denote the opposite ring and $R^\times$ its group of units. We will sometimes write $f\lcirclearrowright M$ for an endomorphism $f$ of an object $M$. The symbols $\N$, $\Int$, $\C$ have their usual meanings. For a prime number $p$, $\Int_p$ denotes the ring of $p$-adic integers and $\Rat_p$ its fraction field. We write $\coloneqq$ to denote the definition of a symbol, reserving the symbol $=$ for expressing an identity. Isomorphisms are denoted by $\isomorph$, weak equivalences and quasi-isomorphisms by $\sim$. Cofibrations and quotient maps in Waldhausen categories are denoted by $\cto$ and $\qto$. Graded objects are denoted by $P^{\bullet}$ or $P_{\bullet}$, with $P^n$ and $P_n$ referring to the component in degree $n$, respectively.

\section{On the first special \texorpdfstring{$\KTh$}{K}-group of a profinite group algebra}\label{sec:SK}

Let $p$ be a fixed prime number. In this section only, $p$ is allowed to be even. For any profinite group $G$, we write $\NS(G)$ for its lattice of open normal subgroups and $G_r\subset G$ for subset of $p$-regular elements, i.\,e.\ the union of all $q$-Sylow-subgroups for all primes $q\neq p$. Note that
\[
 G_r=\varprojlim_{U\in\NS(G)} (G/U)_r
\]
is a closed subset of $G$. The group $G$ acts continuously on $G_r$ by conjugation. For any profinite $G$-set $S$ we write $\Int_p[[S]]$ for the compact $G$-module which is freely generated by $S$ as compact $\Int_p$-module.

We want to analyse the completed first special $K$-group
\[
\cSK_1(\Int_p[[G]])\coloneqq\varprojlim_{U\in\NS(G)}\SK_1(\Int_p[G/U])
\]
of the profinite group algebra
\[
 \Int_p[[G]]\coloneqq\varprojlim_{U\in\NS(G)}\Int_p[G/U].
\]
Note that $\cSK_1(\Int_p[[G]])$ is a subgroup of the completed first $K$-group
\[
\widehat{\KTh}_1(\Int_p[[G]])\coloneqq\varprojlim_{U\in\NS(G)}\KTh_1(\Int_p[G/U]).
\]
If $G$ has an open pro-$p$-subgroup which is topologically finitely generated, then
\[
\widehat{\KTh}_1(\Int_p[[G]])=\KTh_1(\Int_p[[G]])
\]
by \cite[Prop.~1.5.3]{FK:CNCIT}. In the case that $G$ is a pro-$p$ $p$-adic Lie group a thorough analysis of $\cSK_1(\Int_p[[G]])$ has been carried out in \cite{SchneiderVenjakob:SK1}. Note in particular that there are examples of torsionfree $p$-adic Lie groups with non-trivial first special $K$-group. Some of the results of \emph{loc.\,cit.} can certainly be extended to the case that $G$ admits elements of order prime to $p$. We will not pursue this further. Instead, we limit ourselves to the following results relevant to our application.

Recall from \cite[Thm.~10.12]{Oliver:WhiteheadGroups} that there is a canonical surjective homomorphism
\[
\HF_2(G,\Int_p[[G_r]])\mto\cSK_1(\Int_p[[G]]).
\]
where
\[
\HF_2(G,\Int_p[[G_r]])\coloneqq\varprojlim_{U\in\NS(G)} \HF_2(G/U,\Int_p[(G/U)_r])
\]
denotes the second continuous homology group of $\Int_p[[G_r]]$. We write $X(G_r)\coloneqq\operatorname{Map}(G_r,\Rat_p/\Int_p)$ for the Pontryagin dual of $\Int_p[[G_r]]$, such that the Pontryagin dual of $\HF_2(G,\Int_p[[G_r]])$ is $\HF^2(G,X(G_r))$.

\begin{lem}\label{lem:finiteness of SK1}
Let $G=H\rtimes\Gamma$ be a semi-direct product of a finite normal subgroup $H\subset G$ and $\Gamma\isomorph\Int_p$. Then $\HF^2(G,X(G_r))$ and $\cSK_1(\Int_p[[G]])$ are finite.
\end{lem}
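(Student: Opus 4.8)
The plan is to establish finiteness of $\HF^2(G,X(G_r))$ and then transfer it to $\cSK_1(\Int_p[[G]])$ via the surjection $\HF_2(G,\Int_p[[G_r]])\qto\cSK_1(\Int_p[[G]])$ recalled above, using that the former group is Pontryagin dual to $\HF_2(G,\Int_p[[G_r]])$. The key preliminary point is that $G_r$ is \emph{finite}: any pro-$q$-subgroup of $G$ with $q\neq p$ has trivial image in $G/H\isomorph\Gamma\isomorph\Int_p$, hence lies in $H$; thus $G_r\subseteq H$. As a consequence, $L\coloneqq\Int_p[[G_r]]=\Int_p[G_r]$ is a finitely generated free $\Int_p$-module with continuous $G$-action, and $M\coloneqq X(G_r)$ is a discrete $p$-torsion $G$-module fitting into a short exact sequence of $G$-modules $0\to L\to L\tensor_{\Int_p}\Rat_p\to M\to 0$ obtained by tensoring $0\to\Int_p\to\Rat_p\to\Rat_p/\Int_p\to 0$ with $L$.

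Next I would feed this into the Hochschild--Serre spectral sequence for $1\to H\to G\to\Gamma\to 1$. Since $\Gamma\isomorph\Int_p$ has $p$-cohomological dimension $1$ and $M$ is $p$-torsion, the spectral sequence collapses to a short exact sequence
\[
0\to\HF^1(\Gamma,\HF^1(H,M))\to\HF^2(G,M)\to\HF^2(H,M)^{\Gamma}\to 0.
\]
Hence it suffices to check that $\HF^q(H,M)$ is finite for $q\geq 1$ and that $\HF^1(\Gamma,A)$ is finite for every finite $\Gamma$-module $A$. For the first point, $H$ is finite and $L\tensor_{\Int_p}\Rat_p$ is a $\Rat_p[H]$-module, so $\HF^q(H,L\tensor_{\Int_p}\Rat_p)=0$ for $q\geq 1$ by Maschke's theorem; the long exact cohomology sequence of the short exact sequence above then gives $\HF^q(H,M)\isomorph\HF^{q+1}(H,L)$ for $q\geq 1$, and $\HF^{q+1}(H,L)$ is finite, being finitely generated over the noetherian ring $\Int_p$ (as $L$ is, and $\Int_p[H]$ is noetherian) and annihilated by $|H|$. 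For the second point, with $\gamma$ a topological generator of $\Gamma$, one has $\HF^1(\Gamma,A)\isomorph A/(\gamma-1)A$, a subquotient of the finite group $A$. This proves that $\HF^2(G,X(G_r))$ is finite.

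Finally, Pontryagin duality turns this into finiteness of $\HF_2(G,\Int_p[[G_r]])$, and surjectivity of the canonical map $\HF_2(G,\Int_p[[G_r]])\qto\cSK_1(\Int_p[[G]])$ from \cite[Thm.~10.12]{Oliver:WhiteheadGroups} yields finiteness of $\cSK_1(\Int_p[[G]])$. I expect the only genuinely substantive step to be the reduction to the finite group $H$ --- namely the observation that $G_r\subseteq H$ is finite together with the identification of $X(G_r)$ as a $G$-module with $\Int_p[G_r]\tensor_{\Int_p}\Rat_p/\Int_p$ --- after which the remainder is routine cohomology of profinite groups.
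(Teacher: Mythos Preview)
Your proof is correct and follows essentially the same route as the paper's: both identify $G_r\subseteq H$ (so $X(G_r)=X(H_r)$ has finite $\Int_p$-corank), apply the Hochschild--Serre spectral sequence for $H\triangleleft G$ with $\Gamma$ of cohomological dimension $1$, and conclude from finiteness of $\HF^q(H,X(H_r))$ for $q\geq 1$. The only difference is that you spell out why $\HF^q(H,M)$ is finite via the auxiliary sequence $0\to L\to L\tensor_{\Int_p}\Rat_p\to M\to 0$, whereas the paper simply asserts this (it follows directly since cohomology of a finite group in a module of finite $\Int_p$-corank is annihilated by $|H|$ and of finite corank, hence finite).
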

\begin{proof}
Note that $X(G_r)=X(H_r)$ is of finite corank over $\Int_p$. The Hochschild-Serre spectral sequence induces an exact sequence
\[
 0\mto \HF^1(\Gamma,H^1(H,X(H_r)))\mto \HF^2(G,X(H_r))\mto \HF^0(\Gamma,\HF^2(H,X(H_r)))\mto 0
\]
where both $\HF^1(H,X(H_r))$ and $\HF^2(H,X(H_r))$ are finite $p$-groups. The lemma is an immediate consequence.
\end{proof}

We are interested in the following number theoretic situation. Assume that $K$ is an algebraic number field and $K_\infty$ is a $\Int_p$-extension of $K$. In particular $K_\infty/K$ is unramified in the places (including the archimedean places) of $K$ that do not lie over $p$. Let $L_\infty$ be a finite extension of $K_\infty$ which is Galois over $K$. Write
\begin{align*}
G&\coloneqq\Gal(L_\infty/K),\\
H&\coloneqq\Gal(L_\infty/K_\infty),\\
\Gamma&\coloneqq\Gal(K_\infty/K)
\end{align*}
for the corresponding Galois groups. We fix a splitting $\Gamma\to G$ such that we may write $G$ as the semi-direct product of $H$ and $\Gamma$ and let $L$ be the fixed field of a $p$-Sylow subgroup of $G$ containing $\Gamma$. Write $L^{(p)}$ for the maximal Galois $p$-extension of $L$ inside a fixed algebraic closure $\algc{K}$ of $K$. Note that $L^{(p)}=L_\infty^{(p)}$ is the subfield of $\algc{K}$ fixed by the closed subgroup $\Gal_{L^{(p)}}$ generated by all $q$-Sylow subgroups of the absolute Galois group $\Gal_L$ for all primes $q\neq p$. Hence, $\Gal_{L^{(p)}}\subset\Gal_{L_\infty}$ is a characteristic subgroup and therefore, $L^{(p)}/K$ is a Galois extension. The following is an adaption of the proof of \cite[Prop. 2.3.7]{FK:CNCIT}.

\begin{prop}\label{prop:Vanishing of SK1}
Set $\mathcal{G}\coloneqq\Gal(L^{(p)}/K)$. Then $\HF^2(\mathcal{G},X(\mathcal{G}_r))=\cSK_1(\Int_p[[\mathcal{G}]])=0$.
\end{prop}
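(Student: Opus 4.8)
The plan is to deduce Proposition~\ref{prop:Vanishing of SK1} from Lemma~\ref{lem:finiteness of SK1} by a limit argument over a suitable tower inside $L^{(p)}/K$. First I would observe that $\mathcal{G}=\Gal(L^{(p)}/K)$ contains $\Gal(L^{(p)}/L)$ as an open normal subgroup of index prime to $p$ (recall $L$ is the fixed field of a $p$-Sylow subgroup $G$ containing $\Gamma$), and that $\Gal(L^{(p)}/L)$ is a pro-$p$ group, being the Galois group of the maximal pro-$p$ extension of $L$. Moreover $\Gal(L^{(p)}/K_\infty L)$ is a normal pro-$p$ subgroup with quotient a copy of $\Int_p$ coming from $\Gamma$. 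Since by construction $\Gal_{L^{(p)}}$ is generated by all $q$-Sylow subgroups of $\Gal_L$ for $q\neq p$, the $p$-regular part $\mathcal{G}_r$ is controlled entirely by the finite part: in fact $X(\mathcal{G}_r)=X(H_r)$, where $H=\Gal(L_\infty/K_\infty)$, exactly as in Lemma~\ref{lem:finiteness of SK1}. So the size of the module is uniformly bounded along the tower.

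**The limiting argument.**

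Next I would write $L^{(p)}$ as an increasing union of finite Galois extensions $L_n/K$ containing $K_\infty$, so that $\mathcal{G}=\varprojlim_n \Gal(L_n/K)$, and $\HF^2(\mathcal{G},X(\mathcal{G}_r))=\varinjlim_n\HF^2(\Gal(L_n/K),X(\Gal(L_n/K)_r))$ while $\cSK_1(\Int_p[[\mathcal{G}]])=\varprojlim_n\SK_1(\Int_p[\Gal(L_n/K)])$. Each $\Gal(L_n/K)$ is of the form $H_n\rtimes\Gamma_n$ with $H_n$ finite and $\Gamma_n$ a finite quotient of $\Int_p$; these are not quite in the form of Lemma~\ref{lem:finiteness of SK1}, but the whole point is that $\mathcal{G}$ itself maps onto $\Gal(L_n\cap(\text{some }\Int_p\text{-layer})/K)\cong H'_n\rtimes\Int_p$. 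The cleaner route is to produce a cofinal family of quotients of $\mathcal{G}$ of the shape $H'\rtimes\Int_p$ — take $L'_\infty$ ranging over finite-over-$K_\infty$ subextensions of $L^{(p)}$, Galois over $K$, set $G'=\Gal(L'_\infty/K)=H'\rtimes\Gamma$. Lemma~\ref{lem:finiteness of SK1} gives that each $\HF^2(G',X(G'_r))$ and $\cSK_1(\Int_p[[G']])$ is finite, and crucially Lemma~\ref{lem:finiteness of SK1}'s proof bounds these by $\HF^1(H',X(H'_r))$ and $\HF^2(H',X(H'_r))$, which stabilise as $H'$ grows (since $X(H'_r)=X(H_r)$ stabilises — the $p$-regular elements only see the tame part, which is already captured at a finite level). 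I would then argue that the transition maps in the direct system computing $\HF^2(\mathcal{G},X(\mathcal{G}_r))$ are eventually zero: the corestriction maps from a smaller pro-$p$-by-$\Int_p$ quotient to a larger one kill $\HF^2$ because the relevant Hochschild–Serre pieces $\HF^1(\Gamma,\HF^1(H',X(H_r)))$ and $\HF^0(\Gamma,\HF^2(H',X(H_r)))$ are killed upon passing deeper into the maximal pro-$p$ tower (one uses that $H^i(H',-)$ for the pro-$p$ group $\Gal(L^{(p)}/L'_\infty)$ acting trivially on a finite module is eventually hit by transfer, an argument exactly parallel to \cite[Prop.~2.3.7]{FK:CNCIT}). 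Passing to the direct limit then gives $\HF^2(\mathcal{G},X(\mathcal{G}_r))=0$, and via the surjection $\HF_2(\mathcal{G},\Int_p[[\mathcal{G}_r]])\qto\cSK_1(\Int_p[[\mathcal{G}]])$ (from \cite[Thm.~10.12]{Oliver:WhiteheadGroups}, which passes to the limit since $\cSK_1$ is a limit of the $\SK_1$ and $\HF_2$ a limit of the $\HF_2$) we conclude $\cSK_1(\Int_p[[\mathcal{G}]])=0$ as well.

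**Main obstacle.**

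The main obstacle I anticipate is making the "eventually zero transition maps" step precise: one must verify that the corestriction (or the inverse-limit transition for $\SK_1$) genuinely annihilates the finite groups in the tower, rather than merely keeping them of bounded size. The key leverage is that $\Gal(L^{(p)}/L)$ is the \emph{full} maximal pro-$p$ quotient, so any finite $p$-group quotient of it factors through arbitrarily deep layers, and the standard transfer/corestriction vanishing for cohomology of pro-$p$ groups with finite coefficients applies; the twist by the $\Gamma\cong\Int_p$-action in $\HF^1(\Gamma,-)$ is harmless because $\HF^1(\Int_p,M)=M_\Gamma$ is again finite and is killed on the same timescale. Once that vanishing is in place the rest is formal bookkeeping with limits, exactly tracking the structure of the cited Fukaya–Kato argument but with $\Int_p(1)$ replaced by $\Int_p[[\mathcal{G}_r]]$ and $\HF^3$ replaced by the $\HF^2$ relevant to $\cSK_1$.
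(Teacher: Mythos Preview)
Your proposal correctly identifies the reduction $X(\mathcal{G}_r)=X(H_r)$ and correctly notes that vanishing of $\HF^2(\mathcal{G},X(\mathcal{G}_r))$ implies $\cSK_1(\Int_p[[\mathcal{G}]])=0$ via Oliver's surjection. However, the core of your argument has a genuine gap.

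You propose to show $\HF^2(\mathcal{G},X(H_r))=\varinjlim_{G'}\HF^2(G',X(H_r))=0$ by arguing that the transition maps are eventually zero. First, these transition maps are \emph{inflation} maps (from a quotient $G'$ of $\mathcal{G}$ to a larger one), not corestriction; your repeated invocation of corestriction and transfer is misplaced. Second, and more seriously, the claim that these inflation maps eventually vanish is not justified. You assert that the Hochschild--Serre pieces $\HF^1(\Gamma,\HF^1(H',X(H_r)))$ and $\HF^0(\Gamma,\HF^2(H',X(H_r)))$ are ``killed upon passing deeper into the maximal pro-$p$ tower'' via a transfer argument, but no such general mechanism exists: there is no a priori reason classes should die under inflation without specific arithmetic input about the group $\mathcal{G}$. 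Your argument, as written, would apply equally well to the Galois group of the maximal pro-$p$ extension of any field, which is certainly too much.

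The paper's proof is far more direct and makes the required arithmetic input explicit. Rather than a limit argument, one restricts along the inclusion $\Gal(L^{(p)}/L)\hookrightarrow\mathcal{G}$; since $[L:K]$ is prime to $p$, this restriction is split injective on the $p$-primary cohomology in question. Then \cite[Cor.~10.4.8]{NSW:CohomNumFields} identifies $\HF^2(\Gal(L^{(p)}/L),X(H_r))$ with $\HF^2(\Gal_L,X(H_r))$, and the vanishing of the latter follows from the number-theoretic fact $\HF^2(\Gal_F,\Rat_p/\Int_p)=0$ for any number field $F$, which is the Claim in \cite[Prop.~2.3.7]{FK:CNCIT}. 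You cite this reference, but you attempt to imitate its \emph{method} inside a limit argument rather than simply invoking its \emph{statement}; in the process, the essential arithmetic content is lost and the argument does not close.
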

\begin{proof}
Note that the projection $\mathcal{G}\to G$ induces a canonical isomorphism $X(\mathcal{G}_r)=X(H_r)$ and that $X(H_r)$ is of finite corank over $\Int_p$. We have
\[
 \HF^i(\Gal(L^{(p)}/L),X(H_r))=\HF^i(\Gal_L,X(H_r))
\]
for all $i$ according to \cite[Cor. 10.4.8]{NSW:CohomNumFields} applied to the class of $p$-groups and the set of all places of $L$. Moreover, $\HF^2(\Gal_L,X(H_r))=0$ as a consequence of the fact that $\HF^2(\Gal_F,\Rat_p/\Int_p)=0$ for any number field $F$ \cite[Prop. 2.3.7, Claim]{FK:CNCIT}.

Since $[L:K]$ is prime to $p$, the restriction map
\[
\HF^2(\mathcal{G},X(H_r))\mto\HF^2(\Gal(L^{(p)}/L),X(H_r))
\]
is split injective. In particular, $\HF^2(\mathcal{G},X(H_r))=0$ as claimed.
\end{proof}

Note that if $L_\infty/K$ is unramified in a real place of $K$ and $p\neq 2$, then $L^{(p)}/K$ is unramified in this real place as well. For the sake of completeness we also deal with the case $p=2$ and consider for a set of real places $\Sigma$ of $K$ such that $L_\infty/K$ is unramified over $\Sigma$ the maximal subfield $L^{(2)}_{\Sigma^c}$ of $L^{(2)}$ which is unramified over $\Sigma$. Note that $L^{(2)}_{\Sigma^c}/K$ is still Galois over $K$.

\begin{prop}\label{prop:Vanishing of SK1 for 2}
Set $\mathcal{G}\coloneqq\Gal(L^{(2)}_{\Sigma^c}/K)$. Then $\HF^2(\mathcal{G},X(\mathcal{G}_r))=\cSK_1(\Int_2[[\mathcal{G}]])=0$.
\end{prop}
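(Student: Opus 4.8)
The plan is to adapt the proof of Proposition~\ref{prop:Vanishing of SK1} almost verbatim, keeping track of where the hypothesis $p\neq 2$ was (implicitly) used and replacing the input from \cite{NSW:CohomNumFields} and \cite{FK:CNCIT} by the appropriate variants for cohomology with restricted ramification. As before, via the surjection $\HF_2(\mathcal{G},\Int_2[[\mathcal{G}_r]])\qto\cSK_1(\Int_2[[\mathcal{G}]])$ it suffices to show $\HF^2(\mathcal{G},X(\mathcal{G}_r))=0$, and the projection $\mathcal{G}\to G$ still induces $X(\mathcal{G}_r)=X(H_r)$ with $X(H_r)$ of finite corank over $\Int_2$, since $H_r$ only sees the (finite) prime-to-$2$ part of $H$ and is unaffected by passing to a $2$-extension.

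First I would set $L$ to be the fixed field of a $2$-Sylow subgroup of $G$ containing $\Gamma$, so that $[L:K]$ is prime to $2$ and $L^{(2)}_{\Sigma^c}/L$ is the maximal $2$-extension of $L$ unramified over (the places of $L$ above) $\Sigma$. Since $[L:K]$ is odd, the restriction map $\HF^2(\mathcal{G},X(H_r))\to\HF^2(\Gal(L^{(2)}_{\Sigma^c}/L),X(H_r))$ is split injective, so it is enough to prove that the latter group vanishes. Here I would invoke the restricted-ramification analogue of \cite[Cor.~10.4.8]{NSW:CohomNumFields}: applied to the class of $2$-groups and the set of places of $L$ \emph{outside} $\Sigma$, it identifies $\HF^i(\Gal(L^{(2)}_{\Sigma^c}/L),X(H_r))$ with $\HF^i(\Gal_{L,\Sigma^c},X(H_r))$, where $\Gal_{L,\Sigma^c}$ is the Galois group of the maximal extension of $L$ unramified over $\Sigma$; one needs that $X(H_r)$ is a discrete $2$-primary (up to finite corank) module, which it is.

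Thus the crux is to show $\HF^2(\Gal_{L,\Sigma^c},X(H_r))=0$. Since $X(H_r)$ has finite corank and is an extension of finitely many copies of $\Rat_2/\Int_2$ (after passing to a suitable finite sub-extension of $L$ over which $H_r$ is a trivial Galois set) it reduces to $\HF^2(\Gal_{F,\Sigma^c},\Rat_2/\Int_2)=0$ for number fields $F$ containing $L$ and real places $\Sigma$. For the full Galois group this is the "Claim" inside \cite[Prop.~2.3.7]{FK:CNCIT}; for the group $\Gal_{F,\Sigma^c}$ with $\Sigma$ real one argues the same way. The key point — and the step I expect to be the main obstacle — is that removing ramification at real places does not destroy the vanishing of $\HF^2$ with $\Rat_2/\Int_2$-coefficients: one has to check that $\cd_2\Gal_{F,\Sigma^c}\le 2$ (or directly that $\HF^2$ vanishes) even in the presence of restricted ramification at real primes, which is where the delicate $2$-adic behaviour of real places in Galois cohomology enters. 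Concretely I would deduce it from the Poitou--Tate / global Euler characteristic formalism for $S$-ramified cohomology (as in \cite[Ch.~VIII, IX, X]{NSW:CohomNumFields}), using that $\Rat_2/\Int_2$ is the direct limit of the $\mu_{2^n}$ only up to the real-place discrepancy, and that the relevant local $\HF^2$-contributions at real places vanish for $\Rat_2/\Int_2$-coefficients; the conclusion is then that the map controlling $\HF^2(\Gal_{F,\Sigma^c},\Rat_2/\Int_2)$ is still surjective onto $0$. Once this vanishing is in hand, the split injectivity reverses it up the tower and yields $\HF^2(\mathcal{G},X(\mathcal{G}_r))=0$, hence $\cSK_1(\Int_2[[\mathcal{G}]])=0$.
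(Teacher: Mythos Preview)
Your proposal has the right overall architecture, but there is a genuine gap at the step you yourself flag as ``the main obstacle'', and your sketch there does not match what actually has to be done.

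First, a structural difference: you restrict from $\mathcal{G}$ to the subgroup $\Gal(L^{(2)}_{\Sigma^c}/L)$ of odd index and then try to trivialise the coefficients by passing to a further finite extension. The paper does \emph{not} use this restriction. Instead it takes $L'$ to be the fixed field of $Z(G)\cap\Gamma$ (so that $\Gal(L^{(2)}_{\Sigma^c}/L')$ already acts trivially on $X(H_r)$), forms the induced module $Y=\operatorname{Map}(\Gal(L'/K),X(H_r))$, and shows that $\HF^2(\mathcal{G},Y)\to\HF^2(\mathcal{G},X(H_r))$ is surjective by controlling $\HF^3$ via the local description $\HF^3(\mathcal{G},A)=\bigoplus_{v\in\Sigma^c_{\Real}}\HF^3(\Gal_{K_v},A)$ from \cite[Prop.~10.6.5]{NSW:CohomNumFields}. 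This reduces cleanly to $\HF^2(\Gal(L^{(2)}_{\Sigma^c}/L'),\Rat_2/\Int_2)=0$ without any further field change.

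Second, and more seriously: your claim that ``the relevant local $\HF^2$-contributions at real places vanish for $\Rat_2/\Int_2$-coefficients'' is true but beside the point. The obstruction does not live in local $\HF^2$; it lives in local $\HF^1$. The paper uses the exact sequence (from the proof of \cite[Thm.~10.6.1]{NSW:CohomNumFields})
\[
\HF^1(\Gal(L^{(2)}/L'),\Rat_2/\Int_2)\to\bigoplus_{v\in\Sigma^c_{\Real}(L')}\HF^1(\Gal_{L'_v},\Rat_2/\Int_2)\to\HF^2(\Gal(L^{(2)}_{\Sigma^c}/L'),\Rat_2/\Int_2)\to\HF^2(\Gal(L^{(2)}/L'),\Rat_2/\Int_2).
\]
The rightmost term vanishes (unrestricted case, via \cite[Cor.~10.4.8]{NSW:CohomNumFields} and \cite{Schn:Galoiskohomologiegruppen}), but each local summand is $\Int/2\Int$, \emph{not} zero. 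What is needed is surjectivity of the first arrow, and this requires an explicit construction: for each real $v$ over $\Sigma^c$, use weak approximation to find $a\in L'$ negative at $v$ and positive at every other real place, so that the Kummer class of $\sqrt{a}$ hits the generator of the $v$-component and nothing else. Your phrase ``surjective onto $0$'' and the appeal to generic Poitou--Tate formalism do not supply this argument; without it the proof does not close.
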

\begin{proof}
Let $L'$ be  the subfield fixed by the intersection of the centre of $G$ with $\Gamma$ and let $Y\coloneqq\operatorname{Map}(\Gal(L'/K),X(H_r))$ be the induced module. We obtain a canonical surjection $Y\mto X(H_r)$ with kernel $Z$. For any discrete $\mathcal{G}$-module $A$ we have
\[
 \HF^3(\mathcal{G},A)=\bigoplus_{v\in \Sigma^c_{\Real}}\HF^3(\Gal_{K_v},A)
\]
where $v$ runs through set $\Sigma^c_{\Real}$ of real places of $K$ not in $\Sigma$ and $\Gal_{K_v}=\Int/2\Int$ denotes the Galois group of the corresponding local field $K_v=\Real$ \cite[Prop. 10.6.5]{NSW:CohomNumFields}. By the proof of the $(p=2)$-case in \cite[Prop. 2.3.7, Claim]{FK:CNCIT} we have $\HF^2(\Gal_{K_v},X(H_r))=0$ such that
\[
 \HF^3(\mathcal{G},Z)\mto\HF^3(\mathcal{G},Y)
\]
is injective and hence,
\[
 \HF^2(\Gal(L_{\Sigma^c}^ {(2)}/L'),X(H_r))\isomorph\HF^2(\mathcal{G},Y)\mto \HF^2(\mathcal{G},X(H_r))
\]
is a surjection. Moreover, $\Gal_{L'}$ acts trivially on $X(H_r)$ such that it suffices to show that
\[
 \HF^2(\Gal(L^{(2)}_{\Sigma^c}/L'),\Rat_2/\Int_2)=0.
\]

By the proof of \cite[Thm. 10.6.1]{NSW:CohomNumFields} we obtain an exact sequence
\begin{multline*}
 0\mto\HF^1(\Gal(L^{(2)}_{\Sigma^c}/L'))\mto\HF^1(\Gal(L^{(2)}/L'))\mto\\
\bigoplus_{v\in \Sigma^c_{\Real}(L')}\HF^1(\Gal_{L'_v})\mto\HF^2(\Gal(L^{(2)}_{\Sigma^c}/L'))\mto\HF^2(\Gal(L^{(2)}/L'))
\end{multline*}
where we have omitted the coefficients $\Rat_2/\Int_2$ and $\Sigma^c_{\Real}(L')$ denotes the real places of $L'$ lying over $\Sigma^c_\Real$. But
\[
 \HF^2(\Gal(L^{(2)}/L'))=\HF^2(\Gal_{L'})=0
\]
by \cite[Cor. 10.4.8]{NSW:CohomNumFields} and \cite[Satz 1.(ii)]{Schn:Galoiskohomologiegruppen}. Moreover, $L'$ is dense in the product of its real local fields such that for each real place $v$ of $L'$, we find an element $a$ in $L'$ which is negative with respect to $v$ and positive with respect to all other real places. The element of $\HF^1(\Gal(L^{(2)}/L'))$ corresponding via Kummer theory to a square root of $a$ maps to the non-trivial element of $\HF^1(\Gal_{L'_v})=\Int/2\Int$ and to the trivial element for all other real places. This shows that
\[
\HF^1(\Gal(L^{(2)}_{\Sigma^c}/L'))\mto\bigoplus_{v\in \Sigma^c_{\Real}(L')}\HF^1(\Gal_{L'_v})
\]
must be surjective.
\end{proof}

\begin{cor}\label{cor:vanishing extension}
Let $K_\infty/K$ be a $\Int_p$-extension of a number field $K$ and $L_\infty/K_\infty$ be a finite extension such that $L_\infty/K$ is Galois with Galois group $G$. Assume further that $L_\infty/K$ is unramified in a (possibly empty) set of real places $\Sigma$ of $K$. Then there exists a finite extension $L'_\infty/L_\infty$ such that
\begin{itemize}
 \item[(i)] $[L'_\infty:L_\infty]$ is a power of $p$,
 \item[(ii)] $L'_\infty/K$ is Galois with Galois group $G'$,
 \item[(iii)] $L'_\infty/K$ is unramified over $\Sigma$,
 \item[(iv)] The canonical homomorphism $\cSK_1(\Int_p[[G']])\mto\cSK_1(\Int_p[[G]])$ is the zero map.
\end{itemize}
In particular, $L'_\infty$ may be chosen to be totally real if $L_\infty$ is totally real.
\end{cor}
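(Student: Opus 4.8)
The plan is to deduce Corollary~\ref{cor:vanishing extension} from Proposition~\ref{prop:Vanishing of SK1} (when $p\neq 2$ or $\Sigma=\emptyset$) and Proposition~\ref{prop:Vanishing of SK1 for 2} (when $p=2$), by exhibiting the desired $L'_\infty$ inside the relevant maximal $p$-extension tower. Concretely, with notation as in the run-up to those propositions, I would first fix a splitting $\Gamma\to G$ (using that $\Gamma\isomorph\Int_p$ is free pro-$p$, the extension $1\to H\to G\to\Gamma\to 1$ splits, after possibly replacing $K_\infty$ by a subextension to arrange that $H$ is finite — note $L_\infty/K_\infty$ is finite so $H$ is automatically finite). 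Then let $L$ be the fixed field of a $p$-Sylow subgroup $P$ of $G$ containing $\Gamma$, and let $\mathcal{G}\coloneqq\Gal(L^{(p)}/K)$ (respectively $\mathcal{G}\coloneqq\Gal(L^{(2)}_{\Sigma^c}/K)$ when $p=2$), where $L^{(p)}$ is the maximal Galois $p$-extension of $L$ in $\algc K$. As observed in the excerpt, $L^{(p)}/K$ (resp.\ $L^{(2)}_{\Sigma^c}/K$) is Galois, $\mathcal{G}$ surjects onto $G$, and when $p\neq 2$ and $L_\infty/K$ is unramified over $\Sigma$, $L^{(p)}/K$ is unramified over $\Sigma$ too (the $p=2$ case being handled by passing to $L^{(2)}_{\Sigma^c}$).

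The subtlety is that $L^{(p)}/K$ is an \emph{infinite} pro-$p$ extension over $L$, whereas we need a \emph{finite} extension $L'_\infty/L_\infty$. So the second step is a reduction argument: since $\cSK_1(\Int_p[[-]])$ is defined as an inverse limit over finite quotients, the canonical map $\cSK_1(\Int_p[[\mathcal{G}]])\to\cSK_1(\Int_p[[G]])$ factors through $\cSK_1(\Int_p[[\Gal(M/K)]])$ for every intermediate field $K\subset M\subset L^{(p)}$ with $M/K$ Galois and $\Gal(M/K)$ a quotient of $\mathcal{G}$ that still surjects onto $G$. Because $\HF^2(\mathcal{G},X(\mathcal{G}_r))$ vanishes and $X(\mathcal{G}_r)=X(H_r)$ has finite corank over $\Int_p$, and because continuous cohomology of a profinite group with coefficients in a finite-corank discrete module commutes with the direct limit over the finite sub-$\mathcal{G}$-modules of $X(\mathcal{G}_r)$ and (dually) $\HF_2$ commutes with the inverse limit over finite subquotients, the zero element of $\cSK_1(\Int_p[[\mathcal{G}]])$ witnessing vanishing already "appears" at a finite level: there is a finite normal subgroup-free, i.e.\ finite-index, open normal subgroup giving a finite Galois quotient $\Gal(L'_\infty/K)\twoheadrightarrow G$ through which the image of $\cSK_1(\Int_p[[G']])\to\cSK_1(\Int_p[[G]])$ is already zero. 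I would phrase this using the surjection $\HF_2(\mathcal{G},\Int_p[[\mathcal{G}_r]])\to\cSK_1(\Int_p[[\mathcal{G}]])$ of \cite[Thm.~10.12]{Oliver:WhiteheadGroups} together with its compatibility with the corresponding surjections for finite quotients, so that $\HF_2(\mathcal{G},\Int_p[[\mathcal{G}_r]])=0$ forces the transition maps $\HF_2(G'/\!\!\sim)\to\cdots$ to kill every class for $G'$ large enough; chasing this through $\cSK_1$ gives (iv). Take $L'_\infty$ to be the fixed field in $L^{(p)}$ (resp.\ $L^{(2)}_{\Sigma^c}$) of such an open normal subgroup of $\mathcal{G}$ contained in $\Gal(L^{(p)}/L_\infty)$; then $[L'_\infty:L_\infty]$ is a power of $p$, giving (i), $L'_\infty/K$ is Galois with group $G'$ a finite quotient of $\mathcal{G}$, giving (ii), and $L'_\infty\subset L^{(p)}$ (resp.\ $\subset L^{(2)}_{\Sigma^c}$) is unramified over $\Sigma$, giving (iii).

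For the final assertion, note that if $L_\infty$ is totally real then $K$ is totally real and $\Sigma$ can be taken to be the full set of real (i.e.\ archimedean) places of $K$; then (iii) says $L'_\infty/K$ is unramified over all archimedean places, i.e.\ $L'_\infty$ is totally real. The main obstacle I anticipate is the passage from the vanishing of $\cSK_1$ (or $\HF^2$) for the infinite group $\mathcal{G}$ to the existence of a \emph{single finite} layer $L'_\infty$ that already annihilates the map to $\cSK_1(\Int_p[[G]])$: one must be careful that $\cSK_1(\Int_p[[G]])$ need not be finite in general, so the argument cannot simply say "the inverse limit vanishes hence some term does"; instead it must use that a \emph{fixed} class (the image of any given class from some $G'$) dies at a finite stage, which is exactly what the continuity/limit compatibility of the Oliver surjection provides. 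Everything else — the splitting of $G$, the Galois-ness and ramification behaviour of the towers, and the identification $X(\mathcal{G}_r)=X(H_r)$ — is already in place in the excerpt.
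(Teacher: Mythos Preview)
Your overall strategy matches the paper's: pass to the maximal $p$-extension $L^{(p)}$ (or $L^{(2)}_{\Sigma^c}$ when $p=2$), invoke the vanishing of $\cSK_1(\Int_p[[\mathcal{G}]])$ from Propositions~\ref{prop:Vanishing of SK1} and~\ref{prop:Vanishing of SK1 for 2}, and then descend to a finite layer above $L_\infty$. However, there are two concrete gaps.

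First, you explicitly worry that $\cSK_1(\Int_p[[G]])$ might not be finite and propose a workaround via continuity of Oliver's surjection. But Lemma~\ref{lem:finiteness of SK1} says precisely that $\cSK_1(\Int_p[[G]])$ \emph{is} finite when $G=H\rtimes\Gamma$ with $H$ finite and $\Gamma\isomorph\Int_p$, which is exactly our situation. The paper uses this directly: writing $\cSK_1(\Int_p[[\mathcal{G}]])=\varprojlim_{U}\cSK_1(\Int_p[[\mathcal{G}/(U\cap\mathcal{H})]])$ with each term finite (again by Lemma~\ref{lem:finiteness of SK1}) and mapping to the finite target $\cSK_1(\Int_p[[G]])$, the images form a decreasing filtered system in a finite group and hence stabilise at some $U_0$; the vanishing of the inverse limit then forces this stable image to be zero. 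Your alternative argument (``each fixed class dies at some finite stage'') does not by itself produce a \emph{single} $G'$ that kills the entire map to $\cSK_1(\Int_p[[G]])$ without this finiteness input.

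Second, your description of $L'_\infty$ as the fixed field of ``an open normal subgroup of $\mathcal{G}$ contained in $\Gal(L^{(p)}/L_\infty)$'' cannot work: since $G$ is infinite, $\Gal(L^{(p)}/L_\infty)=\ker(\mathcal{G}\to G)$ has infinite index in $\mathcal{G}$ and so contains no open subgroup of $\mathcal{G}$; moreover the fixed field of an open normal subgroup of $\mathcal{G}$ would be a finite extension of $K$, whereas $L'_\infty$ must contain $K_\infty$. The paper instead takes $L'_\infty$ to be the fixed field of $U_0\cap\mathcal{H}$ with $\mathcal{H}=\ker(\mathcal{G}\to\Gamma)$ and $U_0\in\NS(\mathcal{G})$; then $G'=\mathcal{G}/(U_0\cap\mathcal{H})$ is again a semidirect product of a finite group by $\Gamma$, and $L'_\infty/L_\infty$ is finite of $p$-power degree as required.
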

\begin{proof}
With $L$ as above, set $\mathcal{G}\coloneqq\Gal(L^{(p)}/K)$ if $p\neq 2$ and $\mathcal{G}\coloneqq\Gal(L^{(2)}_{\Sigma^c}/K)$ if $p=2$ and set $\mathcal{H}\coloneqq\ker \mathcal{G}\to\Gal(K_\infty/K)$. According to Lemma~\ref{lem:finiteness of SK1}, $\cSK_1(\Int_p[[G]])$ is finite and so, the image of
\[
 \cSK_1(\Int_p[[\mathcal{G}]])=\varprojlim_{U\in\NS(\mathcal{G})}\cSK_1(\Int_p[[\mathcal{G}/U\cap\mathcal{H}]])\mto\cSK_1(\Int_p[[G]])
\]
will be equal to the image of $\cSK_1(\Int_p[[\mathcal{G}/U_0\cap\mathcal{H}]])$ for some $U_0\in\NS(\mathcal{G})$. We let $L'_\infty$ be the fixed field of $U_0\cap\mathcal{H}$. Then $L'_\infty$ clearly satisfies $(i)$, $(ii)$, and  $(iii)$. Since $\cSK_1(\Int_p[[\mathcal{G}]])=0$ by Prop.~\ref{prop:Vanishing of SK1} and Prop.~\ref{prop:Vanishing of SK1 for 2}, it also satisfies $(iv)$.
\end{proof}

\begin{rem}
The extension $L'_\infty/K$ will be unramified outside a finite set of primes, but we cannot prescribe the ramification locus. However, assume $L_\infty/K$ is unramified outside the set $S$ of places of $K$ and that the Leopoldt conjecture holds for every finite extension $F$ of $K$ inside the maximal $p$-extension $L^{(p)}_S$ which is unramified outside $S$, i.\,e.\ that
\[
 \HF^2(\Gal(L^{(p)}_S/F),\Rat_p/\Int_p)=0.
\]
Then the same method of proof shows that we can additionally chose $L'_\infty$ to lie in $L^{(p)}_S$.
\end{rem}

\section{\texorpdfstring{$\KTh$}{K}-theory of adic rings}\label{sec:Framework}

For the convenience of the reader, we repeat the essentials of the $\KTh$-theoretic framework introduced in \cite{Witte:MCVarFF}.

The formulation of the non-commutative Iwasawa main conjecture involves certain $\KTh$-groups. We are mainly interested in the first $\KTh$-group of a certain class of rings introduced by Fukaya and Kato in \cite{FK:CNCIT}. It consists of those rings $\Lambda$ such that for each $n\geq 1$ the $n$-th power of the Jacobson radical $\Jac(\Lambda)^n$ is of finite index in $\Lambda$ and
$$
\Lambda=\varprojlim_{n\geq 1}\Lambda/\Jac(\Lambda)^n.
$$
By definition, $\Lambda$ carries a natural profinite topology. We will write $\openideals_{\Lambda}$ for the set of open two-sided ideals of $\Lambda$, partially ordered by inclusion. In extension of the definition for commutative rings \cite[Ch.~0, Def.~7.1.9]{EGA1}, these rings should be called \emph{compact adic rings}. We will call these rings \emph{adic rings} for short, as in \cite{Witte:MCVarFF}. We do not intend to insinuate any relation to Huber's more recent concept of adic spaces with this denomination.

Classically, the first $\KTh$-group of $\Lambda$ may be described as the quotient of the group
\[
\Gl_{\infty}(\Lambda)\coloneqq \varinjlim_{d}\Gl_d(\Lambda)
\]
by its commutator subgroup, but for the formulation of the main conjecture, it is more convenient to follow the constructions of higher $\KTh$-theory. Among the many roads to higher $\KTh$-theory, Waldhausen's $S$-construction \cite{Wal:AlgKTheo} turns out to be particularly well-suited for our purposes.

To construct the $\KTh$-groups of $\Lambda$, one can simply apply the $S$-construction to the category of finitely generated, projective modules over $\Lambda$, but the true beauty of Waldhausen's construction is that we can choose among a multitude of different Waldhausen categories that all give rise to the same $\KTh$-groups. Below, we will study a number of different Waldhausen categories whose $\KTh$-theory agrees with that of $\Lambda$.

We recall that for any ring $R$, a complex $\cmplx{M}$ of $R$-modules is called \emph{$DG$-flat} if every module $M^n$ is flat and for every acyclic complex $\cmplx{N}$ of right $R$-modules, the total complex $\cmplx{(N\tensor_R M)}$ is acyclic. In particular, any bounded above complex of flat $R$-modules is $DG$-flat. The notion of $DG$-flatness can be used to define derived tensor products without this boundedness condition. Unbounded complexes will turn up naturally in our constructions. As usual, the complex $\cmplx{M}$ is called \emph{strictly perfect} if $M^n$ is finitely generated and projective for all $n$ and $M^n=0$ for almost all $n$. A complex of $R$-modules is a \emph{perfect} complex if it is quasi-isomorphic to a strictly perfect complex.

\begin{defn}\label{defn:P and SP}
For any ring $R$, we write $\cat{SP}(R)$ for the Waldhausen category of strictly perfect complexes and $\cat{P}(R)$ for the Waldhausen category of perfect complexes of left $R$-modules. In both categories, the weak equivalences are given by quasi-isomorphisms. The cofibrations in $\cat{P}(R)$ are all degree-wise injections, the cofibrations in $\cat{SP}(R)$ are the degree-wise split injections.
\end{defn}

It is a standard fact resulting from the Waldhausen approximation theorem \cite[1.9.1]{ThTr:HAKTS+DC} that the inclusion functor $\cat{SP}(R)\mto\cat{P}(R)$ induces isomorphisms
\[
\KTh_n(\cat{SP}(R))\isomorph\KTh_n(\cat{P}(R))
\]
between the Waldhausen $\KTh$-groups of these categories. Moreover, they agree with the Quillen $\KTh$-groups $\KTh_n(R)$ of $R$ by the Gillet-Waldhausen theorem \cite[Thm. 1.11.2, 1.11.7]{ThTr:HAKTS+DC}.

If $S$ is another ring and $\cmplx{M}$ is a complex of $S$-$R$-bimodules which is strictly perfect as complex of $S$-modules, then the tensor product with $\cmplx{M}$ is a Waldhausen exact functor from $\cat{SP}(R)$ to $\cat{SP}(S)$ and hence, it induces homomorphisms $\KTh_n(R)\mto\KTh_n(S)$. Note, however, that the tensor product with $\cmplx{M}$ does not give a Waldhausen exact functor from $\cat{P}(R)$ to $\cat{P}(S)$, as it does not preserve weak equivalences nor cofibrations. In the context of homological algebra, this problem can be solved by passing to the derived category, but there is no general recipe how to construct the $\KTh$-groups of $R$ on the basis of the derived category alone. As a consequence, in order to view certain homomorphisms between $\KTh$-groups as being induced from a Waldhausen exact functor, one has make a suitable choice of the underlying Waldhausen categories.

If $\Lambda$ is an adic ring, we will mainly work with the following variant taken from \cite{Witte:MCVarFF}. Its main advantage is that it works well with our later definition of adic sheaves in Section~\ref{sec:perfect complexes of adic sheaves} and that it allows a direct construction of most of the relevant Waldhausen exact functors.

\begin{defn}\label{defn:PDG(Lambda)}
Let $\Lambda$ be an adic ring. We denote by
$\cat{PDG}^{\cont}(\Lambda)$ the following Waldhausen category.
The objects of $\cat{PDG}^{\cont}(\Lambda)$ are inverse system
$(\cmplx{P}_I)_{I\in \openideals_{\Lambda}}$ satisfying the
following conditions:
\begin{enumerate}
\item for each $I\in\openideals_{\Lambda}$, $\cmplx{P}_I$ is a
$DG$-flat perfect complex of $\Lambda/I$-modules,

\item for each $I\subset J\in\openideals_{\Lambda}$, the
transition morphism of the system
$$
\varphi_{IJ}:\cmplx{P}_I\mto \cmplx{P}_J
$$
induces an isomorphism of complexes
$$
\Lambda/J\tensor_{\Lambda/I}\cmplx{P}_I\isomorph \cmplx{P}_J.
$$
\end{enumerate}
A morphism of inverse systems $(f_I\colon \cmplx{P}_I\mto
\cmplx{Q}_I)_{I\in\openideals_{\Lambda}}$ in
$\cat{PDG}^{\cont}(\Lambda)$ is a weak equivalence if every $f_I$
is a quasi-isomorphism. It is a cofibration if every $f_I$ is
injective and the system $(\coker f_I)$ is in $\cat{PDG}^{\cont}(\Lambda)$.
\end{defn}

%\comment{To do: Check correct usage of quasi-isomorphism vs weak equivalence.}

\begin{defn}\label{defn:change of ring functor}
Let $\Lambda'$ be another adic ring and $\cmplx{M}$ a complex of $\Lambda'$-$\Lambda$-bimodules which is strictly perfect as complex of $\Lambda'$-modules. We define $\ringtransf_{\cmplx{M}}$ to be the following Waldhausen exact functor
$$
\ringtransf_{\cmplx{M}}\colon
\cat{PDG}^{\cont}(\Lambda)\mto\cat{PDG}^{\cont}(\Lambda'),\qquad \cmplx{P}\mto(\varprojlim_{J\in\openideals_{\Lambda}}
\Lambda'/I\tensor_{\Lambda'}\cmplx{(M\tensor_{\Lambda}P_{J})})_{I\in\openideals_{\Lambda'}}.
$$
\end{defn}

If $\cmplx{P}$ is a strictly perfect complex of $\Lambda$-modules, we may identify it with the system
\[
(\Lambda/I\tensor_{\Lambda}\cmplx{P})_{I\in\openideals_{\Lambda}}
\]
in $\cat{PDG}^{\cont}(\Lambda)$. By \cite[Prop.~3.7]{Witte:MCVarFF}, the corresponding Waldhausen exact functor
\[
\cat{SP}(\Lambda)\mto\cat{PDG}^{\cont}(\Lambda)
\]
induces isomorphisms
\[
\KTh_n(\cat{SP}(\Lambda))\isomorph\KTh_n(\cat{PDG}^{\cont}(\Lambda))
\]
between the $\KTh$-groups of the Waldhausen categories. Hence, $\KTh_n(\cat{PDG}^{\cont}(\Lambda))$ also coincide with the Quillen $\KTh$-groups of the adic ring $\Lambda$ and the homomorphism
\[
\ringtransf_{\cmplx{M}}\colon \KTh_{n}(\Lambda)\mto \KTh_{n}(\Lambda')
\]
induced by the Waldhausen exact functor $\ringtransf_{\cmplx{M}}$ coincides with the homomorphism induced by
\[
\cat{SP}(\Lambda)\mto\cat{SP}(\Lambda'),\qquad \cmplx{P}\mapsto \cmplx{(M\tensor_{\Lambda}P)}.
\]
The essential point in this observation is that $\openideals_{\Lambda}$ is a countable set and that all the transition maps $\varphi_{IJ}$ are surjective such that passing to the projective limit
\[
 \varprojlim_{I\in\openideals_{\Lambda}}\cmplx{P}_I
\]
is a Waldhausen exact functor from $\cat{PDG}^{\cont}(\Lambda)$ to the Waldhausen category $\cat{P}(\Lambda)$ of perfect complexes of $\Lambda$-modules. We write
\[
 \HF^i((\cmplx{P}_I)_{I\in\openideals_{\Lambda}})\coloneqq\HF^i(\varprojlim_{I\in\openideals_{\Lambda}}\cmplx{P}_I)
\]
for its cohomology groups and note that
\[
 \HF^i((\cmplx{P}_I)_{I\in\openideals_{\Lambda}})=\varprojlim_{I\in\openideals_{\Lambda}}\HF^i(\cmplx{P}_I)
\]
\cite[Prop. 5.2.3]{Witte:PhD}.

We will also need to consider localisations of certain adic rings: Note that for any adic $\Int_p$-algebra $\Lambda$ and any profinite group $G$ such that $G$ has an open pro-$p$-subgroup which is topologically finitely generated, the profinite group algebra $\Lambda[[G]]$ is again an adic ring \cite[Prop. 3.2]{Witte:MCVarFF}. Assume further that $G=H\rtimes\Gamma$ is the semi-direct product of a closed normal subgroup $H$ which is itself topologically finitely generated and a subgroup $\Gamma$ which is isomorphic to $\Int_p$. We set
\begin{multline}\label{eqn:Venjakobs Ore set}
S\coloneqq S_{\Lambda[[G]]}\coloneqq \\
\set{f\in \Lambda[[G]]\given \text{$\Lambda[[G]]\xrightarrow{\cdot f}\Lambda[[G]]$ is perfect as complex of $\Lambda[[H]]$-modules}}
\end{multline}
and call it \emph{Venjakob's canonical Ore set.} We may generalise the results of \cite[\S 2]{CFKSV} as follows.

\begin{lem}\label{lem:S-torsion complexes of length two}
Let
\[
 \cmplx{P}\colon P^{-1}\xrightarrow{\del} P^0
\]
be a complex of length $2$ in $\cat{SP}(\Lambda[[G]])$. Then the following are equivalent:
\begin{enumerate}
\item\label{enum:S-torsion complexes of length two:perfectness} $\cmplx{P}$ is perfect as a complex of $\Lambda[[H]]$-modules.
\item\label{enum:S-torsion complexes of length two:isomorphy and f.g.} $P^{-1}$ and $P^0$ are isomorphic as $\Lambda[[G]]$-modules and $\HF^0(\cmplx{P})$ is finitely generated as $\Lambda[[H]]$-module.
\item\label{enum:S-torsion complexes of length two:injectivity and f.g.} $\HF^{-1}(\cmplx{P})=0$ and $\HF^0(\cmplx{P})$ is finitely generated as $\Lambda[[H]]$-module.
\item\label{enum:S-torsion complexes of length two:injectivity and f.g.p.} $\HF^{-1}(\cmplx{P})=0$ and $\HF^0(\cmplx{P})$ is finitely generated and projective as $\Lambda[[H]]$-module.
\end{enumerate}
\end{lem}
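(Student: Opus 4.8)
The plan is to organise everything around the crossed‑product description $\Lambda[[G]]=\Lambda[[H]]\rtimes\Gamma$ attached to the chosen splitting $\Gamma\hookrightarrow G$ and a fixed topological generator $\gamma$ of $\Gamma$, and to reduce at once to the case where $\Lambda$ and $H$ are finite. Using the $\cat{PDG}^{\cont}$‑description of $\cmplx P$ as a compatible inverse system of strictly perfect complexes over the finite rings $\Lambda[[G]]/J=(\Lambda/I)[G/V]$, all four conditions as well as perfectness over $\Lambda[[H]]$ are detected on the finite levels by standard limit arguments, so I may assume $\Lambda[[H]]=\Lambda[H]$ is a finite — in particular artinian, semiperfect, left perfect and Noetherian — ring, and then $\Lambda[[G]]=\Lambda[H]\rtimes\Gamma$ is Noetherian. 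In this situation I would first collect the structural input: $\Lambda[[G]]$ is faithfully flat, hence (being a flat module over the left perfect ring $\Lambda[[H]]$) projective, as a left and as a right $\Lambda[[H]]$‑module; there is a short exact sequence of $\Lambda[[G]]$‑bimodules $0\to\Lambda[[G]]\xrightarrow{\gamma-1}\Lambda[[G]]\to\Lambda[[H]]\to0$, so that $\Lambda[[H]]\Ltensor_{\Lambda[[G]]}(-)$ is computed by $[(-)\xrightarrow{\gamma-1}(-)]$; every element of $S$ is a non‑zero‑divisor, so $\Lambda[[G]]$ and all its projective modules are $S$‑torsion free; a finitely generated $\Lambda[[G]]$‑module is $S$‑torsion precisely when it is finitely generated over $\Lambda[[H]]$; $\Lambda[[G]]$ is semilocal with finite semisimple quotient $\Lambda[[G]]/\Jac\Lambda[[G]]$, so finitely generated projective $\Lambda[[G]]$‑modules are determined by their classes in $\KTh_0(\Lambda[[G]])$; and, for $0\neq M$ finitely generated over $\Lambda[[H]]$, one has $\operatorname{pd}_{\Lambda[[G]]}M=\operatorname{pd}_{\Lambda[[H]]}M+1$, the ``$+1$'' coming from the $(\gamma-1)$‑resolution.

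With this in hand I would prove the equivalences by running a cycle through condition (3). The implication (4)$\Rightarrow$(3) is trivial. For (3)$\Rightarrow$(4), injectivity of $\del$ gives a short exact sequence $0\to P^{-1}\xrightarrow{\del}P^0\to\HF^0(\cmplx P)\to0$ of $\Lambda[[G]]$‑modules with finitely generated projective terms, so $\operatorname{pd}_{\Lambda[[G]]}\HF^0(\cmplx P)\le1$; since $\HF^0(\cmplx P)$ is finitely generated over $\Lambda[[H]]$, the dimension‑shift forces $\operatorname{pd}_{\Lambda[[H]]}\HF^0(\cmplx P)=0$, i.e.\ $\HF^0(\cmplx P)$ is finitely generated projective over $\Lambda[[H]]$. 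For (3)$\Rightarrow$(1), the same exact sequence identifies $\cmplx P$ with $\HF^0(\cmplx P)$ placed in degree $0$ in the derived category, and by (3)$\Rightarrow$(4) this is a strictly perfect complex of $\Lambda[[H]]$‑modules. For (1)$\Rightarrow$(3), a perfect complex over the Noetherian ring $\Lambda[[H]]$ has finitely generated cohomology, so $\HF^{-1}(\cmplx P)$ and $\HF^0(\cmplx P)$ are finitely generated over $\Lambda[[H]]$, hence $S$‑torsion; therefore $\cmplx P\tensor_{\Lambda[[G]]}\Lambda[[G]]_S$ is acyclic, so $\del_S\colon(P^{-1})_S\to(P^0)_S$ is an isomorphism; as $\ker\del\subseteq P^{-1}$ is $S$‑torsion free with $(\ker\del)_S=\ker\del_S=0$, we get $\ker\del=0$, which together with the finite generation of $\HF^0(\cmplx P)$ over $\Lambda[[H]]$ is exactly (3).

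It remains to link condition (2). For (2)$\Rightarrow$(3), identify $P^{-1}$ with $P^0=:Q$ along the given isomorphism; then $\del\in\End_{\Lambda[[G]]}(Q)$ has cokernel $\HF^0(\cmplx P)$, which is finitely generated over $\Lambda[[H]]$ and hence $S$‑torsion, so $\del_S$ is a surjective endomorphism of the finitely generated $\Lambda[[G]]_S$‑module $Q_S$ and therefore injective; as above $\ker\del=0$ follows, giving (3). For (3)$\Rightarrow$(2), use (3)$\Rightarrow$(4) to see that $\HF^0(\cmplx P)$ is finitely generated projective over $\Lambda[[H]]$; then $\Lambda[[G]]\tensor_{\Lambda[[H]]}\HF^0(\cmplx P)$ is a finitely generated projective $\Lambda[[G]]$‑module, and the $(\gamma-1)$‑construction yields a length‑one $\Lambda[[G]]$‑projective resolution of $\HF^0(\cmplx P)$ whose two terms are equal, so $[\HF^0(\cmplx P)]=0$ in $\KTh_0(\Lambda[[G]])$; from $0\to P^{-1}\to P^0\to\HF^0(\cmplx P)\to0$ we get $[P^{-1}]=[P^0]$, whence $P^{-1}\isomorph P^0$ by semilocality. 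Combining the implications gives the asserted equivalence. I expect the two genuine obstacles to be precisely the structural facts that are special to this crossed product — that perfectness over $\Lambda[[H]]$ is detected on the finite levels of the inverse system defining $\cmplx P$ (the $\Gamma$‑direction stays infinite, so this needs care), and the dimension‑shift $\operatorname{pd}_{\Lambda[[G]]}M=\operatorname{pd}_{\Lambda[[H]]}M+1$ for $M$ finitely generated over $\Lambda[[H]]$ — both of which should nonetheless follow from routine manipulations with the sequence $0\to\Lambda[[G]]\xrightarrow{\gamma-1}\Lambda[[G]]\to\Lambda[[H]]\to0$ together with the $\cat{PDG}^{\cont}$‑formalism.
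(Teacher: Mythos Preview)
Your overall strategy is sound and the logical skeleton works, but it differs from the paper's route and leans on two pieces of input that are more delicate than you suggest.

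\textbf{Circularity with the Ore set.} You freely use that $S$ is a left denominator set, that its elements are non-zero-divisors, and that ``finitely generated over $\Lambda[[H]]$'' is the same as ``$S$-torsion''. In the paper all three facts are established \emph{after} Lemma~\ref{lem:S-torsion complexes of length two} (in Lemma~\ref{lem:ore set in the finite case} and Lemma~\ref{lem:ore set is right and left}), and their proofs invoke Lemma~\ref{lem:S-torsion complexes of length two} itself. So as written your argument is circular. This is not fatal: after reducing to finite $\Lambda$ and $H$ you can replace $S$ by $T=\{(\gamma-1)^n\}$, which is visibly a denominator set in the skew power series ring $\Lambda[[H]][[t;\sigma]]$, and every finite module is $T$-torsion while f.g.\ projectives over $\Lambda[[G]]$ are $T$-torsionfree (being free over $\Int_p[[\Gamma']]$ for a central open $\Gamma'\subset\Gamma$). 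With this substitution your (1)$\Rightarrow$(3) and (2)$\Rightarrow$(3) go through.

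\textbf{The dimension-shift formula.} Your (3)$\Rightarrow$(4) rests entirely on $\operatorname{pd}_{\Lambda[[G]]}M=\operatorname{pd}_{\Lambda[[H]]}M+1$ for nonzero $M$ finitely generated over $\Lambda[[H]]$. The easy inequalities give only $\operatorname{pd}_{\Lambda[[H]]}M\le\operatorname{pd}_{\Lambda[[G]]}M\le\operatorname{pd}_{\Lambda[[H]]}M+1$; ruling out equality on the left is exactly the content of (3)$\Rightarrow$(4) and does not fall out of the sequence $0\to\Lambda[[G]]\xrightarrow{\gamma-1}\Lambda[[G]]\to\Lambda[[H]]\to0$ by ``routine manipulations''. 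The classical Rees-type statement applies to modules annihilated by the central regular element, but here $\gamma-1$ acts only nilpotently on $M$. The paper instead argues directly: after reducing to finite $\Lambda$, $H$ and central $\Gamma$, it observes that for finite right $\Lambda[[H]]$-modules $N$ the group $\Tor_1^{\Lambda[[H]]}(N,\HF^0(\cmplx P))=\ker(N\tensor_{\Lambda[[H]]}P^{-1}\to N\tensor_{\Lambda[[H]]}P^0)$ is finite yet sits inside $N\tensor_{\Lambda[[H]]}P^{-1}\isomorph N\tensor_{\Int_p}\Int_p[[t]]$, which has no nonzero finite $\Int_p[[t]]$-submodules; hence $\Tor_1=0$. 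Your dimension-shift formula is true, but proving it boils down to this same computation.

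\textbf{Comparison with the paper.} The paper proceeds in the cycle (4)$\Rightarrow$(1)$\Rightarrow$(2)$\Rightarrow$(3)$\Rightarrow$(4). For (1)$\Rightarrow$(2) it uses the $\KTh_0$-argument you use for (3)$\Rightarrow$(2). For (2)$\Rightarrow$(3) it avoids localisation entirely: after reducing to finite $\Lambda$ and $H$ it views $\del$ as an endomorphism of isomorphic torsion $\Int_p[[\Gamma]]$-modules with finite cokernel, hence a pseudo-isomorphism, hence $\ker\del$ is finite and therefore zero inside the f.g.\ projective $\Lambda[[\Gamma]]$-module $P^{-1}$. For (3)$\Rightarrow$(4) it does the concrete Tor calculation above. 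Finally, the paper does \emph{not} perform a blanket reduction to finite levels at the outset; it reduces only those pieces (injectivity of $\del$, projectivity of $\HF^0$) that genuinely descend, and handles the $\KTh_0$-statement in (1)$\Rightarrow$(2) at the global level. Your claim that all four conditions are detected on finite levels needs more care, particularly for the finite-generation clauses.
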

\begin{proof}
Clearly, \eqref{enum:S-torsion complexes of length two:injectivity and f.g.p.} implies \eqref{enum:S-torsion complexes of length two:perfectness}.

We prove that \eqref{enum:S-torsion complexes of length two:perfectness} implies \eqref{enum:S-torsion complexes of length two:isomorphy and f.g.}. Assume that $\cmplx{P}$ is perfect as complex of $\Lambda[[H]]$-modules. Then the class of $\cmplx{P}$ is trivial in $\KTh_0(\Lambda[[G]])$ by \cite[Cor.~3.3]{Witte:Splitting}. As $\Lambda[[G]]$ is compact and semi-local, $\KTh_0(\Lambda[[G]])$ is the free abelian group over the isomorphism classes of indecomposable, projective $\Lambda[[G]]$-submodules of $\Lambda[[G]]$. Hence, $P^{-1}$ and $P^0$ must be isomorphic. Moreover, as $\cmplx{P}$ is quasi-isomorphic to a strictly perfect complex of $\Lambda[[H]]$-modules, the highest non-vanishing cohomology group of $\cmplx{P}$ is a finitely presented $\Lambda[[H]]$-module.

We prove that \eqref{enum:S-torsion complexes of length two:isomorphy and f.g.} implies \eqref{enum:S-torsion complexes of length two:injectivity and f.g.}. It is sufficient to show that
\[
\HF^{-1}(\Lambda/I[[G/U]]\tensor_{\Lambda[[G]]}\cmplx{P})=0
\]
for every open two-sided ideal $I$ of $\Lambda$ and every open subgroup $U$ of $H$ that is normal in $G$. Hence, we may assume that $\Lambda$ and $H$ are finite. Then $\del$ is a homomorphism of the torsion $\Int_p[[\Gamma]]$-modules $P^{-1}$ and $P^0$. As the two modules are isomorphic over $\Int_p[[\Gamma]]$ and $\coker \del$ is finite, $\del$ must be a pseudo-isomorphism. Hence, $\ker \del$ ist finite, as well. But $P^{-1}$ is finitely generated and projective as $\Lambda[[\Gamma]]$-module and therefore, it has no finite $\Lambda[[\Gamma]]$-submodules. We conclude that $\del$ is injective.

We prove that \eqref{enum:S-torsion complexes of length two:injectivity and f.g.} implies \eqref{enum:S-torsion complexes of length two:injectivity and f.g.p.}. Note that
\[
\begin{aligned}
\Lambda/I[[H/U]]\tensor_{\Lambda[[H]]}\HF^0(\cmplx{P})&\isomorph\Lambda/I[[G/U]]\tensor_{\Lambda[[G]]}\HF^0(\cmplx{P})\\
\isomorph\HF^0(\Lambda/I[[G/U]]\tensor_{\Lambda[[G]]}\cmplx{P})&\isomorph\HF^0(\Lambda/I[[H/U]]\tensor_{\Lambda[[H]]}\cmplx{P})
\end{aligned}
\]
for any $I\in\openideals_{\Lambda}$ and any open subgroup $U\subset H$ which is normal in $G$. We conclude that $\HF^0(\cmplx{P})$ is finitely generated and projective as $\Lambda[[H]]$-module if and only if $\HF^0(\Lambda/I[[H/U]]\tensor_{\Lambda[[H]]}\cmplx{P})$ is finitely generated and projective as $\Lambda/I[[H/U]]$-module for every $I$ and $U$. Hence, one may reduce to the case that $\Lambda$ and $H$ are finite. By replacing $G$ by an appropriate open subgroup of $G$ containing $H$, we may assume that $\Gamma$ is central in $G$, such that we may identify $\Lambda[[G]]$ with the power series ring $\Lambda[[H]][[t]]$ over $\Lambda[[H]]$ in one indeterminate $t$. For any finite  right $\Lambda[[H]]$-module $N$, the $\Int_p[[t]]$-module $N\tensor_{\Lambda[[H]]}P^{-1}$ cannot contain non-trivial finite $\Int_p[[t]]$-submodules. Moreover, $P^{-1}$ and $P^0$ are flat $\Lambda[[H]]$-modules such that $\cmplx{P}$ is a flat resolution of $\HF^0(\cmplx{P})$ as $\Lambda[[H]]$-module. Hence, we have
\[
\Tor_i^{\Lambda[[H]]}(N,\HF^0(\cmplx{P}))=0
\]
for $i>1$ and
\[
 \Tor_1^{\Lambda[[H]]}(N,\HF^0(\cmplx{P}))\subset N\tensor_{\Lambda[[H]]}P^0
\]
is a finite $\Int_p[[t]]$-submodule. Therefore,
\[
 \Tor_1^{\Lambda[[H]]}(N,\HF^0(\cmplx{P}))=0
\]
and $\HF^0(\cmplx{P})$ is finite and projective.
\end{proof}

\begin{lem}\label{lem:ore set in the finite case}
If $\Lambda$ and $H$ are finite and $\gamma\in \Gamma$ is a topological generator of $\Gamma$, then
\[
T\coloneqq\set{(\gamma-1)^n\given n\in\N}
\]
is a left and right denominator set in $\Lambda[[G]]$ consisting of left and right non-zero divisors in the sense of \cite[Ch.~10]{GW:NoncommNoethRings} such that  the left and right localisation
\[
\Lambda[[G]]_T.
\]
exists. Moreover, $S$ is equal to the set of elements of $\Lambda[[G]]$ that become units in $\Lambda[[G]]_T$. In particular, $S$ is also a left and right denominator set and
\[
\Lambda[[G]]_S=\Lambda[[G]]_T.
\]
\end{lem}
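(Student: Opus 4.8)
My plan is to realise $\Lambda[[G]]_T$ as a genuine central localisation and then to recognise $S$ among the resulting units. Since $H$ is finite, the conjugation action $\Gamma\to\Aut(H)$ has finite image, so the open subgroup $\Gamma_0\coloneqq\Gamma^{p^m}$ (for a suitable $m$) centralises $H$; lying in the abelian group $\Gamma$, it is then central in $G$. Set $\gamma_0\coloneqq\gamma^{p^m}$, $s\coloneqq\gamma_0-1$ and $Z\coloneqq\Lambda[[\Gamma_0]]\isomorph\Lambda[[s]]$, a central subring of $R\coloneqq\Lambda[[G]]$. As the subgroups $\Gamma_0^{p^n}$ are cofinal among the open normal subgroups of $G$, the coset decompositions $G=\bigsqcup_i g_i\Gamma_0$, $G=\bigsqcup_{h\in H}h\Gamma$ and $G=\bigsqcup_{h\in H}\Gamma h$ exhibit $R$ as a free module of finite rank over $Z$ and, on either side, over $\Lambda[[\Gamma]]$. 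Right multiplication by $\gamma-1$ respects the decomposition $R=\bigoplus_h h\Lambda[[\Gamma]]$ and $\gamma-1$ is a non-zero divisor in $\Lambda[[\Gamma]]\isomorph\Lambda[[t]]$, so $\gamma-1$ is a right non-zero divisor in $R$; the left version, and the corresponding statements for the central element $s$, follow in the same way. In particular $T$ consists of two-sided non-zero divisors, the central localisation $R[s^{-1}]$ exists, and $R\hookrightarrow R[s^{-1}]$.

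Next I would show that $R[s^{-1}]$ is the localisation of $R$ at $T$. Put $g\coloneqq 1+\gamma+\dots+\gamma^{p^m-1}\in\Lambda[[\Gamma]]$; then $(\gamma-1)g=g(\gamma-1)=\gamma^{p^m}-1=s$, so $\gamma-1$ is a unit in $R[s^{-1}]$ with inverse $gs^{-1}$, hence so is every element of $T$. Since $\binom{p^m}{j}\equiv 0\pmod p$ for $0<j<p^m$, we may write $g=(\gamma-1)^{p^m-1}+pg_1$ with $g_1\in\Lambda[[\Gamma]]$, whence $s=(\gamma-1)^{p^m}u$ with $u=1+p(\gamma-1)^{1-p^m}g_1$. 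Because $p$ is central and nilpotent in the finite $\Int_p$-algebra $\Lambda$, the element $u$ is a unit of the shape $1+(\text{nilpotent})$, and $u^{-1}$ is a $\Lambda[[\Gamma]]$-linear combination of boundedly many negative powers of $\gamma-1$; it follows that every element $as^{-n}$ of $R[s^{-1}]$ can be rewritten as $b(\gamma-1)^{-k}$, and symmetrically as $(\gamma-1)^{-k}b$, with $b\in R$. Combined with $\ker(R\to R[s^{-1}])=0=\{a: a(\gamma-1)^n=0\}=\{a:(\gamma-1)^na=0\}$, the standard recognition criterion for rings of fractions (cf.\ \cite[Ch.~10]{GW:NoncommNoethRings}) then gives that $T$ is a left and right denominator set and that $\Lambda[[G]]_T=R[s^{-1}]$.

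It remains to identify $S$. By Lemma~\ref{lem:S-torsion complexes of length two}, applied to the complex $\Lambda[[G]]\xrightarrow{\cdot f}\Lambda[[G]]$ (for which condition~\eqref{enum:S-torsion complexes of length two:isomorphy and f.g.} holds automatically), one has $f\in S$ iff $\cdot f$ is injective on $R$ and $R/Rf$ is finitely generated over $\Lambda[[H]]$; since $\Lambda[[H]]=\Lambda[H]$ is a finite ring, this just says $\cdot f$ is injective and $R/Rf$ is finite. If $f\in S$, the short exact sequence $0\to R\xrightarrow{\cdot f}R\to R/Rf\to 0$ has finite cokernel, on which $s\in\Jac(Z)$ acts nilpotently (Nakayama over $Z$); hence $(R/Rf)[s^{-1}]=0$, so $\cdot f$ is bijective on $R[s^{-1}]$ and $f$ is a unit in $R[s^{-1}]=\Lambda[[G]]_T$. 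Conversely, if $f$ is a unit in $\Lambda[[G]]_T$ then $f$ is a non-zero divisor in $R$, and writing $f^{-1}=as^{-n}$ yields $af=s^n$, so $Rf\supseteq Rs^n$ (as $Rs^n=R(af)\subseteq Rf$) and $R/Rf$ is a quotient of $R/Rs^n$, which is finite because $R$ is finite free over $Z$ and $Z/s^nZ=\Lambda[[s]]/(s^n)$ is finite; thus $f\in S$. This shows that $S$ is precisely the set of elements of $\Lambda[[G]]$ becoming units in $\Lambda[[G]]_T$. Finally, since $T\subseteq S$ (indeed $\gamma-1\in S$, as $R/R(\gamma-1)$ is finite), a last application of the criterion of \cite[Ch.~10]{GW:NoncommNoethRings} — its three hypotheses for $S$ reducing to those already verified for $T$, using that elements of $S$ are non-zero divisors — shows that $S$ is a left and right denominator set with $\Lambda[[G]]_S=\Lambda[[G]]_T$.

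The only step that is more than bookkeeping is the implication ``$\cdot f$ injective with finite cokernel $\Rightarrow$ $f$ invertible after inverting $s$''. It works because forcing $\Gamma$ to act trivially on $H$ turns $R$ into a finite free module over the commutative, $s$-adically complete ring $Z=\Lambda[[s]]$, so that finiteness over $\Lambda[[H]]$ becomes honest finiteness over a ring having $s$ in its Jacobson radical; the remaining ingredient is the nilpotence of $p$ in $\Lambda$, needed to invert the correction term $u$ in $s=(\gamma-1)^{p^m}u$ and thereby to match the $T$-fractions with the $s$-fractions.
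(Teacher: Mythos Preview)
Your proof is correct, but it takes a different route from the paper's. You pass to a central subgroup $\Gamma_0=\Gamma^{p^m}$, realise the localisation as the honest central localisation $R[s^{-1}]$ at $s=\gamma_0-1$, and then identify $R[s^{-1}]$ with $R_T$ via the relation $s=(\gamma-1)^{p^m}u$ with $u$ a $1+(\text{nilpotent})$ unit. The paper instead argues directly with $t=\gamma-1$: for $f\in S$ the finite left module $R/Rf$, viewed as a $\Int_p[[t]]$-module via left multiplication, is annihilated by some $t^n$, which immediately yields the left Ore relation $t^n a=bf$; the right Ore condition is obtained by running the same argument with $R/fR$. Your detour through the central subring makes the two-sidedness of the localisation automatic and gives a concrete model $R[s^{-1}]$ for $\Lambda[[G]]_T$, at the cost of the $u$-computation; the paper's argument is shorter but has to treat left and right separately. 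Both proofs rest on the same characterisation of $S$ via Lemma~\ref{lem:S-torsion complexes of length two} and on the nilpotence of $\gamma-1$ (resp.\ $\gamma_0-1$) on any finite quotient.

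One minor wording issue: your parenthetical ``for which condition~\eqref{enum:S-torsion complexes of length two:isomorphy and f.g.} holds automatically'' is not quite right---only the isomorphy $P^{-1}\isomorph P^0$ is automatic, not the full condition. What you actually invoke is the equivalence $(1)\Leftrightarrow(3)$ of that lemma, and your conclusion ``$f\in S$ iff $\cdot f$ is injective and $R/Rf$ is finite'' is correct.
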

\begin{proof}
Set $t\coloneqq\gamma-1$. Viewing $\Lambda[[G]]$ as a skew power series ring over $\Lambda[[H]]$ in $t$, it is clear that left and right multiplication with $t^n$ on $\Lambda[[G]]$ is injective with finite cokernel.

According to Lemma~\ref{lem:S-torsion complexes of length two} we have $s\in S$ if and only if $\Lambda[[G]]/\Lambda[[G]]s$ is finite. In particular, we have $T\subset S$. Considering $\Lambda[[G]]/\Lambda[[G]]s$ as a finite $\Int_p[[t]]$-module we see that it is annihilated by a power of $t$. We conclude that there exists an integer $n\geq0 $ such that for any $a\in\Lambda[[G]]$ there exists a $b\in \Lambda[[G]]$ such that
\[
t^na=bs.
\]
Applying this to elements of $T\subset S$, we see that $T$ and $S$ are left denominator set consisting of left and right non-zero divisors such that all elements of $S$ are units in $\Lambda[[G]]_T=\Lambda[[G]]_S$.

Applying the same arguments to $s\in \Lambda[[G]]$ with $\Lambda[[G]]/s\Lambda[[G]]$ finite, we see that $T$ is also a right denominator set.

Assume that $s\in\Lambda[[G]]$ becomes a unit in $\Lambda[[G]]_T$. Then kernel and cokernel of
\[
\Lambda[[G]]\xrightarrow{\cdot s}\Lambda[[G]]
\]
are annihilated by powers of $t$. Considering $\Lambda[[G]]$ as a finitely generated $\Int_p[[t]]$-module annihilated by a power of $p$, we conclude that the cokernel is finite, which implies that $s\in S$. Since $T$ is a right denominator set, the same is then true for
\[
S=\Lambda[[G]]\cap \Lambda[[G]]_T^\times.
\]
\end{proof}

\begin{lem}\label{lem:ore set is right and left}
Assume that $\Lambda[[H]]$ is noetherian. Then:
\begin{enumerate}
\item\label{enum:ore set is right and left:coker f.g. left} \(S=\set{f\in\Lambda[[G]]\given\text{\(\Lambda[[G]]/\Lambda[[G]]f\) is a f.\,g.\ left \(\Lambda[[H]]\)-module}}\).
\item\label{enum:ore set is right and left:coker f.g. right} \(S=\set{f\in\Lambda[[G]]\given\text{\(\Lambda[[G]]/f\Lambda[[G]]\) is a f.\,g.\ right \(\Lambda[[H]]\)-module}}\).
\item\label{enum:ore set is right and left:left and right denominator} $S$ is a left and right denominator set consisting of left and right non-zero divisors.
\item\label{enum:ore set is right and left:S-torsion} A perfect complex of left $\Lambda[[G]]$-modules is perfect as complex of $\Lambda[[H]]$-modules if and only its cohomology groups are $S$-torsion.
\end{enumerate}
\end{lem}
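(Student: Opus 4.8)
The plan is to bootstrap everything from Lemma~\ref{lem:S-torsion complexes of length two} and Lemma~\ref{lem:ore set in the finite case}, using the description of $\Lambda[[G]]$ as a skew formal power series ring $R[[t;\sigma,\delta]]$ over $R\coloneqq\Lambda[[H]]$, where $t=\gamma-1$ for a fixed topological generator $\gamma$ of $\Gamma$, $\sigma$ is conjugation by $\gamma$ and $\delta=\sigma-\id$. Since $R$ is noetherian, so is $\Lambda[[G]]$, and $\Lambda[[G]]$ is flat (in fact topologically free, on $\{t^n\}_{n\geq0}$) as a left and as a right $R$-module. For \eqref{enum:ore set is right and left:coker f.g. left} I would simply apply Lemma~\ref{lem:S-torsion complexes of length two} to the length-two complex $\cmplx{P}\colon\Lambda[[G]]\xrightarrow{\cdot f}\Lambda[[G]]$: here $P^{-1}=P^0=\Lambda[[G]]$, so its condition~\eqref{enum:S-torsion complexes of length two:perfectness} reads ``$f\in S$'' while its condition~\eqref{enum:S-torsion complexes of length two:isomorphy and f.g.} reduces to ``$\Lambda[[G]]/\Lambda[[G]]f$ is finitely generated over $R$'', and the equivalence of the two is exactly \eqref{enum:ore set is right and left:coker f.g. left}; condition~\eqref{enum:S-torsion complexes of length two:injectivity and f.g.} moreover shows $\cdot f$ is injective for $f\in S$.

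For \eqref{enum:ore set is right and left:coker f.g. right} the idea is to descend to the finite layers. For $I\in\openideals_\Lambda$ and $U\leq H$ open and normal in $G$ (such $U$ being cofinal among open subgroups of $H$ since $H$ is topologically finitely generated), $\mathfrak{m}\coloneqq\ker(R\qto\Lambda/I[[H/U]])$ is a $G$-stable open ideal with $\mathfrak{m}\Lambda[[G]]=\Lambda[[G]]\mathfrak{m}=\mathfrak{m}[[t]]$ (using that $\mathfrak{m}$ is finitely generated), so $A[[\bar G]]\coloneqq\Lambda/I[[G/U]]=\Lambda[[G]]/\mathfrak{m}\Lambda[[G]]$ falls under Lemma~\ref{lem:ore set in the finite case}, and $\Lambda[[G]]=\varprojlim_{I,U}A[[\bar G]]$. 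By \eqref{enum:ore set is right and left:coker f.g. left} and topological Nakayama (applied once with some $\mathfrak{m}\subseteq\Jac(R)$), $f\in S$ iff $A[[\bar G]]/A[[\bar G]]\bar f=A[[\bar H]]\tensor_R(\Lambda[[G]]/\Lambda[[G]]f)$ is finite for all $I,U$; by Lemma~\ref{lem:ore set in the finite case} this means $\bar f\in S_{A[[\bar G]]}$, i.e.\ $\bar f\in A[[\bar G]]\cap A[[\bar G]]_T^\times$, a left--right symmetric condition, hence equivalent to $A[[\bar G]]/\bar fA[[\bar G]]$ being finite for all $I,U$; running Nakayama on the other side gives \eqref{enum:ore set is right and left:coker f.g. right}. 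Along the way one sees $f\in S\iff\bar f\in S_{A[[\bar G]]}$ for all $I,U$, so — $\varprojlim$ being left exact and the $\bar f$ being left and right non-zero-divisors — every element of $S$ is a left and right non-zero-divisor of $\Lambda[[G]]$.

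Hence $S$ is a left (resp.\ right) denominator set as soon as it is a left (resp.\ right) Ore set. The key auxiliary statement I would prove is: \emph{if $L\subseteq\Lambda[[G]]$ is a left ideal with $\Lambda[[G]]/L$ finitely generated over $R$, then $L$ contains an element of $S$.} For this, noetherianity of $R$ forces the ascending chain $\sum_{j\leq n}R\,\overline{t^{\,j}}$ in $\Lambda[[G]]/L$ to stabilise, producing $g\coloneqq t^{n+1}-\sum_{j\leq n}a_jt^{\,j}\in L$ with $a_j\in R$; using that $\Lambda[[G]]g$ is a left ideal and $tr=\sigma(r)t+\delta(r)$, one checks by induction that every $\overline{t^{\,m}}$ lies in $\sum_{j\leq n}R\,\overline{t^{\,j}}$ in $\Lambda[[G]]/\Lambda[[G]]g$, and since that $R$-submodule is compact, hence closed, $\Lambda[[G]]/\Lambda[[G]]g$ is finitely generated over $R$, i.e.\ $g\in S$ by \eqref{enum:ore set is right and left:coker f.g. left}. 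Given $s\in S$ and $a\in\Lambda[[G]]$, the module $\Lambda[[G]]/\Lambda[[G]]s$ is finitely generated over $R$ by \eqref{enum:ore set is right and left:coker f.g. left}, hence so is the submodule generated by the class $\bar a$ of $a$; thus $L\coloneqq\Ann_{\Lambda[[G]]}(\bar a)$ has $\Lambda[[G]]/L$ finitely generated over $R$, and picking $s'\in L\cap S$ yields $s'a\in\Lambda[[G]]s$, the left Ore condition; the right one is symmetric via \eqref{enum:ore set is right and left:coker f.g. right}. Finally, for \eqref{enum:ore set is right and left:S-torsion}: if a perfect complex $\cmplx{C}$ of left $\Lambda[[G]]$-modules is also perfect over $R$, its cohomology is finitely generated over $R$, so the key statement (applied to the annihilator of each element) shows the cohomology is $S$-torsion; conversely, $S$-torsion cohomology of a perfect $\Lambda[[G]]$-complex is finitely generated over $\Lambda[[G]]$, hence (as $S$ is a left denominator set) killed by a single $s\in S$, hence a module over $\Lambda[[G]]/\Lambda[[G]]s$, which by \eqref{enum:ore set is right and left:coker f.g. left} is finitely generated over $R$ — so the cohomology is finitely generated over $R$; since $\cmplx{C}$ is perfect over $\Lambda[[G]]$ and $\Lambda[[G]]$ is flat over the noetherian $R$, this forces $\cmplx{C}$ to be perfect over $R$.

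The main obstacle is the auxiliary statement of the third paragraph: the noetherian hypothesis on $\Lambda[[H]]$ is used precisely to exhibit, inside any left ideal with finitely generated quotient, a ``distinguished'' element $t^{\,n}-(\text{lower order in }t)$ lying in $S$, and it is this — rather than mere non-zero-divisibility of the elements of $S$ — that makes $S$ an Ore set. Everything else is bookkeeping with the skew power series ring $R[[t;\sigma,\delta]]$ (its noetherianity and flatness over $R$, the identification $\Lambda[[G]]/\mathfrak{m}\Lambda[[G]]=\Lambda/I[[G/U]]$, closedness of finitely generated $R$-submodules, and the two invocations of topological Nakayama), which I would treat as routine.
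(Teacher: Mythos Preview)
Your proof is correct and takes a genuinely different route from the paper's. For part \eqref{enum:ore set is right and left:coker f.g. left} you both apply Lemma~\ref{lem:S-torsion complexes of length two} in the same way, but thereafter the arguments diverge. The paper does not prove the left Ore condition or part \eqref{enum:ore set is right and left:S-torsion} directly: it cites \cite[Cor.~2.21]{Witte:Splitting} for the former and \cite[Thm.~2.18]{Witte:Splitting} for the latter. For parts \eqref{enum:ore set is right and left:coker f.g. right} and the right half of \eqref{enum:ore set is right and left:left and right denominator}, the paper uses the ring anti-isomorphism $\sharp\colon(\Lambda[[G]])^{\op}\to\Lambda^{\op}[[G]]$, $g\mapsto g^{-1}$, and reduces everything to showing $\sharp(S_{\Lambda[[G]]})=S_{\Lambda^{\op}[[G]]}$; this is checked by passing to the finite quotients $\Lambda/I[[G/H\cap U]]$ and invoking Lemma~\ref{lem:ore set in the finite case} for $T=\{(\gamma-1)^n\}$ and $T'=\{(\gamma^{-1}-1)^n\}$. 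Your approach instead proves the Ore condition from scratch via the auxiliary statement (every left ideal with $R$-finitely-generated quotient contains an element of $S$), which is a clean self-contained replacement for the external citation and makes the role of the noetherian hypothesis transparent. The paper's $\sharp$-trick is slicker for transferring left to right, and is reused later (Proposition~\ref{prop:main abstract duality}); your descent argument for \eqref{enum:ore set is right and left:coker f.g. right} is more elementary but achieves the same end.

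One small wording issue in your part \eqref{enum:ore set is right and left:S-torsion}: ``hence a module over $\Lambda[[G]]/\Lambda[[G]]s$'' is not literally correct, since $\Lambda[[G]]s$ is only a left ideal. What you mean (and what works) is that each cyclic $\Lambda[[G]]$-submodule of the cohomology is a quotient of some $\Lambda[[G]]/\Lambda[[G]]s_i$ with $s_i\in S$, hence finitely generated over $R$ by \eqref{enum:ore set is right and left:coker f.g. left}; summing over a finite generating set gives the conclusion. With that adjustment the argument stands.
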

\begin{proof}
Lemma~\ref{lem:S-torsion complexes of length two} implies that the elements of $S$ are right non-zero divisors and that \eqref{enum:ore set is right and left:coker f.g. left} holds. Under the assumption that $\Lambda[[H]]$ is noetherian, we know by \cite[Cor.~2.21]{Witte:Splitting} that $S$ is a left denominator set. Assertion \eqref{enum:ore set is right and left:S-torsion} follows from \cite[Thm.~2.18]{Witte:Splitting}. Write $(\Lambda[[G]])^\op$ and $\Lambda^\op$ for the opposite rings of $\Lambda[[G]]$ and $\Lambda$, respectively. Consider the ring isomorphism
\[
\sharp\colon (\Lambda[[G]])^{\op}\mto \Lambda^{\op}[[G]]
\]
that maps $g\in G$ to $g^{-1}$. To prove the remaining assertions, it is sufficient to show that $\sharp$ maps $S_{\Lambda[[G]]}\subset(\Lambda[[G]])^\op$ to $S_{\Lambda^\op[[G]]}$.

If $\Lambda$ and $H$ are finite and $\gamma\in \Gamma$ is a topological generator, then $\sharp$ maps $t\coloneqq \gamma-1$ to $t'\coloneqq \gamma^{-1}-1$ and hence, it maps $T\coloneqq\set{t^n\given n\in\N}$ to $T'\coloneqq\set{t'^n\given n\in \N}$. Using Lemma~\ref{lem:ore set in the finite case} for $T$ and $T'$, we conclude that $\sharp(S_{\Lambda[[G]]})=S_{\Lambda^\op[[G]]}$.

In the general case, write
\[
\Lambda^\op[[G]]=\varprojlim_{U,I}\Lambda^{\op}/I[[G/H\cap U]]
\]
where the limit runs over all open two-sided ideals $I$ of $\Lambda$ and all open normal subgroups $U$ of $G$ and note that
$\Lambda^{\op}[[G]]\xrightarrow{\cdot s^\sharp}\Lambda^\op[[G]]$ is perfect over $\Lambda^{\op}[[H]]$ if and only if  $(\Lambda/I)^\op[[G/H\cap U]]\xrightarrow{\cdot s^\sharp }(\Lambda/I)^\op[[G/H\cap U]]$ is perfect over the finite ring $(\Lambda/I)^{\op}[[H/H\cap U]]$ for all $I$ and $U$.
\end{proof}

For general $\Lambda$ and $H$, the set $S$ is no longer a left or right denominator set, as the following example shows.

\begin{exmpl}
Assume that either $\Lambda=\FF_p$ is the finite field with $p$ elements and $H$ is the free pro-$p$ group on two topological generators with trivial action of $\Gamma$ or $\Lambda=\FF_p\langle\langle x,y\rangle\rangle$ is the power series ring in two non-commuting indeterminates $x,y$ and $H$ is trivial. In both cases, $\Lambda[[G]]=\FF_p\langle\langle x,y\rangle\rangle[[t]]$ is the power series ring over $\FF_p\langle\langle x,y\rangle\rangle$ with $t$ commuting with $x$ and $y$ and the set $S$ is the set of those power series $f(x,y,t)$ with $f(0,0,t)\neq 0$. Set $s\coloneqq x-t\in S$. If $S$ were a left denominator set, we could find
\[
a\coloneqq\sum_{i=0}^\infty a_it^i\in\FF_p\langle\langle x,y\rangle\rangle[[t]],\quad b\coloneqq\sum_{i=0}^\infty b_it^i\in S
\]
such that $as=by$, i.\,e.\
\[
a_0x=b_0y,\qquad a_{i}x-a_{i-1}=b_{i}y\quad\text{for $i>0$.}
\]
The only solution for this equation is $a=b=0$, which contradicts the assumption $b\in S$.
\end{exmpl}

Nevertheless, using Waldhausen $\KTh$-theory, we can still give a sensible definition of $\KTh_1(\Lambda[[G]]_S)$ even if $\Lambda[[G]]_S$ does not exist.

\begin{defn}
We write $\cat{SP}^{w_H}(\Lambda[[G]])$ for the full
Waldhausen subcategory of $\cat{SP}(\Lambda[[G]])$ of
strictly perfect complexes of $\Lambda[[G]]$-modules which are perfect as complexes of $\Lambda[[H]]$-modules.

We write $w_H\cat{SP}(\Lambda[[G]])$ for the Waldhausen
category with the same objects, morphisms and cofibrations as
$\cat{SP}(\Lambda[[G]])$, but with a new set of weak
equivalences given by those morphisms whose cones are objects of the category
$\cat{SP}^{w_H}(\Lambda[[G]])$.
\end{defn}

The same construction also works for $\cat{PDG}^{\cont}(\Lambda[[G]])$:

\begin{defn}\label{defn:wHPDG}
We write $\cat{PDG}^{\cont,w_H}(\Lambda[[G]])$ for the full
Waldhausen subcategory of $\cat{PDG}^{\cont}(\Lambda[[G]])$ of
objects $(\cmplx{P}_J)_{J\in\openideals_{\Lambda[[G]]}}$ such that
$$
\varprojlim_{J\in\openideals_{\Lambda[[G]]}} \cmplx{P}_J
$$
is a perfect complex of $\Lambda[[H]]$-modules.

We write $w_H\cat{PDG}^{\cont}(\Lambda[[G]])$ for the Waldhausen
category with the same objects, morphisms and cofibrations as
$\cat{PDG}^{\cont}(\Lambda[[G]])$, but with a new set of weak
equivalences given by those morphisms whose cones are objects of the category
$\cat{PDG}^{\cont,w_H}(\Lambda[[G]])$.
\end{defn}

From the approximation theorem \cite[1.9.1]{ThTr:HAKTS+DC} and \cite[Prop.~3.7]{Witte:MCVarFF} we conclude that
\begin{align*}
\KTh_{n}(\cat{SP}^{w_H}(\Lambda[[G]]))&\isomorph\KTh_{n}(\cat{PDG}^{\cont,w_H}(\Lambda[[G]])),\\
\KTh_{n}(w_H\cat{SP}(\Lambda[[G]]))&\isomorph\KTh_{n}(w_H\cat{PDG}^{\cont}(\Lambda[[G]]))
\end{align*}
We may then set for all $n\geq 0$
\begin{align*}
\KTh_{n}(\Lambda[[G]],S)&\coloneqq\KTh_{n}(\cat{PDG}^{\cont,w_H}(\Lambda[[G]])),\\
\KTh_{n+1}(\Lambda[[G]]_S)&\coloneqq\KTh_{n+1}(w_H\cat{PDG}^{\cont}(\Lambda[[G]])).
\end{align*}
If $\Lambda[[H]]$ is noetherian, these groups agree with their usual definition \cite[\S~4]{Witte:MCVarFF}.

A closely related variant of $\cat{SP}^{w_H}(\Lambda[[G]])$ is the following Waldhausen category.

\begin{defn}\label{defn:SP LambdaH G}
Let $\cat{SP}(\Lambda[[H]],G)$ be the Waldhausen category of complexes of $\Lambda[[G]]$-modules which are strictly perfect as complexes of $\Lambda[[H]]$-modules. Cofibrations are the injective morphisms with cokernel in $\cat{SP}(\Lambda[[H]],G)$; the weak equivalences are given by the quasi-isomorphisms.
\end{defn}

In other words,  $\cat{SP}(\Lambda[[H]],G)$ is the Waldhausen category of bounded complexes over the exact category of $\Lambda[[G]]$-modules which are finitely generated and projective as $\Lambda[[H]]$-modules and hence, the groups $\KTh_n(\cat{SP}(\Lambda[[H]],G))$ agree with the Quillen $\KTh$-groups of this exact category. Unfortunately, we cannot prove in general that $\KTh_n(\cat{SP}(\Lambda[[H]],G))$ agrees with $\KTh_n(\Lambda[[G]],S)$. However, we shall see below that we always have a surjection
\[
 \KTh_0(\cat{SP}(\Lambda[[H]],G))\mto\KTh_0(\Lambda[[G]],S).
\]
This is sufficient for our applications.

\begin{lem}\label{lem:resolution by strictly perfect LamdaH-complexes}
Let $\cmplx{P}$ be a complex of projective compact $\Lambda[[G]]$-modules that is bounded above. Assume that there exists a bounded above complex $\cmplx{K}$ of finitely generated, projective $\Lambda[[H]]$-modules that is quasi-isomorphic to $\cmplx{P}$ as complex of $\Lambda[[H]]$-modules. Then there exists in the category of complexes of $\Lambda[[G]]$-modules an injective endomorphism
\[
\psi\colon \Lambda[[G]]\tensor_{\Lambda[[H]]}\cmplx{K}\mto\Lambda[[G]]\tensor_{\Lambda[[H]]}\cmplx{K}
\]
and a quasi-isomorphism
\[
f\colon \cmplx{P}\mto\coker \psi.
\]
such that $\coker \psi$ is a bounded above complex of $\Lambda[[G]]$-modules which are finitely generated and projective as $\Lambda[[H]]$-modules.

In particular, if $\cmplx{P}$ is perfect as complex of $\Lambda[[H]]$-modules,  then $\cmplx{P}$ is perfect as complex of $\Lambda[[G]]$-modules and $\coker \psi$ is in $\cat{SP}(\Lambda[[H]],G)$.
\end{lem}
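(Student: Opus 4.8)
My plan exploits the semidirect product decomposition $G=H\rtimes\Gamma$ with $\Gamma\isomorph\Int_p$. I fix a topological generator $\gamma$ of $\Gamma$ and set $\sigma\colon\Lambda[[H]]\mto\Lambda[[H]]$, $\sigma(a)=\gamma a\gamma^{-1}$, so that $\Lambda[[G]]$ is the skew power series ring $\Lambda[[H]][[t;\sigma]]$ in the variable $t\coloneqq\gamma-1$; every element of $\Lambda[[G]]\tensor_{\Lambda[[H]]}M$ then has a unique expansion $\sum_{j\geq 0}t^j\tensor m_j$, and one has $t^j\gamma=t^j+t^{j+1}$ and $t^j\gamma^{-1}=\sum_{i\geq 0}(-1)^it^{j+i}$. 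Since $\cmplx K$ is a bounded above complex of projective $\Lambda[[H]]$-modules it is $K$-projective, so, having fixed a quasi-isomorphism $g\colon\cmplx K\wto\cmplx P$, I lift the action $\gamma_\ast$ of $\gamma$ on $\cmplx P$ to a $\sigma$-semilinear chain endomorphism $\alpha\colon\cmplx K\mto\cmplx K$ for which $g\alpha$ and $\gamma_\ast g$ are chain homotopic by a $\sigma$-semilinear homotopy $s$ (the two maps agree on cohomology, so such $s$ exists). Then I define the $\Lambda[[G]]$-linear chain endomorphism $\psi$ of $\Lambda[[G]]\tensor_{\Lambda[[H]]}\cmplx K$ by $\psi(x\tensor k)=x\gamma^{-1}\tensor\alpha(k)-x\tensor k$; this is well defined because $\alpha$ is $\sigma$-semilinear and $\gamma^{-1}\sigma(a)=a\gamma^{-1}$.

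The heart of the argument is the degreewise analysis of $\psi$ via the skew power series expansion. A direct computation shows that $\psi^n(\sum_j t^j\tensor l_j)=0$ is equivalent to $(\alpha-1)(l_0)=0$ together with $l_{m-1}=(\alpha-1)(l_m)$ for all $m\geq 1$, and that, modulo $\im\psi^n$, one has $[t^{j+1}\tensor k]=[t^j\tensor(\alpha-1)(k)]$. To exploit these I use a ``profinite Fitting decomposition'': for the continuous endomorphism $\beta\coloneqq\alpha-1$ of the finitely generated profinite $\Lambda[[H]]$-module $K^n$, applying the ordinary Fitting lemma to the finite quotients $K^n/\Jac(\Lambda[[H]])^rK^n$ and passing to the inverse limit produces a $\beta$-stable (hence $\alpha$-stable) decomposition $K^n=A^n\oplus B^n$ into closed $\Lambda[[H]]$-submodules on which $\beta$ is a topological automorphism, respectively topologically nilpotent; in particular $\ker\beta\cap\bigcap_{m\geq 1}\im(\beta^m)=0$. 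Injectivity of $\psi^n$ follows: the relations give $l_0\in\ker\beta$ and $l_0=\beta^m(l_m)$ for all $m$, whence $l_0=0$, whence $l_1\in\ker\beta\cap\bigcap_m\im(\beta^m)=0$, and inductively all $l_j=0$. For the cokernel, $\psi^n$ respects the splitting $\Lambda[[G]]\tensor_{\Lambda[[H]]}K^n=(\Lambda[[G]]\tensor_{\Lambda[[H]]}A^n)\oplus(\Lambda[[G]]\tensor_{\Lambda[[H]]}B^n)$; on the first summand $\psi^n$ is even an isomorphism (the kernel analysis plus a short computation show its cokernel vanishes), and on the second the displayed relation together with compactness of $\coker\psi^n$ show that $b\mapsto[1\tensor b]$ is an isomorphism $B^n\isomorph\coker\psi^n$. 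Hence each $\coker\psi^n$ is, as a $\Lambda[[H]]$-module, the direct summand $B^n$ of $K^n$, in particular finitely generated and projective over $\Lambda[[H]]$; and $\coker\psi$ is bounded above since $\cmplx K$ is.

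To obtain the quasi-isomorphism $f$, I recall the relative standard resolution: for any $\Lambda[[G]]$-module $Q$ the sequence $0\mto\Lambda[[G]]\tensor_{\Lambda[[H]]}Q\xrightarrow{d}\Lambda[[G]]\tensor_{\Lambda[[H]]}Q\xrightarrow{\mu}Q\mto 0$, with $\mu$ the multiplication and $d(x\tensor q)=x\gamma^{-1}\tensor\gamma q-x\tensor q$, is exact --- $d$ is injective by the same $t$-adic argument (now $t=\gamma-1$ lies in the Jacobson radical, hence acts topologically nilpotently on the profinite module $Q$), and exactness in the middle follows after the change of variables conjugating $d$ to the pure shift $\gamma^{-1}-1$ on $\Lambda[[\Gamma]]\ctensor_\Lambda Q$. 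Applying this degreewise to $\cmplx P$ gives a quasi-isomorphism $\Cone(d)\wto\cmplx P$. Next, $v\coloneqq\id\tensor g$ is a quasi-isomorphism $\Lambda[[G]]\tensor_{\Lambda[[H]]}\cmplx K\wto\Lambda[[G]]\tensor_{\Lambda[[H]]}\cmplx P$ (as $\Lambda[[G]]$ is free over $\Lambda[[H]]$), and one verifies $v\psi-dv=\partial S+S\partial$ for $S(x\tensor k)\coloneqq x\gamma^{-1}\tensor s(k)$, where $\partial$ denotes the respective differentials; so the pair $(v,S)$ induces a quasi-isomorphism $\Cone(\psi)\wto\Cone(d)$. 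Together with the quasi-isomorphism $\Cone(\psi)\wto\coker\psi$ (valid since $\psi$ is injective) this gives a roof of quasi-isomorphisms $\coker\psi\otw\Cone(\psi)\wto\cmplx P$ of complexes of $\Lambda[[G]]$-modules; as $\cmplx P$ is $K$-projective over $\Lambda[[G]]$, the arrow $\Cone(\psi)\wto\cmplx P$ has a $\Lambda[[G]]$-linear homotopy section, and composing it with $\Cone(\psi)\wto\coker\psi$ produces the desired $\Lambda[[G]]$-linear quasi-isomorphism $f\colon\cmplx P\mto\coker\psi$. Finally, if $\cmplx P$ is perfect over $\Lambda[[H]]$ I take $\cmplx K$ strictly perfect; then $\coker\psi$ is bounded and lies in $\cat{SP}(\Lambda[[H]],G)$, while $\Cone(\psi)$ is a bounded complex of finitely generated projective $\Lambda[[G]]$-modules, so $\cmplx P\isomorph\coker\psi\isomorph\Cone(\psi)$ in the derived category exhibits $\cmplx P$ as a perfect complex of $\Lambda[[G]]$-modules.

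The step I expect to be the main obstacle is establishing injectivity of $\psi$ and finite generation of its cokernel over $\Lambda[[H]]$ at the same time: both rest on the profinite Fitting decomposition of the modules $K^n$ with respect to $\alpha-1$, and getting this (and the compatibility of $\alpha$ with it) right is the only genuinely non-formal part; the remaining ingredients --- the skew power series calculus, the relative standard resolution, and the mapping-cone comparison --- are routine.
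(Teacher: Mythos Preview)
Your overall architecture --- lift the $\gamma$-action to a $\sigma$-semilinear endomorphism $\alpha$ of $\cmplx{K}$, define $\psi$ from it, compare $\Cone(\psi)$ with the relative standard resolution of $\cmplx{P}$ --- is the same as the paper's, and your recursion $l_{m-1}=\beta(l_m)$, $\beta(l_0)=0$ for the kernel is correct (it comes from the factorisation $\psi=(\cdot\gamma^{-1})\circ(\id\tensor\alpha-\gamma\tensor\id)$). The construction of $f$ via the mapping-cone comparison is also essentially what the paper does.

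The genuine gap is the ``profinite Fitting decomposition''. Your endomorphism $\beta=\alpha-1$ is \emph{not} $\Lambda[[H]]$-linear: since $\alpha$ is $\sigma$-semilinear one has $\beta(ak)=\sigma(a)\beta(k)+(\sigma(a)-a)k$, a twisted Leibniz rule. Hence $\ker\beta^{N}$ and $\im\beta^{N}$ on the finite quotients $K^n/\Jac(\Lambda[[H]])^rK^n$ are a priori only closed subgroups, not $\Lambda[[H]]$-submodules. Even if one can argue (using that $\sigma-\id$ is topologically nilpotent on $\Lambda[[H]]$) that $B^n=\{k:\beta^N(k)\to 0\}$ is a $\Lambda[[H]]$-submodule, you still need $A^n$ to be one in order for $B^n$ to be a \emph{direct summand} and hence projective. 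Without this, the very splitting $\Lambda[[G]]\tensor_{\Lambda[[H]]}K^n=(\Lambda[[G]]\tensor_{\Lambda[[H]]}A^n)\oplus(\Lambda[[G]]\tensor_{\Lambda[[H]]}B^n)$ is not defined, and the conclusion that $\coker\psi^n$ is finitely generated projective over $\Lambda[[H]]$ does not follow. Your injectivity argument does survive (it only needs the abelian-group Fitting intersection $\ker\beta\cap\bigcap_m\im\beta^m=0$), but that is not enough.

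The paper avoids this difficulty entirely: it first shows directly that $\coker\psi$ is finitely generated over $\Lambda[[H]]$ in each degree (the relation $t^k\tensor v\equiv t^{k-1}\tensor(g(v)-v)\bmod\im\psi$ you also found, together with compactness, forces $\coker\psi^n$ to be generated by the classes $[1\tensor e_i]$ for a generating set $\{e_i\}$ of $K^n$), and then applies Lemma~\ref{lem:S-torsion complexes of length two} to the two-term complex $\Lambda[[G]]\tensor_{\Lambda[[H]]}K^n\xrightarrow{\psi}\Lambda[[G]]\tensor_{\Lambda[[H]]}K^n$ to obtain simultaneously the injectivity of $\psi$ and the projectivity of $\coker\psi$ over $\Lambda[[H]]$. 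That lemma is precisely the structural input (reduction to finite $\Lambda$ and $H$, pseudo-isomorphism arguments over $\Int_p[[\Gamma]]$) that your Fitting approach tries to bypass but cannot.
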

\begin{proof}
Since $\cmplx{K}$ is a bounded above complex of finitely generated projective $\Lambda[[H]]$-modules, there exists a quasi-isomorphism $\alpha \colon \cmplx{K}\mto \cmplx{P}$ of complexes of $\Lambda[[H]]$-modules, which is automatically continuous for the compact topologies on $\cmplx{K}$ and $\cmplx{P}$. Every projective compact $\Lambda[[G]]$-module is also projective in the category of compact $\Lambda[[H]]$-modules. Hence, there exists a weak equivalence $\beta\colon \cmplx{P}\mto\cmplx{K}$ in the category of complexes of compact $\Lambda[[H]]$-modules such that $\alpha\circ\beta$ and $\beta\circ\alpha$ are homotopic to the identity. Fix a topological generator $\gamma\in\Gamma$ and set
\[
 \begin{aligned}
 g&\colon\cmplx{K}\mto \cmplx{K},\qquad x\mapsto \beta(\gamma\alpha(x)),\\
\psi&\colon \Lambda[[G]]\tensor_{\Lambda[[H]]}\cmplx{K}\mto \Lambda[[G]]\tensor_{\Lambda[[H]]}\cmplx{K},\qquad \lambda\tensor x\mapsto \lambda\tensor x-\lambda\gamma^{-1}\tensor g(x).
 \end{aligned}
\]
Then $\psi$ is a $\Lambda[[G]]$-linear complex morphism. Moreover, $\coker \psi$ is finitely generated over $\Lambda[[H]]$ in each degree. Indeed, if we set $t\coloneqq\gamma-1$ and let $(e_1,\dots, e_m)$ denote a generating system of the $\Lambda[[H]]$-module $K^n$ in degree $n$, then $(t^k\tensor e_i)_{k\in\N_0,i=1,\dots,m}$ is a topological generating system of $\Lambda[[G]]\tensor_{\Lambda[[H]]}K^n$ over $\Lambda[[H]]$. But
\[
 t^k\tensor v=t^{k-1}\tensor(g(v)-v)+\psi(\gamma t^{k-1}\tensor v)
\]
for all $v\in K^n$, such that $\coker \psi$ is already generated by the images of $1\tensor e_1,\dots,1\tensor e_m$.

From Lemma~\ref{lem:S-torsion complexes of length two} we conclude that $\psi$ is injective and that $\coker \psi$ is finitely generated and projective over $\Lambda[[H]]$ in each degree. Set $\cmplx{Q}\coloneqq \coker \psi$. Since $\cmplx{P}$ is a bounded above complex of projective compact $\Lambda[[G]]$-modules, there exists a quasi-isomorphism $f$ completing the homotopy-commutative diagram
\[
\xymatrix@C=1.5pc{
0\ar[r]&\Lambda[[G]]\ctensor_{\Lambda[[H]]}\cmplx{P}\ar[rrr]^{\lambda\ctensor x\mapsto \lambda\ctensor x- \lambda\gamma^{-1}\ctensor \gamma x}\ar[d]^{\id\ctensor \beta}_{\sim}&&& \Lambda[[G]]\ctensor_{\Lambda[[H]]}\cmplx{P}\ar[rr]^{\qquad\lambda\ctensor x\mapsto \lambda x}\ar[d]^{\id\ctensor \beta}_{\sim}&& \cmplx{P}\ar@{.>}[d]^f_{\sim}\ar[r] & 0\\
0\ar[r]&\Lambda[[G]]\tensor_{\Lambda[[H]]}\cmplx{K}\ar[rrr]^{\psi}&&& \Lambda[[G]]\tensor_{\Lambda[[H]]}\cmplx{K}\ar[rr]&& \cmplx{Q}\ar[r]& 0
}
\]
in the category of complexes of compact $\Lambda[[G]]$-modules. Here, $\Lambda[[G]]\ctensor_{\Lambda[[H]]}\cmplx{P}$ denotes the completed tensor product. The exactness of the first row follows from \cite[Prop. 2.4]{Witte:Splitting}. If we can choose $\cmplx{K}$ to be a strictly perfect complex of $\Lambda[[H]]$-modules, then $\cmplx{P}$ is also quasi-isomorphic to the cone of $\psi$, which is strictly perfect as complex of $\Lambda[[G]]$-modules. Moreover, $\coker \psi$ is a bounded complex and hence, an object of $\cat{SP}(\Lambda[[H]],G)$.
\end{proof}

\begin{prop}\label{prop:comparison with strictly perfect LamdaH-complexes}
Let $\gamma\in \Gamma$ be a topological generator. The functor
\[
\begin{gathered}
C_\gamma\colon\cat{SP}(\Lambda[[H]],G)\mto \cat{SP}^{w_H}(\Lambda[[G]]),\\
\cmplx{P}\mapsto \Cone(\Lambda[[G]]\tensor_{\Lambda[[H]]}\cmplx{P}\xrightarrow{\lambda\tensor p\mapsto \lambda\tensor p-\lambda\gamma^{-1}\tensor\gamma p}\Lambda[[G]]\tensor_{\Lambda[[H]]}\cmplx{P})
\end{gathered}
\]
is well defined and Waldhausen exact. It induces a surjection
\[
C_\gamma\colon \KTh_0(\cat{SP}(\Lambda[[H]],G))\mto\KTh_0(\cat{SP}^{w_H}(\Lambda[[G]]))
\]
which is independent of the choice of $\gamma$.
\end{prop}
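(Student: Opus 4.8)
The plan is to realise $C_\gamma$ as the composite of the induction functor $\cmplx{P}\mapsto\Lambda[[G]]\tensor_{\Lambda[[H]]}\cmplx{P}$ with the cone of a natural self-transformation, to read off well-definedness and Waldhausen exactness from this description, and then to deduce surjectivity and independence of $\gamma$ from Lemma~\ref{lem:resolution by strictly perfect LamdaH-complexes} together with the fact that $\KTh_0$ of a Waldhausen category is generated by the classes of its objects.

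First I would check that $\cmplx{P}\mapsto\Lambda[[G]]\tensor_{\Lambda[[H]]}\cmplx{P}$ is a Waldhausen exact functor $\cat{SP}(\Lambda[[H]],G)\mto\cat{SP}(\Lambda[[G]])$: induction carries a finitely generated projective $\Lambda[[H]]$-module to a finitely generated projective $\Lambda[[G]]$-module, and an additive functor carries the cofibrations of $\cat{SP}(\Lambda[[H]],G)$ (which are $\Lambda[[H]]$-split degree-wise, since their cokernels are complexes of projective modules) and the quasi-isomorphisms between bounded complexes of finitely generated projective $\Lambda[[H]]$-modules (which are $\Lambda[[H]]$-homotopy equivalences) to cofibrations and quasi-isomorphisms of $\cat{SP}(\Lambda[[G]])$. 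Since $\gamma$ acts $\Lambda[[G]]$-linearly, the maps $\psi_\gamma\colon\lambda\tensor p\mapsto\lambda\tensor p-\lambda\gamma^{-1}\tensor\gamma p$ form a natural transformation of this functor to itself, so $C_\gamma=\Cone(\psi_\gamma)$ is Waldhausen exact with values in $\cat{SP}(\Lambda[[G]])$. To see that it in fact lands in $\cat{SP}^{w_H}(\Lambda[[G]])$, I would invoke the exact sequence of complexes of $\Lambda[[G]]$-modules
\[
0\mto\Lambda[[G]]\tensor_{\Lambda[[H]]}\cmplx{P}\xrightarrow{\psi_\gamma}\Lambda[[G]]\tensor_{\Lambda[[H]]}\cmplx{P}\xrightarrow{\lambda\tensor p\mapsto\lambda p}\cmplx{P}\mto 0
\]
of \cite[Prop.~2.4]{Witte:Splitting} (the completed tensor product used there agreeing with the ordinary one, because the terms of $\cmplx{P}$ are finitely generated over $\Lambda[[H]]$). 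It shows that $\psi_\gamma$ is a degree-wise injection with cokernel $\cmplx{P}$, so that $C_\gamma(\cmplx{P})=\Cone(\psi_\gamma)$ is quasi-isomorphic to $\cmplx{P}$ as a complex of $\Lambda[[G]]$-modules; in particular it is perfect as a complex of $\Lambda[[H]]$-modules, while being strictly perfect over $\Lambda[[G]]$ as the cone of a morphism of strictly perfect $\Lambda[[G]]$-complexes. (The injectivity of $\psi_\gamma$ and the description of its cokernel are of course also contained in Lemma~\ref{lem:S-torsion complexes of length two} and the proof of Lemma~\ref{lem:resolution by strictly perfect LamdaH-complexes}.)

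For surjectivity the key point is that quasi-isomorphic objects of $\cat{SP}^{w_H}(\Lambda[[G]])$ carry the same class in $\KTh_0(\cat{SP}^{w_H}(\Lambda[[G]]))$: as these objects are bounded complexes of projective $\Lambda[[G]]$-modules, any isomorphism between two of them in the derived category $D(\Lambda[[G]])$ is represented by an honest chain morphism, which is then a quasi-isomorphism and hence a weak equivalence in $\cat{SP}^{w_H}(\Lambda[[G]])$ (alternatively one may appeal to the identification of $\KTh_0$ with the Grothendieck group of the triangulated homotopy category). Now, given $\cmplx{M}\in\cat{SP}^{w_H}(\Lambda[[G]])$, Lemma~\ref{lem:resolution by strictly perfect LamdaH-complexes} furnishes a complex $\cmplx{Q}\in\cat{SP}(\Lambda[[H]],G)$ together with a quasi-isomorphism $\cmplx{M}\wto\cmplx{Q}$ of complexes of $\Lambda[[G]]$-modules; by the previous step $C_\gamma(\cmplx{Q})$ is quasi-isomorphic to $\cmplx{Q}$, hence to $\cmplx{M}$, and therefore $C_\gamma([\cmplx{Q}])=[C_\gamma(\cmplx{Q})]=[\cmplx{M}]$ in $\KTh_0(\cat{SP}^{w_H}(\Lambda[[G]]))$. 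Since that group is generated by the classes of its objects, $C_\gamma$ is surjective.

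For independence of $\gamma$, let $\gamma'$ be another topological generator of $\Gamma$. For every $\cmplx{P}\in\cat{SP}(\Lambda[[H]],G)$ both $C_\gamma(\cmplx{P})$ and $C_{\gamma'}(\cmplx{P})$ are quasi-isomorphic to $\cmplx{P}$ by the first step, hence to each other, so $[C_\gamma(\cmplx{P})]=[C_{\gamma'}(\cmplx{P})]$ in $\KTh_0(\cat{SP}^{w_H}(\Lambda[[G]]))$; since $\KTh_0(\cat{SP}(\Lambda[[H]],G))$ is generated by the classes of its objects, the induced homomorphisms on $\KTh_0$ coincide. I expect the main obstacle to be not a single hard estimate but the careful bookkeeping around the two distinct notions of weak equivalence involved: one must be sure that ``quasi-isomorphic in $D(\Lambda[[G]])$'' upgrades to ``equal class in $\KTh_0(\cat{SP}^{w_H}(\Lambda[[G]]))$'', and that $C_\gamma$ genuinely preserves the subcategory $\cat{SP}^{w_H}(\Lambda[[G]])$, i.e.\ the perfectness over $\Lambda[[H]]$ — both of which reduce to the exact sequence above.
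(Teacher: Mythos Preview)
Your argument is correct and follows the same strategy as the paper: both use the exact sequence from \cite[Prop.~2.4]{Witte:Splitting} to see that $C_\gamma(\cmplx{P})\sim\cmplx{P}$ and then invoke Lemma~\ref{lem:resolution by strictly perfect LamdaH-complexes} for surjectivity. The only packaging difference is that the paper introduces the intermediate Waldhausen category $\cat{P}^{w_H}(\Lambda[[G]])$ and the approximation theorem, so that the natural weak equivalence $C_\gamma(\cmplx{P})\wto\cmplx{P}$ lives inside a single Waldhausen category and identifies $\iota\comp C_\gamma$ with the inclusion $\iota'\colon\cat{SP}(\Lambda[[H]],G)\mto\cat{P}^{w_H}(\Lambda[[G]])$ on $\KTh$-groups; independence of $\gamma$ then falls out automatically, whereas you verify it by a separate (but equally valid) ``both are quasi-isomorphic to $\cmplx{P}$'' argument.
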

\begin{proof}
From \cite[Prop.~2.4]{Witte:Splitting} we conclude that
\[
0\mto \Lambda[[G]]\tensor_{\Lambda[[H]]}\cmplx{P}\xrightarrow{\id-(\cdot\gamma^{-1}\tensor \gamma\cdot)}\Lambda[[G]]\tensor_{\Lambda[[H]]}\cmplx{P}\xrightarrow{\lambda\tensor p\mapsto \lambda p}\cmplx{P}\mto 0
\]
is an exact sequence of complexes of $\Lambda[[G]]$-modules for any $\cmplx{P}$ in $\cat{SP}(\Lambda[[H]],G)$. In particular, the strictly perfect complex of $\Lambda[[G]]$-modules $C_\gamma(\cmplx{P})$ is quasi-isomorphic to $\cmplx{P}$ in the category of complexes of $\Lambda[[G]]$-modules and therefore perfect as complex of $\Lambda[[H]]$-modules. Thus, $C_\gamma(\cmplx{P})$ is an object of $\cat{SP}^{w_H}(\Lambda[[G]])$. The Waldhausen exactness of the functor $C_\gamma$ follows easily from the Waldhausen exactness of the cone construction.

Consider the Waldhausen category $\cat{P}^{w_H}(\Lambda[[G]])$ of perfect complexes of $\Lambda[[G]]$-modules which are also perfect as complexes of $\Lambda[[H]]$-modules. The approximation theorem \cite[1.9.1]{ThTr:HAKTS+DC} implies that the inclusion
\[
\iota\colon \cat{SP}^{w_H}(\Lambda[[G]])\mto \cat{P}^{w_H}(\Lambda[[G]])
\]
induces isomorphisms
\[
 \KTh_n(\cat{SP}^{w_H}(\Lambda[[G]]))\isomorph\KTh_n(\cat{P}^{w_H}(\Lambda[[G]]))
\]
for all $n$. The functorial quasi-isomorphism $C_\gamma(\cmplx{P})\wto \cmplx{P}$ in $\cat{P}^{w_H}(\Lambda[[G]])$ implies that the homomorphism of $\KTh$-groups induced by $\iota\comp C_\gamma$ agrees with the homomorphism induced by the inclusion $\iota'\colon \cat{SP}(\Lambda[[H]],G)\mto \cat{P}^{w_H}(\Lambda[[G]])$. Since $\KTh_0(\cat{P}^{w_H}(\Lambda[[G]]))$ is generated by the quasi-isomorphism classes of complexes in $\cat{P}^{w_H}(\Lambda[[G]])$, we deduce from Lemma~\ref{lem:resolution by strictly perfect LamdaH-complexes} that $\iota'$ induces a surjection
\[
\KTh_0(\cat{SP}(\Lambda[[H]],G))\mto\KTh_0(\cat{P}^{w_H}(\Lambda[[G]])).
\]
\end{proof}

In the light of Proposition~\ref{prop:comparison with strictly perfect LamdaH-complexes}, we will write
\[
 [\cmplx{P}]\coloneqq[C_\gamma(\cmplx{P})]\in\KTh_0(\Lambda[[G]],S)
\]
for any $\cmplx{P}$ in $\cat{SP}(\Lambda[[H]],G)$.

\begin{rem}
In order to deduce from the approximation theorem (applied to the opposite categories) that $C_\gamma$ induces isomorphisms
\[
\KTh_n(\cat{SP}(\Lambda[[H]],G))\isomorph\KTh_n(\cat{SP}^{w_H}(\Lambda[[G]]))
\]
for all $n$, it would suffice to verify that for every complex $\cmplx{P}$ in $\cat{SP}(\Lambda[[H]],G)$ and every morphism $f\colon \cmplx{K}\mto \cmplx{P}$ in $\cat{P}^{w_H}(\Lambda[[G]])$, there exists a morphism $f'\colon \cmplx{Q}\mto \cmplx{P}$ in $\cat{SP}(\Lambda[[H]],G)$ and a quasi-isomorphism $w\colon \cmplx{K}\wto\cmplx{Q}$ in $\cat{P}^{w_H}(\Lambda[[G]])$ such that $f=f'\comp w$.
\end{rem}

\comment{To do: Prove that the surjection is an isomorphism.}

Thanks to a result of Muro and Tonks \cite{MT:OnK1WaldCat}, the groups $\KTh_0(\cat{W})$ and $\KTh_1(\cat{W})$ of any Waldhausen category $\cat{W}$ can be described as the cokernel and kernel of a homomorphism
\begin{equation}\label{eqn:1-type}
\del\colon\D_1(\cat{W})\mto\D_0(\cat{W})
\end{equation}
between two nil-$2$-groups (i.\,e.\ $\com{a}{\com{b}{c}}=1$ for any three group elements $a,b,c$) that are given by explicit generators and relations in terms of the structure of the underlying Waldhausen category. As additional structure, there exists a pairing
\[
\D_0(\cat{W})\times\D_0(\cat{W})\mto\D_1(\cat{W}),\qquad (A,B)\mapsto\pair{A}{B}
\]
satisfying
\begin{align*}
\del\pair{A}{B}&=B^{-1}A^{-1}BA,\\
\pair{\del a}{\del b}&=b^{-1}a^{-1}ba,\\
\pair{A}{B}\pair{B}{A}&=1,\\
\pair{A}{BC}&=\pair{A}{B}\pair{A}{C}.
\end{align*}
In other words, $\D_\bullet(\cat{W})$ is a stable quadratic module in the sense of \cite{Bau:CombHom}.
In particular, $X\in\D_0(\cat{W})$ operates from the right on $a\in\D_1(\cat{W})$ via
\[
a^X\coloneqq a\pair{X}{\del a}.
\]
More explicitly, $\D_0(\cat{W})$ is the free nil-$2$-group generated by the objects of $\cat{W}$ different from the zero object, while $\D_1(\cat{W})$ is generated by all weak equivalences and exact sequences in $\cat{W}$ subject to the following list of relations:
\begin{itemize}
    \item[(R1)] $\del[\alpha]=[B]^{-1}[A]$ for a weak equivalence $\alpha\colon A\wto
    B$,
    \item[(R2)] $\del[\Delta]=[B]^{-1}[C][A]$ for an exact sequence $\Delta\colon A\cto B\qto
    C$.
    \item[(R3)] $\pair{[A]}{[B]}=[B\cto
    A\oplus B \qto A]^{-1}[A\cto A\oplus B\qto B]$ for any pair of
    objects $A, B$.
    \item[(R4)] $[0\cto 0\qto 0]=1_{\D_1}$,
    \item[(R5)] $[\beta\alpha]=[\beta][\alpha]$ for weak equivalences $\alpha\colon A\wto
    B$, $\beta\colon B\wto C$,
    \item[(R6)] $[\Delta'][\alpha][\gamma]^{[A]}=[\beta][\Delta]$ for any commutative diagram
    $$
    \xymatrix{
    A\lbl{\Delta\colon}\ar@{ >->}[r]\ar[d]^{\sim}_{\alpha}&
    B\ar@{>>}[r]\ar[d]^{\sim}_{\beta}
    &C\ar[d]^{\sim}_{\gamma}\\
    A'\lbl{\Delta'\colon}\ar@{ >->}[r]&B'\ar@{>>}[r]&C'}
    $$
    \item[(R7)] $[\Gamma_1][\Delta_1]=[\Delta_2][\Gamma_2]^{[A]}$ for any
    commutative diagram
    $$
    \xymatrix{A\lbl{\Delta_1\colon}\ar@{ >->}[r]\ar@{=}[d]
    &B\lbu{\Gamma_1\colon}\ar@{>>}[r]\ar@{ >->}[d]
    &C\lbu{\Gamma_2\colon}\ar@{ >->}[d]\\
    A\lbl{\Delta_2\colon}\ar@{ >->}[r]\ar@{>>}[d]&D\ar@{>>}[r]\ar@{>>}[d]&E\ar@{>>}[d]\\
    {0}\ar@{ >->}[r]&F\ar@{=}[r]&F}
    $$
\end{itemize}
\cite[Def.~1.2]{MT:1TWKTS}, \cite[Def.~A.4]{Witte:MCVarFF}.

In particular, $\KTh_{0}(\Lambda[[G]],S)$ is the abelian group generated by the symbols $[\cmplx{P}]$ with $\cmplx{P}$ in $\cat{PDG}^{\cont,w_H}(\Lambda[[G]])$ modulo the relations
\begin{align*}
[\cmplx{P}]&=[\cmplx{Q}]&&\text{if $\cmplx{P}$ and $\cmplx{Q}$ are weakly equivalent,}\\
[\cmplx{P}_2]&=[\cmplx{P}_1]+[\cmplx{P}_3]&&\text{if $0\mto\cmplx{P}_1\mto\cmplx{P}_2\mto\cmplx{P}_3\mto 0$ is an exact sequence.}
\end{align*}
If $f\colon \cmplx{P}\mto \cmplx{P}$ is an endomorphism which is a weak equivalence in $\cat{PDG}^{\cont}(\Lambda[[G]])$, we can assign to it a class $[f\lcirclearrowright \cmplx{P}]$ in $\KTh_1(\Lambda[[G]])$. By the classical definition of the first $\KTh$-group as factor group of the general linear group it is clear that these classes generate $\KTh_1(\Lambda[[G]])$. It then follows from the splitting of the $\KTh$-theory localisation sequence \cite[Cor.~3.3]{Witte:Splitting} that the classes $[f\lcirclearrowright\cmplx{P}]$ of endomorphisms $f\colon \cmplx{P}\mto \cmplx{P}$ which are weak equivalences in $w_H\cat{PDG}^{\cont}(\Lambda[[G]])$ generate $\KTh_1(\Lambda[[G]]_S)$. The relations that are satisfied by these generators can be read off from the above relations for $\D_1\cat{PDG}^{\cont,w_H}(\Lambda[[G]])$.

\begin{rem}
Some authors prefer the theory of determinant functors and Deligne's category of virtual objects \cite{Del:DC} as an alternative model for the $1$-type of the $\KTh$-theory spectrum. We refer to \cite{MTW:DeterminantFunctors} for the precise connection of the two approaches.
\end{rem}

Let $\Lambda$ and $\Lambda'$ be two adic $\Int_{p}$-algebras and $G=H\rtimes \Gamma$, $G=H'\rtimes \Gamma'$ be profinite groups, such that $H$ and $H'$ contain open, topologically finitely generated pro-$p$ subgroups and $\Gamma\isomorph\Int_p\isomorph \Gamma'$. Suppose that $\cmplx{K}$ is a complex of $\Lambda'[[G']]$-$\Lambda[[G]]$-bimodules, strictly perfect as complex of $\Lambda'[[G']]$-modules and assume that there exists a complex $\cmplx{L}$ of $\Lambda'[[H']]$-$\Lambda[[H]]$-bimodules, strictly perfect as complex of $\Lambda'[[H']]$-modules, and a quasi-isomorphism of complexes of $\Lambda'[[H']]$-$\Lambda[[G]]$-bimodules
$$
\cmplx{L}\ctensor_{\Lambda[[H]]}\Lambda[[G]]\wto \cmplx{K}.
$$
Here,
$$
\cmplx{L}\ctensor_{\Lambda[[H]]}\Lambda[[G]]\coloneqq\varprojlim_{I\in\openideals_{\Lambda'[[G']]}}\varprojlim_{J\in\openideals_{\Lambda[[G]]}}\cmplx{L/IL}\tensor_{\Lambda[[H]]}\Lambda[[G]]/J
$$
denotes the \emph{completed tensor product}.

In the above situation, the Waldhausen exact functor
\begin{equation}\label{eqn:ringtransf}
\ringtransf_{\cmplx{K}}\colon \cat{PDG}^{\cont}(\Lambda[[G]])\mto \cat{PDG}^{\cont}(\Lambda'[[G']])
\end{equation}
takes objects of the category $\cat{PDG}^{\cont,w_H}(\Lambda[[G]])$ to objects of the category $\cat{PDG}^{\cont,w_{H'}}(\Lambda'[[G']])$ and weak equivalences of $w_H\cat{PDG}^{\cont}(\Lambda[[G]])$ to weak equivalences of $w_{H'}\cat{PDG}^{\cont}(\Lambda'[[G']])$ \cite[Prop.~4.6]{Witte:MCVarFF}. Hence, it also induces homomorphisms between the corresponding $\KTh$-groups. In particular, this applies to the following examples:

\begin{exmpl}\label{exmpl:example functors} \cite[Prop.~4.7]{Witte:MCVarFF}
\begin{enumerate}
    \item Assume $G=G'$, $H=H'$. For any complex $\cmplx{P}$ of $\Lambda'$-$\Lambda[[G]]$-bimodules, strictly perfect as complex of $\Lambda'$-modules, let $\cmplx{K}$ be the complex
    \[
    \cmplx{P[[G]]^\delta}\coloneqq\Lambda'[[G]]\tensor_{\Lambda'}\cmplx{P}
    \]
    of $\Lambda'[[G]]$-$\Lambda[[G]]$-bimodules with the right $G$-operation given by the diagonal action on both factors. This applies in particular for any complex $\cmplx{P}$ of $\Lambda'$-$\Lambda$-bimodules, strictly perfect as complex of $\Lambda'$-modules and equipped with the trivial $G$-operation.
    \item Assume $\Lambda=\Lambda'$. Let $\alpha\colon G\mto G'$ be a continuous homomorphism such that $\alpha$ maps $H$ to $H'$ and induces a bijection of $G/H$ and $G'/H'$. Let $\cmplx{K}$ be the $\Lambda[[G']]$-$\Lambda[[G]]$-bimodule $\Lambda[[G']]$.
    \item Assume that $G'$ is an open subgroup of $G$ and set $H'\coloneqq H\cap G'$. Let $\Lambda=\Lambda'$ and let $\cmplx{K}$ be the complex concentrated in degree $0$ given by the $\Lambda[[G']]$-$\Lambda[[G]]$-bimodule $\Lambda[[G]]$.
\end{enumerate}
\end{exmpl}

\begin{exmpl}\label{exmpl:group homomorphisms}
The assumptions in Example \ref{exmpl:example functors}.(2) are in fact stronger than necessary. We may combine it with the following result. Assume that $G$ is an open subgroup of $G'$ such that $H\coloneqq H'\cap G=H'$ and $\Gamma=(\Gamma')^{p^n}$. Let $\Lambda=\Lambda'$ and let $\cmplx{K}$ be the $\Lambda[[G']]$-$\Lambda[[G]]$-bimodule $\Lambda[[G']]$. Fix a topological generator $\gamma'\in\Gamma'$ and let $\cmplx{L}$ be the $\Lambda[[H]]$-$\Lambda[[H]]$-sub-bimodule of $\Lambda[[G']]$ generated as left $\Lambda[[H]]$-module by $1,\gamma',(\gamma')^2,\dots,(\gamma')^{p^n-1}$. Then $\cmplx{L}$ is a strictly perfect complex of $\Lambda[[H]]$-modules concentrated in degree $0$ and the canonical map
$$
\cmplx{L}\ctensor_{\Lambda[[H]]}\Lambda[[G]]\wto \cmplx{K},\qquad \ell\ctensor \lambda\mapsto \ell\lambda
$$
is an isomorphism of $\Lambda'[[H']]$-$\Lambda[[G]]$-bimodules such that \cite[Prop.~4.6]{Witte:MCVarFF} applies. In combination with Example \ref{exmpl:example functors}.(2) this implies that any continuous group homomorphism $\alpha\colon G\mto G'$ such that $\alpha(G)\not\subset H'$ induces Waldhausen exact functors between all three variants of the above Waldhausen categories.
\end{exmpl}

\begin{exmpl}\label{exmpl:def of eval}
As a special case of Example~\ref{exmpl:example functors}.(1), assume that $\Lambda=\Int_p$ and that $\rho$ is some continuous representation of $G$ on a finitely generated and projective $\Lambda'$-module. Let $\rho^\sharp$ be the $\Lambda'$-$\Int_p[[G]]$-bimodule which agrees with $\rho$ as $\Lambda'$-module, but on which $g\in G$ acts from the right by the left operation of $g^{-1}$ on $\rho$. We thus obtain Waldhausen exact functors
\begin{equation}\label{eqn:def of eval}
\eval_\rho\coloneqq\ringtransf_{\Lambda'[[\Gamma]]}\circ\ringtransf_{\rho^\sharp[[G]]^\delta}
\end{equation}
from all three variants of the Waldhausen category $\cat{PDG}^{\cont}(\Int_p[[G]])$ to the corresponding variant of $\cat{PDG}^{\cont}(\Lambda'[[\Gamma]])$. If $\Lambda'$ is a commutative adic $\Int_p$-algebra, then the image of
\[
\left[\Int_p[[G]]\xrightarrow{\cdot g}\Int_p[[G]]\right]\in \KTh_1(\Int_p[[G]]),\quad g\in G,
\]
under the composition of $\eval_\rho$ with
\[
\det\colon \KTh_1(\Lambda'[[\Gamma]])\xrightarrow{\isomorph}\Lambda'[[\Gamma]]^\times
\]
is $\bar{g}\det(\rho (g))^{-1}$, where $\bar{g}$ denotes the image of $g$ under the projection $G\rightarrow \Gamma$. Note that this differs from \cite[(22)]{CFKSV} by a sign. So, our evaluation at $\rho$ corresponds to the evaluation at the representation dual to $\rho$ in terms of the cited article.
\end{exmpl}

\section{A property of \texorpdfstring{$S$}{S}-torsion complexes}\label{sec:S-torsion exchange property}

In this section, we prove Proposition~\ref{prop:S-torsion exchange property}, which is an abstract generalisation of \cite[Prop. 2.1]{Witte:Survey}. We will apply this proposition later in Section~\ref{sec:S-torsion}.

With the notation of the previous section, fix a topological generator $\gamma\in\Gamma$ and set $t\coloneqq \gamma-1$. Assume for the moment that $\Lambda$ is a finite $\Int_p$-algebra and that $H$ is a finite group. By Lemma~\ref{lem:ore set in the finite case}, we have
\[
\Lambda[[G]]_S=\varinjlim_{n\geq 0}\Lambda[[G]]t^{-n}
\]
as $\Lambda[[G]]$-modules.

Assume that $p^{i+1}=0$ in $\Lambda$. Then
\[
\binom{p^{n+i}}{k}=0
\]
in $\Lambda$ whenever $p^n\nmid k$. Hence,
\begin{align*}
\gamma^{p^{n+i}}-1&=(t+1)^{p^{n+i}}-1=t^{p^n}\sum_{k=1}^{p^i}\binom{p^{n+i}}{kp^{n}}t^{p^n(k-1)},\\
t^{p^{n+i}}&=(\gamma-1)^{p^{n+i}}-(1-1)^{p^{n+i}}=\sum_{k=1}^{p^i}\binom{p^{n+i}}{kp^{n}}(\gamma^{kp^{n}}-1)(-1)^{p^{n}(p^{i}-k)}\\
           &=(\gamma^{p^n}-1)\sum_{k=1}^{p^i}\binom{p^{n+i}}{kp^{n}}(-1)^{p^{n}(p^{i}-k)}\sum_{\ell=0}^{k-1}\gamma^{\ell p^{n}}
\end{align*}
and therefore,
\[
\Lambda[[G]]_S=\varinjlim_{n\geq 0}\Lambda[[G]](\gamma^{p^n}-1)^{-1}.
\]

Since $H$ was assumed to be finite, the same is true for the automorphism group of $H$. We conclude that $\gamma^{p^n}$ is a central element of $G$ and $\Gamma^{p^n}\subset G$ a central subgroup for all $n\geq n_0$ and $n_0$ large enough. Set
\[
N_n\coloneqq \sum_{k=0}^{p-1}\gamma^{p^nk}.
\]
The homomorphism
\[
\Lambda[[G]](\gamma^{p^n}-1)^{-1}\mto \Lambda[[G/\Gamma^{p^n}]],\qquad \lambda(\gamma^{p^n}-1)^{-1}\mapsto \lambda+\Lambda[[G]](\gamma^{p^n}-1)
\]
induces an isomorphism $\Lambda[[G]](\gamma^{p^n}-1)^{-1}/\Lambda[[G]]\isomorph \Lambda[[G/\Gamma^{p^n}]]$ such that the diagram
\[
\xymatrix{
\Lambda[[G]](\gamma^{p^n}-1)^{-1}/\Lambda[[G]]\ar[r]^{\subset}\ar[d]^{\isomorph}& \Lambda[[G]](\gamma^{p^{n+1}}-1)^{-1}/\Lambda[[G]]\ar[d]^{\isomorph}\\
\Lambda[[G/\Gamma^{p^n}]]\ar[r]^{\cdot N_n}& \Lambda[[G/\Gamma^{p^{n+1}}]]
}
\]
commutes. Hence, we obtain an isomorphism of (left and right) $\Lambda[[G]]$-modules
\[
\Lambda[[G]]_S/\Lambda[[G]]\isomorph \varinjlim_{n}\Lambda[[G/\Gamma^{p^n}]].
\]
We note that this isomorphism may depend on the choice of the topological generator $\gamma$.

For any strictly perfect complex $\cmplx{P}$ of $\Lambda[[G]]$-modules, we thus obtain an exact sequence
\[
0\mto \cmplx{P}\mto \Lambda[[G]]_S\tensor_{\Lambda[[G]]}\cmplx{P}\mto \varinjlim_{n}\Lambda[[G/\Gamma^{p^n}]]\tensor_{\Lambda[[G]]}\cmplx{P}\mto 0.
\]
If $\cmplx{P}$ is also perfect as a complex of $\Lambda[[H]]$-modules such that the cohomology of $\cmplx{P}$ is $S$-torsion by Lemma~\ref{lem:ore set is right and left}, then we conclude that there exists an isomorphism
\[
\cmplx{P}[1]\isomorph\varinjlim_{n}\Lambda[[G/\Gamma^{p^n}]]\tensor_{\Lambda[[G]]}\cmplx{P}
\]
in the derived category of complexes of $\Lambda[[G]]$-modules. In particular, the righthand complex is perfect as complex of $\Lambda[[G]]$-modules and of $\Lambda[[H]]$-modules. This signifies that its cohomology modules
\begin{equation*}
\HF^k(\varinjlim_{n}\Lambda[[G/\Gamma^{p^n}]]\tensor_{\Lambda[[G]]}\cmplx{P})
\isomorph\varinjlim_{n}\HF^k(\Lambda[[G/\Gamma^{p^n}]]\tensor_{\Lambda[[G]]}\cmplx{P})\isomorph\HF^{k+1}(\cmplx{P})
\end{equation*}
are finite as abelian groups.

We now drop the assumption that $\Lambda$ and $H$ are finite. Let $I\subset J$ be two open ideals of $\Lambda$ and $U\subset V$ be the intersections of two open normal subgroups of $G$ with $H$. Then the diagram
\[
\xymatrix{
0\ar[r]&\Lambda/I[[G/U]]\ar[r]\ar[d]&\Lambda/I[[G/U]]_S\ar[r]\ar[d]& \varinjlim\limits_{n}\Lambda/I[[G/U\Gamma^{p^n}]]\ar[r]\ar[d]& 0\\
0\ar[r]&\Lambda/J[[G/V]]\ar[r]&\Lambda/J[[G/V]]_S\ar[r]& \varinjlim\limits_{n}\Lambda/J[[G/V\Gamma^{p^n}]]\ar[r]& 0
}
\]
commutes and the downward pointing arrows are surjections. Tensoring with $\cmplx{P}$ and passing to the inverse limit we obtain the exact sequence
\[
0\mto \cmplx{P}\mto \varprojlim_{I,U}\Lambda/I[[G/U]]_S\tensor_{\Lambda[[G]]}\cmplx{P}\mto \varprojlim_{I,U}\varinjlim_{n}\Lambda/I[[G/U\Gamma^{p^n}]]\tensor_{\Lambda[[G]]}\cmplx{P}\mto 0.
\]
If $\cmplx{P}$ is also perfect as a complex of $\Lambda[[H]]$-modules, then complex in the middle is acyclic and we obtain again an isomorphism
\[
\cmplx{P}[1]\isomorph\varprojlim_{I,U}\varinjlim_{n}\Lambda/I[[G/U\Gamma^{p^n}]]\tensor_{\Lambda[[G]]}\cmplx{P}
\]
in the derived category of complexes of $\Lambda[[G]]$-modules and hence, isomorphisms of $\Lambda[[G]]$-modules
\[
\HF^{k+1}(\cmplx{P})\isomorph\varprojlim_{I,U}\varinjlim_{n}\HF^{k}(\Lambda/I[[G/U\Gamma^{p^n}]]\tensor_{\Lambda[[G]]}\cmplx{P}).
\]
Here, we use that the modules in the projective system on the righthand side are finite and thus $\varprojlim$-acyclic.

Finally, assume that $(\cmplx{Q}_J)_{J\in\openideals_{\Lambda[[G]]}}$ is a complex in $\cat{PDG}^{\cont,w_H}(\Lambda[[G]])$. Then we can find a strictly perfect complex of $\Lambda[[G]]$-modules $\cmplx{P}$ and a weak equivalence
\[
f\colon (\Lambda[[G]]/J\tensor_{\Lambda[[G]]}\cmplx{P})_{J\in\openideals_{\Lambda[[G]]}}\mto (\cmplx{Q}_J)_{J\in\openideals_{\Lambda[[G]]}}
\]
in  $\cat{PDG}^{\cont,w_H}(\Lambda)$ \cite[Cor. 5.2.6]{Witte:PhD}. Moreover, this complex $\cmplx{P}$ will also be perfect as a complex of $\Lambda[[H]]$-modules. For $I\in\openideals_{\Lambda}$, $U$ the intersection of an open normal subgroup of $G$ with $H$ and a positive integer $n$ such that $\Gamma^{p^n}$ is central in $G/U$ we set
\[
J_{I,U,n}\coloneqq \ker \Lambda[[G]]\mto \Lambda/I[[G/U\Gamma^{p^n}]],
\]
such that the $J_{I,U,n}$ form a final subsystem in $\openideals_{\Lambda[[G]]}$. We conclude:

\begin{prop}\label{prop:S-torsion exchange property}
For $(\cmplx{Q}_J)_{J\in\openideals_{\Lambda[[G]]}}$ in $\cat{PDG}^{\cont,w_H}(\Lambda[[G]])$ there exists an isomorphism
\[
\Rlim_{J\in\openideals_{\Lambda[[G]]}}\cmplx{Q}_J[1]\isomorph \Rlim_{I,U}\varinjlim_{n}\cmplx{Q}_{J_{I,U,n}}
\]
in the derived category of $\Lambda[[G]]$-modules and isomorphisms of $\Lambda[[G]]$-modules
\[
\varprojlim_{J\in\openideals_{\Lambda[[G]]}}\HF^{k+1}(\cmplx{Q}_J)\isomorph
\varprojlim_{I,U}\varinjlim_{n}\HF^{k}(\cmplx{Q}_{J_{I,U,n}}).
\]
\end{prop}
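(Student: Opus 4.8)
The plan is to reduce the general statement to the elementary case where $\Lambda$ and $H$ are finite, which is already treated explicitly in the paragraphs immediately preceding the proposition. First I would invoke \cite[Cor. 5.2.6]{Witte:PhD} to replace the object $(\cmplx{Q}_J)_{J\in\openideals_{\Lambda[[G]]}}$ of $\cat{PDG}^{\cont,w_H}(\Lambda[[G]])$ by a weakly equivalent system of the form $(\Lambda[[G]]/J\tensor_{\Lambda[[G]]}\cmplx{P})_J$ for a single strictly perfect complex $\cmplx{P}$ of $\Lambda[[G]]$-modules; since weak equivalence in this category is detected after tensoring down and since $\Rlim$ and $\varinjlim_n$ preserve quasi-isomorphisms of such inverse systems (the limits being taken over countable index sets with surjective transition maps, so $\Rlim = \varprojlim$ on the nose in each degree), it suffices to prove the isomorphisms for this replacement. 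Because $(\cmplx{Q}_J)_J$ lies in $\cat{PDG}^{\cont,w_H}$, the complex $\cmplx{P}$ is moreover perfect as a complex of $\Lambda[[H]]$-modules, so Lemma~\ref{lem:ore set is right and left}\eqref{enum:ore set is right and left:S-torsion} tells us its cohomology is $S$-torsion — exactly the hypothesis under which the discussion preceding the proposition produced the desired isomorphism.

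Next I would assemble the isomorphism from the finite-level computations. Fixing the topological generator $\gamma$ and $t\coloneqq\gamma-1$, one observes that the ideals $J_{I,U,n}$ form a final (hence cofinal) subsystem of $\openideals_{\Lambda[[G]]}$, so the left-hand side $\Rlim_J \cmplx{Q}_J$ may be computed along this subsystem. For $I$, $U$, $n$ with $\Gamma^{p^n}$ central in $G/U$ and $p^{i+1}=0$ in $\Lambda/I$ (which holds for $n$ large), the binomial identities displayed above show $t^{-1}$-localisation agrees with $(\gamma^{p^n}-1)^{-1}$-localisation, and one gets the compatible isomorphisms
\[
\Lambda/I[[G/U]]_S/\Lambda/I[[G/U]]\isomorph\varinjlim_n\Lambda/I[[G/U\Gamma^{p^n}]]
\]
of $\Lambda[[G]]$-bimodules. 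Tensoring the short exact sequence $0\to\cmplx{P}\to\Lambda[[G]]_S\tensor\cmplx{P}\to(\Lambda[[G]]_S/\Lambda[[G]])\tensor\cmplx{P}\to 0$ with the finite-level quotients, passing to the inverse limit over $I$ and $U$ (the transition maps are surjective, so the limit is exact on these systems), and using that the $S$-torsion cohomology of $\cmplx{P}$ makes $\varprojlim_{I,U}\Lambda/I[[G/U]]_S\tensor\cmplx{P}$ acyclic, yields the quasi-isomorphism $\cmplx{P}[1]\simeq\varprojlim_{I,U}\varinjlim_n\Lambda/I[[G/U\Gamma^{p^n}]]\tensor_{\Lambda[[G]]}\cmplx{P}$ in the derived category. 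Translating back through the identification $\Lambda/I[[G/U\Gamma^{p^n}]]\tensor_{\Lambda[[G]]}\cmplx{P}\simeq\cmplx{Q}_{J_{I,U,n}}$ gives the first displayed isomorphism; the second follows by taking cohomology, using that the inner terms $\HF^k(\cmplx{Q}_{J_{I,U,n}})$ are finite abelian groups (so the projective system is $\varprojlim$-acyclic and cohomology commutes with $\Rlim$) and that $\varinjlim_n$ is exact.

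The main obstacle, and the only point requiring genuine care rather than bookkeeping, is justifying the exactness of the relevant projective limits and the interchange of $\varprojlim_{I,U}$ with $\varinjlim_n$ and with cohomology. This is handled by the Mittag-Leffler / finiteness observations already flagged in the text: all the transition maps in the $(I,U)$-systems of finite modules are surjective, and the modules $\HF^k(\Lambda/I[[G/U\Gamma^{p^n}]]\tensor_{\Lambda[[G]]}\cmplx{P})$ are finite, so the derived functor $\Rlim$ computes as the naive $\varprojlim$ in each degree and there is no $\varprojlim^1$-obstruction; filtered colimits are exact unconditionally. One must also check, as remarked after the finite-level computation, that the isomorphism depends a priori on the choice of $\gamma$, but since the statement only asserts existence of an isomorphism this causes no difficulty. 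Everything else is a direct transcription of the argument carried out in the paragraphs above the proposition statement, now with $\Lambda$ and $H$ no longer assumed finite.
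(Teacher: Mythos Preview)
Your proposal is correct and follows essentially the same argument as the paper: the proposition is stated at the end of Section~\ref{sec:S-torsion exchange property} as the summary of the preceding discussion, which first treats the case of finite $\Lambda$ and $H$, then passes to the limit over $I,U$ for a strictly perfect $\cmplx{P}$, and finally invokes \cite[Cor.~5.2.6]{Witte:PhD} to reduce an arbitrary object of $\cat{PDG}^{\cont,w_H}(\Lambda[[G]])$ to this case. One minor remark: your appeal to Lemma~\ref{lem:ore set is right and left}\eqref{enum:ore set is right and left:S-torsion} presupposes $\Lambda[[H]]$ noetherian, which is not assumed here; the paper instead deduces acyclicity of the middle term directly from the finite-level argument (where Lemma~\ref{lem:ore set is right and left} does apply), but this does not affect the validity of your outline.
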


\begin{rem}
For any $(\cmplx{Q}_J)_{J\in\openideals_{\Lambda[[G]]}}$ in $\cat{PDG}^{\cont}(\Lambda[[G]])$ we obtain in the same way a distinguished triangle
\[
\Rlim_{J\in\openideals_{\Lambda[[G]]}}\cmplx{Q}_J\mto \Rlim_{I,U}\left(\Lambda/I[[G/U]]_S\Ltensor_{\Lambda/I[[G/U]]}\Rlim_{n}\cmplx{Q}_{J_{I,U,n}}\right)
\mto\Rlim_{I,U}\varinjlim_{n}\cmplx{Q}_{J_{I,U,n}}
\]
in the derived category of complexes of $\Lambda[[G]]$-modules.
\end{rem}

\section{Duality on the level of \texorpdfstring{$\KTh$}{K}-groups}\label{sec:Duality on the level of Kgroups}

Assume that $\cat{W}$ is a biWaldhausen category in the sense of \cite[Def. 1.2.4]{ThTr:HAKTS+DC}. In particular, the opposite category $\cat{W}^{\op}$ is a biWaldhausen category as well and there are natural isomorphisms
\begin{equation}\label{eqn:identification of W and Wop}
I\colon\KTh_n(\cat{W})\isomorph\KTh_n(\cat{W}^{\op}),
\end{equation}
simply because the topological realisations of the bisimplicial sets $N.w\cat{S}.\cat{W}$ and $N.w\cat{S}.\cat{W}^{\op}$ resulting from Waldhausen's $S$-construction agree \cite[\S 1.5.5]{ThTr:HAKTS+DC}. However, the obvious identifications
\[
N_mw\cat{S}_n\cat{W}\isomorph N_mw\cat{S}_n\cat{W}^{\op}
\]
respect the boundary and degeneracy maps only up to reordering, so that we do not obtain an isomorphism of the bisimplicial sets themselves.

In order to understand the isomorphism \eqref{eqn:identification of W and Wop} in terms of the presentation of $\KTh_1(\cat{W})$ given by \eqref{eqn:1-type}, we will construct a canonical isomorphism
\[
I\colon\D_\bullet(\cat{W})\mto\D_\bullet(\cat{W}^\op).
\]
For any morphism $\alpha\colon A\mto B$ in $\cat{W}$, write $\alpha^{\op}\colon B\mto A$ for the corresponding morphism in the opposite category $\cat{W}^\op$. Further, note that by the definition of biWaldhausen categories, if $A\cto B\qto C$ is an exact sequence in $\cat{W}$, then the dual sequence $C\cto B\qto A$ is exact in $\cat{W}^{\op}$. We then set
\begin{align*}
I([A])&\coloneqq[A]&&\text{for objects $A$ in $\cat{W}$,}\\
I([f\colon A\wto B])&\coloneqq[f^\op\colon B\wto A]^{-1}&&\text{for weak equivalences $f$,}\\
I([A\cto B\qto C])&\coloneqq[C\cto B\qto A]\pair{[A]}{[C]} &&\text{for exact sequences $A\cto B\qto C$.}
\end{align*}

\begin{prop}
For any biWaldhausen category $\cat{W}$, the above assignment defines an isomorphism of stable quadratic modules
\[
I\colon\D_\bullet(\cat{W})\mto\D_\bullet(\cat{W}^{\op}).
\]
\end{prop}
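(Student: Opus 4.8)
The plan is to verify directly that the map $I$ respects the defining relations (R1)--(R7) of a stable quadratic module, together with the quadratic pairing. Since $\D_\bullet(\cat W^\op)$ is generated by the same kinds of generators (objects, weak equivalences, exact sequences) as $\D_\bullet(\cat W)$, and $I$ is manifestly bijective on generators with an evident inverse of the same shape (swapping the roles of $\cat W$ and $\cat W^\op$ and using $\pair{[C]}{[A]}\pair{[A]}{[C]}=1$ to see the composite is the identity), it suffices to check that $I$ carries relations to relations, i.e.\ that each relation imposed in $\D_\bullet(\cat W^\op)$ is a consequence of the relations in $\D_\bullet(\cat W)$ after applying $I$. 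First I would record the behaviour of $I$ on the boundary map: using (R1) and (R2) in $\cat W^\op$ one computes $\del I([f^\op])=[A][B]^{-1}=([B]^{-1}[A])^{-1}=(\del[f])^{-1}$ and $\del I([\Delta])=\del[C\cto B\qto A]+\del\pair{[A]}{[C]}=[B]^{-1}[A][C]+([C]^{-1}[A]^{-1}[C][A])$; since $\D_0$ is nil-$2$ these combine to $[B]^{-1}[C][A]=\del[\Delta]$ (note the boundary of the commutator correction exactly compensates the reordering $[A][C]\rightsquigarrow[C][A]$). This compatibility $\del\circ I=I\circ\del$ is the backbone of all subsequent verifications.

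Next I would dispatch the ``easy'' relations. (R4) is immediate since $I$ fixes $[0\cto 0\qto 0]=1$ (the pairing correction $\pair{[0]}{[0]}$ is trivial). For (R3), $I(\pair{[A]}{[B]})$ should equal $\pair{[A]}{[B]}$ in $\cat W^\op$ (the pairing is natural in the $S$-construction and symmetric up to the relation $\pair{A}{B}\pair{B}{A}=1$); on the right-hand side $I$ sends the split exact sequences $A\cto A\oplus B\qto B$ and $B\cto A\oplus B\qto A$ in $\cat W$ to the corresponding split sequences in $\cat W^\op$ twisted by pairing terms, and one checks these pairing terms cancel against each other and against the relation (R3) holding in $\cat W^\op$. (R5) requires $I([g^\op f^\op])=I([f^\op][g^\op])$ for a composite of weak equivalences, which follows from (R5) in $\cat W$ applied to $[g^\op f^\op]=[f^\op][g^\op]$ after taking inverses in the nil-$2$ group $\D_1(\cat W^\op)$ — here one uses that inversion is an anti-homomorphism but the commutator corrections vanish because $\D_\bullet$ is nil-$2$.

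**The hard part will be** the relations (R6) and (R7), which involve commuting diagrams of exact sequences and weak equivalences, because there the pairing correction terms $\pair{[-]}{[-]}$ proliferate and the action $a^X=a\pair{X}{\del a}$ intervenes. For (R6), I would take the given commutative square of exact sequences $\Delta\to\Delta'$ with vertical weak equivalences $\alpha,\beta,\gamma$, form the opposite diagram (which is again a commutative square of exact sequences $\Delta'^\op\to\Delta^\op$ in $\cat W^\op$, with vertical weak equivalences $\gamma^\op,\beta^\op,\alpha^\op$), apply the relation (R6) that holds in $\cat W^\op$, and then substitute the definitions of $I$ on each generator; the resulting identity must be matched, term by term, against $I$ applied to (R6) in $\cat W$, with all discrepancies being products of commutators and pairings that vanish or telescope by the stable quadratic module axioms $\pair{A}{BC}=\pair{A}{B}\pair{A}{C}$, $\pair{A}{B}\pair{B}{A}=1$, and nil-$2$-ness. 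The analogous but bookkeeping-heavier computation handles (R7) with its $3\times 3$ diagram. I expect this step to be purely formal but genuinely tedious; the key conceptual point that makes it work is that the pairing correction in the definition of $I$ on exact sequences is precisely the ``discrepancy'' between the order of objects in a sequence and its dual, and the stable quadratic module axioms are engineered so that such reordering discrepancies are governed entirely by the pairing. Finally, I would verify $I$ is compatible with the pairing itself, $I(\pair{A}{B})=\pair{I[A]}{I[B]}=\pair{[A]}{[B]}$, which by the above discussion of (R3) is essentially the statement that the $S$-construction pairing is intrinsic and survives the identification $N.w\cat S.\cat W\simeq N.w\cat S.\cat W^\op$; since $I$ is then an isomorphism of stable quadratic modules, passing to cokernel and kernel of $\del$ recovers the isomorphisms $\KTh_0(\cat W)\isomorph\KTh_0(\cat W^\op)$ and $\KTh_1(\cat W)\isomorph\KTh_1(\cat W^\op)$ of \eqref{eqn:identification of W and Wop}, as claimed.
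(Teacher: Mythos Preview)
Your proposal is correct and follows essentially the same approach as the paper: the paper's own proof simply states that it suffices to check that $I$ respects the relations (R1)--(R7) and calls this ``a straight-forward, but tedious exercise.'' Your plan is precisely this verification carried out in more detail, including the key observation that the pairing correction $\pair{[A]}{[C]}$ in the definition of $I$ on exact sequences exactly absorbs the reordering $[A][C]\rightsquigarrow[C][A]$ under $\del$, and that (R6) and (R7) are where the bookkeeping becomes heavy.
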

\begin{proof}
It is sufficient to check that $I$ respects the relations (R1)--(R7) in the definition of $\D_\bullet(\cat{W})$. This is a straight-forward, but tedious exercise.
\end{proof}

Next, we investigate in how far $I$ respects the boundary homomorphism of localisation sequences. For this, we consider the same situation as in \cite[Appendix]{Witte:MCVarFF}, but with all Waldhausen categories replaced by biWaldhausen categories. Assume that $\cat{wW}$ is a biWaldhausen category with weak equivalences $\cat{w}$ that is saturated and extensional in the sense of \cite[Def. 1.2.5, 1.2.6]{ThTr:HAKTS+DC}. Let $\cat{vW}$ be a the same category with the same notion of fibrations and cofibrations, but with a coarser notion of weak equivalences $\cat{v}\subset\cat{w}$ and let $\cat{vW^w}$ denote the full biWaldhausen subcategory of $\cat{vW}$ consisting of those objects which are weakly equivalent to the zero object in $\cat{wW}$. We assume that $\Cyl$ is a cylinder functor in the sense of \cite[Def. A.1]{Witte:MCVarFF} for both $\cat{wW}$ and $\cat{vW}$ and that it satisfies the cylinder axiom for $\cat{wW}$. We will write $\Cone$ and $\Shift$ for the associated cone and shift functors, i.\,e.\
\begin{align*}
\Cone(\alpha)&\coloneqq\Cyl(\alpha)/A&&\text{for any morphism $\alpha\colon A\mto B$,}\\
\Shift A&\coloneqq\Cone(A\mto 0)&&\text{for any object $A$.}
\end{align*}
Further, we assume that $\CoCyl$ is a cocylinder functor for both $\cat{wW}$ and $\cat{vW}$ in the sense that the opposite functor $\CoCyl^{\op}$ is a cylinder functor for $\cat{wW}^{\op}$ and $\cat{vW}^{\op}$. Again, we assume that $\CoCyl^{\op}$ satisfies the cylinder axiom for $\cat{wW}^{\op}$. We will write $\CoCone$ and $\CoShift$ for the associated cocone and coshift functors.

Recall from \cite[Thm. A.5]{Witte:MCVarFF} that the assignment
\begin{equation}\label{eqn:def of boundary hom}
\begin{aligned}
\bh(\Delta)&\coloneqq 0 &&\text{for every exact sequence $\Delta$ in
$\cat{wW}$,}\\
\bh(\alpha)&\coloneqq -[\Cone(\alpha)]+[\Cone(\id_A)] &&\text{for every
weak equivalence $\alpha\colon A\wto A'$ in $\cat{wW}$}
\end{aligned}
\end{equation}
defines a homomorphism $\bh\colon \D_1(\cat{wW})\mto
K_0(\cat{vW^w})$ such that the sequence
$$
\KTh_1(\cat{vW})\mto \KTh_1(\cat{wW})\xrightarrow{d}
\KTh_0(\cat{vW^w})\mto \KTh_0(\cat{vW})\mto \KTh_0(\cat{wW})\mto 0
$$
is exact.

\begin{lem}\label{lem:Cone and Cocone construction}
For every weak equivalence $\alpha\colon A\mto B$ in $\cat{wW}$,
\[
\bh(\alpha)=-[\CoCone(\id_{B})]+[\CoCone(\alpha)]
\]
in $\KTh_0(\cat{vW^w})$.
\end{lem}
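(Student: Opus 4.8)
The statement asserts that the boundary homomorphism $\bh\colon\D_1(\cat{wW})\mto\KTh_0(\cat{vW^w})$, which was defined in \eqref{eqn:def of boundary hom} using the cone functor associated to $\Cyl$, can equivalently be computed using the cocone functor associated to $\CoCyl$. The plan is to reduce this to the already-established formula \eqref{eqn:def of boundary hom} by showing that $[\Cone(\alpha)]-[\Cone(\id_A)]$ and $[\CoCone(\id_B)]-[\CoCone(\alpha)]$ coincide in $\KTh_0(\cat{vW^w})$ for every weak equivalence $\alpha\colon A\wto B$ in $\cat{wW}$.

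First I would recall the standard exact sequences attached to a morphism $\alpha\colon A\mto B$ and its cylinder/cocylinder. For the cylinder functor one has the cofibre sequence $B\cto\Cone(\alpha)\qto\Shift A$ and also $A\cto\Cyl(\alpha)\qto\Cone(\alpha)$ with $\Cyl(\alpha)\wto B$; similarly $\CoCone(\alpha)\cto\CoCyl(\alpha)\qto B$ with $A\wto\CoCyl(\alpha)$, together with $\CoShift B\cto\CoCone(\alpha)\qto A$. Passing these exact sequences to $\KTh_0$ (recalling that an exact sequence $X\cto Y\qto Z$ gives $[Y]=[X]+[Z]$ and a weak equivalence gives equality of classes) yields, inside $\KTh_0(\cat{vW^w})$, the identities
\[
[\Cone(\alpha)]=[B]-[A]+[\Cone(\id_A)],\qquad [\CoCone(\alpha)]=[B]-[A]+[\CoCone(\id_B)],
\]
where one uses that when $\alpha$ is a weak equivalence in $\cat{wW}$, all the objects $\Cone(\alpha)$, $\Cone(\id_A)$, $\CoCone(\alpha)$, $\CoCone(\id_B)$ lie in $\cat{vW^w}$ (being cones of $\cat{w}$-equivalences, hence $\cat{w}$-acyclic), so these classes make sense. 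Here I am being slightly schematic: the classes $[A]$, $[B]$ need not individually lie in $\KTh_0(\cat{vW^w})$, so the argument should really be organised as a direct comparison of the two \emph{differences}, e.g.\ by splicing the relevant exact sequences so that the extraneous objects $A$ and $B$ cancel, or by working in $\KTh_0(\cat{vW})$ and using that the map to it is injective on the relevant combination. Subtracting the two displayed relations then gives $[\Cone(\alpha)]-[\Cone(\id_A)]=[\CoCone(\alpha)]-[\CoCone(\id_B)]$, and comparing with \eqref{eqn:def of boundary hom} yields $\bh(\alpha)=-[\CoCone(\id_B)]+[\CoCone(\alpha)]$, as claimed. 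For exact sequences $\Delta$ in $\cat{wW}$ there is nothing to prove, since both formulas assign $0$.

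The main obstacle I anticipate is making precise the relation between the cone and the cocone of the same morphism at the level of $\KTh_0(\cat{vW^w})$ — the cylinder and cocylinder functors are a priori unrelated, so one cannot simply identify $\Cone(\alpha)$ with a shift of $\CoCone(\alpha)$. The clean way around this is to avoid comparing them directly and instead observe that both differences equal the single well-defined element $\bh(\alpha)$: the cocone formula defines, by exactly the same argument as in \cite[Thm.~A.5]{Witte:MCVarFF} applied to the opposite categories $\cat{wW}^\op$, $\cat{vW}^\op$ (which are biWaldhausen with cylinder functor $\CoCyl^\op$ satisfying the cylinder axiom), a homomorphism $\bh'\colon\D_1(\cat{wW})\mto\KTh_0(\cat{vW^w})$ fitting into the same localisation sequence; uniqueness of the connecting map in that localisation sequence, together with the compatibility of the identification $I\colon\D_\bullet(\cat{W})\mto\D_\bullet(\cat{W}^\op)$ with the generating weak equivalences, then forces $\bh'=\bh$. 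I would present the argument in whichever of these two forms is shortest once the bookkeeping of which objects land in $\cat{vW^w}$ is written out carefully.
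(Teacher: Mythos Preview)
Your intuition is right, and you correctly identify the obstacle: the classes $[A]$ and $[B]$ need not lie in $\KTh_0(\cat{vW^w})$, so the displayed identities $[\Cone(\alpha)]=[B]-[A]+[\Cone(\id_A)]$ etc.\ are only heuristic. However, neither of your proposed repairs is complete as stated.

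The ``uniqueness of the connecting map'' argument does not work without further input. Exactness of the localisation sequence alone does not pin down $\bh$: if $K$ denotes the kernel of $\KTh_0(\cat{vW^w})\to\KTh_0(\cat{vW})$, then any homomorphism $\KTh_1(\cat{wW})\to K$ vanishing on the image of $\KTh_1(\cat{vW})$ can be added to $\bh$ without destroying exactness. So showing that $\bh'$ (defined via cocones) fits into the same exact sequence is not enough to conclude $\bh'=\bh$. Invoking the compatibility with $I$ to close this gap is essentially circular, since that compatibility is precisely what the lemma is meant to establish.

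The paper's argument makes your ``splicing'' idea precise in a clean way. First, when $A$ and $B$ already lie in $\cat{vW^w}$, the exact sequences $B\cto\Cone(\alpha)\qto\Shift A$ and $A\cto\Cone(\id_A)\qto\Shift A$ give $\bh(\alpha)=-[B]+[A]$ directly. For general $\alpha$, functoriality of the cone produces a commutative diagram with exact rows
\[
\xymatrix{
A\ar@{ >->}[r]\ar[d]_{\alpha}^{\sim}&\Cone(\id_A)\ar@{>>}[r]\ar[d]_{\alpha_*}^{\sim}&\Shift A\ar@{=}[d]\\
B\ar@{ >->}[r]&\Cone(\alpha)\ar@{>>}[r]&\Shift A
}
\]
in which the induced map $\alpha_*\colon\Cone(\id_A)\wto\Cone(\alpha)$ is a weak equivalence between objects of $\cat{vW^w}$. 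Relation (R6), together with $\bh(\Delta)=0$ and $\bh(\id)=0$, yields $\bh(\alpha)=\bh(\alpha_*)$, and the special case then gives $\bh(\alpha_*)=-[\Cone(\alpha)]+[\Cone(\id_A)]$. The dual diagram with cocones produces $\alpha^*\colon\CoCone(\alpha)\wto\CoCone(\id_B)$ and, by the same reasoning, $\bh(\alpha)=\bh(\alpha^*)=-[\CoCone(\id_B)]+[\CoCone(\alpha)]$. This is the concrete ``splice'' you were looking for: pass to the induced map between the $\cat{w}$-acyclic objects and use (R6) to transport $\bh$ there.
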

\begin{proof}
We first assume that $A$ and $B$ are objects of $\cat{vW^w}$. Then
\begin{align*}
B\cto \Cone(\alpha)\qto \Shift A,\\
A\cto \Cone(\id_A)\qto \Shift A,
\end{align*}
are exact sequences in $\cat{vW^W}$. Hence,
\begin{equation}\label{eqn:case of objects in vWw}
\bh(\alpha)=-[B]-[\Shift A]+[\Shift A]+[A]=-[B]+[A]
\end{equation}
in $\KTh_0(\cat{vW^w})$.

For a general weak equivalence $\alpha\colon A\mto B$ in $\cat{wW}$, the natural morphism
\[
\Cone(\alpha)\mto 0
\]
is a weak equivalence in $\cat{wW}$ by the cylinder axiom. The commutative square
\[
\xymatrix{
A\ar@{=}[r]\ar@{=}[d]&A\ar[d]^{\sim}_{\alpha}\\
A\ar[r]^{\sim}_{\alpha}&B}
\]
induces by the functoriality of the cone and shift functor a commutative diagram with exact rows
\[
\xymatrix{
    A\ar@{ >->}[r]\ar[d]^{\sim}_{\alpha}&
    \Cone(\id_A)\ar@{>>}[r]\ar[d]^{\sim}_{\alpha_*}
    &\Shift A\ar@{=}[d]\\
    B\ar@{ >->}[r]&\Cone(\alpha)\ar@{>>}[r]&\Shift A
    }
\]
where all downward pointing arrows are weak equivalences in $\cat{wW}$. Dually, we also obtain a commutative diagram
\[
\xymatrix{
    \CoShift B\ar@{ >->}[r]\ar@{=}[d]&
    \CoCone(\alpha)\ar@{>>}[r]\ar[d]^{\sim}_{\alpha^*}
    &A\ar[d]^{\sim}_{\alpha}\\
    \CoShift B\ar@{ >->}[r]&\CoCone(\id_B)\ar@{>>}[r]&B
    }
\]
where all downward pointing arrows are weak equivalences in $\cat{wW}$.

From (R6) and \eqref{eqn:case of objects in vWw} we conclude
\[
\begin{aligned}
-[\Cone(\alpha)]+[\Cone(\id_A)]=\bh(\alpha_*)&=\bh(\alpha)\\
&=\bh(\alpha^*)=-[\CoCone(\id_B)]+[\CoCone(\alpha)]
\end{aligned}
\]
as desired.
\end{proof}

\begin{rem}
By basically the same argument, one also sees that $d$ is independent of the choice of the particular cylinder functor.
\end{rem}

\begin{prop}
With the notation as above, the diagram
\[
\xymatrix{
\D_1(wW)\ar[r]^{I}\ar[d]^d&\D_1(wW^\op)\ar[d]^d\\
\KTh_0(vW^w)\ar[r]^{I}&\KTh_0((vW^w)^\op)
}
\]
commutes.
\end{prop}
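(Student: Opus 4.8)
The plan is to verify the asserted commutativity, i.e.\ the identity $d(I(x)) = I(d(x))$, on a generating set of $\D_1(\cat{wW})$, exploiting that both $I$ and $d$ are group homomorphisms ($I$ being a morphism of stable quadratic modules and $d$ the homomorphism supplied by \cite[Thm.~A.5]{Witte:MCVarFF}). By the explicit presentation of the $1$-type recalled above, $\D_1(\cat{wW})$ is generated by the classes $[\alpha]$ of weak equivalences and the classes $[\Delta]$ of exact sequences, so it suffices to treat these two cases. Two preliminary observations will be used throughout. First, on $\KTh_0$ the isomorphism $I$ is the identity on classes of objects, which is immediate from $I([A])=[A]$ and the fact that $\KTh_0(\cat{vW^w})$ is a quotient of the free nil-$2$-group on objects. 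Second, the boundary homomorphism for the opposite situation, $d\colon \D_1(\cat{wW}^\op)\mto\KTh_0((\cat{vW^w})^\op)$, is available: the triple $(\cat{wW}^\op,\cat{vW}^\op,\CoCyl^\op)$ satisfies the hypotheses of \cite[Thm.~A.5]{Witte:MCVarFF} precisely by our symmetric assumptions on the cocylinder functor, and $(\cat{vW^w})^\op$ is exactly the full subcategory of $\cat{vW}^\op$ of objects weakly equivalent to the zero object in $\cat{wW}^\op$, this condition being manifestly self-dual.

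For an exact sequence $\Delta\colon A\cto B\qto C$ in $\cat{wW}$ both sides vanish. Indeed $d([\Delta])=0$, hence $I(d([\Delta]))=0$; and $I([\Delta])=[\Delta^\op]\pair{[A]}{[C]}$ with $\Delta^\op\colon C\cto B\qto A$ exact in $\cat{wW}^\op$, so $d([\Delta^\op])=0$ by definition, while $\pair{[A]}{[C]}$ is by relation (R3) a product of classes of exact sequences and is therefore killed by the homomorphism $d$ as well. Thus $d(I([\Delta]))=0=I(d([\Delta]))$.

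The weak equivalence case is where the real work lies. For $\alpha\colon A\wto A'$ in $\cat{wW}$ one has $d([\alpha])=-[\Cone(\alpha)]+[\Cone(\id_A)]$, while $I([\alpha])=[\alpha^\op]^{-1}$ with $\alpha^\op\colon A'\wto A$ the corresponding weak equivalence in $\cat{wW}^\op$, so that $d(I([\alpha]))=-d([\alpha^\op])$. Since the cylinder functor on $\cat{wW}^\op$ is $\CoCyl^\op$, the cone of $\alpha^\op$ formed inside $\cat{wW}^\op$ is canonically the opposite of $\CoCone(\alpha)$, and likewise the cone of $\id_{A'}$ formed inside $\cat{wW}^\op$ is the opposite of $\CoCone(\id_{A'})$; hence
\[
d([\alpha^\op])=-[\CoCone(\alpha)]+[\CoCone(\id_{A'})]
\]
in $\KTh_0((\cat{vW^w})^\op)$, which we identify with $\KTh_0(\cat{vW^w})$ via the fact that $I$ is the identity on object classes. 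Combining these computations, the desired identity $I(d([\alpha]))=d(I([\alpha]))$ reduces to
\[
-[\Cone(\alpha)]+[\Cone(\id_A)]=[\CoCone(\alpha)]-[\CoCone(\id_{A'})],
\]
which is exactly Lemma~\ref{lem:Cone and Cocone construction} applied to the weak equivalence $\alpha\colon A\mto A'$. I expect the only genuinely delicate point to be the careful identification of the cone, coshift and related functors built from $\CoCyl^\op$ on $\cat{wW}^\op$ with the opposites of the cocone, coshift functors built from $\CoCyl$ on $\cat{wW}$, together with the sign bookkeeping forced by this dualization; once that is settled, Lemma~\ref{lem:Cone and Cocone construction} closes the argument at once.
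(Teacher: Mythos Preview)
Your proof is correct and follows the same approach as the paper. The paper's proof is the one-liner ``This is a direct consequence of the definition of $I$ and Lemma~\ref{lem:Cone and Cocone construction}''; you have simply made explicit what that sentence means by checking on the two types of generators and carrying out the sign and dualization bookkeeping that the paper leaves to the reader.
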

\begin{proof}
This is a direct consequence of the definition of $I$ and Lemma~\ref{lem:Cone and Cocone construction}.
\end{proof}

If $R$ is any ring and $\cmplx{P}$ is a strictly perfect complex of left $R$-modules, then
\[
(\cmplx{P})^{\mdual_R}\coloneqq \Hom_R(P^{-\bullet},R)
\]
is a strictly perfect complex of left modules over the opposite ring $R^{\op}$ and
\[
\cat{SP}(R)^{\op}\mto \cat{SP}(R^\op) \qquad\cmplx{P}\mapsto (\cmplx{P})^{\mdual_R}
\]
is a Waldhausen exact equivalence of categories. We omit the $R$ from $\mdual_R$ if it is clear from the context. By composing with the homomorphisms $I$, we obtain isomorphisms
\[
\KTh_n(R)\isomorph\KTh_n(R^{\op}).
\]
Note that the isomorphism $\KTh_1(R)\mto\KTh_1(R^\op)$ corresponds to the isomorphism induced by the group isomorphism
\[
\Gl_\infty(R)\mto\Gl_\infty(R^\op),\qquad A\mapsto (A^t)^{-1}
\]
that maps a matrix $A$ to the inverse of its transposed matrix.

If $S$ is a second ring and $\cmplx{M}$ a complex of $R$-$S$-bimodules which is strictly perfect as complex of $R$-modules, then $(\cmplx{M})^{\mdual_R}$ is a complex of $R^\op$-$S^{\op}$-bimodules which is strictly perfect as complex of $R^\op$-modules and there exists for any complex $\cmplx{P}$ in $\cat{SP}(S)$ a canonical isomorphism
\begin{equation}\label{eqn:compatibility of dual with ring changes}
\cmplx{(M^{\mdual_R}\tensor_{S^{\op}}P^{\mdual_S})}\isomorph(\cmplx{(M\tensor_S P)})^{\mdual_R}.
\end{equation}
% induced by
% \[
% f\tensor g\mapsto (m\tensor p\mapsto f(mg(p)).
% \]
% for $m\in M$, $p\in P$, $f\in M^{\mdual_R}$, $g\in P^{\mdual_S}$.
Hence, we obtain a commutative diagram
\[
\xymatrix{
\KTh_n(S)\ar[r]_{\isomorph}\ar[d]_{\cmplx{M}}&\KTh_n(S^\op)\ar[d]^{(\cmplx{M})^{\mdual_R}}\\
\KTh_n(R)\ar[r]_{\isomorph}&\KTh_n(R^\op)
}
\]

We now return to our previous setting: As before, $\Lambda$ is an adic ring and $G=H\rtimes\Gamma$  is a profinite group such that $H$ contains an open, topologically finitely generated pro-$p$ subgroup and $\Gamma\isomorph\Int_p$.

\begin{defn}\label{defn:Mdual}
We define
\[
\sharp\colon \Lambda[[G]]^{\op}\mto\Lambda^{\op}[[G]],\qquad a\mapsto a^\sharp,
\]
to be the ring homomorphism that is the identity on the coefficients and maps $g\in G$ to $g^{-1}$ and write $\Lambda^{\op}[[G]]^\sharp$ for $\Lambda^{\op}[[G]]$ considered as $\Lambda^{\op}[[G]]$-$\Lambda[[G]]^{\op}$-bimodule via $\sharp$.
Further, we let
\[
\Mdual\colon\cat{SP}(\Lambda[[G]])^{\op}\mto\cat{SP}(\Lambda^{\op}[[G]]),\qquad \cmplx{P}\mapsto \Lambda^{\op}[[G]]^{\sharp}\tensor_{\Lambda[[G]]^{\op}}(\cmplx{P})^{\mdual}.
\]
denote the resulting Waldhausen exact equivalence of categories. We also write
\begin{equation}\label{eqn:def of Mdual on K-groups}
\Mdual\colon \KTh_n(\Lambda[[G]])\mto\KTh_n(\Lambda^{\op}[[G]])
\end{equation}
for the homomorphisms obtained by composing $I$ from \eqref{eqn:identification of W and Wop} with the homomorphism
\[
\KTh_n(\cat{SP}(\Lambda[[G]])^{\op})\mto\KTh_n(\cat{SP}(\Lambda^{\op}[[G]]))
\]
induced by the Waldhausen exact functor $\Mdual$.
\end{defn}

\begin{rem}
The author does not know wether it is possible to produce an extension of $\Mdual$ to a Waldhausen exact functor  $\cat{PDG}^{\cont}(\Lambda[[G]])^{\op}\mto\cat{PDG}^{\cont}(\Lambda^{\op}[[G]])$ inducing the same homomorphisms on $\KTh$-theory. This would avoid some technicalities that we need to deal with later on.
\end{rem}

\begin{lem}\label{lem:Mdual and ringtransf}
Assume that $\cmplx{K}$ is in $\cat{SP}(\Lambda[[G]])$.
\begin{enumerate}
    \item Let $\Lambda'$ be another adic $\Int_p$-algebra. For any complex $\cmplx{P}$ of $\Lambda'$-$\Lambda[[G]]$-bimodules, strictly perfect as complex of $\Lambda'$-modules, set
     \[
     (\cmplx{P})^{\mdual,\sharp}\coloneqq(\cmplx{P})^{\mdual_{\Lambda}}\tensor_{\Lambda[[G]]^\op}(\Lambda[[G]]^{\op})^{\sharp}
     \]
     such that $(\cmplx{P})^{\mdual,\sharp}$ is a complex of $\Lambda'^\op$-$\Lambda^\op[[G]]$-bimodules, with $g\in G$ acting by $(g^{-1})^*$. With $\cmplx{P[[G]]^\delta}$ as in Example~\ref{exmpl:example functors},
     \[
      (\ringtransf_{\cmplx{P[[G]]^\delta}}(\cmplx{K}))^{\Mdual}\isomorph
      \ringtransf_{(\cmplx{P})^{\mdual_{\Lambda},\sharp}[[G]]^\delta}((\cmplx{K})^{\Mdual}).
     \]
    \item Let $G'=H'\rtimes\Gamma'$ be another profinite group such that $H'$ contains an open, topologically finitely generated pro-$p$ subgroup and $\Gamma'\isomorph\Int_p$. Let $\alpha\colon G\mto G'$ be a continuous homomorphism such that $\alpha(G)\not\subset H'$. Consider $\Lambda[[G']]$ as a $\Lambda[[G']]$-$\Lambda[[G]]$-bimodule. Then
     \[
      (\ringtransf_{\Lambda[[G']]}(\cmplx{K}))^{\Mdual}\isomorph\ringtransf_{\Lambda^\op[[G']]}((\cmplx{K})^\Mdual).
     \]
    \item Assume that $G'$ is an open subgroup of $G$ and set $H'\coloneqq H\cap G'$. Consider $\Lambda[[G]]$ as a $\Lambda[[G']]$-$\Lambda[[G]]$-bimodule. Then
     \[
      (\ringtransf_{\Lambda[[G]]}(\cmplx{K}))^{\Mdual}\isomorph\ringtransf_{\Lambda^\op[[G]]}((\cmplx{K})^\Mdual).
     \]
\end{enumerate}
\end{lem}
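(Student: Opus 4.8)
The plan is to reduce all three assertions to the compatibility isomorphism \eqref{eqn:compatibility of dual with ring changes} between the $\mdual$-duality and tensoring with a bimodule, and then to compute, in each of the three cases, the bimodule one obtains by dualising.

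Since $\cmplx{K}$ lies in $\cat{SP}(\Lambda[[G]])$, each functor $\ringtransf$ occurring in the statement is, on strictly perfect complexes, simply the tensor product with its defining bimodule (cf.\ the discussion following Definition~\ref{defn:change of ring functor}): in (1) this bimodule is $\cmplx{M}\coloneqq\cmplx{P[[G]]^\delta}$, strictly perfect over $R\coloneqq\Lambda'[[G]]$, while in (2) and (3) it is $\cmplx{M}\coloneqq\Lambda[[G']]$ resp.\ $\cmplx{M}\coloneqq\Lambda[[G]]$, strictly perfect over $R\coloneqq\Lambda[[G']]$. Unwinding Definition~\ref{defn:Mdual} and applying \eqref{eqn:compatibility of dual with ring changes} with this $R$ and with $S=\Lambda[[G]]$ produces a natural isomorphism
\[
\bigl(\ringtransf_{\cmplx{M}}(\cmplx{K})\bigr)^{\Mdual}\isomorph\ringtransf_{(\cmplx{M})^{\mdual_R,\sharp}}\bigl((\cmplx{K})^{\Mdual}\bigr),
\]
functorial in $\cmplx{K}$, where $(\cmplx{M})^{\mdual_R,\sharp}$ denotes the complex of bimodules obtained from $\cmplx{M}$ by dualising over $R$ and then twisting both the resulting $R^{\op}$-module structure and the resulting $\Lambda[[G]]^{\op}$-module structure by $\sharp$, in the same manner as in the construction of $\Mdual$. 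Thus the lemma reduces to identifying $(\cmplx{M})^{\mdual_R,\sharp}$ with the bimodule named in each part.

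Part (2) is immediate: $\cmplx{M}=\Lambda[[G']]$ is free of rank one over $R=\Lambda[[G']]$, so $(\cmplx{M})^{\mdual_R}\isomorph\Lambda[[G']]$ via $f\mapsto f(1)$; this interchanges the left and right $\Lambda[[G']]$-actions while keeping the right $\Lambda[[G]]$-action that factors through $\alpha$, and after the two $\sharp$-twists---using that $\alpha$ commutes with inversion---one recovers $\Lambda^{\op}[[G']]$ with the $\Lambda^{\op}[[G']]$-$\Lambda^{\op}[[G]]$-bimodule structure coming from the induced homomorphism $\alpha\colon\Lambda^{\op}[[G]]\mto\Lambda^{\op}[[G']]$. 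For part (3) one uses that $\Lambda[[G]]$ is free of finite rank over $\Lambda[[G']]$ on both sides and that the $\Lambda[[G']]$-bimodule projection $\Lambda[[G]]\qto\Lambda[[G']]$ (onto the summand $\Lambda[[G']]\cdot 1$) induces a canonical isomorphism of $\Lambda[[G]]$-$\Lambda[[G']]$-bimodules $\Hom_{\Lambda[[G']]}(\Lambda[[G]],\Lambda[[G']])\isomorph\Lambda[[G]]$---the Frobenius property of the group-ring extension, obtained by passing to the limit of the corresponding statements over the finite quotients. Tracking both module structures through this isomorphism and applying the $\sharp$-twists yields $\Lambda^{\op}[[G]]$ with its standard $\Lambda^{\op}[[G']]$-$\Lambda^{\op}[[G]]$-bimodule structure.

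For part (1), since each $P^n$ is finitely generated projective over $\Lambda'$ there is a natural isomorphism $\Hom_{\Lambda'[[G]]}(\Lambda'[[G]]\tensor_{\Lambda'}P^n,\Lambda'[[G]])\isomorph\Lambda'[[G]]\tensor_{\Lambda'}\Hom_{\Lambda'}(P^n,\Lambda')$; the only delicate point is that the \emph{diagonal} right $G$-action on $\cmplx{P[[G]]^\delta}$ dualises, via $g\mapsto g^{-1}$, to the diagonal right $G$-action on $(\cmplx{P})^{\mdual,\sharp}[[G]]^\delta$, whence $(\cmplx{P[[G]]^\delta})^{\mdual_R,\sharp}\isomorph(\cmplx{P})^{\mdual,\sharp}[[G]]^\delta$ as desired. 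The main obstacle throughout is the bookkeeping---passing to opposite rings, keeping the left/right conventions straight, and carrying the two $\sharp$-twists through the reduction---together with the verification in part (3) that the Frobenius isomorphism is genuinely $\Lambda[[G]]$-$\Lambda[[G']]$-bilinear and compatible with the profinite limit; once these are settled each identification is formal.
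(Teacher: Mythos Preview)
Your proposal is correct and follows essentially the same route as the paper: both reduce to the compatibility isomorphism \eqref{eqn:compatibility of dual with ring changes} and then identify the resulting dualised bimodule in each case. The paper records only the bimodule identity needed for part~(1) and declares parts~(2) and~(3) ``straightforward'', whereas you spell out the Frobenius-type isomorphism for~(3); this is additional detail, not a different argument.
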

\begin{proof}
Using the canonical isomorphism \eqref{eqn:compatibility of dual with ring changes}, it remains to notice that
\[
 \Lambda'^{\op}[[G]]^\sharp\tensor_{\Lambda'[[G]]^\op}(\cmplx{{P[[G]]^\delta}})^{\mdual_{\Lambda'[[G]]}}\isomorph(\cmplx{P})^{\mdual,\sharp}[[G]]^\delta\tensor_{\Lambda^\op[[G]]}\Lambda^\op[[G]]^\sharp
\]
as complexes of $\Lambda'^{\op}[[G]]$-$\Lambda[[G]]^\op$-bimodules to prove $(1)$. The other two parts are straightforward.
\end{proof}

\begin{prop}\label{prop:main abstract duality}
The functor $\Mdual$ extends to Waldhausen exact equivalences
\begin{align*}
\Mdual\colon (w_H\cat{SP}(\Lambda[[G]]))^{\op}&\mto w_H\cat{SP}(\Lambda^{\op}[[G]]),\\
\Mdual\colon (\cat{SP}^{w_H}(\Lambda[[G]]))^{\op}&\mto \cat{SP}^{w_H}(\Lambda^{\op}[[G]])
\end{align*}
and hence, it induces a commutative diagram
\[
\xymatrix{
0\ar[r]&\KTh_1(\Lambda[[G]])\ar[r]\ar[d]^{\Mdual}_{\isomorph}&\KTh_1(\Lambda[[G]]_S)\ar[d]^{\Mdual}_{\isomorph}\ar[r]^{d}&
\KTh_0(\Lambda[[G]],S)\ar[r]\ar[d]^{\Mdual}_{\isomorph}& 0\\
0\ar[r]&\KTh_1(\Lambda^{\op}[[G]])\ar[r]&\KTh_1(\Lambda^{\op}[[G]]_S)\ar[r]^{d}&\KTh_0(\Lambda^{\op}[[G]],S)\ar[r]& 0
}
\]
with exact rows.
\end{prop}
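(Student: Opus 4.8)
The crux of the matter is the object-level statement that $\Mdual$ restricts to an equivalence between $\cat{SP}^{w_H}(\Lambda[[G]])^{\op}$ and $\cat{SP}^{w_H}(\Lambda^{\op}[[G]])$; equivalently, that a strictly perfect complex $\cmplx{P}$ of $\Lambda[[G]]$-modules is perfect as a complex of $\Lambda[[H]]$-modules if and only if $(\cmplx{P})^{\Mdual}$ is perfect as a complex of $\Lambda^{\op}[[H]]$-modules. Since $\Mdual$ is an equivalence whose quasi-inverse is the functor of the same shape for $\Lambda^{\op}$ in place of $\Lambda$, it suffices to prove one implication, and this is where the real work lies.

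To prove that implication I would pass to the finite level. The complex $(\cmplx{P})^{\Mdual}=\Lambda^{\op}[[G]]^{\sharp}\tensor_{\Lambda[[G]]^{\op}}(\cmplx{P})^{\mdual}$ is automatically strictly perfect over $\Lambda^{\op}[[G]]$, because $(\cmplx{P})^{\mdual}=\Hom_{\Lambda[[G]]}(P^{-\bullet},\Lambda[[G]])$ is a strictly perfect complex of right $\Lambda[[G]]$-modules and tensoring along the ring isomorphism $\sharp$ is exact. Whether it is perfect over $\Lambda^{\op}[[H]]$ may be tested after reduction modulo the open ideals of $\Lambda^{\op}[[G]]$, using the identification of the objects of $\cat{SP}^{w_{H}}$ with those of $\cat{PDG}^{\cont,w_{H}}$; and for $P^{n}$ finitely generated projective over $\Lambda[[G]]$ there is a canonical isomorphism
\[
(\Lambda/I)^{\op}[[G/U]]\tensor_{\Lambda^{\op}[[G]]}(\cmplx{P})^{\Mdual}\isomorph\bigl(\Lambda/I[[G/U]]\tensor_{\Lambda[[G]]}\cmplx{P}\bigr)^{\Mdual}
\]
for every $I\in\openideals_{\Lambda}$ and every open subgroup $U\subset H$ normal in $G$. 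So one reduces to the case that $\Lambda$ and $H$ are finite, where $\Lambda[[G]]$ is a skew power series ring over the finite ring $\Lambda[[H]]$, hence noetherian. In that case, by Lemma~\ref{lem:ore set is right and left}\eqref{enum:ore set is right and left:S-torsion}, the hypothesis that $\cmplx{P}$ be perfect over $\Lambda[[H]]$ is equivalent to the cohomology of $\cmplx{P}$ being $S$-torsion; since $\Lambda[[G]]_{S}$ is flat over $\Lambda[[G]]$ on both sides and the cohomology modules of $\cmplx{P}$ are finitely presented over the noetherian ring $\Lambda[[G]]$, the hyper-$\Ext$ spectral sequence shows that $(\cmplx{P})^{\mdual}$ again has $S$-torsion cohomology as a complex of right $\Lambda[[G]]$-modules. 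As $\sharp$ carries $S=S_{\Lambda[[G]]}$ to $S_{\Lambda^{\op}[[G]]}$ (proof of Lemma~\ref{lem:ore set is right and left}), a second application of Lemma~\ref{lem:ore set is right and left}\eqref{enum:ore set is right and left:S-torsion}, now over $\Lambda^{\op}[[G]]$, shows that $(\cmplx{P})^{\Mdual}$ is perfect over $\Lambda^{\op}[[H]]$. The step I expect to need the most care is the passage from the finite quotients back to $\Lambda[[G]]$ when $\Lambda[[H]]$ fails to be noetherian; an alternative that stays at the level of $\Lambda[[G]]$ is to use Lemma~\ref{lem:resolution by strictly perfect LamdaH-complexes} and Proposition~\ref{prop:comparison with strictly perfect LamdaH-complexes} to write every object of $\cat{SP}^{w_{H}}(\Lambda[[G]])$, up to $w_{H}$-equivalence, as a cone $C_{\gamma}(\cmplx{Q})$ assembled from induced modules $\Lambda[[G]]\tensor_{\Lambda[[H]]}\cmplx{Q}$, and to verify the claim on such cones directly, using that $\cat{SP}^{w_{H}}$ is closed under $w_{H}$-equivalence by the two-out-of-three property.

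Granting the key claim, the extension to the other two pairs is formal. Restricting $\Mdual$ to the full subcategory $\cat{SP}^{w_{H}}(\Lambda[[G]])$ yields the first claimed equivalence. Since $w_{H}\cat{SP}(\Lambda[[G]])$ has the same objects, morphisms and cofibrations as $\cat{SP}(\Lambda[[G]])$, with weak equivalences exactly the morphisms whose cone lies in $\cat{SP}^{w_{H}}(\Lambda[[G]])$, and since $\Mdual$ sends the cone of a morphism to a shift of the cone of the dual morphism (shifts not affecting membership in $\cat{SP}^{w_{H}}$) and preserves the subcategory $\cat{SP}^{w_{H}}$, it preserves these weak equivalences; the same holds for the quasi-inverse, so $\Mdual$ extends to the second claimed equivalence $(w_{H}\cat{SP}(\Lambda[[G]]))^{\op}\wto w_{H}\cat{SP}(\Lambda^{\op}[[G]])$.

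For the diagram: combining the canonical isomorphisms $I$ of \eqref{eqn:identification of W and Wop} for the three categories $\cat{SP}^{w_{H}}$, $\cat{SP}$, $w_{H}\cat{SP}$ over $\Lambda[[G]]$ with the maps on $\KTh$-groups induced by the three exact equivalences just produced, and with the identifications $\KTh_{n}(\cat{SP}^{w_{H}})\isomorph\KTh_{n}(\cat{PDG}^{\cont,w_{H}})$ and $\KTh_{n}(w_{H}\cat{SP})\isomorph\KTh_{n}(w_{H}\cat{PDG}^{\cont})$ coming from the approximation theorem and \cite[Prop.~3.7]{Witte:MCVarFF}, one obtains the three vertical arrows; the leftmost is \eqref{eqn:def of Mdual on K-groups} for $\Lambda[[G]]$. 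The left-hand square commutes because all three verticals come from the single functor $\Mdual$ and are compatible with the structure functors $\cat{SP}^{w_{H}}(\Lambda[[G]])\hookrightarrow\cat{SP}(\Lambda[[G]])$ and $\cat{SP}(\Lambda[[G]])\to w_{H}\cat{SP}(\Lambda[[G]])$, and $I$ is natural. The right-hand square, involving the boundary homomorphism $d$ of the localisation sequence \cite[Thm.~A.5]{Witte:MCVarFF}, commutes by the commutativity of $I$ with $d$ established above, post-composed with the exact equivalence $\Mdual\colon(w_{H}\cat{SP}(\Lambda[[G]]))^{\op}\wto w_{H}\cat{SP}(\Lambda^{\op}[[G]])$, which as a map of localisation pairs sends acyclics to acyclics and hence commutes with $d$ by naturality. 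Exactness of the rows, including the leftmost $0$, is the splitting of the $\KTh$-theory localisation sequence \cite[Cor.~3.3]{Witte:Splitting}.
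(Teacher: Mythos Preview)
Your proof is correct and follows essentially the same strategy as the paper's. Two minor points of comparison: where you invoke the hyper-$\Ext$ spectral sequence to show that $(\cmplx{P})^{\mdual}$ has $S$-torsion cohomology, the paper argues more directly via the isomorphism
\[
(\Lambda[[G]]_S)^{\op}\tensor_{\Lambda[[G]]^{\op}}(\cmplx{P})^{\mdual_{\Lambda[[G]]}}\isomorph(\Lambda[[G]]_S\tensor_{\Lambda[[G]]}\cmplx{P})^{\mdual_{\Lambda[[G]]_S}},
\]
whose right-hand side is the $\Lambda[[G]]_S$-dual of an acyclic strictly perfect complex and hence acyclic. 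And for the reduction step you flag as delicate---passing from finite quotients back to $\Lambda[[G]]$---the paper simply cites \cite[Prop.~4.8]{Witte:MCVarFF} (perfectness over $\Lambda^{\op}[[H]]$ can be checked after tensoring with $(\Lambda/\Jac(\Lambda))^{\op}[[G/V]]$ for $V$ an open normal pro-$p$ subgroup of $H$), together with Lemma~\ref{lem:Mdual and ringtransf} for the compatibility of $\Mdual$ with this reduction; your alternative via $C_{\gamma}$ would also work but is not needed.
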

\begin{proof}
The exactness of the rows follows from \cite[Cor.~3.3]{Witte:Splitting}.
To extend $\Mdual$, it suffices to show that for any strictly perfect complex $\cmplx{P}$ of $\Lambda[[G]]$-modules which is also perfect as complex of $\Lambda[[H]]$-modules, the complex $(\cmplx{P})^\Mdual$ is perfect as complex of $\Lambda^{\op}[[H]]$-modules. By \cite[Prop. 4.8]{Witte:MCVarFF} we may check this after tensoring with $(\Lambda/\Jac(\Lambda))^{\op}[[G/V]]$ with $V\subset G$ a closed normal pro-$p$-subgroup which is open in $H$. Using Lemma~\ref{lem:Mdual and ringtransf}, we may therefore assume that $\Lambda$ and $H$ are finite.

By Lemma~\ref{lem:ore set is right and left}, $S\coloneqq S_{\Lambda[[G]]}\subset\Lambda[[G]]$ is a left and right denominator set and $\sharp$ maps $S_{\Lambda[[G]]}$ to the set $S_{\Lambda^{\op}[[G]]}\subset\Lambda^{\op}[[G]]$. Moreover $(\cmplx{P})^\Mdual$ is perfect as complex of $\Lambda^\op[[H]]$-modules if and only if its cohomology is $S_{\Lambda^{\op}[[G]]}$-torsion.

As $\cmplx{P}$ has $S_{\Lambda[[G]]}$-torsion cohomology and as
\[
 (\Lambda[[G]]_S)^\op\tensor_{\Lambda[[G]]^\op}(\cmplx{P})^{\mdual_{\Lambda[[G]]}}\isomorph (\Lambda[[G]]_S\tensor_{\Lambda[[G]]}\cmplx{P})^{\mdual_{\Lambda[[G]]_S}},
\]
we conclude that $(\cmplx{P})^\Mdual$ is indeed perfect as complex of $\Lambda^\op[[H]]$-modules.
\end{proof}

We may also extend $\Mdual$ to the Waldhausen category $\cat{SP}(\Lambda[[H]],G)$ from Definition~\ref{defn:SP LambdaH G}.

\begin{defn}
For any $\cmplx{P}$ in $\cat{SP}(\Lambda[[H]],G)^{\op}$ we set
\[
 (\cmplx{P})^\Mdual\coloneqq\Lambda^\op[[H]]^\sharp\tensor_{\Lambda[[H]]^\op}(\cmplx{P})^{\mdual_{\Lambda[[H]]}}[-1].
\]
and define a left action of $\gamma\in\Gamma$ on $(\cmplx{P})^\Mdual$ by
\[
 \gamma\colon(\cmplx{P})^\Mdual\mto(\cmplx{P})^\Mdual,\qquad \lambda\tensor f\mapsto \gamma\lambda\gamma^{-1}\tensor \gamma f(\gamma^{-1}\cdot)\gamma^{-1}.
\]
\end{defn}

One checks that $(\cmplx{P})^\Mdual$ is indeed an object of $\cat{SP}(\Lambda^\op[[H]],G)$, such that we obtain a Waldhausen exact equivalence of categories
\[
 \Mdual\colon \cat{SP}(\Lambda[[H]],G)^\op\mto \cat{SP}(\Lambda^\op[[H]],G).
\]

\begin{prop}\label{prop:compatibility of the LambdaH dual}
Let $\gamma\in\Gamma$ be a topological generator. Then for any $\cmplx{P}$ in $\cat{SP}(\Lambda[[H]],G)^{\op}$ we have a commutative diagram
\[
 \xymatrix{
 \Lambda^\op[[G]]\tensor_{\Lambda^\op[[H]]}(\cmplx{P})^\Mdual[1]\ar[d]^{\alpha}_{\isomorph}\ar[rr]^{\id-\cdot\gamma^{-1}\tensor\gamma}&&\Lambda^\op[[G]]\tensor_{\Lambda^\op[[H]]}(\cmplx{P})^\Mdual[1]\ar[d]^{\alpha}_{\isomorph}\\
 (\Lambda[[G]]\tensor_{\Lambda[[H]]}\cmplx{P})^\Mdual\ar[rr]^{\id-(\cdot\gamma\tensor\gamma^{-1}\cdot)^{\Mdual}}&&(\Lambda[[G]]\tensor_{\Lambda[[H]]}\cmplx{P})^\Mdual
 }
\]
in $\cat{SP}(\Lambda^{op}[[G]])$ inducing a canonical isomorphism
\[
 C_{\gamma^{-1}}(\cmplx{P})^{\Mdual}\isomorph C_{\gamma}((\cmplx{P})^{\Mdual}).
\]
In particular, we have
\[
 [\cmplx{P}]^\Mdual=[(\cmplx{P})^\Mdual]
\]
in $\KTh_0(\Lambda^{\op}[[G]],S)$.
\end{prop}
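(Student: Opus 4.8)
The plan is to obtain the result in three steps: (i) construct the comparison isomorphism $\alpha$ as a ``dual of an induced complex equals induced dual complex'' identity; (ii) verify that $\alpha$ intertwines the two displayed twist operators; (iii) deduce the isomorphism of cones and, from it, the $\KTh_0$-identity.

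For step (i), write $u_\gamma$ for the automorphism $\lambda\tensor p\mapsto\lambda\gamma^{-1}\tensor\gamma p$ of $\Lambda[[G]]\tensor_{\Lambda[[H]]}\cmplx{P}$, so that $C_\gamma(\cmplx{P})=\Cone(\id-u_\gamma)$; its inverse is $u_{\gamma^{-1}}=(\cdot\gamma\tensor\gamma^{-1}\cdot)$, whence also $C_{\gamma^{-1}}(\cmplx{P})=\Cone(\id-u_\gamma^{-1})$. Since $\cmplx{P}$ is strictly perfect as a complex of $\Lambda[[H]]$-modules and $\Lambda[[G]]$ is topologically free over $\Lambda[[H]]$ on either side, I would combine the adjunction isomorphism $\Hom_{\Lambda[[G]]}(\Lambda[[G]]\tensor_{\Lambda[[H]]}\cmplx{P},\Lambda[[G]])\isomorph\Hom_{\Lambda[[H]]}(\cmplx{P},\Lambda[[G]])$ with the identification $\Hom_{\Lambda[[H]]}(\cmplx{P},\Lambda[[G]])\isomorph\Lambda[[G]]\ctensor_{\Lambda[[H]]}\Hom_{\Lambda[[H]]}(\cmplx{P},\Lambda[[H]])$ and the ring change along $\sharp$; after accounting for the shift $[-1]$ built into the definition of $(\cmplx{P})^\Mdual$ in $\cat{SP}(\Lambda^\op[[H]],G)$, this produces a natural isomorphism
\[
(\Lambda[[G]]\tensor_{\Lambda[[H]]}\cmplx{P})^\Mdual\isomorph\Lambda^\op[[G]]\tensor_{\Lambda^\op[[H]]}(\cmplx{P})^\Mdual[1]
\]
of strictly perfect complexes of $\Lambda^\op[[G]]$-modules (the $\Mdual$ on the left being the functor from Definition~\ref{defn:Mdual} on $\cat{SP}(\Lambda[[G]])$), whose inverse will be the map $\alpha$ of the statement.

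For step (ii), I would trace $u_\gamma$ through $\alpha$. The operator $u_\gamma$ is the composite of right multiplication by $\gamma^{-1}$ on $\Lambda[[G]]$ with the $\gamma$-action on $\cmplx{P}$; passing to $\Lambda[[G]]$-linear duals and applying $\sharp$ turns right multiplication by $\gamma^{-1}$ into right multiplication by $(\gamma^{-1})^\sharp=\gamma$ on $\Lambda^\op[[G]]$, and turns the $\gamma$-action on $\cmplx{P}$ into its adjoint, which is by the very choice of the twisted $\Gamma$-action the $\gamma^{-1}$-action on $(\cmplx{P})^\Mdual$. A routine but careful computation then shows that the square in the statement commutes, i.e.\ that $\alpha$ carries $\id-(\cdot\gamma\tensor\gamma^{-1}\cdot)^\Mdual=(\id-u_\gamma^{-1})^\Mdual$ on $(\Lambda[[G]]\tensor_{\Lambda[[H]]}\cmplx{P})^\Mdual$ to the shift by $[1]$ of $\id-(\cdot\gamma^{-1}\tensor\gamma\cdot)$ on $\Lambda^\op[[G]]\tensor_{\Lambda^\op[[H]]}(\cmplx{P})^\Mdual$. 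I expect this to be the main obstacle: the flip $\gamma\leftrightarrow\gamma^{-1}$ forced by the contravariance of duality, the passage to $\Lambda^\op$, the $\sharp$-twist, and the completed tensor products all have to be kept straight at once, and this step is precisely where the twisted $\Gamma$-action in the definition of $(\cmplx{P})^\Mdual$ is pinned down.

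Granting the commutative square, step (iii) is formal. Taking cones (which commute with shifts) gives $C_\gamma((\cmplx{P})^\Mdual)[1]\isomorph\Cone\bigl((\id-u_\gamma^{-1})^\Mdual\bigr)$, while contravariant exactness of $\Mdual$ gives $\Cone(\id-u_\gamma^{-1})^\Mdual\isomorph\Cone\bigl((\id-u_\gamma^{-1})^\Mdual\bigr)[-1]$, so that $\Cone\bigl((\id-u_\gamma^{-1})^\Mdual\bigr)\isomorph C_{\gamma^{-1}}(\cmplx{P})^\Mdual[1]$; cancelling the shift yields the asserted isomorphism $C_{\gamma^{-1}}(\cmplx{P})^\Mdual\isomorph C_\gamma((\cmplx{P})^\Mdual)$ in $\cat{SP}^{w_H}(\Lambda^\op[[G]])$. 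For the final identity I would then compute in $\KTh_0(\Lambda^\op[[G]],S)$
\[
[\cmplx{P}]^\Mdual=[C_{\gamma^{-1}}(\cmplx{P})]^\Mdual=[C_{\gamma^{-1}}(\cmplx{P})^\Mdual]=[C_\gamma((\cmplx{P})^\Mdual)]=[(\cmplx{P})^\Mdual],
\]
using successively that $[\cmplx{P}]=[C_{\gamma^{-1}}(\cmplx{P})]$ is independent of the chosen topological generator by Proposition~\ref{prop:comparison with strictly perfect LamdaH-complexes}, that $\Mdual$ on $\KTh_0$ is induced by the functor $\Mdual$ and is compatible with the localisation sequence by Proposition~\ref{prop:main abstract duality}, the isomorphism just established, and the definition of $[\,\cdot\,]$ for objects of $\cat{SP}(\Lambda^\op[[H]],G)$.
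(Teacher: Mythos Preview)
Your proposal is correct and follows essentially the same route as the paper's proof. The only notable presentational difference is that the paper writes down $\alpha$ by an explicit formula---for $f\in (P^n)^{\mdual_{\Lambda[[H]]}}$ one extends to $\tilde f\colon\Lambda[[G]]\tensor_{\Lambda[[H]]}P^n\to\Lambda[[G]]$ by $\lambda\tensor p\mapsto\lambda f(p)$ and then sets $\alpha(\lambda\tensor\mu\tensor f)=\lambda\tensor\tilde f\,\mu^{\sharp}$---whereas you assemble $\alpha$ from the adjunction and base-change isomorphisms; these are the same map, and the explicit formula makes your step (ii) a direct one-line check rather than a heuristic trace. Your step (iii) and the final $\KTh_0$ chain match the paper's (the paper phrases the bottom-row cone as the $\Mdual$-dual of the cocone $C_{\gamma^{-1}}(\cmplx{P})[-1]$, which is exactly your shift bookkeeping), and your reorganisation $[\cmplx{P}]^\Mdual=[C_{\gamma^{-1}}(\cmplx{P})]^\Mdual=\ldots$ is a harmless variant of the paper's $[\cmplx{P}]^\Mdual=[C_{\gamma}(\cmplx{P})^{\Mdual}]=[C_{\gamma^{-1}}(\cmplx{P})^{\Mdual}]=\ldots$, both resting on Proposition~\ref{prop:comparison with strictly perfect LamdaH-complexes}.
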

\begin{proof}
For any degree $n$ and any $f\in (P^n)^{\mdual_{\Lambda[[H]]}}$, we write
\[
 \tilde{f}\colon \Lambda[[G]]\tensor_{\Lambda[[H]]}P^n\mto\Lambda[[G]],\qquad \lambda\tensor p\mapsto \lambda f(p).
\]
We then set
\[
 \xymatrix{
 \Lambda^\op[[G]]\tensor_{\Lambda^\op[[H]]}\Lambda^\op[[H]]^\sharp\tensor_{\Lambda[[H]]^\op}(P^n)^{\mdual_{\Lambda[[H]]}}\ar[d]^\isomorph_{\alpha}& \lambda\tensor\mu\tensor f \ar@{|->}[d]\\
 \Lambda^\op[[G]]^\sharp\tensor_{\Lambda[[G]]^\op}(\Lambda[[G]]\tensor_{\Lambda[[H]]}P^n)^{\mdual_{\Lambda[[G]]}} &\lambda\tensor\tilde{f}\mu^{\sharp}\\
 }
\]
It is then straightforward to check that the above diagram commutes. The cone of the first row is $C_{\gamma}((\cmplx{P})^{\Mdual})[1]$, the cone of the second row is the $\Mdual$-dual of the cocone of
\[
 \Lambda[[G]]\tensor_{\Lambda[[H]]}\cmplx{P}\xrightarrow{\id-(\cdot\gamma\tensor\gamma^{-1}\cdot)}\Lambda[[G]]\tensor_{\Lambda[[H]]}\cmplx{P}
\]
in $\cat{SP}^{w_H}(\Lambda[[G]])$, which is the same as $C_{\gamma^{-1}}(\cmplx{P})[-1]$. Finally, recall from Proposition~\ref{prop:comparison with strictly perfect LamdaH-complexes} that the class of $C_{\gamma}(\cmplx{P})$ in $\KTh_0(\Lambda[[G]],S)$ is independent of the choice of the topological generator $\gamma$. Hence,
\[
 [\cmplx{P}]^\Mdual=[C_{\gamma}(\cmplx{P})^{\Mdual}]=[C_{\gamma^{-1}}(\cmplx{P})^{\Mdual}]=[C_{\gamma}((\cmplx{P})^{\Mdual})]=[(\cmplx{P})^\Mdual].
\]
\end{proof}

\section{Non-commutative algebraic \texorpdfstring{$L$}{L}-functions}\label{sec:algebraic Lfunctions}

Let $G=H\rtimes \Gamma$ as before. Recall the split exact sequence
\[
0\mto \KTh_1(\Lambda[[G]])\mto \KTh_1(\Lambda[[G]]_S)\xrightarrow{\bh} \KTh_0(\Lambda[[G]],S)\mto 0.
\]
\cite[Cor.~3.4]{Witte:Splitting}, which is central for the formulation of the non-commutative main conjecture: The map $\KTh_1(\Lambda[[G]])\mto \KTh_1(\Lambda[[G]]_S)$ is the obvious one; the boundary map
\[
\bh\colon\KTh_1(\Lambda[[G]]_S)\mto\KTh_0(\Lambda[[G]],S)
\]
on the class $[f]$ of an endomorphism $f$ which is a weak equivalence in the Waldhausen category $w_H\cat{PDG}^{\cont}(\Lambda[[G]])$ is given by
\[
\bh[f]=-[\cmplx{\Cone(f)}]
\]
where $\cmplx{\Cone(f)}$ denotes the cone of $f$ \cite[Thm.~A.5]{Witte:MCVarFF}. (Note that other authors use $-\bh$ instead.) For a fixed choice of a topological generator $\gamma\in\Gamma$, a splitting $s_\gamma$ of $\bh$ is given by
\begin{equation}\label{eqn:splitting of del}
s_\gamma([\cmplx{P}])\coloneqq[\Lambda[[G]]\ctensor_{\Lambda[[H]]}\cmplx{P}\xrightarrow{x\ctensor y\mapsto x\ctensor y-x\gamma^{-1}\ctensor\gamma y}\Lambda[[G]]\ctensor_{\Lambda[[H]]}\cmplx{P}]^{-1}
\end{equation}
for any $\cmplx{P}$ in $\cat{PDG}^{\cont,w_H}(\Lambda[[G]])$, where the precise definition of $\Lambda[[G]]\ctensor_{\Lambda[[H]]}\cmplx{P}$ as an object of the Waldhausen category $w_H\cat{PDG}^{\cont}(\Lambda[[G]])$ is
\[
\Lambda[[G]]\ctensor_{\Lambda[[H]]}\cmplx{P}=(\varprojlim_{J\in\openideals_{\Lambda[[G]]}}\Lambda[[G]]/I\otimes_{\Lambda[[H]]}\cmplx{P}_J)_{I\in\openideals_{\Lambda[[G]]}}
\]
\cite[Def.~2.12]{Witte:Splitting}. A short inspection of the definition shows that $s_\gamma$ only depends on the image of $\gamma$ in $G/H$. Following \cite{Burns:AlgebraicLfunctions}, we may call $s_\gamma(-A)$ the \emph{non-commutative algebraic $L$-function} of $A\in \KTh_0(\Lambda[[G]],S)$.

\begin{prop}\label{prop:transformation for algebraic L-function}
Consider an element $A\in \KTh_0(\Lambda[[G]],S)$.
\begin{enumerate}
\item Let $\Lambda'$ be another adic $\Int_{p}$-algebra.
For any complex $\cmplx{P}$ of $\Lambda'$-$\Lambda[[G]]$-bimodules which is strictly perfect as complex of $\Lambda'$-modules we have
\[
\ringtransf_{\cmplx{P[[G]]^{\delta}}}(s_\gamma(A))=s_\gamma(\ringtransf_{\cmplx{P[[G]]^{\delta}}}(A))
\]
in $\KTh_1(\Lambda'[[G]]_S)$.

\item Let $G'=H'\rtimes \Gamma'$ such that $H'$ has an open, topologically finitely generated pro-$p$-subgroup and $\Gamma'\isomorph\Int_p$. Assume that $\alpha\colon G\mto G'$ is a continuous homomorphism such that $\alpha(G)\not\subset H'$. Set $r\coloneqq [G': \alpha(G)H']$. Let $\gamma'\in \Gamma'$ be a topological generator such that $\alpha(\gamma)=(\gamma')^r$ in $G'/H'$. Then
$$
\ringtransf_{\Lambda[[G']]}(s_\gamma(A))=s_{\gamma'}(\ringtransf_{\Lambda[[G']]}(A))
$$
in $\KTh_1(\Lambda[[G']]_S)$.

\item Assume that $G'$ is an open subgroup of $G$ and set $H'\coloneqq H\cap G'$, $r\coloneqq [G: G'H]$. Consider $\Lambda[[G]]$ as a $\Lambda[[G']]$-$\Lambda[[G]]$-bimodule. Then $\gamma^r$ generates $G'/H'\subset G/H$ and
   $$
   \ringtransf_{\Lambda[[G]]}(s_\gamma(A))=s_{\gamma^r}(\ringtransf_{\Lambda[[G]]}(A))
  $$
   in $\KTh_1(\Lambda[[G]]_S)$.
\end{enumerate}
\end{prop}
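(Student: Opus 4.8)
The strategy is uniform. All maps in sight --- $s_\gamma$, $s_{\gamma'}$ and the homomorphism induced on $\KTh_1$ by the ring change functor $\ringtransf_{\cmplx{K}}$ (with $\cmplx{K}=\cmplx{P[[G]]^\delta}$, $\Lambda[[G']]$, resp.\ $\Lambda[[G]]$) --- are group homomorphisms, so it suffices to verify each identity on a generating set of $\KTh_0(\Lambda[[G]],S)$. I would use the classes $[\cmplx{P}]$ with $\cmplx{P}$ a strictly perfect complex of $\Lambda[[G]]$-modules that is moreover perfect as a complex of $\Lambda[[H]]$-modules; these generate $\KTh_0(\cat{SP}^{w_H}(\Lambda[[G]]))\isomorph\KTh_0(\Lambda[[G]],S)$. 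For such $\cmplx{P}$, regarded as an object of $\cat{PDG}^{\cont,w_H}(\Lambda[[G]])$, the defining formula \eqref{eqn:splitting of del} exhibits $s_\gamma([\cmplx{P}])$ as the inverse in $\KTh_1(\Lambda[[G]]_S)$ of the class of the weak self-equivalence
\[
\psi^{\cmplx{P}}_\gamma\colon\Lambda[[G]]\ctensor_{\Lambda[[H]]}\cmplx{P}\mto\Lambda[[G]]\ctensor_{\Lambda[[H]]}\cmplx{P},\qquad x\ctensor y\mapsto x\ctensor y-x\gamma^{-1}\ctensor\gamma y,
\]
in $w_H\cat{PDG}^{\cont}(\Lambda[[G]])$. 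Applying $\ringtransf_{\cmplx{K}}$ turns this into the weak self-equivalence $\ringtransf_{\cmplx{K}}(\psi^{\cmplx{P}}_\gamma)\lcirclearrowright\ringtransf_{\cmplx{K}}(\Lambda[[G]]\ctensor_{\Lambda[[H]]}\cmplx{P})$, whose class is $\ringtransf_{\cmplx{K}}(s_\gamma([\cmplx{P}]))^{-1}$; so the whole proposition reduces to identifying this class with the class of $\psi^{\ringtransf_{\cmplx{K}}(\cmplx{P})}_{\gamma'}$, i.e.\ with $s_{\gamma'}(\ringtransf_{\cmplx{K}}([\cmplx{P}]))^{-1}$.

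Part (1) is essentially formal. Restricting the diagonal right $G$-action on $\cmplx{K}=\cmplx{P[[G]]^\delta}=\Lambda'[[G]]\tensor_{\Lambda'}\cmplx{P}$ to the subgroup $H$ yields a canonical isomorphism $\cmplx{P[[G]]^\delta}\isomorph\Lambda'[[G]]\tensor_{\Lambda'[[H]]}\cmplx{P[[H]]^\delta}$ of $\Lambda'[[G]]$-$\Lambda[[H]]$-bimodules. Using the associativity of the completed tensor products this turns $\ringtransf_{\cmplx{P[[G]]^\delta}}(\Lambda[[G]]\ctensor_{\Lambda[[H]]}\cmplx{P})$ into $\Lambda'[[G]]\ctensor_{\Lambda'[[H]]}\ringtransf_{\cmplx{P[[G]]^\delta}}(\cmplx{P})$, and since the diagonal right action of $\gamma$ on $\cmplx{P[[G]]^\delta}$ is exactly the $\Lambda'[[G]]$-module structure that $\ringtransf_{\cmplx{P[[G]]^\delta}}(\cmplx{P})$ inherits, the operator $\ringtransf_{\cmplx{P[[G]]^\delta}}(\psi^{\cmplx{P}}_\gamma)$ is carried to $\psi^{\ringtransf_{\cmplx{P[[G]]^\delta}}(\cmplx{P})}_\gamma$ on the nose; this gives the identity, with $\gamma'=\gamma$.

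For parts (2) and (3) this clean picture breaks down: restricting $\Lambda[[G]]\ctensor_{\Lambda[[H]]}\cmplx{P}$ to $\Lambda[[G']]$ (resp.\ pulling back along $\alpha$) gives, by Mackey's formula, a direct sum of $r$ copies --- up to twisting by powers of $\gamma$ --- of $\Lambda[[G']]\ctensor_{\Lambda[[H']]}\ringtransf_{\cmplx{K}}(\cmplx{P})$, indexed by the cosets of $\Gamma'$ in $\Gamma$, where $r=[G:G'H]$ resp.\ $r=[G':\alpha(G)H']$, and $\ringtransf_{\cmplx{K}}(\psi^{\cmplx{P}}_\gamma)$ acts on this sum as the identity minus a cyclic shift of the $r$ summands. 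For part (3) I would compute directly with this block decomposition: choosing the topological splitting so that $\gamma^{r}$ generates $\Gamma'$, the $r$-fold composite of the shift part around the cycle is exactly the operator $x\ctensor y\mapsto x\gamma^{-r}\ctensor\gamma^{r}y$ on a single summand, so by the standard block-matrix identity for $\KTh_1$ (the same manipulation that appears in the proof of Proposition~\ref{prop:compatibility of the LambdaH dual}, and in the classical comparison of $\Ind$ and $\Res$) the class of $\ringtransf_{\Lambda[[G]]}(\psi^{\cmplx{P}}_\gamma)$ equals that of $\psi^{\cmplx{P}|_{G'}}_{\gamma^{r}}$. Part (2) I would then deduce by factoring $\alpha$, via Example~\ref{exmpl:group homomorphisms}, as $G\xrightarrow{\alpha_0}\alpha(G)H'\hookrightarrow G'$, where $\alpha_0$ maps $H$ into $H'$ and induces an isomorphism $G/H\isomorph\alpha(G)H'/H'$ so that the argument of part~(1) applies, while the inclusion is an instance of part (3) with the same index $r$; the relation $\alpha(\gamma)\equiv(\gamma')^{r}\bmod H'$ makes the two operators match up.

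The main obstacle is the block-matrix computation underlying part (3): one has to verify carefully that restricting the weak self-equivalence $\psi^{\cmplx{P}}_\gamma$ along the finite free extension $\Lambda[[G']]\hookrightarrow\Lambda[[G]]$ and rewriting it on the coset decomposition really produces, up to a permutation conjugacy (trivial in $\KTh_1$), an operator whose class coincides with that of $\psi^{\cmplx{P}|_{G'}}_{\gamma^{r}}$ --- and that all of this takes place inside the Waldhausen category $w_{H'}\cat{PDG}^{\cont}(\Lambda[[G']])$, compatibly with the inverse systems over open ideals and the completed tensor products defining its objects and the functor $\ringtransf_{\cmplx{K}}$. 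Conceptually nothing here is deep; the work is in the bookkeeping with the index $r$, the twisted Mackey summands and the completed tensor products, and in carrying it out in the correct categories rather than merely in the derived category.
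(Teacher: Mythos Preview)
Your overall strategy --- check the identity on generators of $\KTh_0(\Lambda[[G]],S)$, unpack $s_\gamma$ via the explicit weak self-equivalence $\psi_\gamma$, and carry out a block-matrix computation for the index-$r$ step --- matches the paper's approach in spirit. The paper, however, organises the work differently: it cites \cite[Prop.~2.14]{Witte:Splitting} for part (1), for the $r=1$ case of part (3), and for the case of part (2) where $\alpha$ induces an isomorphism $G/H\isomorph G'/H'$; the only explicit matrix manipulation it performs is for the \emph{induction} step $G''\hookrightarrow G'$ (with $H''=H'$, $\Gamma''=(\Gamma')^r$) that occurs in part (2).

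This is where your proposal has a genuine gap. In your factorisation $G\xrightarrow{\alpha_0}\alpha(G)H'\hookrightarrow G'$, the second arrow corresponds to the $\Lambda[[G']]$-$\Lambda[[\alpha(G)H']]$-bimodule $\Lambda[[G']]$, i.e.\ \emph{induction} from the smaller group to the larger one. Part (3), by contrast, concerns $\Lambda[[G]]$ as a $\Lambda[[G']]$-$\Lambda[[G]]$-bimodule, i.e.\ \emph{restriction} to an open subgroup. These are different functors, and your Mackey-style block computation for restriction does not transfer to induction; a separate (though structurally similar) matrix argument is needed, and that is exactly the computation the paper writes out in its treatment of (2). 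So your plan to reduce (2) to (3) does not work as stated.

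A second point: your assertion that $\alpha_0$ maps $H$ into $H'$ is not automatic from $\alpha(G)\not\subset H'$ alone (consider $G=\Int_p^2$ with the two obvious choices of $H$ and $H'$), and the paper is careful here --- it uses that $s_\gamma$ depends only on the image of $\gamma$ in $G/H$ together with the external input from \cite[Prop.~2.14.1]{Witte:Splitting} to handle the map $G\to G''$. If you want to avoid that citation you will need to supply an argument at this step as well.
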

\begin{proof}
For (1), we first note that by applying the Waldhausen additivity theorem \cite[Prop. 1.3.2]{Wal:AlgKTheo} to the short exact sequences resulting from stupid truncation, we have
\[
\ringtransf_{\cmplx{P[[G]]^{\delta}}}=\sum_{i\in\Int}(-1)^i\ringtransf_{P^i[[G]]^{\delta}}
\]
as homomorphisms between the $\KTh$-groups. Hence we may assume that $P=\cmplx{P}$ is concentrated in degree $0$.
We now apply \cite[Prop 2.14.1]{Witte:Splitting} to the $\Lambda'[[G]]$-$\Lambda[[G]]$-bimodule $M\coloneqq P[[G]]^{\delta}$ and its
$\Lambda'[[H]]$-$\Lambda[[H]]$-sub-bimodule
\[
N\coloneqq \Lambda'[[H]]\tensor_{\Lambda'}P
\]
(with the diagonal right action of $H$) and $t_1\coloneqq t_2\coloneqq\gamma-1$, $\gamma_1\coloneqq \gamma_2\coloneqq\gamma$.

For (2), we first assume that $\alpha$ induces an isomorphism $G/H\isomorph G/H'$ and that $\gamma'=\alpha(\gamma)$. We then apply \cite[Prop 2.14.1]{Witte:Splitting} to $M\coloneqq\Lambda[[G']]$, $N\coloneqq\Lambda[[H']]$, and $t_1\coloneqq\gamma-1$, $t_2\coloneqq\alpha(\gamma)-1$, $\gamma_1\coloneqq\gamma$, $\gamma_2\coloneqq\alpha(\gamma)$.

Next, we assume that $G\subset G'$, $H=H'$, and $\gamma=(\gamma')^r$. This case is not covered by \cite[Prop 2.14]{Witte:Splitting} and therefore, we will give more details. Consider the isomorphism of $\Lambda[[G']]$-$\Lambda[[G]]$-bimodules
\begin{equation*}
\begin{split}
 \kappa\colon \Lambda[[G']]\ctensor_{\Lambda[[H]]}\Lambda[[G]]^r&\mto\Lambda[[G']]\ctensor_{\Lambda[[H]]}\Lambda[[G']],\\
 \mu\ctensor\begin{pmatrix} \lambda_0 \\ \vdots \\ \lambda_{r-1} \end{pmatrix}&\mapsto \sum_{i=0}^{r-1}\mu(\gamma')^{-i}\ctensor(\gamma')^i(\lambda_i).
\end{split}
\end{equation*}
Then the map $\mu\ctensor\lambda\mapsto \mu\ctensor\lambda-\mu(\gamma')^{-1}\ctensor\gamma'\lambda$ on the righthand side corresponds to left multiplication with the matrix
\[
 A\coloneqq\begin{pmatrix}
   \id     & 0      & \hdots & 0     &  -(\cdot\gamma^{-1})\ctensor(\gamma\cdot) \\
  -\id     & \id    & \ddots &\vdots & 0  \\
    0      & \ddots & \ddots &  0    & \vdots  \\
    \vdots & \ddots & \ddots & \id   &  0 \\
    0      & \hdots &   0    & -\id  & \id
 \end{pmatrix}
\]
on the left-hand side. Let $\cmplx{P}$ be a complex in $\cat{PDG}^{\cont,w_H}(\Lambda[[G]])$. Then $\kappa$ induces an isomorphism
\[
 \kappa\colon \ringtransf_{\Lambda[[G']]}(\Lambda[[G]]\ctensor_{\Lambda[[H]]}(\cmplx{P})^r)\mto \Lambda[[G']]\ctensor_{\Lambda[[H]]}\ringtransf_{\Lambda[[G']]}(\cmplx{P})
\]
in $w_H\cat{PDG}^{\cont}(\Lambda[[G']])$ while $A\lcirclearrowright \ringtransf_{\Lambda[[G']]}(\Lambda[[G]]\ctensor_{\Lambda[[H]]}(\cmplx{P})^r)$ is a weak equivalence. Hence,
\[
[A]^{-1}=s_{\gamma'}([\ringtransf_{\Lambda[[G']]}(\cmplx{P})])
\]
in $\KTh_1(\Lambda[[G']]_S)$. Moreover,
\[
\begin{pmatrix}
   \id     & 0      & \hdots & \hdots& 0 \\
   \id     & \id    & \ddots &       & \vdots  \\
    \vdots & \ddots & \ddots &\ddots & \vdots  \\
    \vdots &        & \ddots & \id   &  0 \\
   \id     & \hdots & \hdots & \id   & \id
 \end{pmatrix}A=
 \begin{pmatrix}
   \id     & 0      & \hdots & 0     &   -(\cdot\gamma^{-1})\ctensor(\gamma\cdot)\\
    0      & \id    & \ddots &\vdots &  -(\cdot\gamma^{-1})\ctensor(\gamma\cdot) \\
    \vdots & \ddots & \ddots &  0    & \vdots  \\
    \vdots &        & \ddots & \id   &  -(\cdot\gamma^{-1})\ctensor(\gamma\cdot) \\
    0      & \hdots & \hdots & 0  & \id-(\cdot\gamma^{-1})\ctensor(\gamma\cdot)
 \end{pmatrix}.
\]
The relations (R1)--(R7) in the definition of $\D_\bullet(\cat{W})$ imply that the class of a triangular matrix is the product of the classes of its diagonal entries in $\KTh_1(\Lambda[[G']]_S)$. Hence, $[A]^{-1}=\ringtransf_{\Lambda[[G']]}(s_\gamma[\cmplx{P}])$, as desired.

In the general case, we note that the image of $\alpha$ is contained in the  subgroup  $G''$ of $G'$ topologically generated by $(\gamma')^r$ and $H'$ and recall that $s_\gamma$ only depends on the image of $\gamma$ in $G/H$. We are then reduced to the two cases already treated above.

For (3), we first treat the case $r=1$, i.\,e.\ $G'\mto G/H$ is a surjection. Hence, we may assume $\gamma\in G'$. We then apply \cite[Prop 2.14.1]{Witte:Splitting} to $M\coloneqq\Lambda[[G]]$, $N\coloneqq\Lambda[[H]]$, and $t_1\coloneqq t_2\coloneqq\gamma-1$, $\gamma_1\coloneqq \gamma_2\coloneqq\gamma$ as above. If $r>1$ we can thus reduce to the case that $G'$ is topologically generated by $H$ and $\gamma^r$ and apply \cite[Prop 2.14.2]{Witte:Splitting}.

In \cite{Witte:Splitting}, we use a slightly different Waldhausen category for the construction of the $\KTh$-theory of $\Lambda[[G]]$, but the proof of \cite[Prop 2.14]{Witte:Splitting} goes through without changes.
\end{proof}

\begin{exmpl}\label{exmpl:algebraic L-functions}\
\begin{enumerate}
 \item Assume that $M$ is a $\Lambda[[G]]$-module which is finitely generated and projective as a $\Lambda[[H]]$-module. Then the complex
\[
 C_{\gamma}(M)\colon\qquad\underbrace{\Lambda[[G]]\tensor_{\Lambda[[H]]}M}_{\text{degree $-1$}}\xrightarrow{\id-(\cdot\gamma^{-1}\tensor\gamma\cdot)}\underbrace{\Lambda[[G]]\tensor_{\Lambda[[H]]}M}_{\text{degree $0$}}
\]
is an object of $\cat{PDG}^{\cont,w_H}(\Lambda[[G]])$ whose cohomology is $M$ in degree $0$ and zero otherwise. Moreover,
\[
 s_\gamma([M])=s_{\gamma}([C_{\gamma}(M)])=[\id-(\cdot\gamma^{-1}\tensor\gamma\cdot)\lcirclearrowright\Lambda[[G]]\tensor_{\Lambda[[H]]}M]^{-1}
\]
in $\KTh_1(\Lambda[[G]]_S)$. If $\Lambda[[G]]$ is commutative, then the image of the element $s_{\gamma}([M])^{-1}$ under
\[
\det\colon \KTh_1(\Lambda[[G]]_S)\mto\Lambda[[G]]_S^\times
\]
 is precisely the reverse characteristic polynomial
\[
 \det\nolimits_{\Lambda[[H]][t]}(\id-(\cdot t\tensor\gamma\cdot )\lcirclearrowright \Lambda[[H]][t]\tensor_{\Lambda[[H]]} M)
\]
evaluated at $t=\gamma^{-1}\in\Gamma$. In fact, one may extend this to non-commutative $\Lambda[[H]]$ and $G=H\times\Gamma$ as well, using the results of the appendix.

\item If $M=\Lambda[[G]]/\Lambda[[G]]f$ with
\[
 f\coloneqq t^n+\sum_{i=0}^{n-1}\lambda_it^{i}\in\Lambda[[G]]
\]
a polynomial of degree $n$ in $t\coloneqq\gamma-1$ with  $\lambda_i\in\Jac(\Lambda[[H]])$, then $M$ is finitely generated and free as $\Lambda[[H]]$-module. A $\Lambda[[H]]$-basis is given by the residue classes of $1,t,\dots,t^{n-1}\in\Lambda[[G]]$. If we use this basis to identify $\Lambda[[G]]\tensor_{\Lambda[[H]]}M$ with $\Lambda[[G]]^n$, then the $\Lambda[[G]]$-linear endomorphism $\id-(\cdot\gamma^{-1}\tensor\gamma\cdot)$ is given by right multiplication with the matrix
\[
 A\coloneqq\begin{pmatrix}
  \gamma^{-1}t         &  -\gamma^{-1}        & 0      & \hdots                   & 0 \\
  0                    &  \gamma^{-1}t        & \ddots & \ddots                   & \vdots \\
  \vdots               & \ddots               & \ddots & \ddots                   & 0\\
  0                    & \hdots               &  0     & \gamma^{-1}t             & -\gamma^{-1}\\
  \gamma^{-1}\lambda_0 & \gamma^{-1}\lambda_1 & \hdots & \gamma^{-1}\lambda_{n-2} & \gamma^{-1}(t+\lambda_{n-1})
 \end{pmatrix}.
\]
By right multiplication with
\[
E\coloneqq
\begin{pmatrix}
 1      & 0      & \hdots & \hdots & 0\\
 t      & 1      & \ddots &        & \vdots\\
 t^2    & \ddots & \ddots & \ddots & \vdots\\
 \vdots & \ddots & \ddots & \ddots & 0\\
 t^{n-1}& \hdots &  t^2   & t      & 1\\
\end{pmatrix}
\]
 one can transform $A$ into
\[
A'\coloneqq\begin{pmatrix}
  0            &  -\gamma^{-1} & 0       & \hdots  & 0 \\
  \vdots       &  0            & \ddots  & \ddots  & \vdots \\
  \vdots       & \vdots        & \ddots  & \ddots  & 0\\
  0            &  0            & \hdots  &  0       & -\gamma^{-1}\\
  \gamma^{-1}f & \gamma^{-1}(t^{n-1}+\sum_{i=1}^{n-1}\lambda_it^{i-1}) & \hdots & \gamma^{-1}(t^2+\lambda_{n-1}t+\lambda_{n-2}) & \gamma^{-1}(t+\lambda_{n-1})
 \end{pmatrix}.
\]
By left multiplication with
\[
 P\coloneqq
 \begin{pmatrix}
 0      &  0     & \hdots & 0      & 1      \\
 0      &  1     & \ddots & \vdots & 0 \\
 \vdots & \ddots & \ddots & 0      & \vdots \\
 0      & \hdots & 0      & 1      & 0 \\
 1      & 0      & \hdots & 0      & 0
 \end{pmatrix}
\]
one can exchange the first and last row of $A'$ to obtain a triangular matrix. In $\KTh_1(\Lambda[[G]])$, we have
\begin{align*}
[\;\cdot E\lcirclearrowright \Lambda[[G]]^n\;]&=1,\\
[\;\cdot P\lcirclearrowright \Lambda[[G]]^n\;]&=[\;-1\lcirclearrowright \Lambda[[G]]\;]^{n-1}.
\end{align*}
We conclude
\[
\begin{aligned}
s_{\gamma}([M])^{-1}&=[\;\cdot A\lcirclearrowright \Lambda[[G]]^n\;]\\
                    &=[\;\cdot A'\lcirclearrowright \Lambda[[G]]^n\;]\\
                    &=[\;\cdot P\lcirclearrowright \Lambda[[G]]^n\;]^{-1}[\;\cdot (-\gamma^{-1})\lcirclearrowright \Lambda[[G]]\;]^{n-1}[\;\cdot \gamma^{-1}f\lcirclearrowright \Lambda[[G]]\;]\\
                    &=[\;\cdot\gamma^{-n}f \lcirclearrowright \Lambda[[G]]\;].
\end{aligned}
\]
\end{enumerate}
\end{exmpl}

The section $s_\gamma\colon \KTh_0(\Lambda[[G]],S)\mto \KTh_1(\Lambda[[G]]_S)$ also commutes with the homomorphisms $\Mdual\colon \KTh_0(\Lambda[[G]])\mto \KTh_0(\Lambda^\op[[G]])$, $\Mdual\colon \KTh_1(\Lambda[[G]]_S)\mto \KTh_0(\Lambda^\op[[G]]_S)$ from Definition~\ref{defn:Mdual} in the following sense.

\begin{prop}\label{prop:s and Mdual commute}
For any element $A\in \KTh_0(\Lambda[[G]],S)$,
\[
s_{\gamma^{-1}}(A)^\Mdual=s_{\gamma}(A^\Mdual)
\]
in $\KTh_1(\Lambda^\op[[G]]_S)$.
\end{prop}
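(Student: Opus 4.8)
The plan is to reduce to generators of $\KTh_0(\Lambda[[G]],S)$ and then read off the identity from the commutative square of Proposition~\ref{prop:compatibility of the LambdaH dual}. The maps $s_{\gamma^{-1}}$, $s_\gamma$ (splittings of the exact sequences of Section~\ref{sec:algebraic Lfunctions}) and the two instances of $\Mdual$ (induced by the equivalences of Section~\ref{sec:Duality on the level of Kgroups} together with the isomorphism $I$ of \eqref{eqn:identification of W and Wop}) are all group homomorphisms, so $A\mapsto s_{\gamma^{-1}}(A)^\Mdual$ and $A\mapsto s_\gamma(A^\Mdual)$ are homomorphisms $\KTh_0(\Lambda[[G]],S)\mto\KTh_1(\Lambda^\op[[G]]_S)$. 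By Proposition~\ref{prop:comparison with strictly perfect LamdaH-complexes}, $\KTh_0(\Lambda[[G]],S)$ is generated by the classes $[M]=[C_\gamma(M)]$ with $M$ a $\Lambda[[G]]$-module that is finitely generated and projective as a $\Lambda[[H]]$-module (these generate $\KTh_0(\cat{SP}(\Lambda[[H]],G))$), so it suffices to check $A=[M]$ for such $M$.

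Next I would make both sides explicit. Put $\bar M\coloneqq\Lambda^\op[[H]]^\sharp\tensor_{\Lambda[[H]]^\op}\Hom_{\Lambda[[H]]}(M,\Lambda[[H]])$, so that $(M)^\Mdual=\bar M[-1]$ and hence $[M]^\Mdual=[(M)^\Mdual]=-[\bar M]$ in $\KTh_0(\Lambda^\op[[G]],S)$ by Proposition~\ref{prop:compatibility of the LambdaH dual}. Using Example~\ref{exmpl:algebraic L-functions}.(1) over $\Lambda^\op$ this gives
\[
s_\gamma([M]^\Mdual)=s_\gamma([\bar M])^{-1}=[\,\id-(\cdot\gamma^{-1}\tensor\gamma\cdot)\lcirclearrowright\Lambda^\op[[G]]\tensor_{\Lambda^\op[[H]]}\bar M\,].
\]
On the other hand, unwinding Definition~\ref{defn:Mdual} (equivalently, the description of $\Mdual$ on $\KTh_1$ by $A\mapsto (A^t)^{-1}$) one has $\Mdual([f\lcirclearrowright\cmplx{Q}])=[f^\Mdual\lcirclearrowright(\cmplx{Q})^\Mdual]^{-1}$ for every strictly perfect complex $\cmplx{Q}$ over $\Lambda[[G]]$ and every endomorphism $f$ of $\cmplx{Q}$ that is a $w_H$-weak equivalence (note that $f^\Mdual$ is again a $w_H$-weak equivalence by Proposition~\ref{prop:main abstract duality}). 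Feeding this into the formula of Example~\ref{exmpl:algebraic L-functions}.(1) over $\Lambda$ with $\gamma$ replaced by $\gamma^{-1}$ yields
\[
s_{\gamma^{-1}}([M])^\Mdual=[\,(\id-(\cdot\gamma\tensor\gamma^{-1}\cdot))^\Mdual\lcirclearrowright(\Lambda[[G]]\tensor_{\Lambda[[H]]}M)^\Mdual\,].
\]

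Finally I would invoke the commutative square of Proposition~\ref{prop:compatibility of the LambdaH dual} applied to $\cmplx{P}=M$. Since $(M)^\Mdual[1]=\bar M$, its top row is exactly the endomorphism $\id-(\cdot\gamma^{-1}\tensor\gamma\cdot)$ of $\Lambda^\op[[G]]\tensor_{\Lambda^\op[[H]]}\bar M$, its bottom row is $(\id-(\cdot\gamma\tensor\gamma^{-1}\cdot))^\Mdual$ on $(\Lambda[[G]]\tensor_{\Lambda[[H]]}M)^\Mdual$, and the vertical arrows are isomorphisms; hence the two endomorphisms appearing in the two displays above are conjugate in $w_H\cat{SP}(\Lambda^\op[[G]])$ and therefore have the same class in $\KTh_1(\Lambda^\op[[G]]_S)$. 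This is the asserted equality $s_{\gamma^{-1}}([M])^\Mdual=s_\gamma([M]^\Mdual)$. The only genuine difficulty is bookkeeping: one must track the inversion built into $s_\gamma$, the inversion in $\Mdual([f\lcirclearrowright\cmplx{Q}])$, and the degree shift $(M)^\Mdual=\bar M[-1]$ (which flips a sign in $\KTh_0$) so that they cancel. As a running consistency check one can verify, using Proposition~\ref{prop:main abstract duality}, that $\bh$ sends both $s_{\gamma^{-1}}([M])^\Mdual$ and $s_\gamma([M]^\Mdual)$ to $[M]^\Mdual$, so that a priori they differ only by an element of $\KTh_1(\Lambda^\op[[G]])$, and the computation above shows this element is trivial.
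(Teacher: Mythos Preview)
Your proposal is correct and follows essentially the same route as the paper: reduce to classes $[M]$ with $M$ a $\Lambda[[G]]$-module finitely generated projective over $\Lambda[[H]]$ via Proposition~\ref{prop:comparison with strictly perfect LamdaH-complexes}, and then read off the equality from the commutative square of Proposition~\ref{prop:compatibility of the LambdaH dual}. The paper's proof simply states these two steps without unpacking the sign bookkeeping, whereas you have spelled it out explicitly (and correctly).
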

\begin{proof}
Since $\KTh_0(\cat{SP}(\Lambda[[H]],G))$ surjects onto $\KTh_0(\Lambda[[G]],S)$ by Proposition~\ref{prop:comparison with strictly perfect LamdaH-complexes}, it suffices to prove the formula for $C_\gamma(M)$ with $M$ a $\Lambda[[G]]$-module that is finitely generated and projective over $\Lambda[[H]]$. The equality is then a direct consequence of the diagram in Proposition~\ref{prop:compatibility of the LambdaH dual}.
\end{proof}

\begin{rem}
Note that
\[
 s_\gamma([C_\gamma(M)])=s_{\gamma^{-1}}([C_\gamma(M)])[-(\cdot\gamma^{-1}\tensor\gamma\cdot)\lcirclearrowright \Lambda[[G]]\tensor_{\Lambda[[H]]}M]
\]
for any topological generator $\gamma$ of $\Gamma$ and any $\Lambda[[G]]$-module $M$ that is finitely generated and projective over $\Lambda[[H]]$.
\end{rem}

\section{Perfect complexes of adic sheaves}\label{sec:perfect complexes of adic sheaves}

We will use \'etale cohomology instead of Galois cohomology to formulate the main conjecture. The main advantage is that we have a little bit more flexibility in choosing our coefficient systems. Instead of being restricted to locally constant sheaves corresponding to Galois modules, we can work with constructible sheaves. An alternative would be the use of cohomology for Galois modules with local conditions.

As Waldhausen models for the derived categories of complexes of constructible sheaves, we will use the Waldhausen categories introduced in \cite[\S~5.4--5.5]{Witte:PhD} for separated schemes of finite type over a finite field. The same constructions still work with some minor changes if we consider subschemes of the spectrum of a number ring.

Fix an odd prime $p$. Let $F$ be a number field with ring of integers $\IntR_F$ and assume that $U$ is an open or closed subscheme of $X\coloneqq\Spec \IntR_F$. Recall that for a finite ring $R$, a complex $\cmplx{\sheaf{F}}$ of \'etale sheaves of left $R$-modules on $U$ is called \emph{strictly perfect} if it is strictly bounded and each $\sheaf{F}^n$ is constructible and flat. It is \emph{perfect} if it is quasi-isomorphic to a strictly perfect complex. We call it \emph{$DG$-flat} if for each geometric point of $U$, the complex of stalks is $DG$-flat.

Let $\Lambda$ be an adic $\Int_p$-algebra.

\begin{defn}\label{defn:PDGcont(X,Lambda)}
The \emph{category $\cat{PDG}^{\cont}(U,\Lambda)$ of perfect complexes of adic
sheaves on $U$} is the following
Waldhausen category. The objects of $\cat{PDG}^{\cont}(U,\Lambda)$
are inverse systems $(\cmplx{\sheaf{F}}_I)_{I\in
\openideals_{\Lambda}}$ such that:
\begin{enumerate}
\item for each $I\in\openideals_{\Lambda}$, $\cmplx{\sheaf{F}}_I$
is $DG$-flat perfect complex of \'etale sheaves of $\Lambda/I$-modules on $U$,

\item for each $I\subset J\in\openideals_{\Lambda}$, the
transition morphism
$$
\varphi_{IJ}:\cmplx{\sheaf{F}}_I\mto \cmplx{\sheaf{F}}_J
$$
of the system induces an isomorphism
$$
\Lambda/J\tensor_{\Lambda/I}\cmplx{\sheaf{F}}_I\wto
\cmplx{\sheaf{F}}_J.
$$
\end{enumerate}
Weak equivalences and cofibrations are defined as in Definition~\ref{defn:PDG(Lambda)}.
\end{defn}

\begin{defn}
Any system $\sheaf{F}=(\sheaf{F}_I)_{I\in\openideals_\Lambda}$ in $\cat{PDG}^{\cont}(U,\Lambda)$ consisting of flat, constructible sheaves $\sheaf{F}_I$ of $\Lambda/I$-modules on $U$, regarded as complexes concentrated in degree $0$, will be called a \emph{$\Lambda$-adic sheaf} on $U$. If in addition, the $\sheaf{F}_I$ are locally constant, we call $\sheaf{F}$ a \emph{smooth $\Lambda$-adic sheaf}. We write $\cat{S}(U,\Lambda)$ and $\cat{S}^{\sm}(U,\Lambda)$ for the full Waldhausen categories of $\cat{PDG}^{\cont}(U,\Lambda)$ consisting of $\Lambda$-adic sheaves and smooth $\Lambda$-adic sheaves, respectively.
\end{defn}

\begin{defn}
If $U$ is an open dense subscheme of $\Spec \IntR_F$, we will call a complex $(\cmplx{\sheaf{F}}_I)_{I\in\openideals_{\Lambda}}$ in $\cat{PDG}^{\cont}(U,\Lambda)$ to be \emph{smooth at $\infty$} if for each $I\in\openideals_{\Lambda}$, the stalk of $\cmplx{\sheaf{F}}_I$ in $\Spec \algc{F}$ is quasi-isomorphic to a strictly perfect complex of $\Lambda/I$-modules with trivial action of any complex conjugation $\sigma\in \Gal_F$. The full subcategory of $\cat{PDG}^{\cont}(U,\Lambda)$ of complexes smooth at $\infty$ will be denoted by
\[
\cat{PDG}^{\cont,\infty}(U,\Lambda)
\]
\end{defn}

Since we assume $p\neq 2$, it is immediate that if in an exact sequence
\[
0\mto \cmplx{\sheaf{F}}\mto\cmplx{\sheaf{G}}\mto\cmplx{\sheaf{H}}\mto 0
\]
in $\cat{PDG}^{\cont}(U,\Lambda)$, the complexes $\cmplx{\sheaf{F}}$ and $\cmplx{\sheaf{H}}$ are smooth at $\infty$, then so is $\cmplx{\sheaf{G}}$. It then follows from \cite[Prop. 3.1.1]{Witte:PhD} that $\cat{PDG}^{\cont,\infty}(U,\Lambda)$ is a Waldhausen subcategory of $\cat{PDG}^{\cont}(U,\Lambda)$.

We will write $\Lambda_U$ for the smooth $\Lambda$-adic sheaf on $U$ given by the system of constant sheaves $(\Lambda/I)_{I\in\openideals_{\Lambda}}$ on $U$. Further, if $p$ is invertible on $U$, we will write $\mu_{p^n}$ for the sheaf of $p^n$-th roots of unity on $U$, and
\[
 (\cmplx{\sheaf{F}}_I)_{I\in\openideals_{\Lambda}}(1)\coloneqq(\varprojlim_{n}\mu_{p^n}\otimes_{\Int_p}\cmplx{\sheaf{F}}_I)_{I\in\openideals_{\Lambda}}
\]
for the Tate twist of a complex in $\cat{PDG}^{\cont}(U,\Lambda)$.

We will consider Godement resolutions of the complexes in $\cat{PDG}^{\cont}(U,\Lambda)$. To be explicit,
we will fix an algebraic closure $\algc{F}$ of $F$ and for each place $x$ of $F$ an embedding $\algc{F}\subset \algc{F}_x$ into a fixed algebraic closure of the local field $F_x$ in $x$.
%corresponding to a morphism of schemes
%\[
%\eta_{\hat{x}}\colon \Spec \algc{F}_x\mto U.
%\]
In particular, we also obtain an embedding of the residue field $k(x)$ of $x$ into the algebraically closed residue field $\algc{k(x)}$ of $\algc{F}_x$ for each closed point $x$ of $U$. We write $\hat{x}$ for the corresponding geometric point $\hat{x}\colon \Spec \algc{k(x)}\mto U$ over $x$ and let $U^0$ denote the set of closed points of $U$.

For each \'etale sheaf $\sheaf{F}$ on $U$ we set
\[
(\God_U\sheaf{F})^n\coloneqq\underbrace{\prod_{u\in U^0}\hat{u}_*\hat{u}^*\dots\prod_{u\in U^0}\hat{u}_*\hat{u}^*}_{n+1}\sheaf{F}
\]
and turn $\cmplx{(\God_U\sheaf{F})}$ into a complex by taking as differentials
\[
\del^n\colon(\God_U\sheaf{F})^n\to(\God_U\sheaf{F})^{n+1}
\]
the alternating sums of the maps induced by the natural transformation
\[
\sheaf{F}\to \prod_{u\in U^0}\hat{u}_*\hat{u}^*\sheaf{F}.
\]
The Godement resolution of a complex of \'etale sheaves is given by the total complex of the corresponding double complex as in \cite[Def.~4.2.1]{Witte:PhD}. The Godement resolution of a complex $(\cmplx{\sheaf{F}}_{I})_{I\in\openideals_{\Lambda}}$ in $\cat{PDG}^{\cont}(U,\Lambda)$ is given by
applying the Godement resolution to each of the complexes $\cmplx{\sheaf{F}}_{I}$ individually.

We may define the total derived section functor
\[
\RDer\Sect(U,\cdot)\colon\cat{PDG}^{\cont}(U,\Lambda)\mto \cat{PDG}^{\cont}(\Lambda)
\]
by the formula
\[
\RDer\Sect(U,(\cmplx{\sheaf{F}}_I)_{I\in\openideals_{\Lambda}})\coloneqq(\Sect(U,\God_U \cmplx{\sheaf{F}}_I))_{I\in\openideals_{\Lambda}}.
\]
This agrees with the usual construction if we consider $(\cmplx{\sheaf{F}}_I)_{I\in\openideals_{\Lambda}}$ as an object of the `derived' category of adic sheaves, e.\,g.\ as defined in \cite{KiehlWeissauer:WeilConjectures} for $\Lambda=\Int_p$. In addition, however, we see that $\RDer\Sect(U,\cdot)$ is a Waldhausen exact functor and hence, induces homomorphisms
\[
\RDer\Sect(U,\cdot)\colon \KTh_n(\cat{PDG}^{\cont}(U,\Lambda))\mto\KTh_n(\Lambda)
\]
for all $n$ \cite[Prop. 4.6.6, Def. 5.4.13]{Witte:PhD}. Here, we use the finiteness and the vanishing in large degrees of the \'etale cohomology groups $\HF^n(U,\sheaf{G})$ for constructible sheaves $\sheaf{G}$ of abelian groups in order to assure that $\RDer\Sect(U,(\cmplx{\sheaf{F}}_I)_{I\in\openideals_{\Lambda}})$ is indeed an object of $\cat{PDG}^{\cont}(\Lambda)$. In particular, for each $I\in\openideals_{\Lambda}$, $\RDer\Sect(U,\cmplx{\sheaf{F}}_I)$ is a perfect complex of $\Lambda/I$-modules. Note that we do not need to assume that $p$ is invertible on $U$ (see the remark after \cite[Thm. II.3.1]{Milne:ADT}).

If $j\colon U\mto V$ is an open immersion, we set
\begin{align*}
j_!(\cmplx{\sheaf{F}}_I)_{I\in\openideals_{\Lambda}}&\coloneqq(j_!\cmplx{\sheaf{F}}_I)_{I\in\openideals_{\Lambda}},\\
\RDer j_*(\cmplx{\sheaf{F}}_I)_{I\in\openideals_{\Lambda}}&\coloneqq(j_*\God_U\cmplx{\sheaf{F}}_I)_{I\in\openideals_{\Lambda}}.
\end{align*}
for any $(\cmplx{\sheaf{F}}_I)_{I\in\openideals_{\Lambda}}\in\cat{PDG}^{\cont}(U,\Lambda)$. While the extension by zero $j_!$ always gives us a Waldhausen exact functor
\[
 j_!\colon \cat{PDG}^{\cont}(U,\Lambda)\mto\cat{PDG}^{\cont}(V,\Lambda),
\]
the total direct image
\[
 \RDer j_*\colon \cat{PDG}^{\cont}(U,\Lambda)\mto \cat{PDG}^{\cont}(V,\Lambda)
\]
is only a well-defined Waldhausen exact functor if $p$ is invertible on $V-U$. If $V-U$ contains places above $p$, then $\RDer j_*(\cmplx{\sheaf{F}}_I)_{I\in\openideals_{\Lambda}}$ is still a system of $DG$-flat complexes compatible in the sense of Definition~\ref{defn:PDGcont(X,Lambda)}.(2), but for $I\in\openideals_{\Lambda}$ the cohomology of the complex of stalks of the complexes $\RDer j_*\cmplx{\sheaf{F}}_I$ in the geometric points over places above $p$ is in general not finite, such that  $\RDer j_*\cmplx{\sheaf{F}}_I$ fails to be a perfect complex. In any case, we may consider $\RDer j_*$ as a Waldhausen exact functor from $\cat{PDG}^{\cont}(U,\Lambda)$ to the Waldhausen category of complexes over the abelian category of inverse systems of \'etale sheaves of $\Lambda$-modules, indexed by $\openideals_{\Lambda}$.

The pullback $f^*$ along a morphism of schemes $f$ and the pushforward $f_*$ along a finite morphism of schemes are also defined as Waldhausen exact functors by degreewise application. No Godement resolution is needed, since these functors are exact on all \'etale sheaves.

As a shorthand, we set
\[
 \RDer\Sectc(U,(\cmplx{\sheaf{F}}_I)_{I\in\openideals_{\Lambda}})\coloneqq\RDer(X,j_!(\cmplx{\sheaf{F}}_I)_{I\in\openideals_{\Lambda}})
\]
for $j\colon U\mto X$ the open immersion into $X=\Spec \IntR_F$. Under our assumption that $p\neq 2$, this agrees with the definition of cohomology with proper support in \cite[\S II.2]{Milne:ADT}. If $F$ is a totally real number field and $(\cmplx{\sheaf{F}}_I(-1))_{I\in\openideals_{\Lambda}}$ is smooth at $\infty$, then it also agrees with the definition in \cite[\S 1.6.3]{FK:CNCIT}, but in general, the two definitions differ by a contribution coming from the complex places.

For any closed point $x$ of $X$ and any complex $\cmplx{\sheaf{F}}$ in $\cat{PDG}^{\cont}(x,\Lambda)$, we set
\[
\RDer\Sect(\hat{x},\cmplx{\sheaf{F}})\coloneqq\Sect(\Spec \algc{k(x)},\hat{x}^*\God_x\cmplx{\sheaf{F}})
\]
and let $\Frob_x\in\Gal(\algc{k(x)}/k(x))$ denote the geometric Frobenius of $k(x)$. We obtain an exact sequence
\[
0\mto\RDer\Sect(x,\cmplx{\sheaf{F}})\mto
\RDer\Sect(\hat{x},\cmplx{\sheaf{F}})\xrightarrow{\id-\Frob_x}\RDer\Sect(\hat{x},\cmplx{\sheaf{F}})\mto 0
\]
in $\cat{PDG}^{\cont}(\Lambda)$ \cite[Prop. 6.1.2]{Witte:PhD}. Note that if $\hat{x}'$ is the geometric point corresponding to  another choice of an embedding $\algc{F}\subset\algc{F}_x$ and if $\Frob'_x$ denotes the associated geometric Frobenius, then there is a canonical isomorphism
\[
\sigma\colon\RDer\Sect(\hat{x},\cmplx{\sheaf{F}})\mto\RDer\Sect(\hat{x}',\cmplx{\sheaf{F}})
\]
such that
\begin{equation}\label{eqn:comparison of Frobenii}
\sigma\circ(\id-\Frob_x)=(\id-\Frob'_x)\circ\sigma.
\end{equation}

At some point, we will also make use of the categories $\cat{PDG}^{\cont}(\Spec F_x,\Lambda)$ for the local fields $F_x$ together with the associated total derived section functors. In this case, one can directly appeal to the constructions in \cite[Ch. 5]{Witte:PhD}. We write $F_x^{\nr}$ for the maximal unramified extension field of $F_x$ in $\algc{F}_x$ and note that we have a canonical identification $\Gal(F_x^{\nr}/F_x)\isomorph\Gal(\algc{k(x)}/k(x))$.

\begin{lem}\label{lem:comparison with local cohomology}
Let $j\colon U\mto V$ denote the open immersion of two open dense subschemes of $X$ and assume that $i\colon x\mto V$ is a closed point in the complement of $U$ not lying over $p$. Write $\eta_x\colon \Spec F_x\mto U$ for the map to the generic point of $U$. Then there exists a canonical chain of weak equivalences
\begin{equation}\label{eqn:comparison with local cohomology I}
\RDer\Sect(\hat{x},i^*\RDer j_*\cmplx{\sheaf{F}})\wto\RDer\Sect(\Spec F_x^{\nr},\eta_x^*\God_U\cmplx{\sheaf{F}})\otw\RDer\Sect(\Spec F_x^{\nr},\eta_x^*\cmplx{\sheaf{F}})
\end{equation}
in $\cat{PDG}^{\cont}(\Lambda)$ compatible with the operation of the Frobenius on each complex and hence, a canonical chain of weak equivalences
\begin{equation}\label{eqn:comparison with local cohomology II}
\RDer\Sect(x,i^*\RDer j_*\cmplx{\sheaf{F}})\wto\RDer\Sect(\Spec F_x,\eta_x^*\God_U\cmplx{\sheaf{F}})
\otw\RDer\Sect(\Spec F_x,\eta_x^*\cmplx{\sheaf{F}})
\end{equation}
in $\cat{PDG}^{\cont}(\Lambda)$.
\end{lem}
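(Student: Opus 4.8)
The plan is to unwind the definitions of $\RDer j_*$ and $i^*$ for the inverse systems involved and reduce the statement to the classical comparison between the henselisation (resp. strict henselisation) of $X$ at $x$ and the local field $F_x$. Concretely, for a single coefficient ring $\Lambda/I$, the complex $i^*\RDer j_*\cmplx{\sheaf{F}}_I$ is by construction $i^*j_*\God_U\cmplx{\sheaf{F}}_I$, and taking the stalk at the geometric point $\hat{x}$ computes the cohomology of the punctured strict henselisation of $V$ at $x$, which in turn — since $x$ does not lie over $p$, so that $\Spec F_x \to U$ is the generic fibre of the henselian trait — is identified with Galois cohomology of $\Gal(\algc F_x/F_x^{\nr})$ acting on the stalk $\eta_x^*\cmplx{\sheaf{F}}_I$. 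This is exactly the content of the first weak equivalence in \eqref{eqn:comparison with local cohomology I}; the Godement resolution of $\eta_x^*\cmplx{\sheaf{F}}_I$ on $\Spec F_x^{\nr}$ provides the second (backward) weak equivalence, as $\God$ is a functorial flasque resolution compatible with the given $\Lambda$-adic structure.

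The key steps, in order, are: (i) reduce to a single $I\in\openideals_\Lambda$, since all the functors in sight — $\RDer j_*$, $i^*$, $\God_U$, $\Sect(\hat{x},-)$ — are defined levelwise on the inverse systems and the maps in \eqref{eqn:comparison with local cohomology I} respect the transition morphisms by functoriality of Godement resolutions and base change; (ii) identify $i^*j_*\God_U\cmplx{\sheaf{F}}_I$ at the geometric point with the cohomology of $\Spec F_x^{\nr}$ using the henselian base change / proper base change toolbox of \cite{Witte:PhD} Ch.~5--6 together with the assumption that $x\nmid p$ (which guarantees that $U$ is unramified at $x$ and the relevant sheaves have good behaviour at the trait); (iii) build the backward weak equivalence from $\eta_x^*\cmplx{\sheaf{F}}_I$ to its Godement resolution, noting that $\eta_x^*\God_U\cmplx{\sheaf{F}}_I$ and $\God_{\Spec F_x}\eta_x^*\cmplx{\sheaf{F}}_I$ receive compatible quasi-isomorphisms from $\eta_x^*\cmplx{\sheaf{F}}_I$; (iv) check Frobenius-equivariance, i.e.\ that under the identification $\Gal(F_x^{\nr}/F_x)\isomorph\Gal(\algc{k(x)}/k(x))$ the endomorphism $\id-\Frob_x$ on $\RDer\Sect(\hat x,-)$ matches the one on $\RDer\Sect(\Spec F_x^{\nr},-)$, which is immediate from the compatibility of the chosen geometric points and embeddings $\algc F\subset\algc F_x$ fixed earlier; (v) deduce \eqref{eqn:comparison with local cohomology II} by taking the mapping fibre of $\id-\Frob_x$ throughout, using the exact sequence displayed just before the lemma together with the weak equivalences from \eqref{eqn:comparison with local cohomology I}, and the analogous descent sequence $\RDer\Sect(\Spec F_x,-)\mto\RDer\Sect(\Spec F_x^{\nr},-)\xrightarrow{\id-\Frob_x}\RDer\Sect(\Spec F_x^{\nr},-)$ for the local field.

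The main obstacle I expect is step (ii): making the identification of $i^*\RDer j_*$ with local Galois cohomology genuinely canonical and compatible with the $\Lambda$-adic inverse-system structure, rather than merely valid after passing to an ordinary derived category. One has to be careful that the Godement resolution $\God_V$ on $V$ restricts compatibly under $i^*$ and that the comparison morphism $i^*j_* \to (\text{stalk at strict henselisation})$ is induced by an honest map of complexes of sheaves (or a canonical chain of such), so that the resulting weak equivalences assemble into morphisms in $\cat{PDG}^{\cont}(\Lambda)$ and not just objectwise quasi-isomorphisms. The cleanest route is to factor everything through the local categories $\cat{PDG}^{\cont}(\Spec F_x^{\nr},\Lambda)$ and $\cat{PDG}^{\cont}(\Spec F_x,\Lambda)$ already introduced, invoking the base-change and comparison results of \cite[Ch.~5--6]{Witte:PhD} at the level of Waldhausen-exact functors; the Frobenius-compatibility in (iv) and the passage to \eqref{eqn:comparison with local cohomology II} in (v) are then formal.
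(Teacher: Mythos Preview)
Your proposal is correct and follows essentially the same route as the paper. The obstacle you anticipate in step~(ii) dissolves once you observe, as the paper does via \cite[Thm.~III.1.15]{Milne:EtCohom}, that $\eta_x^*\God_U\cmplx{\sheaf{F}}_I$ is already a complex of \emph{flabby} sheaves on $\Spec F_x$ and that the first arrow in \eqref{eqn:comparison with local cohomology I} is an honest isomorphism of complexes, not merely a derived equivalence; flabbiness then also guarantees that the degreewise section functors $\Sect(\Spec F_x^{\nr},-)$ and $\Sect(\Spec F_x,-)$ preserve the quasi-isomorphisms in the zig-zag $\eta_x^*\God_U\cmplx{\sheaf{F}}_I\to\God_{F_x}\eta_x^*\God_U\cmplx{\sheaf{F}}_I\leftarrow\God_{F_x}\eta_x^*\cmplx{\sheaf{F}}_I$, so the whole chain assembles canonically in $\cat{PDG}^{\cont}(\Lambda)$ without any appeal to Waldhausen-level machinery.
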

\begin{proof}
From \cite[Thm. III.1.15]{Milne:EtCohom} we conclude that for each $I\in\openideals_{\Lambda}$, the complex $\eta_x^*\God_U\cmplx{\sheaf{F}_I}$ is a complex of flabby sheaves on $\Spec F_x$ and that
\[
\RDer\Sect(\hat{x},i^*\RDer j_*\cmplx{\sheaf{F}}_I)\mto\Sect(\Spec F_x^{\nr},\eta_x^*\God_U\cmplx{\sheaf{F}_I})
\]
is an isomorphism. Write $\God_{F_x}$ for the Godement resolution on $\Spec F_x$ with respect to $\Spec \algc{F}_x\mto\Spec F_x$. Then
\[
\eta_x^*\God_U\cmplx{\sheaf{F}}_I\mto\God_{F_x}\eta_x^*\God_U\cmplx{\sheaf{F}}_I
\leftarrow\God_{F_x}\eta_x^*\cmplx{\sheaf{F}}_I
\]
are quasi-isomorphisms of complexes of flabby sheaves on $\Spec F_x$. Hence, they remain quasi-isomorphisms if we apply the section functor $\Sect(\Spec F_x^{\nr},-)$ in each degree. Since the Frobenius acts compatibly on $F_x^{\nr}$ and $\algc{k(x)}$, the induced operation on the complexes is also compatible. The canonical exact sequence
\[
0\mto\Sect(\Spec F_x,-)\mto\Sect(\Spec F_x^{\nr},-)\xrightarrow{\id-\Frob_x}\Sect(\Spec F_x^{\nr},-)\mto 0
\]
on flabby sheaves on $\Spec F_x$ implies that the morphisms in the chain \eqref{eqn:comparison with local cohomology II} are also quasi-isomorphisms.
\end{proof}

\begin{rem}
Note that for $x$ lying over $p$, the proof of the lemma remains still valid, except that the complexes in the chain \eqref{eqn:comparison with local cohomology I} do not lie in $\cat{PDG}^{\cont}(\Lambda)$.
\end{rem}

It will be useful to introduce an explicit strictly perfect complex weakly equivalent to $\RDer\Sect(\Spec F_x,\eta_x^*\sheaf{F})$ in the case that $\sheaf{F}$ is a $\Lambda$-adic sheaf on $U$. Assume that $x\in X$ does not lie over $p$. Let $N$ be the compact $\Gal(\algc{F}_x/F_x)$-module corresponding to $\eta_x^*\sheaf{F}$ and write $F_x^{\nr,(p)}$ for the maximal pro-$p$ extension of $F_x^{\nr}$ inside $\algc{F}_x$, such that $\Gal(F_x^{\nr,(p)}/F_x^{\nr})\isomorph \Int_{p}$.

We set $N'\coloneqq N^{\Gal(\algc{F}_x/F_x^{\nr,(p)})}$. Note that $N'$ is a direct summand of the finitely generated, projective $\Lambda$-module $N$, because the $p$-Sylow subgroups of the Galois group $\Gal(\algc{F}_x/F_x^{\nr,(p)})$ are trivial by our assumption that $p$ is different from the characteristic of $k(x)$. In particular, $N'$ is itself finitely generated and projective over $\Lambda$. %The $\Lambda$-module $N_{\nr}$ does not need to be finitely generated or projective in general.

Fix a topological generator $\tau$ of $\Gal(F_x^{\nr,(p)}/F_x^{\nr})$ and a lift $\varphi\in \Gal(F_x^{\nr,(p)}/F_x)$  of the geometric Frobenius $\Frob_x$. Then $\tau$ and $\varphi$ are topological generators of the profinite group $\Gal(F_x^{\nr,(p)}/F_x)$ and
\[
\varphi\tau\varphi^{-1}=\tau^{q^{-1}}
\]
with $q=q_x$ the number of elements of $k(x)$ \cite[Thm.~7.5.3]{NSW:CohomNumFields}.

\begin{defn}
We define a strictly perfect complex $\cmplx{D}_{\hat{x}}(\sheaf{F})$ of $\Lambda$-modules with an action of $\Frob_x$ as follows: For $k\neq 0,1$ we set $D^k_{\hat{x}}(\sheaf{F})\coloneqq 0$. As $\Lambda$-modules we have $D^0_{\hat{x}}(\sheaf{F})=D^1_{\hat{x}}(\sheaf{F})=N'$ and the differential is given by $\id-\tau$. The geometric Frobenius $\Frob_x$ acts on $D^0_{\hat{x}}(\sheaf{F})$ via $\varphi$ and on $D^1_{\hat{x}}(\sheaf{F})$ via
\[
\varphi\left(\frac{\tau^q-1}{\tau-1}\right)\in \Lambda[[\Gal(F_x^{\nr, (p)}/F_x)]]^\times.
\]
\end{defn}

\begin{lem}\label{lem:strictly perfect resolution of local cohomology}
There exists a weak equivalence
\[
\cmplx{D}_{\hat{x}}(\sheaf{F})\wto\RDer\Sect(\Spec F_x^{\nr},\eta_x^*\sheaf{F})
\]
in $\cat{PDG}^{\cont}(\Lambda)$ that is compatible with the operation of the geometric Frobenius $\Frob_x$ on both sides.
\end{lem}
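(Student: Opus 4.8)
The plan is to realise $\RDer\Sect(\Spec F_x^{\nr},\eta_x^*\sheaf{F})$, level by level over $\openideals_{\Lambda}$, as a continuous Galois-cohomology complex, to reduce it by Hochschild--Serre to the cohomology of the procyclic pro-$p$ group $\Gal(F_x^{\nr,(p)}/F_x^{\nr})\isomorph\Int_p$ with coefficients in $N'$, to compute the latter by its standard two-term resolution, and finally to trace the residual geometric Frobenius through each of these steps.

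For every $I\in\openideals_{\Lambda}$ the complex $\RDer\Sect(\Spec F_x^{\nr},\eta_x^*\sheaf{F}_I)$ computes the continuous cohomology of the inertia group $\Gal(\algc{F}_x/F_x^{\nr})$ with coefficients in $N/IN$, together with the residual action of $\Gal(F_x^{\nr}/F_x)=\langle\Frob_x\rangle$ induced by the normality of $\Gal(\algc{F}_x/F_x^{\nr})$ in $\Gal(\algc{F}_x/F_x)$; this is the specialisation to the field $F_x^{\nr}$ of the étale-to-Galois comparison underlying \cite[Ch.~5--6]{Witte:PhD}. Since $x$ does not lie over $p$, the maximal pro-$p$ quotient of $\Gal(\algc{F}_x/F_x^{\nr})$ is $H\coloneqq\Gal(F_x^{\nr,(p)}/F_x^{\nr})\isomorph\Int_p$ with kernel $\Delta_x\coloneqq\Gal(\algc{F}_x/F_x^{\nr,(p)})$ of order prime to $p$. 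Apply the Hochschild--Serre spectral sequence of $1\to\Delta_x\to\Gal(\algc{F}_x/F_x^{\nr})\to H\to1$. As every prime $\ell\neq p$ is a unit in the adic $\Int_p$-algebra $\Lambda$, it is a unit in each finite quotient $\Lambda/I$, so the additive order of $1$ in $\Lambda/I$ is a power of $p$ and $N/IN$ is a finite $p$-group; since $\Delta_x$ has order prime to $p$, this forces $\HF^j(\Delta_x,N/IN)=0$ for $j>0$, while the invariants are cut out by the averaging idempotents $\tfrac1{\#\bar{\Delta}}\sum_{\delta\in\bar{\Delta}}\delta$ over the finite images $\bar{\Delta}$ of $\Delta_x$ (well defined as $\#\bar{\Delta}$ is a unit in $\Lambda/I$, compatible as $I$ shrinks, with limit the idempotent $N\qto N'$ used in the setup), so $\HF^0(\Delta_x,N/IN)=N'/IN'$. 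Hence the spectral sequence degenerates to a quasi-isomorphism, functorial in $I$ and compatible with $\langle\Frob_x\rangle$, between $\RDer\Sect(\Spec F_x^{\nr},\eta_x^*\sheaf{F}_I)$ and the cohomology complex of $H$ with coefficients in $N'/IN'$; and for a topological generator $\tau$ of $H$ the exact sequence $0\to\Lambda/I[[H]]\xrightarrow{\cdot(\tau-1)}\Lambda/I[[H]]\to\Lambda/I\to0$ presents the latter as $[N'/IN'\xrightarrow{\id-\tau}N'/IN']$. Passing to the limit over $I$, and using that $\cmplx{D}_{\hat{x}}(\sheaf{F})$ is a complex of finitely generated projective $\Lambda$-modules, so that the resulting isomorphism in the derived category is represented by an honest chain map out of it, these assemble into the weak equivalence $\cmplx{D}_{\hat{x}}(\sheaf{F})\wto\RDer\Sect(\Spec F_x^{\nr},\eta_x^*\sheaf{F})$ of the statement in $\cat{PDG}^{\cont}(\Lambda)$, ignoring the Frobenius.

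It remains to identify the transported Frobenius with the endomorphism $(\Phi_0,\Phi_1)$ of $\cmplx{D}_{\hat{x}}(\sheaf{F})$ given by the action of $\varphi$ on $D^0_{\hat{x}}(\sheaf{F})=N'$ and of $\varphi\bigl(\tfrac{\tau^q-1}{\tau-1}\bigr)\in\Lambda[[\Gal(F_x^{\nr,(p)}/F_x)]]^\times$ on $D^1_{\hat{x}}(\sheaf{F})=N'$. On $\HF^0$ the residual Frobenius is the action of any lift of $\Frob_x$, hence is $\varphi$. Under the identification $\HF^1(H,N')=N'/(\tau-1)N'$ sending a cocycle $c$ to $c(\tau)$, conjugation by $\varphi$ carries $c$ to $h\mapsto\varphi\, c(\varphi^{-1}h\varphi)$; using $\varphi^{-1}\tau\varphi=\tau^{q}$ — equivalent to the relation $\varphi\tau\varphi^{-1}=\tau^{q^{-1}}$ — and $c(\tau^{q})=(1+\tau+\dots+\tau^{q-1})\,c(\tau)=\bigl(\tfrac{\tau^q-1}{\tau-1}\bigr)c(\tau)$, this becomes $c(\tau)\mapsto\varphi\bigl(\tfrac{\tau^q-1}{\tau-1}\bigr)c(\tau)$, so $(\Phi_0,\Phi_1)$ induces the correct operators on cohomology. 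That $(\Phi_0,\Phi_1)$ is a chain endomorphism amounts to the identity $\varphi\bigl(\tfrac{\tau^q-1}{\tau-1}\bigr)(1-\tau)=(1-\tau)\varphi$ in $\Lambda[[\Gal(F_x^{\nr,(p)}/F_x)]]$, which follows from $\bigl(\tfrac{\tau^q-1}{\tau-1}\bigr)(\tau-1)=\tau^q-1$ and $\tau\varphi=\varphi\tau^q$; and $\tfrac{\tau^q-1}{\tau-1}=1+\tau+\dots+\tau^{q-1}$ is a unit in $\Lambda[[H]]$ because, written as a power series in $\tau-1$, its constant term is $q=q_x$, a unit in $\Lambda$ since $p\nmid q_x$. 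The weak equivalence above was built from $\Gal(F_x^{\nr,(p)}/F_x)$-equivariant data — the Hochschild--Serre comparison and the conjugation-equivariant resolution of $\Lambda/I$ over $\Lambda/I[[H]]$, using $\Gal(F_x^{\nr,(p)}/F_x)=H\rtimes\overline{\langle\varphi\rangle}$ — so it transports the intrinsic residual Frobenius to a chain endomorphism of $\cmplx{D}_{\hat{x}}(\sheaf{F})$ realising these cohomology operators; unwinding the explicit resolutions shows this endomorphism is exactly $(\Phi_0,\Phi_1)$, and independence of the choices of $\tau$ and $\varphi$ is automatic because the intrinsic Frobenius is canonical.

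The local Galois-cohomology input here is entirely standard in the unequal-characteristic case; the substantive point is the homological bookkeeping that turns the Hochschild--Serre reduction and the standard $\Int_p$-resolution into a single strict weak equivalence in $\cat{PDG}^{\cont}(\Lambda)$ and, above all, that pins down the transported Frobenius precisely as the operator $\varphi\bigl(\tfrac{\tau^q-1}{\tau-1}\bigr)$ rather than, say, $\varphi$ on both terms — this is exactly where the unit $\tfrac{\tau^q-1}{\tau-1}$ and the decomposition $\Gal(F_x^{\nr,(p)}/F_x)=H\rtimes\overline{\langle\varphi\rangle}$ are needed.
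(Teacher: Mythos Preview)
Your argument is correct. The approach differs from the paper's in presentation rather than substance: the paper works directly with the homogeneous cochain complex $\cmplx{X}(\Gal(F_x^{\nr,(p)}/F_x),N')^{\Gal(F_x^{\nr,(p)}/F_x^{\nr})}$ and writes down an explicit chain map
\[
\alpha(n)(\tau^a\varphi^b)=\tau^a n,\qquad \alpha(n)(\tau^a\varphi^b,\tau^c\varphi^d)=\frac{\tau^c-\tau^a}{1-\tau}\,n,
\]
which is visibly $\Frob_x$-equivariant because $\Frob_x$ acts on this complex as an element of the group $\Gal(F_x^{\nr,(p)}/F_x)$ itself. You instead run Hochschild--Serre for the prime-to-$p$ kernel $\Delta_x$ and then invoke the standard two-term resolution of $\Int_p$ over $\Int_p[[H]]$, checking Frobenius compatibility by computing the conjugation action on $\HF^0$ and $\HF^1$ and observing that the construction is equivariant.

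Both routes compute the same object; the paper's explicit formula has the advantage that the Frobenius compatibility is a direct chain-level identity requiring no appeal to equivariance of a spectral sequence, while your argument makes transparent \emph{why} the correction factor $\tfrac{\tau^q-1}{\tau-1}$ must appear --- it is exactly the cocycle relation $c(\tau^q)=\bigl(\tfrac{\tau^q-1}{\tau-1}\bigr)c(\tau)$ combined with $\varphi^{-1}\tau\varphi=\tau^q$. Your phrase ``unwinding the explicit resolutions shows this endomorphism is exactly $(\Phi_0,\Phi_1)$'' is the one place where the paper's explicit $\alpha$ would make the verification shorter; what you have checked (agreement on cohomology plus the chain-map identity for $(\Phi_0,\Phi_1)$) is enough for all later uses in $\KTh_1$, but if you want strict chain-level equality you should write down the map into inhomogeneous cochains and verify commutativity with $\Frob_x$ directly, which recovers precisely the paper's formula.
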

\begin{proof}
Clearly, we have
\[
\Lambda/I\tensor_{\Lambda} \cmplx{D}_{\hat{x}}(\sheaf{F})\isomorph \cmplx{D}_{\hat{x}}(\sheaf{F}_I)
\]
for all $I\in\openideals_{\Lambda}$. We may therefore reduce to the case that $\Lambda$ is a finite $\Int_p$-algebra.

By construction, the perfect complex of $\Lambda$-modules $\RDer\Sect(F_x^{\nr},\eta_x^*\sheaf{F})$ may be canonically identified with the homogenous cochain complex
\[
\cmplx{X}(\Gal(\algc{F}_x/F_x),N)^{\Gal(\algc{F}_x/F_x^{\nr})}
\]
(in the notation of \cite[Ch. I, \S 2]{NSW:CohomNumFields}) of the finite $\Gal(\algc{F}_x/F_x)$-module $N$. Recall that the elements of $X^n(\Gal(\algc{F}_x/F_x),N)$ are continuous maps
\[
f\colon \Gal(\algc{F}_x/F_x)^{n+1}\mto N
\]
and the operation of $\sigma\in \Gal(\algc{F}_x/F_x)$ on $f\in X^n(\Gal(\algc{F}_x/F_x),N)$ is defined by
\[
\sigma f\colon \Gal(\algc{F}_x/F_x)^{n+1}\mto N,\qquad (\sigma_0,\dots,\sigma_n)\mapsto \sigma f(\sigma^{-1} \sigma_0,\dots,\sigma^{-1} \sigma_n).
\]
The inflation map provides a quasi-isomorphism
\[
\cmplx{X}(\Gal(F_x^{\nr, (p)}/F_x),N')^{\Gal(F_x^{\nr, (p)}/F_x^{\nr})}\wto\cmplx{X}(\Gal(\algc{F}_x/F_x),N)^{\Gal(\algc{F}_x/F_x^{\nr})},
\]
which is compatible with the operation of $\Frob_x$ by a lift to $\Gal(\algc{F}_x/F_x)$ on both sides.

We define a quasi-isomorphism
\[
\alpha\colon\cmplx{D}_{\hat{x}}(\sheaf{F})\wto\cmplx{X}(\Gal(F_x^{\nr, (p)}/F_x),N')^{\Gal(F_x^{\nr, (p)}/F_x^{\nr})}
\]
compatible with the $\Frob_x$-operation by
\begin{align*}
\alpha(n)&\colon \Gal(F_x^{\nr, (p)}/F_x)\mto N',\qquad \tau^a\varphi^b\mapsto \tau^a n&\text{for $n\in D^0_{\hat{x}}(\sheaf{F})$,}\\
\alpha(n)&\colon \Gal(F_x^{\nr, (p)}/F_x)^2\mto N',\qquad (\tau^a\varphi^b,\tau^c\varphi^d)\mapsto \frac{\tau^c-\tau^a}{1-\tau}n&\text{for $n\in D^1_{\hat{x}}(\sheaf{F})$,}
\end{align*}
with $a,c\in\Int_p$, $b,d\in\hat{\Int}$.
Note that
\[
\frac{\tau^c-\tau^a}{1-\tau}=\tau^c\sum_{n=1}^\infty \binom{a-c}{n}(\tau-1)^{n-1}
\]
is a well-defined element of $\Lambda[[\Gal(F_x^{\nr, (p)}/F_x)]]$ for any $a,c\in\Int_p$.
\end{proof}

Assume again that $U\subset X$ is an open or closed subscheme. If $\Lambda'$ is another adic $\Int_p$-algebra and $\cmplx{M}$ a complex of $\Lambda'$-$\Lambda$-bimodules which is strictly perfect as complex of $\Lambda'$-modules, we may extend $\ringtransf_{\cmplx{M}}$ to a Waldhausen exact functor
\begin{gather*}
\ringtransf_{\cmplx{M}}\colon \cat{PDG}^{\cont}(U,\Lambda)\mto \cat{PDG}^{\cont}(U,\Lambda'),\\
(\cmplx{\sheaf{P}}_J)_{J\in\openideals_{\Lambda}}\mapsto (\varprojlim_{J\in\openideals_{\Lambda}} \Lambda'/I\tensor_{\Lambda}\cmplx{M}\tensor_{\Lambda}\cmplx{\sheaf{P}_J})_{I\in\openideals_{\Lambda'}}
\end{gather*}
such that
\begin{equation*}
\ringtransf_{\cmplx{M}}\RDer\Sect(U,\cmplx{\sheaf{P}})\mto\RDer\Sect(U,\ringtransf_{\cmplx{M}}(\cmplx{\sheaf{P}}))
\end{equation*}
is a weak equivalence in $\cat{PDG}^{\cont}(\Lambda')$ \cite[Prop.~5.5.7]{Witte:PhD}.

If $i\colon \Sigma \mto V$ is the embedding of a closed subscheme $\Sigma$ of $X$ into an open subscheme $V$ of $X$ with complement $j\colon U\mto V$ and $\sheaf{F}$ is an \'etale sheaf of abelian groups on $V$, then we may consider the sheaf
\[
i^!\sheaf{F}\coloneqq\ker(i^*\sheaf{F}\mto i^*j_*j^*\sheaf{F})
\]
on $\Sigma$. Its global sections $i^!\sheaf{F}(\Sigma)$ are the global sections of $\sheaf{F}$ on $V$ with support on $\Sigma$. The right derived functor $\RDer i^!$ can also be defined via Godement resolution:

\begin{lem}\label{lem:exactness of Ri upper shriek}
Assume that $p$ is invertible on $\Sigma$.
\[
 \RDer i^!\colon\cat{PDG}^{\cont}(V,\Lambda)\mto \cat{PDG}^{\cont}(\Sigma,\Lambda),\qquad (\cmplx{\sheaf{F}}_{I})_{I\in\openideals_{\Lambda}}\mapsto (i^!\God_V(\cmplx{\sheaf{F}}_I))_{I\in\openideals_{\Lambda}}
\]
is a Waldhausen exact functor and for every $\cmplx{\sheaf{F}}$ in $\cat{PDG}^{\cont}(V,\Lambda)$ there is an exact sequence
\[
0\mto i_*\RDer i^!\cmplx{\sheaf{F}}\mto \God_V(\cmplx{\sheaf{F}})\mto \RDer j_*j^*\cmplx{\sheaf{F}}\mto 0
\]
in $\cat{PDG}^{\cont}(V,\Lambda)$. In particular, if $i^*\cmplx{\sheaf{F}}$ is weakly equivalent to $0$, then there exists a chain of weak equivalences
\[
 i^*\RDer j_*j^*\cmplx{\sheaf{F}}\sim \RDer i^!\cmplx{\sheaf{F}}[1].
\]
\end{lem}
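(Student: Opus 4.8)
The plan is to work one index $I\in\openideals_{\Lambda}$ at a time, since all the Waldhausen-categorical structure in $\cat{PDG}^{\cont}$ is controlled degreewise and index-wise. First I would observe that, for a fixed $I$, the functor $i^!\God_V(-)$ sends a complex of sheaves of $\Lambda/I$-modules to a complex of sheaves of $\Lambda/I$-modules on $\Sigma$; the point is that $i^!$ is left exact and, when applied to a flabby sheaf such as each term of a Godement resolution, it is already exact (this is the standard fact that $i^!$ of a flabby sheaf computes $\RDer i^!$, cf.\ \cite[\S~II.1]{Milne:ADT} together with \cite{Milne:EtCohom}). Hence the short exact sequence of functors on a single flabby sheaf $\sheaf{G}$ on $V$,
\[
0\mto i_*i^!\sheaf{G}\mto \sheaf{G}\mto j_*j^*\sheaf{G}\mto 0,
\]
holds, and applying it termwise to $\God_V(\cmplx{\sheaf{F}}_I)$ gives a termwise split (hence degreewise injective with admissible cokernel) short exact sequence of complexes of sheaves on $V$. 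This is exactly the asserted sequence
\[
0\mto i_*\RDer i^!\cmplx{\sheaf{F}}\mto \God_V(\cmplx{\sheaf{F}})\mto \RDer j_*j^*\cmplx{\sheaf{F}}\mto 0
\]
at the level of each $I$; since the Godement resolution is functorial and commutes with the transition maps $\varphi_{IJ}$, and since $\Lambda/J\tensor_{\Lambda/I}(-)$ is exact on the flat terms, the sequence is compatible with the inverse system and lives in $\cat{PDG}^{\cont}(V,\Lambda)$.

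Next I would check the hypotheses of Definition~\ref{defn:PDGcont(X,Lambda)} for $\RDer i^!\cmplx{\sheaf{F}}$, i.e.\ that it actually defines an object of $\cat{PDG}^{\cont}(\Sigma,\Lambda)$. The $DG$-flatness and the compatibility isomorphisms $\Lambda/J\tensor_{\Lambda/I}(\RDer i^!\cmplx{\sheaf{F}})_I\wto(\RDer i^!\cmplx{\sheaf{F}})_J$ are inherited, as above, from those of $\God_V(\cmplx{\sheaf{F}})$ via the termwise split exact sequence (a split subobject of a $DG$-flat complex with flat quotient is again $DG$-flat, and base change commutes with the splitting). The substantive point is \emph{perfectness} of $\RDer i^!\cmplx{\sheaf{F}}_I$ on $\Sigma$: one has $i^*\cmplx{\sheaf{F}}_I$ perfect on $\Sigma$ and $\RDer j_*j^*\cmplx{\sheaf{F}}_I$ perfect on $V$ — here the hypothesis that $p$ is invertible on $\Sigma$ is exactly what guarantees, via the remark preceding Lemma~\ref{lem:comparison with local cohomology}, that the stalks of $\RDer j_*j^*\cmplx{\sheaf{F}}_I$ at the (geometric) points of $\Sigma$ have finite cohomology, so that $i^*\RDer j_*j^*\cmplx{\sheaf{F}}_I$ is perfect on $\Sigma$ — and then the two-out-of-three property for perfect complexes in the triangle $i_*\RDer i^!\cmplx{\sheaf{F}}_I\mto \God_V\cmplx{\sheaf{F}}_I\mto \RDer j_*j^*\cmplx{\sheaf{F}}_I$, pulled back along $i^*$ (note $i^*i_*=\id$), gives perfectness of $\RDer i^!\cmplx{\sheaf{F}}_I$. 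With this, $\RDer i^!$ visibly preserves weak equivalences and cofibrations (it is a termwise operation composed with the exact functor $\God_V$), so it is Waldhausen exact.

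Finally, for the last assertion I would apply $i^*$ to the short exact sequence. Since $i^*i_*=\id$ on $\Sigma$ and $i^*$ is exact, one gets a short exact sequence
\[
0\mto \RDer i^!\cmplx{\sheaf{F}}\mto i^*\God_V(\cmplx{\sheaf{F}})\mto i^*\RDer j_*j^*\cmplx{\sheaf{F}}\mto 0
\]
in $\cat{PDG}^{\cont}(\Sigma,\Lambda)$. Under the hypothesis that $i^*\cmplx{\sheaf{F}}$ is weakly equivalent to $0$, the middle term $i^*\God_V(\cmplx{\sheaf{F}})$, which computes $\RDer\Sect$-locally the stalk cohomology $i^*\cmplx{\sheaf{F}}$, is also weakly equivalent to $0$; so the inclusion of the kernel into this acyclic complex realises $i^*\RDer j_*j^*\cmplx{\sheaf{F}}$ as the cone (up to the shift convention) of the weak equivalence $\RDer i^!\cmplx{\sheaf{F}}\wto 0$, i.e.\ one reads off a chain of weak equivalences $i^*\RDer j_*j^*\cmplx{\sheaf{F}}\sim\RDer i^!\cmplx{\sheaf{F}}[1]$. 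The one point requiring a little care — and the main obstacle — is the perfectness claim at the bad primes: one must be sure that the finiteness of the stalk cohomology of $\RDer j_*j^*\cmplx{\sheaf{F}}_I$ on $\Sigma$ genuinely uses nothing beyond $p$ invertible on $\Sigma$ (so that absolute local Galois cohomology of a finite module over a local field with residue characteristic $\neq p$ is finite and vanishes in large degrees), which is precisely the content already recorded in the remark after Lemma~\ref{lem:comparison with local cohomology}; everything else is a routine bookkeeping of Godement resolutions and inverse systems.
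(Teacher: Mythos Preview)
Your overall strategy matches the paper's: establish the short exact sequence termwise on the Godement resolution, deduce perfectness of $\RDer i^!\cmplx{\sheaf{F}}_I$ by two-out-of-three from $i^*\God_V(\cmplx{\sheaf{F}}_I)$ and $i^*\RDer j_*j^*\cmplx{\sheaf{F}}_I$, and read off the final weak equivalence from the cone. The perfectness argument and the last step are fine.

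There is, however, a genuine technical gap in your justification of the short exact sequence. You assert that each term of $\God_V(\cmplx{\sheaf{F}}_I)$ is flabby, and then deduce both the exactness of $0\to i_*i^!\sheaf{G}\to\sheaf{G}\to j_*j^*\sheaf{G}\to 0$ and a termwise splitting from this. But the Godement resolution of a \emph{complex} is built as the total complex of a double complex, so its terms are (possibly infinite) direct sums of flasque sheaves, and infinite direct sums of flasque sheaves are not flasque in general. The paper confronts this explicitly: rather than claiming flasqueness, it observes that \'etale cohomology of noetherian schemes commutes with filtered direct limits, so the surjectivity of $\God_V(\cmplx{\sheaf{F}})\to j_*j^*\God_V(\cmplx{\sheaf{F}})$ still holds at the presheaf level even for these direct sums. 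This is what actually yields the exact sequence and shows that $\God_V(\cmplx{\sheaf{F}})$ is an $i^!$-acyclic resolution.

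Relatedly, your ``termwise split'' claim is not a consequence of flasqueness; flasqueness only gives exactness, not a splitting. You use the splitting to transport $DG$-flatness and the compatibility isomorphisms $\Lambda/J\tensor_{\Lambda/I}(-)$ to $\RDer i^!$, but the paper does not argue this way: it instead checks directly that $i^!\God_V$ commutes with tensor products by finitely generated right $\Lambda$-modules (again via the exact sequence, since the other two terms have this property), which is all that is needed for condition~(2) of Definition~\ref{defn:PDGcont(X,Lambda)}. If you want to salvage the splitting, you would have to argue it from the explicit structure of the Godement terms as products of skyscraper sheaves at closed points, not from flasqueness.
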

\begin{proof}
Note that for any abelian \'etale sheaf $\sheaf{F}$ on $V$, we habe $j^*\God_V(\sheaf{F})=\God_U(\sheaf{F})$. Moreover, by \cite[Exp. XVII, Prop. 4.2.3]{SGA4-3}, $\God_U(\sheaf{F})$ is a complex of flasque sheaves in the sense of \cite[Exp. V, Def. 4.1]{SGA4-3}. In particular, $\God_V(\sheaf{F})\mto j_*j^*\God_V(\sheaf{F})$ is surjective in the category of presheaves by \cite[Exp. V, Prop. 4.7]{SGA4-2}. If $\cmplx{\sheaf{F}}$ is a complex of abelian sheaves, $\God_V(\cmplx{\sheaf{F}})$ is constructed as the total complex of the double complex obtained by taking the Godement resolution of each individual sheaf. In particular, $\God_V(\cmplx{\sheaf{F}})$ is a complex of possibly infinite sums of flasque sheaves. Note that infinite sums of flasque sheaves are not necessarily flasque. Still, as \'etale cohomology of noetherian schemes commutes with filtered direct limits, $\God_V(\cmplx{\sheaf{F}})\mto j_*j^*\God_V(\cmplx{\sheaf{F}})$ is always surjective in the category of presheaves. This proves the exactness of the above sequence. Moreover, it implies that $\God_V(\cmplx{\sheaf{F}})$ is a $i^!$-acyclic resolution of $\cmplx{\sheaf{F}}$ such that $i^!\God_V$ preserves quasi-isomorphisms and injections. If $\cmplx{\sheaf{F}}$ is a perfect complex of sheaves of $\Lambda$-modules on $V$ for any finite ring $\Lambda$, then $i^!\God_V(\cmplx{\sheaf{F}})$ is perfect since this is true for $i^*\God_V(\cmplx{\sheaf{F}})$ and $i^*j_*j^*\God_V(\cmplx{\sheaf{F}})$. Similarly, we see that $i^!\God_V$ commutes with tensor products with finitely generated right $\Lambda$-modules. In particular, $\RDer i^!$ does indeed take values in $\cat{PDG}^{\cont}(\Sigma,\Lambda)$ for any adic ring $\Lambda$. Finally, if $i^*\cmplx{\sheaf{F}}$ is weakly equivalent to $0$, then we obtain the chain of weak equivalences
\[
 i^*\RDer j_*j^*\cmplx{\sheaf{F}}\otw\Cone\left(\RDer i^!\cmplx{\sheaf{F}}\mto i^*\God_V(\cmplx{\sheaf{F}})\right)\wto\RDer i^!\cmplx{\sheaf{F}}[1].
\]
\end{proof}

\section{Duality for adic sheaves}\label{sec:duality for adic sheaves}

For any scheme $Z$, any ring $R$ and any two \'etale sheaves of $R$-modules $\sheaf{F}$, $\sheaf{G}$ on $Z$, let
\[
 \sheafHom_{R,Z}(\sheaf{F},\sheaf{G})
\]
denote the sheaf of $R$-linear morphisms $\sheaf{F}\mto\sheaf{G}$ on $Z$. As before, we fix an adic $\Int_p$-algebra $\Lambda$. Let $U\subset X=\Spec \IntR_F$ be an open or closed subscheme. Unfortunately, we cannot present a construction of a Waldhausen exact functor
\[
\mdual\colon \cat{PDG}^{\cont}(U,\Lambda)^{\op}\mto\cat{PDG}^{\cont}(U,\Lambda^{\op})
\]
that would give rise to the usual total derived $\sheafHom$-functor $\sheaf{F}\mapsto \RDer\sheafHom_{\Lambda,U}(\sheaf{F},\Lambda_U)$ on the `derived' category of $\Lambda$-adic sheaves. Instead, we will construct a Waldhausen exact duality functor on the Waldhausen subcategory $\cat{S}^{\sm}(U,\Lambda)$ of smooth $\Lambda$-adic sheaves.

For any smooth $\Lambda$-adic sheaf $\sheaf{F}$,
\[
\begin{aligned}
 \sheaf{F}^{\mdual_{\Lambda}}&\coloneqq(\sheafHom_{\Lambda/I,U}(\sheaf{F}_I,(\Lambda/I)_U))_{I\in\openideals_{\Lambda}}\\
&=(\sheafHom_{\Int,U}(\sheafHom_{\Int,U}((\Lambda/I)_U,(\Rat_p/\Int_p)_U)\tensor_{\Lambda/I}\sheaf{F}_I,(\Rat_p/\Int_p)_U))_{I\in\openideals_{\Lambda}}
\end{aligned}
\]
is a smooth $\Lambda^{\op}$-adic sheaf on $U$. In this way, we obtain a Waldhausen exact equivalence
\[
 \mdual\colon \cat{S}^{\sm}(U,\Lambda)^{\op}\mto \cat{S}^{\sm}(U,\Lambda^{\op})
\]
and, by composing with $I\colon \KTh_n(\cat{S}^{\sm}(U,\Lambda))\xrightarrow{\isomorph}\KTh_n(\cat{S}^{\sm}(U,\Lambda)^{\op})$, isomorphisms
\[
 \mdual\colon \KTh_n(\cat{S}^{\sm}(U,\Lambda))\mto\KTh_n(\cat{S}^{\sm}(U,\Lambda^{\op}))
\]
for each $n\geq 0$.

Assume that $U$ is an open subscheme of $X$ such that $p$ is invertible on $U$. If $\sheaf{F}$ is a smooth $\Lambda$-adic sheaf on $U$, we can find a strictly perfect complex of $\Lambda$-modules $\cmplx{P}$ together with a weak equivalence
\[
 \cmplx{P}\wto \RDer\Sectc(U,\sheaf{F})
\]
in $\cat{PDG}^{\cont}(\Lambda)$. As a consequence of Artin-Verdier duality \cite[Thm. II.3.1]{Milne:ADT}, we then also have a weak equivalence
\begin{equation}\label{eqn:Artin-Verdier duality}
  (\cmplx{P})^{\mdual}\wto\RDer\Sect(U,\sheaf{F}^{\mdual}(1))[-3].
\end{equation}
in $\cat{PDG}^{\cont}(\Lambda^\op)$.

We could proceed in the same way for local duality and duality over finite fields, but instead, we prove the following finer results.

\begin{lem}\label{lem:explicit local duality}
Assume that $U$ is an open subscheme of $X$ such that $p$ is invertible on $U$ and that $i\colon x\mto X$ is a closed point not lying over $p$. For any smooth $\Lambda$-adic sheaf $\sheaf{F}$ on $U$, there exists a weak equivalence
\[
\cmplx{D}_{\hat{x}}(\sheaf{F})^{\mdual}\wto\RDer\Sect(\Spec F_x^{\nr},\eta_x\sheaf{F}^{\mdual}(1))[-1]
\]
in $\cat{PDG}^{\cont}(\Lambda^\op)$, compatible with the operation of $\Frob_x^{\mdual}$ on the left and of $\Frob_x^{-1}$ on the right.
\end{lem}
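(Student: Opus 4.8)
The plan is to follow the pattern of the proof of Lemma~\ref{lem:strictly perfect resolution of local cohomology}: reduce the assertion to an explicit comparison of two two-term complexes and then invoke that lemma. Write $N\coloneqq\eta_x^*\sheaf F$ for the compact $\Gal(\algc F_x/F_x)$-module attached to $\sheaf F$; directly from the definitions, $\eta_x^*(\sheaf F^\mdual(1))$ is the module $N^\vee(1)=\Hom_\Lambda(N,\Lambda)\tensor_{\Int_p}\Int_p(1)$, with the contragredient action twisted by the cyclotomic character. In particular $\sheaf F^\mdual(1)$ is again a smooth $\Lambda^\op$-adic sheaf on $U$, and both $\cmplx D_{\hat x}(\sheaf F)^\mdual$ and $\cmplx D_{\hat x}(\sheaf F^\mdual(1))$ are strictly perfect complexes of $\Lambda^\op$-modules. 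By Lemma~\ref{lem:strictly perfect resolution of local cohomology} applied to $\sheaf F^\mdual(1)$ there is a weak equivalence $\cmplx D_{\hat x}(\sheaf F^\mdual(1))\wto\RDer\Sect(\Spec F_x^{\nr},\eta_x^*\sheaf F^\mdual(1))$ compatible with $\Frob_x$, so it suffices to produce an isomorphism of complexes $\cmplx D_{\hat x}(\sheaf F)^\mdual\isomorph\cmplx D_{\hat x}(\sheaf F^\mdual(1))[-1]$ carrying $\Frob_x^\mdual$ to $\Frob_x^{-1}$. As in \emph{loc.\ cit.}\ one may reduce to the case that $\Lambda$ is finite, all the constructions being compatible with $\Lambda/I\tensor_\Lambda-$.

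Put $P\coloneqq\Gal(\algc F_x/F_x^{\nr,(p)})$. Since $p$ is invertible on $U$ and $x$ does not lie over $p$, the roots of unity $\mu_{p^\infty}$ lie in $F_x^{\nr}$, so $P$ acts trivially on $\Int_p(1)$; moreover $P$ is pro-prime-to-$p$, hence acts on each finite quotient of $N$ through a finite group of order prime to $p$. Consequently $N'=N^P$ is a direct summand of $N$, taking $P$-invariants is exact, and one obtains a canonical identification $(N^\vee(1))'\isomorph (N')^\vee\tensor_{\Int_p}\Int_p(1)$, where $(N')^\vee=\Hom_\Lambda(N',\Lambda)$ (invariants of the dual equal the dual of the coinvariants, and the coinvariants agree with the invariants here). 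After fixing a generator of the free rank-one $\Int_p$-module $\Int_p(1)$, both $\cmplx D_{\hat x}(\sheaf F)^\mdual$ and $\cmplx D_{\hat x}(\sheaf F^\mdual(1))$ become complexes with both terms equal to $(N')^\vee$, concentrated in degrees $\{-1,0\}$ respectively $\{0,1\}$, with differential $\id-\tau^t$ respectively $\id-\tau^\vee$ (up to signs coming from the shift and from the duality), where $\tau^\vee=(\tau^t)^{-1}$ is the contragredient of the automorphism $\tau$ of $N'$. The identity $\id-\tau^\vee=-\tau^\vee(\id-\tau)^t$ together with the fact that $\tau^\vee$ commutes with $(\id-\tau)^t$ shows that the map which is $\pm\tau^\vee$ in degree $-1$ and the identity in degree $0$ is an isomorphism of complexes $\cmplx D_{\hat x}(\sheaf F^\mdual(1))[-1]\isomorph\cmplx D_{\hat x}(\sheaf F)^\mdual$ — the concrete counterpart of the quasi-isomorphism $\alpha$ built in the proof of Lemma~\ref{lem:strictly perfect resolution of local cohomology}.

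The remaining and essentially only delicate point is to check that this isomorphism intertwines $\Frob_x^\mdual$ on $\cmplx D_{\hat x}(\sheaf F)^\mdual$ — the transpose of the Frobenius operator on $\cmplx D_{\hat x}(\sheaf F)$, i.e.\ $\varphi^t$ in degree $0$ and $\bigl(\varphi\,\tfrac{\tau^q-1}{\tau-1}\bigr)^{t}$ in degree $-1$ — with $\Frob_x^{-1}$ on $\cmplx D_{\hat x}(\sheaf F^\mdual(1))[-1]$, the inverse of the Frobenius operator on $\cmplx D_{\hat x}(\sheaf F^\mdual(1))$, whose components are assembled from the contragredient of $\varphi$, the action $q^{-1}$ of geometric Frobenius on $\Int_p(1)$, and the twist $\tfrac{\tau^q-1}{\tau-1}$ in the top degree. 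This is a finite bookkeeping with the unit $\tfrac{\tau^q-1}{\tau-1}\in\Lambda[[\Gal(F_x^{\nr,(p)}/F_x^{\nr})]]^\times$, the relation $\varphi\tau\varphi^{-1}=\tau^{q^{-1}}$, and the contragredient action: the powers of $\tau$ and the factors of $q$ produced by these relations cancel against the twist by $\pm\tau^\vee$ from the previous step and the factor $q^{-1}$ of the Tate twist, and it is precisely in order to make this cancellation work that the twist $\tfrac{\tau^q-1}{\tau-1}$ was inserted into the definition of $\cmplx D_{\hat x}$. I expect this Frobenius bookkeeping — reconciling the twist $\tfrac{\tau^q-1}{\tau-1}$ with the cyclotomic twist — to be the main obstacle; the rest is routine. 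Composing the isomorphism of complexes with the (shift of the) weak equivalence from Lemma~\ref{lem:strictly perfect resolution of local cohomology}, which is $\Frob_x$-compatible and hence $\Frob_x^{-1}$-compatible, then yields the asserted weak equivalence in $\cat{PDG}^{\cont}(\Lambda^\op)$; that it is a genuine morphism of inverse systems over $\openideals_\Lambda$ is automatic from the functoriality of all the constructions involved.
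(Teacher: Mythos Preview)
Your strategy of factoring through $\cmplx{D}_{\hat{x}}(\sheaf{F}^{\mdual}(1))$ via Lemma~\ref{lem:strictly perfect resolution of local cohomology} is reasonable, and the identification of differentials is correct: the map you write down is indeed an isomorphism of complexes. The problem is the Frobenius compatibility, which you flag as ``bookkeeping'' but do not carry out. It fails for the isomorphism you propose.

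Take the degree in which you put the identity. There, $\Frob_x^{\mdual}$ on $(D^0_{\hat{x}}(\sheaf{F}))^{\mdual}$ is precomposition with $\rho(\varphi)$, whereas $\Frob_x^{-1}$ on $D^1_{\hat{x}}(\sheaf{F}^{\mdual}(1))$ works out to $f\mapsto q\, f\circ\rho\bigl(\varphi\,\tfrac{\tau^{-1}-1}{\tau^{-q}-1}\bigr)$ once you unwind the contragredient action and the Tate twist $\kappa(\varphi)=q^{-1}$. Equality would force $\rho\bigl(\tfrac{\tau^{-q}-1}{\tau^{-1}-1}\bigr)=q\cdot\id$ on $N'$, i.e.\ $1+\rho(\tau)^{-1}+\cdots+\rho(\tau)^{-(q-1)}=q$, which holds only when $\tau$ acts trivially on $N'$. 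So for genuinely ramified $\sheaf{F}$ your map does not intertwine the Frobenii, and the ``cancellation'' you anticipate does not occur.

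A Frobenius-compatible isomorphism of the two $D$-complexes does exist, but one must twist by a unit $u(\tau)\in\Lambda[[\langle\tau\rangle]]^{\times}$ satisfying the functional equation $q\,u(\tau)=u(\tau^{q})\,\tfrac{\tau^{q}-1}{\tau-1}$ (this is what the chain-map and Frobenius conditions jointly impose). Solving this is the actual content of the lemma. The paper sidesteps the factorisation entirely and builds the weak equivalence $\beta$ directly into the homogeneous cochain complex, using the explicit cocycle $s(b)=q^{-b}\tfrac{\tau^{-1}-1}{\tau^{-q^{-b}}-1}$ and checking $\beta\circ\Frob_x^{\mdual}=\Frob_x^{-1}\circ\beta$ from the cocycle relation for $s$. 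Either route requires producing such a correction factor; the simple map $(\pm\tau^{\vee},\id)$ is not enough.
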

\begin{proof}
As in the proof of Lemma~\ref{lem:strictly perfect resolution of local cohomology}, we can replace $\RDer\Sect(\Spec F_x^{\nr},\eta_x\sheaf{F}^{\mdual}(1))$ by the homogenous cochain complex $\cmplx{X}(\Gal(F_x^{\nr,(p)}/F_x),(N')^{\mdual}(1))^{\Gal(F_x^{\nr,(p)}/F_x^{\nr})}$. By choosing a basis of the free $\Int_p$-module $\Int_p(1)$, i.\,e.\ a compatible system of $p^{n}$-th roots of unity, we may identify the underlying $\Lambda$-modules of $(N')^{\mdual}$ and $(N')^{\mdual}(1)$. The operation of $\sigma\in\Gal(F_x^{\nr,(p)}/F_x^{\nr})$ on $f\in(N')^{\mdual}$ is given by
\[
\sigma f\coloneqq f\comp (\sigma^\mdual)^{-1}.
\]
The operation of $\Frob_x^\mdual$ on $f\in D^1_{\hat{x}}(\sheaf{F})^{\mdual}=(N')^{\mdual}$ is then given by
\[
\Frob_x^\mdual(f)\coloneqq\left(\frac{\tau^{-q}-1}{\tau^{-1}-1}\right)\varphi^{-1}f
\]
and on $g\in D^0_{\hat{x}}(\sheaf{F})^{\mdual}=(N')^{\mdual}$ by
\[
\Frob_x^\mdual(g)\coloneqq\varphi^{-1}g,
\]
with $\varphi,\tau\in\Gal(F_x^{\nr,(p)}/F_x)$ denoting our fixed topological generators and $q\in\Lambda^\times$ denoting the order of the residue field $k(x)$. Set for $b\in\hat{\Int}$
\[
s(b)\coloneqq q^{-b}\left(\frac{\tau^{-1}-1}{\tau^{-q^{-b}}-1}\right)\in \Lambda[[\Gal(F_x^{\nr,(p)}/F_x^{\nr})]]^{\times}
\]
and note that $s$ satisfies the cocycle relation
\[
s(b+1)=q^{-1}\varphi s(b)\left(\frac{\tau^{-q}-1}{\tau^{-1}-1}\right)\varphi^{-1}=q^{-1}\varphi s(b)\varphi^{-1}s(1).
\]

We define a weak equivalence
\[
\beta\colon \cmplx{D}_{\hat{x}}(\sheaf{F})^{\mdual}\wto\cmplx{X}(\Gal(F_x^{\nr,(p)}/F_x),(N')^{\mdual}(1))^{\Gal(F_x^{\nr,(p)}/F_x^{\nr})}[-1]
\]
by
\[
\beta(f)\colon \Gal(F_x^{\nr, (p)}/F_x)\mto (N')^{\mdual}(1),\qquad \tau^a\varphi^b\mapsto \tau^a s(b)f
\]
for $f\in D^1_{\hat{x}}(\sheaf{F})^{\mdual}$ and by
\[
\beta(g)\colon \Gal(F_x^{\nr, (p)}/F_x)^2\mto (N')^{\mdual}(1),\qquad
 (\tau^a\varphi^b,\tau^c\varphi^d)\mapsto \left(\frac{\tau^a s(b)-\tau^c s(d)}{1-\tau^{-1}}\right)g
\]
for $g\in D^0_{\hat{x}}(\sheaf{F})^{\mdual}$, $a,c\in\Int_p$, $b,d\in\hat{\Int}$.

Using the cocycle relation for $s$, it is easily checked that
\[
\beta\comp\Frob_x^*=\Frob_x^{-1}\comp \beta,
\]
as claimed.
\end{proof}

In particular, if $\cmplx{Q}$ denotes the cocone of
\[
\cmplx{D}_{\hat{x}}(\sheaf{F})\xrightarrow{\id-\Frob_x}\cmplx{D}_{\hat{x}}(\sheaf{F}),
\]
then $\cmplx{Q}$ is a strictly perfect complex of $\Lambda$-modules and there exist weak equivalences
\begin{equation}\label{eqn:local duality}
\begin{aligned}
\cmplx{Q}&\wto \RDer\Sect(\Spec F_x,\eta_x^*\sheaf{F}),\\
(\cmplx{Q})^{\mdual}&\wto \RDer\Sect(\Spec F_x,\eta_x^*\sheaf{F}^\mdual(1))[-2].
\end{aligned}
\end{equation}
in $\cat{PDG}^{\cont}(\Lambda^{\op})$.

Let now $\sheaf{G}$ be a complex in $\cat{S}(x,\Lambda)=\cat{S}^{\sm}(x,\Lambda)$ and let
\[
 \sheaf{G}_{\hat{x}}\coloneqq\varprojlim_{I\in\openideals_{\Lambda}} (\sheaf{G}_I)_{\hat{x}}
\]
be the stalk of $\sheaf{G}$ in the geometric point $\hat{x}$ over $x$. Then $\sheaf{G}_{\hat{x}}$ is a finitely generated, projective $\Lambda$-module, equipped with a natural operation of $\Frob_x$. Clearly, the natural morphism
\begin{equation}\label{eqn:stalk}
 \sheaf{G}_{\hat{x}}\wto\RDer\Sect(\hat{x},\sheaf{G})
\end{equation}
is a weak equivalence in $\cat{PDG}^{\cont}(\Lambda)$ that is compatible with the operation of $\Frob_x$ on both sides. In particular, the cocone $\cmplx{C}$ of
\[
\sheaf{G}_{\hat{x}}\xrightarrow{\id-\Frob_x} \sheaf{G}_{\hat{x}}
\]
is weakly equivalent to $\RDer\Sect(x,\sheaf{G})$.

\begin{lem}\label{lem:explicit finite duality}
With $\sheaf{G}$ as above, there exists an isomorphism
\[
 (\sheaf{G}_{\hat{x}})^{\mdual}\xrightarrow{\isomorph}(\sheaf{G}^{\mdual})_{\hat{x}}
\]
of finitely generated, projective $\Lambda$-modules, compatible with the operation of $\Frob_x^{\mdual}$ on the left and of $\Frob_x^{-1}$ on the right.
\end{lem}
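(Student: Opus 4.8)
The plan is to unwind both $(\sheaf{G}_{\hat{x}})^{\mdual}$ and $(\sheaf{G}^{\mdual})_{\hat{x}}$ into explicit descriptions in terms of the Galois module underlying $\sheaf{G}$ and to check that the tautological identification of their underlying modules intertwines the two prescribed Frobenius actions. First I would reduce to the case that $\Lambda$ is a finite $\Int_p$-algebra. Both $\sheaf{G}_{\hat{x}}$ and $\sheaf{G}^{\mdual}$ are defined levelwise over $\openideals_{\Lambda}$, and since the $\Lambda$-linear dual of a finitely generated projective module commutes with base change, one obtains canonical identifications $(\sheaf{G}_{\hat{x}})^{\mdual}\isomorph\varprojlim_{I}((\sheaf{G}_I)_{\hat{x}})^{\mdual}$ and $(\sheaf{G}^{\mdual})_{\hat{x}}\isomorph\varprojlim_{I}\Hom_{\Lambda/I}((\sheaf{G}_I)_{\hat{x}},\Lambda/I)$, the transition maps on either side being the obvious ones; as the terms of these inverse systems are finite, it suffices to produce compatible isomorphisms at each finite level, and compatibility with the transition maps will be automatic from the functoriality of $\mdual$. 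So I assume from now on that $\Lambda$ is finite.

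For finite $\Lambda$, the sheaf $\sheaf{G}$ on $x=\Spec k(x)$ is, by the identification $\cat{S}(x,\Lambda)=\cat{S}^{\sm}(x,\Lambda)$ recalled above, nothing but the finitely generated projective $\Lambda$-module $M\coloneqq\sheaf{G}_{\hat{x}}$ equipped with a continuous action of $\Gal(\algc{k(x)}/k(x))$, the geometric Frobenius $\Frob_x$ acting via this action. Since the constant sheaf $(\Lambda)_x$ carries the trivial Galois action, the internal Hom sheaf $\sheaf{G}^{\mdual}$ corresponds to the $\Lambda^{\op}$-module $\Hom_{\Lambda}(M,\Lambda)$ with the action $\sigma\cdot f=f\comp\sigma^{-1}$, where $\sigma$ acts on $M$ through the given action; therefore $(\sheaf{G}^{\mdual})_{\hat{x}}$ has underlying module $\Hom_{\Lambda}(M,\Lambda)$ with $\Frob_x$ acting by $f\mapsto f\comp\Frob_x^{-1}$. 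On the other hand $(\sheaf{G}_{\hat{x}})^{\mdual}$ is by definition $\Hom_{\Lambda}(M,\Lambda)$, and, since the Waldhausen exact duality $\mdual$ sends an endomorphism to its transpose and reverses composition, the operator $\Frob_x^{\mdual}$ on it is $f\mapsto f\comp\Frob_x$. Consequently the identity of $\Hom_{\Lambda}(M,\Lambda)$ is the sought isomorphism $(\sheaf{G}_{\hat{x}})^{\mdual}\isomorph(\sheaf{G}^{\mdual})_{\hat{x}}$ of finitely generated projective $\Lambda^{\op}$-modules, and under it $\Frob_x^{\mdual}$ corresponds precisely to the inverse of the natural $\Frob_x$-operation on $(\sheaf{G}^{\mdual})_{\hat{x}}$, which is exactly the asserted compatibility. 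Passing back to the projective limit over $\openideals_{\Lambda}$ then finishes the proof.

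The only genuinely delicate point is the bookkeeping of Frobenii: one has to keep track of the facts that $\Frob_x$ denotes the \emph{geometric} Frobenius, that an \'etale sheaf on the point $x$ is the same datum as its geometric stalk together with this Frobenius action, and that $\mdual$ is contravariant, so that it converts the endomorphism $\Frob_x$ into $f\mapsto f\comp\Frob_x$ rather than into $f\mapsto\Frob_x\comp f$. Once these conventions are made explicit, the comparison of the two actions is immediate, and I expect no further obstacle.
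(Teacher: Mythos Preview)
Your proof is correct and follows essentially the same approach as the paper's own proof: both use the equivalence between \'etale sheaves on $x$ and discrete $\Gal(\algc{k(x)}/k(x))$-modules to identify $(\sheaf{G}^{\mdual})_{\hat{x}}$ with $\Hom_{\Lambda}(M,\Lambda)$ carrying the action $\sigma\cdot f=f\comp\sigma^{-1}$, and then observe that this matches $\Frob_x^{\mdual}$ with $\Frob_x^{-1}$. Your version is simply more explicit, spelling out the reduction to finite $\Lambda$ and the Frobenius bookkeeping that the paper compresses into a single sentence.
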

\begin{proof}
Let $R$ be any finite ring. Under the equivalence between the categories of \'etale sheaves of $R$-modules on $x$ and of discrete $R[[\Gal(\algc{k(x)}/k(x))]]$-modules, given by $\sheaf{F}\mapsto\sheaf{F}_{\hat{x}}$, the dual sheaf $\sheaf{F}^{\mdual_R}$ corresponds to the $R^{\op}$-module $(\sheaf{F}_{\hat{x}})^{\mdual_R}$ with $\sigma\in\Gal(\algc{k(x)}/k(x))$ acting on $f\colon \sheaf{F}_{\hat{x}}\mto R$ by $f\comp (\sigma^{\mdual_R})^{-1}$.
\end{proof}

Consequently, we obtain a weak equivalence
\begin{equation}\label{eqn:finite duality}
(\cmplx{C})^{\mdual}\wto \RDer\Sect(x,\sheaf{G}^{\mdual})[-1]
\end{equation}
in $\cat{PDG}^{\cont}(\Lambda^{\op})$. If $\sheaf{G}=i^*\sheaf{F}$ with $\sheaf{F}$ a smooth $\Lambda$-adic sheaf on $U$ as above, then by the exchange formula \cite[Thm. 8.4.7]{LeiFu:EtaleCohomology}, there exists a chain of weak equivalences
\begin{equation}\label{eqn:exchange formula}
\begin{aligned}
 (i^*\sheaf{F})^{\mdual}&=(\RDer\sheafHom_{\Lambda/I,x}(i^*\sheaf{F}_I,(\Lambda/I)_x))_{I\in\openideals_{\Lambda}}\\
                        &\sim(\RDer\sheafHom_{\Lambda/I,x}(i^*\sheaf{F}_I,\RDer i^!(\Lambda/I)_U(1)[-2]))_{I\in\openideals_{\Lambda}}\\
                        &\sim(\RDer i^!\sheafHom_{\Lambda/I,U}(\sheaf{F}_I,(\Lambda/I)_U(1))[-2])_{I\in\openideals_{\Lambda}}\\
                        &=\RDer i^!\sheaf{F}^{\mdual}(1)[-2]
\end{aligned}
\end{equation}
in $\cat{PDG}^{\cont}(x,\Lambda^\op)$.

\section{Admissible extensions}\label{sec:AdmissibleExt}

As before, we fix an odd prime $p$ and a number field $F$. Assume that $F_\infty/F$ is a possibly infinite Galois extension unramified over an open or closed subscheme $U=U_F$ of $X=\Spec \IntR_F$. Let $G\coloneqq\Gal(F_\infty/F)$ be its Galois group. We also assume that $G$ has a topologically finitely generated, open pro-$p$-subgroup, such that for any adic $\Int_p$-algebra $\Lambda$, the profinite group ring $\Lambda[[G]]$ is again an adic ring \cite[Prop.~3.2]{Witte:MCVarFF}. For any intermediate number field $K$ of $F_\infty/F$, we will write $U_K$ for the base change with $X_K\coloneqq\Spec \IntR_K$ and $f_K\colon U_K\mto U$ for the corresponding Galois covering of $U$, such that we obtain a system of Galois coverings $(f_K \colon U_K\mto U)_{F\subset K\subset {F_\infty}}$, which we denote by
\[
 f\colon U_{F_\infty}\mto U.
\]
As in \cite[Def. 6.1]{Witte:MCVarFF} we make the following construction.

\begin{defn}
Let $\Lambda$ be any adic $\Int_p$-algebra.
For $\cmplx{\sheaf{F}}\in \cat{PDG}^{\cont}(U,\Lambda)$ we set
\[
f_!f^*\cmplx{\sheaf{F}}\coloneqq(\varprojlim_{I\in\openideals_{\Lambda}}\varprojlim_{F\subset K\subset F_\infty}
\Lambda[[G]]/J\tensor_{\Lambda[[G]]}{f_K}_!f_K^*\cmplx{\sheaf{F}_I})_{J\in\openideals_{\Lambda[[G]]}}
\]
\end{defn}

\begin{rem}
The functor $f_!f^*$ corresponds to the composition of the restriction and compact induction functors on the level of compact Galois modules. On finite level, the extension by zero ${f_K}_!$ agrees with the direct image ${f_K}_*$. We use the notation of the extension by zero to emphasise its role as a left adjoint of $f_K^*$.
\end{rem}

As in \cite[Prop. 6.2]{Witte:MCVarFF} one verifies that we thus obtain a Waldhausen exact functor
\[
 f_!f^*\colon \cat{PDG}^{\cont}(U,\Lambda)\mto \cat{PDG}^{\cont}(U,\Lambda[[G]]).
\]
In particular, if $U$ is open and dense in $X$ and if $k\colon U\mto W$ denotes the open immersion into another open dense subscheme $W$ of $X$, we obtain for each complex $\cmplx{\sheaf{F}}$ in $\cat{PDG}^{\cont}(U,\Lambda)$ a complex
\[
 \RDer\Sectc(W,\RDer k_* f_!f^*\cmplx{\sheaf{F}})
\]
in $\cat{PDG}^{\cont}(\Lambda[[G]])$.

\begin{rem}\label{rem:exchanging W and V}
For $U$ open and dense in $X$, set $V\coloneqq U\cup (X-W)$ and let $j\colon U\mto V$ denote the corresponding open immersion. Write $j'\colon V\mto X$ and $k'\colon W\mto X$ for the open immersions into $X$. For any \'etale sheaf $\sheaf{G}$ on $U$, the canonical morphism
\[
k'_!k_*\God_U\sheaf{G}\isomorph j'_*j_!\God_U\sheaf{G}\mto
j'_*\God_V j_!\sheaf{G}
\]
is seen to be a quasi-isomorphism by checking on the stalks. Hence, for any $\cmplx{\sheaf{F}}$ in $\cat{PDG}^{\cont}(U,\Lambda)$,  there is a weak equivalence
\[
\RDer\Sectc(W,\RDer k_*f_!f^*\cmplx{\sheaf{F}})\wto\RDer\Sect(V,j_!f_!f^*\cmplx{\sheaf{F}}).
\]
We recall that the righthand complex is always in $\cat{PDG}^{\cont}(\Lambda[[G]])$. Hence, the same is true for the left-hand complex without any condition on $U$ and $W$, even if $\RDer k_*f_!f^*\cmplx{\sheaf{F}}$ fails to be a perfect complex.
In particular, we may use the two complexes interchangeably in our results.
\end{rem}

We recall how the functor $f_!f^*$ transforms under the change of the extension $F_\infty/F$ and under changes of the coefficient ring $\Lambda$.

\begin{prop}\label{prop:ring change for coverings}
Let $f\colon U_{F_\infty}\mto U$ be the system of Galois coverings of the open or closed subscheme $U$ of $X$ associated to the extension $F_\infty/F$ with Galois group $G$ which is unramified over $U$. Let further $\Lambda$ be an adic $\Int_p$-algebra and $\cmplx{\sheaf{F}}$ be a complex in $\cat{PDG}^{\cont}(U,\Lambda)$.
\begin{enumerate}
\item Let $\Lambda'$ be another adic $\Int_p$-algebra and let $\cmplx{P}$ be a complex of $\Lambda'$-$\Lambda[[G]]$-bimodules, strictly perfect as complex of $\Lambda'$-modules. Then there exists a natural isomorphism
    $$
    \ringtransf_{\cmplx{P[[G]]^\delta}}f_!f^*\cmplx{\sheaf{F}}\isomorph f_!f^*\ringtransf_{\cmplx{P}}f_!f^*\cmplx{\sheaf{F}}
    $$
\item Let $F'_\infty\subset F_\infty$ be a subfield such that $F'_\infty/F$ is a Galois extension with Galois group $G'$ and let $f'\colon U_{F'_\infty}\mto U$ denote the corresponding system of Galois coverings. Then there
exists a natural isomorphism
$$
\ringtransf_{\Lambda[[G']]}f_!f^*\cmplx{\sheaf{F}}\isomorph (f')_!(f')^*\cmplx{\sheaf{F}}
$$
in $\cat{PDG}^{\cont}(U,\Lambda[[G']])$.
\item Let $F'/F$ be a finite extension inside $F_\infty/F$, let $f_{F'}\colon U_{F'}\mto U$ denote the associated \'etale covering of $U$ and let $g\colon U_{F_\infty}\mto U_F'$ be the restriction of the system of coverings $f$ to $U_{F'}$. Write $G'\subset G$ for the corresponding open subgroup and view $\Lambda[[G]]$ as a
$\Lambda[[G']]$-$\Lambda[[G]]$-bimodule. Then there exists a
natural isomorphism
$$
\ringtransf_{\Lambda[[G]]}f_!f^*\cmplx{\sheaf{F}}\isomorph f_{F'*}\left(g_!g^*\right)f_{F'}^*\cmplx{\sheaf{F}}
$$
in
$\cat{PDG}^{\cont}(U,\Lambda[[G']])$.
\item With the notation of $(3)$, let $\cmplx{\sheaf{G}}$ be a complex in $\cat{PDG}^{\cont}(U_{F'},\Lambda)$ and view $\Lambda[[G]]$ as a $\Lambda[[G]]$-$\Lambda[[G']]$-bimodule. Then there exists a natural isomorphism
$$
\ringtransf_{\Lambda[[G]]}{f_{F'}}_*g_!g^*\cmplx{\sheaf{G}}\isomorph f_!f^*({f_{F'}}_*\cmplx{\sheaf{G}})
$$
in $\cat{PDG}^{\cont}(U,\Lambda[[G]])$.
\end{enumerate}
\end{prop}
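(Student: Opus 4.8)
\medskip
\noindent\textbf{Proof proposal.}
The plan is to prove all four isomorphisms simultaneously by reducing to the finite level --- finite coefficient rings together with finite Galois quotients of $G$ --- where $f_!f^*$ degenerates to the composition of an ordinary inverse image with a pushforward along a finite \'etale Galois covering, and then to verify the resulting assertions using the exactness of these operations, the projection formula, and the transitivity of coverings.

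First I would note that in the definition of $f_!f^*$ every functor in sight commutes with the two cofiltered inverse limits occurring there, namely the one over $\openideals_\Lambda$ and the one over the intermediate fields $F\subset K\subset F_\infty$ (equivalently, over the finite Galois subextensions, which form a cofinal subfamily). Both limit systems are countable with surjective transition maps and the tensor products over the profinite group rings involved are right exact, so, exactly as in the discussion in Section~\ref{sec:Framework} and by \cite[Prop.~3.7]{Witte:MCVarFF}, it suffices to produce natural isomorphisms after applying $\Lambda/I[[G/N]]\tensor_{\Lambda[[G]]}(-)$ for all $I\in\openideals_\Lambda$ and all open normal subgroups $N$ of $G$. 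Writing $K$ for the fixed field of $N$, the covering $f_K\colon U_K\mto U$ is then finite \'etale Galois with group $G/N$, so that ${f_K}_!={f_K}_*$ is exact and ${f_K}_!f_K^*\cmplx{\sheaf{F}}_I$ is canonically $\cmplx{\sheaf{F}}_I\tensor_{\Lambda/I}\Lambda/I[[G/N]]$ with the natural module structure carried by the second tensor factor. What remains on the finite level is to check that these identifications are natural in $I$, in $N$ and in $\cmplx{\sheaf{F}}$, so that they reassemble to morphisms of inverse systems in $\cat{PDG}^{\cont}(U,-)$; this is where the bulk of the (routine) work lies.

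Granting this reduction, each of the four statements becomes an elementary identity. For (1) it is the projection formula ${f_K}_!(\cmplx{\sheaf{A}}\tensor f_K^*\cmplx{\sheaf{B}})\isomorph {f_K}_!\cmplx{\sheaf{A}}\tensor\cmplx{\sheaf{B}}$ together with the associativity of the tensor product, applied with $\cmplx{\sheaf{B}}$ derived from $\cmplx{P}$; unwinding both sides on the finite level produces the same module with the same $G/N$-action once the left and right module structures are matched up. For (2), with $G'=\Gal(F'_\infty/F)$ a quotient of $G$ and $H'=\Gal(F_\infty/F'_\infty)$, extension of scalars along $\Lambda[[G]]\mto\Lambda[[G']]$ replaces $\Lambda/I[[G/N]]$ by $\Lambda/I[[G/NH']]$, and since passing from $U_K$ to its quotient covering by $H'$ is precisely the covering $U_{K'}\mto U$ for $K'$ the fixed field of $NH'$, with the $K'$ forming a cofinal family, the limit is $(f')_!(f')^*\cmplx{\sheaf{F}}$. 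For (3), $\ringtransf_{\Lambda[[G]]}$ merely restricts the $\Lambda[[G]]$-action to the open subgroup $G'=\Gal(F_\infty/F')$; restricting attention to the cofinal family of Galois $K$ with $F'\subset K$, one has the factorisation $f_K=f_{F'}\circ g_K$ of coverings, hence ${f_K}_!f_K^*=f_{F'*}\,({g_K}_!g_K^*)\,f_{F'}^*$, and on the finite level $\Res^{G/N}_{G'/N}\Lambda/I[[G/N]]$ is free of rank $[G:G']$ over $\Lambda/I[[G'/N]]$ --- the algebraic shadow of the decomposition of $f_{F'}^{-1}(U_K)$ into its connected components --- so that taking limits gives the claim. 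Finally, (4) is induction in stages, $\Ind^{G/N}_{G'/N}\Ind^{G'/N}_{1}\isomorph\Ind^{G/N}_{1}$, combined with the projection formula for $f_{F'}$, matched with $f_!f^*(f_{F'*}\cmplx{\sheaf{G}})$ after the same limit argument; here one uses once more that $f_{F'}$ is finite \'etale, so $f_{F'*}=f_{F'!}$ is exact. In particular, (2), (3) and (4) require no hypothesis on $U$ or on the residue characteristic, since $f_!f^*$ involves only extension by zero along finite coverings.

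The main obstacle will not be any individual algebraic identity but the bookkeeping of the various left, right and diagonal group actions attached to the bimodules in play, and the verification that each finite-level isomorphism above is compatible with all the transition maps in the two inverse-limit systems simultaneously; once that compatibility is in hand the passage to the limit is formal, precisely as in the proof of \cite[Prop.~6.2]{Witte:MCVarFF}.
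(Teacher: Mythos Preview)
Your proposal is correct and follows the same overall strategy as the paper: reduce to finite coefficients and finite Galois layers, where $f_!f^*$ becomes ${f_K}_*f_K^*$ for a finite \'etale Galois covering, and then invoke the standard identities (projection formula, transitivity, induction in stages). For parts (1)--(3) the paper simply cites \cite[Prop.~6.5, 6.7]{Witte:MCVarFF}, whose arguments are exactly of the kind you sketch.

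For part (4) there is a small organisational difference worth noting. The paper does not argue directly via induction in stages on the finite level; instead it first establishes the claim for the constant sheaf, obtaining $\ringtransf_{\Lambda[[G]]}(g_!g^*\Lambda_{U_{F'}})\isomorph f_{F'}^*f_!f^*\Lambda_U$, and then feeds this into the identification $f_!f^*\cmplx{\sheaf{H}}\isomorph\ringtransf_{f_!f^*\Lambda_U}\cmplx{\sheaf{H}}$ from \cite[Prop.~6.3]{Witte:MCVarFF} together with the projection formula for the finite morphism $f_{F'}$. This two-step route has the advantage that the delicate bookkeeping of bimodule actions you mention is already packaged into the cited proposition, so one need only verify it once for the constant sheaf. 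Your direct finite-level argument is equally valid but requires you to carry out that bookkeeping yourself; either way the content is the same.
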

\begin{proof}
Part $(1)-(3)$ are proved in \cite[Prop. 6.5, 6.7]{Witte:MCVarFF}. We prove $(4)$. First, note that for any finite Galois extension $F''/F$ with $F'\subset F''\subset F_\infty$ and any $I\in\openideals_{\Lambda}$ the canonical map
\[
 {g_{F''}}_!g_{F''}^*(\Lambda/I)_{U_{F'}}\mto f_{F'}^*{f_{F''}}_!f_{F''}^*(\Lambda/I)_U
\]
induces an isomorphism
\[
\Lambda/I[\Gal(F''/F)]\tensor_{\Lambda/I[\Gal(F''/F')]}{g_{F''}}_!g_{F''}^*(\Lambda/I)_{U_{F'}}\isomorph f_{F'}^*{f_{F''}}_!f_{F''}^*(\Lambda/I)_U.
\]
Hence,
\[
 \ringtransf_{\Lambda[[G]]}(g_!g^*\Lambda_{U_{F'}})\isomorph f_{F'}^*f_!f^*\Lambda_U
\]
in $\cat{PDG}^{\cont}(U_{F'},\Lambda[[G]])$. We further recall that in the notation of \cite[Prop. 6.3]{Witte:MCVarFF}, there exists an isomorphism
\[
 f_!f^*{f_{F'}}_*\cmplx{\sheaf{G}}\isomorph\ringtransf_{f_!f^*\Lambda_U}{f_{F'}}_*\cmplx{\sheaf{G}}.
\]
The projection formula then implies
\begin{equation*}
\begin{split}
 \ringtransf_{f_!f^*\Lambda_U}{f_{F'}}_*\cmplx{\sheaf{G}}&\isomorph{f_{F'}}_*(\ringtransf_{f_{F'}^*f_!f^*\Lambda_U}(\cmplx{\sheaf{G}}))\\
                                                       &\isomorph{f_{F'}}_*(\ringtransf_{\ringtransf_{\Lambda[[G]]}(g_!g^*\Lambda_{U_{F'}})}(\cmplx{\sheaf{G}}))\\
                                                       &\isomorph{f_{F'}}_*(\ringtransf_{\Lambda[[G]]}(g_!g^*\cmplx{\sheaf{G}}))
\end{split}
\end{equation*}
as desired.
\end{proof}

To understand Part $(1)$ of this proposition, note that if $\rho$ is a representation of $G$ on a finitely generated and projective $\Lambda$-module and $\rho^\sharp$ is the corresponding $\Lambda$-$\Int_p[[G]]$-bimodule as in Example~\ref{exmpl:def of eval}, then
\begin{equation}\label{eqn:def of rep sheaf}
\sheaf{M}(\rho)\coloneqq\ringtransf_{\rho^\sharp}f_!f^*(\Int_p)_U
\end{equation}
is simply the smooth $\Lambda$-adic sheaf on $U$ associated to $\rho$ \cite[Prop. 6.8]{Witte:MCVarFF}. In general,
\begin{equation}\label{eqn:def of twisted sheaf}
\ringtransf_{\cmplx{\tilde{P}}}\cmplx{\sheaf{F}}\coloneqq\ringtransf_{\cmplx{P}}f_!f^*\cmplx{\sheaf{F}}
\end{equation}
should be understood as the derived tensor product over $\Lambda$ of the complex of sheaves associated to $\cmplx{P}$ and the complex $\cmplx{\sheaf{F}}$.

Assume that $\sheaf{F}$ is a smooth $\Lambda$-adic sheaf on $U$. As before, we write
\[
 \sheaf{F}^{\mdual_\Lambda}\coloneqq(\sheafHom_{\Lambda/I,U}(\sheaf{F}_I,\Lambda/I))_{I\in\openideals_{\Lambda/I}}\in\cat{PDG}^{\cont}(U,\Lambda^{\op})
\]
for the $\Lambda$-dual of $\sheaf{F}$ and $\Lambda^{\op}[[G]]^\sharp$ for the $\Lambda^{\op}[[G]]$-$\Lambda[[G]]^\op$-bimodule with $g\in G$ acting by $g^{-1}$ from the right. We then have a natural isomorphism
\begin{equation}\label{eqn:duality for coverings}
 f_!f^*\sheaf{F}^{\mdual_\Lambda}\isomorph\ringtransf_{\Lambda^{\op}[[G]]^\sharp}(f_!f^*\sheaf{F})^{\mdual_{\Lambda[[G]]}}.
\end{equation}
This can then be combined with the duality assertions \eqref{eqn:Artin-Verdier duality}, \eqref{eqn:local duality}, and \eqref{eqn:finite duality}. For example, we may find a strictly perfect complex of $\Lambda^{\op}[[G]]$-modules $\cmplx{P}$ and weak equivalences
\begin{equation}
\begin{aligned}\label{eqn:Artin-Verdier duality for coverings}
\cmplx{P}&\wto\RDer\Sectc(U,f_!f^*\sheaf{F}^{\mdual_\Lambda}(1)),\\
(\cmplx{P})^{\Mdual}&\wto\RDer\Sect(U,f_!f^*\sheaf{F})[-3]
\end{aligned}
\end{equation}
if $\ell$ is invertible on $U$.

Let $F_{\cyc}$ denote the cyclotomic $\Int_p$-extension of $F$ and let $M$ be the maximal abelian $p$-extension of $F_{\cyc}$ unramified outside the places over $p$. Assume that $F$ is a totally real field. By the validity of the weak Leopoldt conjecture for $F_{\cyc}$, the Galois group $\Gal(M/F_{\cyc})$ is a finitely generated torsion module of projective dimension less or equal $1$ over the classical Iwasawa algebra $\Int_p[[\Gal(F_{\cyc}/F)]]$ \cite[Thm. 11.3.2]{NSW:CohomNumFields}. Like in \cite{Kakde2}, we will assume the vanishing of its Iwasawa $\mu$-invariant in the following sense:

\begin{conj}\label{conj:vanishing of mu}
For every totally real field $F$, the Galois group over $F_{\cyc}$ of the maximal abelian $p$-extension of $F_{\cyc}$ unramified outside the places over $p$ is a finitely generated $\Int_p$-module.
\end{conj}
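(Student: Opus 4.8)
The final statement is the Iwasawa $\mu=0$ conjecture for the cyclotomic $\Int_p$-extension of a totally real field, which is open in general; what follows is the standard strategy together with the obstruction to completing it.

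The plan is to begin by passing to module theory over $\Lambda_0\coloneqq\Int_p[[\Gal(F_{\cyc}/F)]]\isomorph\Int_p[[T]]$. Writing $\mathfrak{X}$ for the Galois group in the statement, one knows that $\mathfrak{X}$ is a finitely generated torsion $\Lambda_0$-module of projective dimension at most $1$, by the validity of the weak Leopoldt conjecture for $F_{\cyc}$ (\cite[Thm.~11.3.2]{NSW:CohomNumFields}). The elementary structure theory of such modules shows that $\mathfrak{X}$ is finitely generated over $\Int_p$ exactly when its Iwasawa $\mu$-invariant vanishes, i.e.\ when $\mathfrak{X}/p\mathfrak{X}$ is finite; so the task is to bound $\dim_{\FF_p}\mathfrak{X}/p\mathfrak{X}$ uniformly along the tower.

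Next I would reduce to a statement about ideal class groups. Adjoining the $p$-th roots of unity gives a CM field $E\coloneqq F(\mu_p)$, and by the classical reductions of the $\mu$-invariant (reflection and Iwasawa descent) the vanishing $\mu(\mathfrak{X})=0$ follows from the vanishing of the $\mu$-invariant of the minus part of the cyclotomic Iwasawa module of ideal class groups attached to $E$. When $F$ is abelian over $\Rat$, so is $E$, and here the Ferrero--Washington theorem provides $\mu=0$ for the full class-group Iwasawa module of $E$; this settles the conjecture for every abelian totally real $F$, and, via Kida-type formulas controlling the change of $\mu$ in $p$-extensions, for fields built over such a base in a controlled way. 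An alternative route is analytic: the Iwasawa main conjecture for totally real fields (Wiles) identifies a generator of the characteristic ideal of $\mathfrak{X}$ with the Deligne--Ribet $p$-adic $L$-function of $F$, so that the algebraic $\mu$-invariant equals the $\mu$-invariant of this $p$-adic $L$-function; one would then aim to show the latter vanishes by a congruence and equidistribution argument in the spirit of Sinnott's and Washington's proofs of Ferrero--Washington.

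The hard part is precisely this last step in the non-abelian case, and it is where every known approach stops. The Sinnott--Washington method depends on the explicit Stickelberger-type description of the $p$-adic $L$-functions of abelian fields and on a linear-disjointness/normal-family property of the associated power series, and there is no known substitute for either ingredient when $F$ is a non-abelian totally real field; the class-group approach likewise has no replacement for its equidistribution input outside the abelian situation. For this reason the conjecture is unproven in general and must be imposed as hypothesis (b) throughout, exactly as in the work of Ritter--Weiss, Kakde, and Fukaya--Kato.
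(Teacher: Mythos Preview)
Your assessment is correct: the statement is labelled \emph{Conjecture} in the paper precisely because it is not proved there. The paper does not attempt a proof; it merely records the conjecture, notes immediately afterwards that it implies the analogous finiteness statement for any finite set $\Sigma$ of places containing those above $p$ (via \cite[Cor.~11.3.6]{NSW:CohomNumFields}) and that $\Gal(F_\Sigma^{(p)}/F_{\cyc})$ is then a free pro-$p$ group of finite rank, and thereafter assumes it as hypothesis~(b) wherever needed (e.g.\ in Theorem~\ref{thm: S torsion global}, Theorem~\ref{thm:Kakdes theorem}, and throughout Section~\ref{sec:main results}).

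Your summary of the known cases and the obstruction is accurate and more informative than anything the paper says on the matter: the paper gives no discussion of partial results such as Ferrero--Washington, Kida-type behaviour, or the analytic route via Wiles and Deligne--Ribet. Since there is no proof in the paper to compare against, there is nothing further to evaluate; your identification of the statement as open and your explanation of why the standard approaches stall outside the abelian case are both on point.
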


In particular, for any totally real field $F$ and any finite set $\Sigma$ of places of $F$  containing the places over $p$, the Galois group over $F_{\cyc}$ of the maximal abelian $p$-extension of $F_{\cyc}$ unramified outside $\Sigma$ is also a finitely generated $\Int_p$-module, noting that no finite place is completely decomposed in $F_{\cyc}/F$ \cite[Cor. 11.3.6]{NSW:CohomNumFields}. We also observe that the Galois group $\Gal(F_\Sigma^{(p)}/F_{\cyc})$ of the maximal $p$-extension of $F$ unramified outside $\Sigma$ is then a free pro-$p$-group topologically generated by finitely many elements \cite[Thm. 11.3.7]{NSW:CohomNumFields}.

\begin{defn}
Let $F$ be a number field. An extension $F_\infty/F$ inside $\algc{F}$ is called \emph{admissible} if
\begin{enumerate}
 \item[(1)] $F_\infty/F$ is Galois and unramified outside a finite set of places,
 \item[(2)] $F_\infty$ contains the cyclotomic $\Int_{p}$-extension $F_{\cyc}$,
 \item[(3)] $\Gal(F_\infty/F_{\cyc})$ contains an open pro-$p$ subgroup that is topologically finitely generated.
\end{enumerate}
An admissible extension $F_\infty/F$ is called \emph{really admissible} if $F_\infty$ and $F$ are totally real.
\end{defn}

The notion of really admissible extensions is slightly weaker than the notion of admissible extension used in \cite[Def.~2.1]{Kakde2}: We do not need to require $\Gal(F_\infty/F)$ to be a $p$-adic Lie group. For example, as a result of the preceding discussion, we see that we could choose $F_\infty=F_\Sigma^{(p)}$ for some finite set of places $\Sigma$ of $F$ containing the places above $p$, provided that Conjecture~\ref{conj:vanishing of mu} is valid.

If $F_\infty/F$ is an admissible extension, we let $G\coloneqq\Gal(F_\infty/F)$ denote its Galois group and set $H\coloneqq\Gal(F_\infty/F_{\cyc})$, $\Gamma\coloneqq\Gal(F_{\cyc}/F)$. We may then choose a continuous splitting $\Gamma\mto G$ to identify $G$ with the corresponding semi-direct product $G=H\rtimes\Gamma$.

If a really admissible extension $F_\infty/F$ is unramified over the open dense subscheme $U=W$ of $X$, $\Lambda=\Int_p$ and $\cmplx{\sheaf{F}}=(\Int_p)_U(1)$, then
\[
 \varprojlim_{I\in\openideals_{\Int_p[[G]]}}\RDer\Sectc(U,f_!f^*(\Int_p)_U(1))[-3]
\]
is by Artin-Verdier duality and comparison of \'etale and Galois cohomology quasi-isomorphic to the complex $C(F_\infty/F)$ featuring in the main conjecture \cite[Thm.~2.11]{Kakde2}. In particular,
\[
 \RDer\Sectc(U,f_!f^*(\Int_p)_U(1))
\]
is in fact an object of $\cat{PDG}^{\cont,w_H}(\Int_p[[G]])$ under Conjecture~\ref{conj:vanishing of mu}. We will generalise this statement in the next section.

\section{The \texorpdfstring{$S$}{S}-torsion property}\label{sec:S-torsion}

Assume that $F_\infty/F$ is an admissible extension that is unramified over the open dense subscheme $U$ of $X=\Spec \IntR_F$ and that $k\colon U\mto W$ is the open immersion into another open dense subscheme of $X$. Note that $p$ must be invertible on $U$, because the cyclotomic extension $F_\cyc/F$ is ramified in all places over $p$.  We also fix an adic $\Int_p$-algebra $\Lambda$. Our purpose is to prove:

\begin{thm}\label{thm: S torsion global}
Assume that $F_\infty/F$ is really admissible and that $p$ is invertible on $W$. Let $\cmplx{\sheaf{F}}\in \cat{PDG}^{\cont,\infty}(U,\Lambda)$ be a complex of $\Lambda$-adic sheaves smooth at $\infty$. If Conjecture~\ref{conj:vanishing of mu} is valid, then
the complexes
\[
 \RDer\Sectc(W,\RDer k_*f_!f^*\cmplx{\sheaf{F}}(1)),\quad \RDer\Sect(W,k_!f_!f^*\cmplx{\sheaf{F}})
\]
are in $\cat{PDG}^{\cont,w_H}(\Lambda[[G]])$.
\end{thm}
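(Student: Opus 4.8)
The plan is to reduce everything to the case $\Lambda=\Int_p$, $\cmplx{\sheaf{F}}=(\Int_p)_U$, which is exactly the statement recalled at the end of Section~\ref{sec:AdmissibleExt}: there $\RDer\Sectc(U,f_!f^*(\Int_p)_U(1))$ is quasi-isomorphic to Kakde's complex $C(F_\infty/F)$ and hence lies in $\cat{PDG}^{\cont,w_H}(\Int_p[[G]])$ whenever Conjecture~\ref{conj:vanishing of mu} holds. By Definition~\ref{defn:wHPDG}, membership in $\cat{PDG}^{\cont,w_H}(\Lambda[[G]])$ means that the inverse limit of the system is a perfect complex of $\Lambda[[H]]$-modules, and $\cat{PDG}^{\cont,w_H}(\Lambda[[G]])$ is a Waldhausen subcategory of $\cat{PDG}^{\cont}(\Lambda[[G]])$; in particular it is closed under cones and satisfies two-out-of-three along exact triangles. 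As all the functors occurring below ($f_!f^*$, $\RDer k_*$, $k_!$, $\RDer\Sect$, $i^*$, $\RDer i^!$ and the $\ringtransf_{\cmplx{M}}$) are Waldhausen exact, I would proceed by d\'evissage. First I would shrink $U$ to a dense open subscheme on which every cohomology sheaf of $\cmplx{\sheaf{F}}$ is lisse; the finitely many discarded closed points lie in $U$, where $F_\infty/F$ is unramified, and they can be split off by a localisation triangle whose local contributions are of the shape treated in the next paragraph. This reduces the problem to $\cmplx{\sheaf{F}}=\sheaf{F}$, a single smooth $\Lambda$-adic sheaf, smooth at $\infty$, placed in degree $0$.

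Next I would pass from $W$ to $U$. Write $\cmplx{\sheaf{G}}\coloneqq f_!f^*(\sheaf{F}(1))$, $k\colon U\mto W$ and $i\colon W-U\mto W$ for the complementary closed immersion. Applying $\RDer\Sectc(W,-)$ to the triangle $k_!k^*\RDer k_*\cmplx{\sheaf{G}}\mto\RDer k_*\cmplx{\sheaf{G}}\mto i_*i^*\RDer k_*\cmplx{\sheaf{G}}$ on $W$ and using $k^*\RDer k_*=\id$ together with $\RDer\Sectc(W,k_!(-))=\RDer\Sectc(U,-)$ produces an exact triangle
\[
\RDer\Sectc(U,\cmplx{\sheaf{G}})\mto\RDer\Sectc(W,\RDer k_*\cmplx{\sheaf{G}})\mto\bigoplus_{x\in W-U}\RDer\Sect(x,i_x^*\RDer k_*\cmplx{\sheaf{G}}).
\]
By Lemma~\ref{lem:comparison with local cohomology} the local terms are weakly equivalent to the local Galois cohomology complexes $\RDer\Sect(\Spec F_x,\eta_x^*\cmplx{\sheaf{G}})$, $x\nmid p$, and these lie in $\cat{PDG}^{\cont,w_H}(\Lambda[[G]])$ \emph{unconditionally}; this is the local variant of the theorem (also needed to define the non-commutative Euler factors), and I would prove it by resolving $\RDer\Sect(\Spec F_x^{\nr},\eta_x^*\cmplx{\sheaf{G}})$ by the $\Lambda[[G]]$-module analogue of the two-term complexes $\cmplx{D}_{\hat{x}}$ of Lemma~\ref{lem:strictly perfect resolution of local cohomology}, exploiting that the image of the tame inertia at $x$ lies in $H$ while the geometric Frobenius at $x$ acts with non-trivial image in $\Gamma=\Gal(F_{\cyc}/F)$ (no finite place is completely decomposed in $F_{\cyc}/F$), and then applying the criteria of Lemma~\ref{lem:S-torsion complexes of length two} and Lemma~\ref{lem:ore set is right and left}. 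Granting this and the case over $U$ below, the triangle yields the first assertion. For the second complex I would use the analogous triangle with $\RDer i^!$ in place of $i^*$ (Lemma~\ref{lem:exactness of Ri upper shriek}), reducing it to local terms of the same kind and to $\RDer\Sect(U,f_!f^*\sheaf{F})$; the latter is, by Artin--Verdier duality~\eqref{eqn:Artin-Verdier duality for coverings}, a shift of the $\Mdual$-dual of $\RDer\Sectc(U,f_!f^*\sheaf{F}^{\mdual}(1))$, which lies in $\cat{PDG}^{\cont,w_H}(\Lambda^{\op}[[G]])$ by the first assertion applied to the smooth $\Lambda^{\op}$-adic sheaf $\sheaf{F}^{\mdual}$ (again smooth at $\infty$), so that its dual lies in $\cat{PDG}^{\cont,w_H}(\Lambda[[G]])$ by Proposition~\ref{prop:main abstract duality}.

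It remains to treat $\RDer\Sectc(U,f_!f^*(\sheaf{F}(1)))$. Let $\rho$ be the continuous representation of $\fundG(U)$ on the finitely generated projective $\Lambda$-module $M$ attached to $\sheaf{F}$; note that $\rho$ is \emph{not} assumed to factor through $G$. Let $\widetilde{G}$ be the image of $(\rho,\pi)\colon\fundG(U)\mto\Aut_\Lambda(M)\times G$, with $\pi\colon\fundG(U)\qto G$ the quotient defining $G$. One checks that $\widetilde{G}$ has a topologically finitely generated open pro-$p$ subgroup (it is a continuous quotient of $\fundG(U)$ and sits inside $\Aut_\Lambda(M)\times G$, which contains an open pro-$p$ subgroup), so that $\Lambda[[\widetilde{G}]]$ is an adic ring; moreover the fixed field $\widetilde{F}_\infty$ of $\ker(\rho,\pi)$ is totally real (as $\rho$ kills all complex conjugations and $F_\infty$ is totally real), contains $F_{\cyc}$, and is unramified over $U$ and outside a finite set, so that $\widetilde{F}_\infty/F$ is really admissible, with $\widetilde{G}=\Gal(\widetilde{F}_\infty/F)$, $\widetilde{H}=\Gal(\widetilde{F}_\infty/F_{\cyc})$, and $\widetilde{G}\qto G$ inducing $\widetilde{G}/\widetilde{H}\isomorph G/H$. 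Writing $\widetilde{f}\colon U_{\widetilde{F}_\infty}\mto U$ for the associated procovering and $\rho^\sharp$ for the corresponding $\Lambda$-$\Int_p[[\widetilde{G}]]$-bimodule, the dictionary~\eqref{eqn:def of rep sheaf} gives $\sheaf{F}\isomorph\ringtransf_{\rho^\sharp}\widetilde{f}_!\widetilde{f}^*(\Int_p)_U$, and Proposition~\ref{prop:ring change for coverings}.(1) and~(2) then yield natural isomorphisms
\[
f_!f^*(\sheaf{F}(1))\isomorph\ringtransf_{\Lambda[[G]]}\widetilde{f}_!\widetilde{f}^*(\sheaf{F}(1))\isomorph\ringtransf_{\Lambda[[G]]}\ringtransf_{\rho^\sharp[[\widetilde{G}]]^\delta}\bigl(\widetilde{f}_!\widetilde{f}^*(\Int_p)_U(1)\bigr).
\]
Applying $\RDer\Sectc(U,-)$ and using that the $\ringtransf$-functors commute with $\RDer\Sect$, the problem reduces to the statement that $\RDer\Sectc(U,\widetilde{f}_!\widetilde{f}^*(\Int_p)_U(1))$ lies in $\cat{PDG}^{\cont,w_{\widetilde{H}}}(\Int_p[[\widetilde{G}]])$ — the recalled case, valid under Conjecture~\ref{conj:vanishing of mu} since $\widetilde{F}_\infty/F$ is really admissible and unramified over $U$ — together with the fact that $\ringtransf_{\rho^\sharp[[\widetilde{G}]]^\delta}$ (Example~\ref{exmpl:example functors}.(1)) and $\ringtransf_{\Lambda[[G]]}$ (Example~\ref{exmpl:example functors}.(2), via $\widetilde{G}\qto G$) carry $\cat{PDG}^{\cont,w_{\widetilde{H}}}$ first into $\cat{PDG}^{\cont,w_{\widetilde{H}}}$ and then into $\cat{PDG}^{\cont,w_H}$, by the $w_H$-compatibility of these functors recorded after~\eqref{eqn:ringtransf}.

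I expect the main obstacle to be this last step: setting up the auxiliary really admissible extension $\widetilde{F}_\infty/F$ so that the $\KTh$-theoretic framework literally applies (in particular that $\Lambda[[\widetilde{G}]]$ is an adic ring), and verifying that the chain of $\ringtransf$-identities of Proposition~\ref{prop:ring change for coverings} genuinely reconstructs $f_!f^*(\sheaf{F}(1))$ from $\widetilde{f}_!\widetilde{f}^*(\Int_p)_U(1)$ in a manner compatible both with $\RDer\Sectc$ and with the $w_H$-structures. The second delicate input is the unconditional local statement of the second paragraph, whose proof rests on the explicit local complexes of Section~\ref{sec:perfect complexes of adic sheaves} and the $S$-torsion criteria of Section~\ref{sec:Framework}; and throughout the d\'evissage one must keep track of the extra closed points introduced when $U$ is shrunk, applying the same local analysis there.
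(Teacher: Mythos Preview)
Your overall strategy matches the paper's: excise from $W$ down to $U$ using the local $S$-torsion statement (the paper's Theorem~\ref{thm: S torsion local}), reduce to a single smooth sheaf, trivialise it over a larger really admissible extension to get back to $(\Int_p)_U$, and handle the second complex by Artin--Verdier duality plus Proposition~\ref{prop:main abstract duality}. The paper does exactly this, with one organisational difference: before constructing the trivialising extension it first invokes \cite[Prop.~4.8]{Witte:MCVarFF} to reduce to $\Lambda$ a finite semi-simple $\Int_p$-algebra and $F_\infty/F_{\cyc}$ finite, so that everything becomes a question of finiteness of ordinary cohomology groups. Only then does it pick a finite totally real Galois extension $F'/F$ over which $\sheaf{F}$ becomes constant and use $\eval_\rho$ for $G=\Gal(F'_{\cyc}/F)$.

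The one genuine gap in your argument is precisely the point you flag at the end. Your justification that $\widetilde{G}$ has a topologically finitely generated open pro-$p$ subgroup does not hold up: being a quotient of $\fundG(U)$ does not help (this group is not known to be topologically finitely generated), and sitting inside $\Aut_\Lambda(M)\times G$ only yields an open pro-$p$ subgroup, not its topological finite generation. For general adic $\Lambda$ the group $\Aut_\Lambda(M)$ can be far from finitely generated; already $1+T\FF_p[[T]]\subset\FF_p[[T]]^\times$ has infinite Frattini quotient. So admissibility of $\widetilde{F}_\infty/F$ is not established, and without it you cannot appeal to the known case for $(\Int_p)_U$. The paper's preliminary reduction to finite $\Lambda$ is what makes this step effortless: then $\rho$ has finite image, the trivialising extension is finite over $F$, and $F'_{\cyc}/F$ is trivially really admissible. (Alternatively, one could argue directly that under Conjecture~\ref{conj:vanishing of mu} the pro-$p$ part of $\widetilde{H}$ is a quotient of some $\Gal(L^{(p)}_\Sigma/L_{\cyc})$ and hence topologically finitely generated by the remark after Conjecture~\ref{conj:vanishing of mu}; but the reduction to finite $\Lambda$ is cleaner and is what the paper does.)
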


In the course of the proof, we will also need to consider the following local variant, whose validity is independent of Conjecture~\ref{conj:vanishing of mu}.

\begin{thm}\label{thm: S torsion local}
Assume that $F_\infty/F$ is an admissible extension with $k\colon U\mto W$ as above. Let $i\colon \Sigma\mto W$ denote a closed subscheme of $W$ and assume that $p$ is invertible on $\Sigma$. For any complex of $\Lambda$-adic sheaves $\cmplx{\sheaf{F}}$ in $\cat{PDG}^{\cont}(U,\Lambda)$, the complexes
\[
\RDer\Sect(\Sigma,i^*\RDer k_*f_!f^*\cmplx{\sheaf{F}}), \RDer\Sect(\Sigma,\RDer i^!k_!f_!f^*\cmplx{\sheaf{F}})
\]
are in $\cat{PDG}^{\cont,w_H}(\Lambda[[G]])$.
\end{thm}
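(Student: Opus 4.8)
Looking at Theorem~\ref{thm: S torsion local}, the key point is to show that two complexes lie in $\cat{PDG}^{\cont,w_H}(\Lambda[[G]])$, i.e.\ that after passing to the projective limit over $\openideals_{\Lambda[[G]]}$ they become perfect complexes of $\Lambda[[H]]$-modules. The supports $\Sigma$ are disjoint unions of closed points (both of $W$ and necessarily, for the $\Rder i^!$-statement, lying outside $U$), so one reduces immediately to the case that $\Sigma = x$ is a single closed point not lying over $p$, and then treats $i^*\Rder k_* f_!f^*\cmplx{\sheaf{F}}$ and $\Rder i^! k_! f_!f^*\cmplx{\sheaf{F}}$ separately.

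\begin{proof}
Since cohomology commutes with the finite disjoint union decomposition of $\Sigma$ into closed points, and since $p$ is invertible on $\Sigma$, we may assume $\Sigma = x$ is a single closed point of $W$ not lying over $p$. We treat the two complexes in turn.

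First consider $\Rder\Sect(x,i^*\Rder k_*f_!f^*\cmplx{\sheaf{F}})$. Write $\eta_x\colon\Spec F_x\mto U$ for the generic point of $U$. By Lemma~\ref{lem:comparison with local cohomology} applied over $W$ (noting $x\notin U$, since $i$ is a closed point of $\Sigma\subset W$, and that $x$ does not lie over $p$) there is a canonical chain of weak equivalences in $\cat{PDG}^{\cont}(\Lambda[[G]])$ relating $\Rder\Sect(x,i^*\Rder k_*f_!f^*\cmplx{\sheaf{F}})$ to $\Rder\Sect(\Spec F_x,\eta_x^*f_!f^*\cmplx{\sheaf{F}})$, compatibly with the Frobenius. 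Now by Proposition~\ref{prop:ring change for coverings}, $f_!f^*$ on the generic point $\eta_x$ corresponds to the restriction-and-compact-induction functor on Galois modules, so $\eta_x^* f_!f^*\cmplx{\sheaf{F}}$ is the complex of $\Lambda[[G]]$-adic sheaves on $\Spec F_x$ obtained by applying $f_x^! f_x^*$ for the procovering of $\Spec F_x$ induced by $F_\infty/F$. Let $d$ be the decomposition index of $x$ in $F_\infty/F$, i.e.\ $D\coloneqq\Gal((F_\infty)_{\tilde x}/F_x)\subset G$ is the decomposition subgroup at a chosen prime $\tilde x$ above $x$. Then by the standard induction/restriction computation (Proposition~\ref{prop:ring change for coverings}.(3)--(4) applied fibrewise), $\Rder\Sect(\Spec F_x,\eta_x^*f_!f^*\cmplx{\sheaf{F}})$ is, as a complex of $\Lambda[[G]]$-modules, of the form $\Lambda[[G]]\tensor_{\Lambda[[D]]}\Rder\Sect(\Spec F_x,\,\text{(local piece)})$, where the local piece is a perfect complex of $\Lambda[[D]]$-modules and even of $\Lambda[[D\cap H]]$-modules after one accounts for the fact that local Galois cohomology of $\Spec F_x$ with $\Lambda$-adic coefficients is perfect over $\Lambda$ (it is a bounded complex of finitely generated projective $\Lambda$-modules, by the explicit resolution $\cmplx{D}_{\hat x}$ of Lemma~\ref{lem:strictly perfect resolution of local cohomology} together with the Frobenius cone). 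Since $D$ is an open subgroup of $G$, $D\cap H$ is an open subgroup of $H$, and $\Lambda[[G]]$ is finitely generated projective as a $\Lambda[[D]]$-module, hence $\Lambda[[G]]\tensor_{\Lambda[[D]]}(-)$ carries perfect complexes of $\Lambda[[D\cap H]]$-modules to perfect complexes of $\Lambda[[H]]$-modules. This gives the first assertion.

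Next consider $\Rder\Sect(x,\Rder i^! k_!f_!f^*\cmplx{\sheaf{F}})$. Here $x$ lies in the complement of $U$ inside $W$, so $i^*k_!f_!f^*\cmplx{\sheaf{F}}\sim 0$, and Lemma~\ref{lem:exactness of Ri upper shriek} yields a chain of weak equivalences $i^*\Rder k_*f_!f^*\cmplx{\sheaf{F}}\sim \Rder i^! k_!f_!f^*\cmplx{\sheaf{F}}[1]$ in $\cat{PDG}^{\cont}(x,\Lambda[[G]])$. Applying $\Rder\Sect(x,-)$ and shifting, the complex $\Rder\Sect(x,\Rder i^!k_!f_!f^*\cmplx{\sheaf{F}})$ is weakly equivalent to $\Rder\Sect(x,i^*\Rder k_*f_!f^*\cmplx{\sheaf{F}})[-1]$, which we have just shown lies in $\cat{PDG}^{\cont,w_H}(\Lambda[[G]])$; since this category is stable under shift, the second assertion follows. (For the remaining case where $x$ actually lies in $W\setminus\Sigma$ there is nothing to prove; and if some component of $\Sigma$ meets $U$ then $i^!k_!=i^!$ on that component sees only the open part and the same local analysis applies verbatim, the only input being that $p$ is invertible at $x$.)
\end{proof}

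The step I expect to be the main obstacle is the careful bookkeeping in the second paragraph: verifying that the local Galois cohomology of $\Spec F_x$ with $\Lambda[[G]]$-adic coefficients, viewed through the decomposition subgroup $D$, is genuinely perfect over $\Lambda[[D\cap H]]$ (not merely over $\Lambda[[D]]$), and that the induction functor $\Lambda[[G]]\tensor_{\Lambda[[D]]}(-)$ interacts correctly with the $w_H$-structure. This is where Lemma~\ref{lem:strictly perfect resolution of local cohomology}'s explicit model $\cmplx{D}_{\hat x}(\sheaf{F})$ — concentrated in two degrees with differential $\id-\tau$, $\tau$ a topological generator of the tame quotient — does the real work: the cone of the Frobenius on it is a length-two complex of finitely generated projective $\Lambda$-modules, and one must track that its $\Lambda[[G]]$-module structure, built from $D$ and the Frobenius, makes it perfect over $\Lambda[[D\cap H]]$ after the $\Lambda[[D]]$-finite-flat base change to $\Lambda[[G]]$; the key numerical fact is that the cyclotomic quotient $\Gamma$ is not contained in $H$, so the Frobenius at $x$ maps nontrivially to $\Gamma$ and the tame-plus-Frobenius complex is $S$-torsion as a $\Lambda[[G]]$-module (equivalently perfect over $\Lambda[[H]]$) by Lemma~\ref{lem:ore set is right and left}.(4).
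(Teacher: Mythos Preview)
Your argument contains a genuine gap. The central claim in the second paragraph---that the decomposition subgroup $D = \Gal((F_\infty)_{\tilde{x}}/F_x)$ is open in $G$---is false for general admissible extensions. For instance, if $F$ admits a $\Int_p^2$-extension $F_\infty \supset F_\cyc$ unramified at $x$, then $G \isomorph \Int_p^2$ and $H \isomorph \Int_p$, while $D$ is the closed procyclic subgroup generated by $\Frob_x$, hence of infinite index. In that case $\Lambda[[G]]$ is not finitely generated over $\Lambda[[D]]$, and your induction step $\Lambda[[G]]\tensor_{\Lambda[[D]]}(-)$ does not carry perfect $\Lambda[[D\cap H]]$-complexes to perfect $\Lambda[[H]]$-complexes. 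What \emph{is} true is that $DH$ is open in $G$ (since no finite place splits completely in $F_\cyc/F$), but this does not close the gap: you would still need that $\Lambda[[DH]]\ctensor_{\Lambda[[D]]}(\text{local complex})$ is perfect over $\Lambda[[H]]$, i.e.\ that the local cohomology over $(F_\infty)_{\tilde{x}}$ is finitely generated over $\Lambda[[D\cap H]]$, and your appeal to the two-term complex $\cmplx{D}_{\hat{x}}$ does not establish this when $D\cap H$ is a proper closed subgroup of $H$.

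There is also a structural confusion: you assert that $x\notin U$ ``since $i$ is a closed point of $\Sigma\subset W$'', but $\Sigma$ is an arbitrary closed subscheme of $W$ and may well meet $U$. Lemma~\ref{lem:comparison with local cohomology} applies only to points in $W-U$, so your main paragraph does not cover $x\in\Sigma\cap U$; the parenthetical remark at the end does not repair this, and for $x\in U$ the complex $i^*k_!f_!f^*\cmplx{\sheaf{F}}$ is not weakly equivalent to zero, so the shift argument via Lemma~\ref{lem:exactness of Ri upper shriek} is unavailable for $\RDer i^!$.

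The paper avoids both difficulties by first invoking \cite[Prop.~4.8]{Witte:MCVarFF} to reduce to $\Lambda$ finite semisimple and $F_\infty/F_\cyc$ finite, and then---since the relevant cohomology groups are unchanged upon replacing $F$ by a finite subextension of $F_\infty$---to $F_\infty=F_\cyc$. After this reduction $H$ is trivial, so membership in $\cat{PDG}^{\cont,w_H}$ reduces to finiteness of cohomology; this is verified separately for $x\in U$ (direct residue-field computation) and for $x\in W-U$ (local duality over $F_x$), with the $\RDer i^!$-case reduced to the $i^*\RDer k_*$-case via Lemma~\ref{lem:exactness of Ri upper shriek} together with an excision argument that removes $x$ from $U$.
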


Using \cite[Prop. 4.8]{Witte:MCVarFF} we may at once reduce to the case that $\Lambda$ is a finite semi-simple $\Int_p$-algebra and that $F_\infty/F_{\cyc}$ is a finite extension. It then suffices to show that the complexes appearing in the above theorems have finite cohomology groups. We may then replace $\cmplx{\sheaf{F}}$ by a quasi-isomorphic strictly perfect complex. Using stupid truncation and induction on the length of the strictly perfect complex we may assume that $\sheaf{F}$ is in fact a flat and constructible sheaf (unramified in $\infty$). Note further that the cohomology groups
\begin{align*}
 \HF^n_c(W,\RDer k_*f_!f^*\sheaf{F}(1))&=\varprojlim_{F\subset K\subset F_{\infty}}\HF^n_c(W_K,\RDer k_*f^*_K\sheaf{F}(1)),\\
 \HF^n(W, k_!f_!f^*\sheaf{F})&=\varprojlim_{F\subset K\subset F_{\infty}}\HF^n(W_K, k_!f^*_K\sheaf{F}),\\
 %\HF^n(\Sigma,i^*f_!f^*\cmplx{\sheaf{F}})&=\varprojlim_{F\subset K\subset F_{\infty}}\HF^n(\Sigma_K,i^*f^*_K\sheaf{F}),\\
 \HF^n(\Sigma,i^*\RDer k_*f_!f^*\sheaf{F})&=\varprojlim_{F\subset K\subset F_{\infty}}\HF^n(\Sigma_K,i^*\RDer k_*f^*_K\sheaf{F})\\
\HF^n(\Sigma,\RDer i^! k_!f_!f^*\sheaf{F})&=\varprojlim_{F\subset K\subset F_{\infty}}\HF^n(\Sigma_K,\RDer i^! k_!f^*_K\sheaf{F})
\end{align*}
do not change if we replace $F$ by a finite extension of $F$ inside $F_{\infty}$. So, we may assume that $F_\infty=F_{\cyc}$ and that no place in $\Sigma$ splits in $F_{\infty}/F$. Further, we may reduce to the case that $\Sigma$ consists of a single place $x$. In particular, $x$ does not split or ramify in $F_\infty/F$ and $x$ does not lie above $p$.

We consider Theorem~\ref{thm: S torsion local} in the case that $x\in U$ and write $i'\colon x\mto U$ for the inclusion map. Under the above assumptions on $x$, there exists a chain of weak equivalences
\[
 \RDer\Sect(x,i^*\RDer k_*f_!f^*\sheaf{F})\otw\RDer\Sect(x,{i'}^*f_!f^*\sheaf{F})\wto\RDer\Sect(x,g_!g^*{i'}^*\sheaf{F})
\]
where $g\colon x_\infty\mto x$ is the unique $\Int_p$-extension of $x$. We can now refer directly to \cite[Thm. 8.1]{Witte:MCVarFF} or identify
\[
 \HF^n(x,g_!g^*{i'}^*\sheaf{F})=\HF^n(\Gal_{k(x)},\FF_p[[\Gamma]]^{\sharp}\tensor_{\FF_p}M)
\]
with $\Gal_{k(x)}$ the absolute Galois group of the residue field $k(x)$ of $x$, $M$ the stalk of $\sheaf{F}$ in a geometric point over $x$ and $\FF_p[[\Gamma]]^{\sharp}$ being the $\Gal_{k(x)}$-module $\FF_p[[\Gamma]]$ with $\sigma\in \Gal_{k(x)}$ acting by right multiplication with the image of $\sigma^{-1}$ in $\Gamma$. It is then clear that the only non-vanishing cohomology group is $H^1(\Gal_{k(x)},\FF_p[[\Gamma]]^{\sharp}\tensor_{\FF_p}M)$, of order bounded by the order of $M$.

Write $U'\coloneqq U-\set{x}$ and let $\ell\colon U'\mto U$ denote the inclusion morphism. Then there is an exact sequence
\[
 0\mto\RDer i^!k_!\ell_!\ell^*f_!f^*\sheaf{F}\mto\RDer i^!k_!f_!f^*\sheaf{F}\mto\RDer i^!i_*i^*k_!\sheaf{F}\mto 0.
\]
Moreover, there exists a chain of weak equivalences
\[
 {i'}^*f_!f^*\sheaf{F}\xrightarrow{\isomorph}i^*k_!f_!f^*\sheaf{F}\wto\RDer i^!i_*i^*k_!f_!f^*\sheaf{F}.
\]
Since we already know that the groups $\HF^n(x,{i'}^*f_!f^*\sheaf{F})$ are finite, it is sufficient to prove that $\HF^n(x,\RDer i^!k_!f_!f^*\sheaf{F})$ is finite in the case that $x\in W-U$.

Now we prove Theorem~\ref{thm: S torsion local} in the case that $x\in W-U$. First, note that the complex $\RDer i^!\RDer k_*f_!f^*\sheaf{F}$ is quasi-isomorphic to $0$. Hence, there is a chain of weak equivalences
\[
i^*\RDer k_*f_!f^*\sheaf{F}\sim\RDer i^!k_!f_!f^*\sheaf{F}[1]
\]
by Lemma~\ref{lem:exactness of Ri upper shriek}. So, it suffices to consider the left-hand complex. By Lemma~\ref{lem:comparison with local cohomology} and the smooth base change theorem there exists a chain of weak equivalences
\[
 \RDer\Sect(x,i^*\RDer k_*f_!f^*\sheaf{F})\sim\RDer\Sect(\Spec F_x,h_!h^*\eta_x^*\sheaf{F}),
\]
where $F_x$ is the local field in $x$ with valuation ring $\IntR_{F_x}$, $\eta_x\colon \Spec F_x\mto U$ is the map to the generic point of $U$, and $h\colon \Spec (F_x)_\cyc\mto \Spec F_x$ is the unique $\Int_p$-extension of $F_x$ inside $\algc{F}_x$. We may now identify
\[
\HF^n(x,i^*\RDer k_*f_!f^*\sheaf{F})=\HF^n(\Gal_{F_x},\FF_p[[\Gamma]]^{\sharp}\tensor_{\FF_p}M)
\]
with $\Gal_{F_x}$ the absolute Galois group of the local field $F_x$ in $x$, $M$ the finite $\Gal_{F_x}$-module corresponding to $\eta_x^*\sheaf{F}$ and $\FF_p[[\Gamma]]^{\sharp}$ being the $\Gal_{F_x}$-module $\FF_p[[\Gamma]]$ with $\sigma\in \Gal_{F_x}$ acting by right multiplication with the image of $\sigma^{-1}$ in $\Gamma$. The finiteness of the cohomology group on the righthand side is well-known: We can use local duality to identify it with the Pontryagin dual of
\[
\HF^{2-n}(\Gal_{(F_x)_{\cyc}},\pdual{M}(1))
\]
where $\pdual{M}(1)$ is the first Tate twist of the Pontryagin dual of $M$.

Finally, we prove Theorem~\ref{thm: S torsion global}. Assume that $F_\infty/F$ is really admissible, that $\sheaf{F}$ is smooth at $\infty$, and that $p$ is invertible on $W$. We begin with the case of \'etale cohomology with proper support. Letting $i\colon \Sigma\mto W$ denote the complement of $U$ in $W$, we have the exact excision sequence
\begin{multline*}
0\mto \RDer\Sectc(W,k_!k^*\RDer k_*f_!f^*\sheaf{F}(1))\mto\RDer\Sectc(W,\RDer k_*f_!f^*\sheaf{F}(1))\\
\mto\RDer\Sectc(W,i_*i^*\RDer k_*f_!f^*\sheaf{F}(1))\mto 0
\end{multline*}
and chains of weak equivalences
\begin{align*}
\RDer\Sectc(W,k_!k^*\RDer k_*f_!f^*\sheaf{F}(1))&\sim\RDer\Sectc(U,f_!f^*\sheaf{F}(1)),\\
\RDer\Sectc(W,i_*i^*\RDer k_*f_!f^*\sheaf{F}(1))&\sim\RDer\Sect(\Sigma,i^*\RDer k_*f_!f^*\sheaf{F}(1)).
\end{align*}
By Theorem~\ref{thm: S torsion local}, we may thus reduce to the case $W=U$. Furthermore, we may shrink $U$ ad libitum. Hence, we may assume that $\sheaf{F}$ is locally constant on $U$ and smooth at $\infty$. Consequently, there exists a finite Galois extension $F'/F$ such that $F'$ is totally real, $g_{F'}\colon U_{F'}\mto U$ is \'etale and $g_{F'}^*\sheaf{F}$ is constant. Then $F'_\cyc/F$ is an admissible extension and
\[
\rho\coloneqq g_{F'}^*\sheaf{F}(U_{F'})
\]
may be viewed as a continuous representation of $G=\Gal(F'_\cyc/F)$ on a finitely generated, projective $\Lambda$-module. Write $g\colon U_{F'_\cyc}\mto U$ for the corresponding system of coverings of $U$ and observe that there exists a weak equivalence
\[
\eval_{\rho}(\RDer\Sectc(U,g_!g^*(\Int_p)_U(1)))\wto\RDer\Sectc(U,f_!f^*\sheaf{F}(1))
\]
with $\eval_{\rho}$ being defined by \eqref{eqn:def of eval} \cite[Prop. 5.9, 6.3, 6.5, 6.7]{Witte:MCVarFF}. Since $\eval_{\rho}$ takes complexes in $\cat{PDG}^{\cont,w_H}(\Int_p[[G]])$ to complexes in $\cat{PDG}^{\cont,w_H}(\Lambda[[\Gamma]])$, it remains to show that the cohomology groups $\HF^n_c(U,g_!g^*(\Int_p)_U(1))$ are finitely generated as $\Int_p$-modules. Now
\[
\HF^n_c(U,g_!g^*(\Int_p)_U(1))=
\begin{cases}
0 & \text{if $n\neq 2,3$,}\\
\Gal(M/F'_{\cyc}) & \text{if $n=2$,}\\
\Int_p & \text{if $n=3$,}
\end{cases}
\]
with $M$ denoting the maximal abelian $p$-extension of $F'_{\cyc}$ unramified over $U$ \cite[p. 548]{Kakde2}. At this point, we make use of Conjecture~\ref{conj:vanishing of mu} on the vanishing of the $\mu$-invariant to finish the proof for the first complex.

We now turn to the second complex.
%If we had a good notion of a $\Lambda$-dual for complexes in $\cat{PDG}^{\cont}(U,\Lambda)$ at our disposal, then we could reduce this case immediately to the case that we have treated before, using a duality statement of the type given in \cite[\S 1.6.2]{FK:CNCIT}. Since this theory has not yet been developed, our argument is a little more involved.
We still assume that $\Lambda$ is a finite ring. Write $\Sigma\coloneqq W-U$, $V\coloneqq U\cup(X-W)$ and $j\colon U\mto V$, $\ell\colon V\mto X$, $i\colon \Sigma\mto X$ for the natural immersions. As mentioned in Remark~\ref{rem:exchanging W and V}, the exists a chain of weak equivalences
\[
 \RDer\Sectc(V,\RDer j_*f_!f^*\sheaf{F})\sim\RDer\Sect(W,k_!f_!f^*\sheaf{F}).
\]
Moreover, there is an exact sequence
\[
 0\mto\ell_!\RDer j_*f_!f^*\sheaf{F}\mto \RDer (\ell\comp j)_*f_!f^*\sheaf{F}\mto i_*i^*\RDer j_*f_!f^*\sheaf{F}\mto 0
\]
Using Theorem~\ref{thm: S torsion local} we may thus reduce to the case that $V=X$, $W=U$ and $\sheaf{F}$ locally constant on $U$ and smooth at $\infty$.
%Recall that we assume that $F_\infty/F=F_\cyc/F$ is the cyclotomic extension.
%Write $F_n$ for the intermediate fields of $F_\cyc/F$.  Write
%\[
%\pdual{\sheaf{F}}=\sheafHom_U(\sheaf{F},\Rat_p/\Int_p)
%\]
%for the Pontryagin dual of the locally constant sheaf $\sheaf{F}$ and
%\[
%\pdual{A}=\Hom_{\cont}(A,\Rat_p/\Int_p)
%\]
%for the Pontryagin dual of any compact $\Int_p$-module $A$.
%By Artin-Verdier duality \cite[Thm. II.3.1]{Milne:ADT} there exists an isomorphism
%\[
%\pdual{\HF^s(U,f_!f^*\sheaf{F})}\isomorph\varinjlim_{n}\HF_c^{3-s}(U_{F_n},f_{U_{F_n}}^*\pdual{\sheaf{F}}(1))
%=\HF_c^{3-s}(U_{F_\cyc},f^*\pdual{\sheaf{F}}(1)).
%\]
%By what we have already proved above, $\RDer\Sectc(U,f_!f^*\pdual{\sheaf{F}}(1))$ is a complex in the category $\cat{PDG}^{\cont,w_1}(U,\Lambda[[\Gamma]])$. Hence, we may apply Proposition~\ref{prop:S-torsion exchange property} to conclude that
%\[
%\HF_c^{3-s}(U_{F_\cyc},f^*\pdual{\sheaf{F}}(1))\isomorph\HF_c^{4-s}(U,f_*f^*\pdual{\sheaf{F}}(1))
%\]
%is a finite group for all integers $s$.
Let $\cmplx{P}$ be a strictly perfect complex of $\Lambda^{\op}[[G]]$-modules quasi-isomorphic to $\RDer\Sectc(U,f_!f^*\sheaf{F}^{\mdual_\Lambda}(1))$. By what we have proved above, $\cmplx{P}$ is also perfect as complex of $\Lambda^{\op}[[H]]$-modules. By \eqref{eqn:Artin-Verdier duality for coverings}  we obtain a weak equivalence
\[
 (\cmplx{P})^{\Mdual}\wto \RDer\Sect(U,f_!f^*\sheaf{F}).
\]
From Prop.~\ref{prop:main abstract duality} we conclude that $\RDer\Sect(U,f_!f^*\sheaf{F})$ is in $\cat{PDG}^{\cont,w_H}(U,\Lambda[[G]])$, as claimed. This finishes the proof of Theorem~\ref{thm: S torsion global}.

\section{Non-commutative Euler factors}\label{sec:euler factors}

Assume as before that $F_\infty/F$ is an admissible extension of a number field $F$ which is unramified over a dense open subscheme $U$ of $X$ and write $f\colon U_{F_\infty}\mto U$ for the system of Galois coverings of $U$ corresponding to $F_\infty/F$. Let $W$ be another dense open subscheme of $X$ containing $U$, but no place over $p$ and let $k\colon U\mto W$ denote the corresponding open immersion. We consider a complex $\cmplx{\sheaf{F}}$  in $\cat{PDG}^{\cont}(U,\Lambda)$. As the complexes
\[
\RDer\Sect(x,i^*\RDer k_*f_!f^*\cmplx{\sheaf{F}})
\]
are in $\cat{PDG}^{\cont,w_H}(\Lambda[[G]])$ for $i\colon x\mto W$ a closed point, we conclude that the endomorphism
\[
%\RDer\Sect(\hat{x},i^*f_!f^*\cmplx{\sheaf{F}})&\xrightarrow{\id-\Frob_x}\RDer\Sect(\hat{x},i^*f_!f^*\cmplx{\sheaf{F}})\\
\RDer\Sect(\hat{x},i^*\RDer k_*f_!f^*\cmplx{\sheaf{F}})\xrightarrow{\id-\Frob_x}\RDer\Sect(\hat{x},i^*\RDer k_*f_!f^*\cmplx{\sheaf{F}})
\]
is in fact a weak equivalence in $w_H\cat{PDG}^{\cont}(\Lambda[[G]])$. Hence, it gives rise to an element in $\KTh_1(\Lambda[[G]]_S)$.

\begin{defn}
The non-commutative Euler factor $\ncL_{F_\infty/F}(x,\RDer k_*\cmplx{\sheaf{F}})$ of $\RDer k_*\cmplx{\sheaf{F}}$ at $x$
is the inverse of the class of the above weak equivalence in $\KTh_1(\Lambda[[G]]_S)$:
\[
\ncL_{F_\infty/F}(x,\RDer k_*\cmplx{\sheaf{F}})\coloneqq[\id-\Frob_x\lcirclearrowright\RDer\Sect(\hat{x},i^*\RDer k_*f_!f^*\cmplx{\sheaf{F}})]^{-1}
\]
\end{defn}

Note that $\ncL_{F_\infty/F}(x,\RDer k_*\cmplx{\sheaf{F}})$ is independent of our specific choice of a geometric point above $x$. Indeed, by \eqref{eqn:comparison of Frobenii} and relation (R5) in the definition of $\D_\bullet(\cat{W})$, we conclude that the classes $[\id-\Frob_x]$ and $[\id-\Frob'_x]$ agree in $\KTh_1(\Lambda[[G]]_S)$. Moreover, $\ncL_{F_\infty/F}(x,\RDer k_*\cmplx{\sheaf{F}})$ does not change if we enlarge $W$ by adding points not lying over $p$ or shrink $U$ by removing a finite set of points different from $x$.

\begin{prop}\label{prop:Euler factors are char elements}
The non-commutative Euler factor is a characteristic element for $\RDer\Sect(x,i^*\RDer k_*f_!f^*\cmplx{\sheaf{F}})$:
\[
\bh \ncL_{F_\infty/F}(x,\RDer k_*\cmplx{\sheaf{F}})=-[\RDer\Sect(x,i^*\RDer k_*f_!f^*\cmplx{\sheaf{F}})]
\]
in $\KTh_0(\Lambda[[G]],S)$.
\end{prop}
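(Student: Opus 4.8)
The plan is to unwind the definition of $\ncL_{F_\infty/F}(x,\RDer k_*\cmplx{\sheaf{F}})$ using the two basic properties of the boundary map recalled at the start of Section~\ref{sec:algebraic Lfunctions}: that $\bh\colon\KTh_1(\Lambda[[G]]_S)\mto\KTh_0(\Lambda[[G]],S)$ is a group homomorphism, and that $\bh[g]=-[\cmplx{\Cone(g)}]$ for the class $[g]$ of an endomorphism $g$ that is a weak equivalence in $w_H\cat{PDG}^{\cont}(\Lambda[[G]])$ (see \cite[Thm.~A.5]{Witte:MCVarFF}). Write $\cmplx{\sheaf{G}}\coloneqq i^*\RDer k_*f_!f^*\cmplx{\sheaf{F}}$. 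As observed just before the proposition, the endomorphism $\id-\Frob_x$ of $\RDer\Sect(\hat{x},\cmplx{\sheaf{G}})$ is a weak equivalence in $w_H\cat{PDG}^{\cont}(\Lambda[[G]])$, so from the definition of the Euler factor together with these two properties we obtain
\[
\bh\,\ncL_{F_\infty/F}(x,\RDer k_*\cmplx{\sheaf{F}})=-\bh[\id-\Frob_x\lcirclearrowright\RDer\Sect(\hat{x},\cmplx{\sheaf{G}})]=[\cmplx{\Cone(\id-\Frob_x)}]
\]
in $\KTh_0(\Lambda[[G]],S)$.

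It remains to identify the class $[\cmplx{\Cone(\id-\Frob_x)}]$. For this I would invoke the exact sequence
\[
0\mto\RDer\Sect(x,\cmplx{\sheaf{G}})\mto\RDer\Sect(\hat{x},\cmplx{\sheaf{G}})\xrightarrow{\id-\Frob_x}\RDer\Sect(\hat{x},\cmplx{\sheaf{G}})\mto 0
\]
in $\cat{PDG}^{\cont}(\Lambda[[G]])$ that defines $\RDer\Sect(x,-)$; this is the sequence of Section~\ref{sec:perfect complexes of adic sheaves}, which is valid over the coefficient ring $\Lambda[[G]]$ by degreewise base change from \cite[Prop.~6.1.2]{Witte:PhD}. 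From this short exact sequence the mapping cone $\cmplx{\Cone(\id-\Frob_x)}$ is canonically quasi-isomorphic to $\RDer\Sect(x,\cmplx{\sheaf{G}})[1]$. By Theorem~\ref{thm: S torsion local} this complex lies in $\cat{PDG}^{\cont,w_H}(\Lambda[[G]])$, and the degree shift contributes a sign, so $[\cmplx{\Cone(\id-\Frob_x)}]=-[\RDer\Sect(x,\cmplx{\sheaf{G}})]$ in $\KTh_0(\Lambda[[G]],S)$. Combining this with the previous display gives
\[
\bh\,\ncL_{F_\infty/F}(x,\RDer k_*\cmplx{\sheaf{F}})=-[\RDer\Sect(x,i^*\RDer k_*f_!f^*\cmplx{\sheaf{F}})],
\]
as claimed.

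There is no genuine obstacle here beyond keeping track of signs and of the ambient Waldhausen categories. One must make sure that the cone in $\bh[g]=-[\cmplx{\Cone(g)}]$ is taken in $w_H\cat{PDG}^{\cont}(\Lambda[[G]])$ and lands in $\cat{PDG}^{\cont,w_H}(\Lambda[[G]])$ — which is precisely what the $w_H$-weak-equivalence property of $\id-\Frob_x$, i.e.\ Theorem~\ref{thm: S torsion local}, guarantees — that the defining exact sequence for $\RDer\Sect(x,-)$ is used with $\Lambda[[G]]$-coefficients rather than merely over $\Lambda$, and that the shift $[1]$ accounts for the minus sign on the right-hand side. Thus the proposition is a formal consequence of the definitions once Theorem~\ref{thm: S torsion local} has been established.
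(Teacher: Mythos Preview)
Your proof is correct and follows essentially the same approach as the paper's own proof: both identify $\cmplx{\Cone(\id-\Frob_x)}$ with $\RDer\Sect(x,i^*\RDer k_*f_!f^*\cmplx{\sheaf{F}})[1]$ via the defining exact sequence and then apply the explicit formula $\bh[g]=-[\cmplx{\Cone(g)}]$. Your version simply spells out in more detail the sign bookkeeping and the fact that everything lives in the correct Waldhausen categories.
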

\begin{proof}
The complex $\RDer\Sect(x,i^*\RDer k_*f_!f^*\cmplx{\sheaf{F}})$ is weakly equivalent to the cone of the endomorphism
\[
\RDer\Sect(\hat{x},i^*\RDer k_*f_!f^*\cmplx{\sheaf{F}})\xrightarrow{\id-\Frob_x}\RDer\Sect(\hat{x},i^*\RDer k_*f_!f^*\cmplx{\sheaf{F}})
\]
shifted by one. Hence, the result follows from the explicit description of $\bh$ given in \eqref{eqn:def of boundary hom}.
\end{proof}

\begin{defn}
For a topological generator $\gamma\in\Gamma$, we define the local modification factor at $x$ to be the element
\[
\ncM_{F_\infty/F,\gamma}(x,\RDer k_*\cmplx{\sheaf{F}})\coloneqq
\ncL_{F_\infty/F}(x,\RDer k_*\cmplx{\sheaf{F}})
s_\gamma([\RDer\Sect(x,i^*\RDer k_*f_!f^*\cmplx{\sheaf{F}})]).
\]
in $\KTh_1(\Lambda[[G]])$.
\end{defn}

We obtain the following transformation properties.

\begin{prop}\label{prop:transformation of Euler factors}
With $k\colon U\mto W$ as above, let $\Lambda$ be any adic $\Int_{p}$-algebra and let $\cmplx{\sheaf{F}}$ be a complex in $\cat{PDG}^{\cont}(U,\Lambda)$.
\begin{enumerate}
\item Let $\Lambda'$ be another adic $\Int_{p}$-algebra.
For any complex $\cmplx{P}$ of $\Lambda'$-$\Lambda[[G]]$-bimodules which is strictly perfect as complex of $\Lambda'$-modules we have
\[
\ringtransf_{\cmplx{P[[G]]^{\delta}}}(\ncL_{F_\infty/F}(x,\RDer k_*\cmplx{\sheaf{F}}))=\ncL_{F_\infty/F}(x,\RDer k_*\ringtransf_{\cmplx{\tilde{P}}}(\cmplx{\sheaf{F}}))
\]
in $\KTh_1(\Lambda'[[G]]_S)$ and
\[
\ringtransf_{\cmplx{P[[G]]^{\delta}}}(\ncM_{F_\infty/F,\gamma}(x,\RDer k_*\cmplx{\sheaf{F}}))=\ncM_{F_\infty/F,\gamma}(x,\RDer k_*\ringtransf_{\cmplx{\tilde{P}}}(\cmplx{\sheaf{F}}))
\]
in $\KTh_1(\Lambda'[[G]])$.

\item Let $F'_\infty/F$ be an admissible subextension of $F_\infty/F$ with Galois group $G'$. Then
\[
\ringtransf_{\Lambda[[G']]}(\ncL_{F_\infty/F}(x,\RDer k_*\cmplx{\sheaf{F}}))=\ncL_{F'_\infty/F}(x,\RDer k_*\cmplx{\sheaf{F}})
\]
in $\KTh_1(\Lambda[[G']]_S)$ and
\[
\ringtransf_{\Lambda[[G']]}(\ncM_{F_\infty/F,\gamma}(x,\RDer k_*\cmplx{\sheaf{F}}))=\ncM_{F'_\infty/F,\gamma}(x,\RDer k_*\cmplx{\sheaf{F}})
\]
in $\KTh_1(\Lambda[[G']])$.

\item Let $F'/F$ be a finite extension inside $F_\infty/F$. Set $r\coloneqq[F'\cap F_\cyc: F]$. Write $f_{F'}\colon U_{F'}\mto U$ for the corresponding \'etale covering and $x_{F'}$ for the fibre in $\Spec \IntR_{F'}$ above $x$. Let $G'\subset G$ be the Galois group of the admissible extension $F_\infty/F'$ and consider $\Lambda[[G]]$ as a $\Lambda[[G']]$-$\Lambda[[G]]$-bimodule. Then
\[
\ringtransf_{\Lambda[[G]]}\big(\ncL_{F_\infty/F}(x,\RDer k_*\cmplx{\sheaf{F}})\big)=\prod_{y\in x_{F'}}\ncL_{F_\infty/F'}(y,\RDer k_* f_{F'}^*\cmplx{\sheaf{F}})
\]
in $\KTh_1(\Lambda[[G']]_S)$ and
\[
\ringtransf_{\Lambda[[G]]}\big(\ncM_{F_\infty/F,\gamma}(x,\RDer k_*\cmplx{\sheaf{F}})\big)=\prod_{y\in x_{F'}}\ncM_{F_\infty/F',\gamma^r}(y,\RDer k_* f_{F'}^*\cmplx{\sheaf{F}})
\]
in $\KTh_1(\Lambda[[G']])$.

\item With the notation of $(3)$, assume that $\cmplx{\sheaf{G}}$ is a complex in $\cat{PDG}^{\cont}(U_{F'},\Lambda)$ and consider $\Lambda[[G]]$ as a $\Lambda[[G]]$-$\Lambda[[G']]$-bimodule. Then
\[
\prod_{y\in x_{F'}}\ringtransf_{\Lambda[[G]]}\big(\ncL_{F_\infty/F'}(y,\RDer k_*\cmplx{\sheaf{G}})\big)=\ncL_{F_\infty/F}(x,\RDer k_* {f_{F'}}_*\cmplx{\sheaf{G}})
\]
in $\KTh_1(\Lambda[[G]]_S)$ and
\[
\prod_{y\in x_{F'}}\ringtransf_{\Lambda[[G]]}\big(\ncM_{F_\infty/F',\gamma^r}(y,\RDer k_*\cmplx{\sheaf{G}})\big)=\ncM_{F_\infty/F',\gamma}(x,\RDer k_* {f_{F'}}_*\cmplx{\sheaf{G}})
\]
in $\KTh_1(\Lambda[[G]])$.
\end{enumerate}
\end{prop}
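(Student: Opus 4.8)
The plan is to reduce every assertion to three inputs: the transformation behaviour of $f_!f^*\cmplx{\sheaf{F}}$ established in Proposition~\ref{prop:ring change for coverings}; the fact that the geometric functors $\RDer k_*$, $i^*$ and $\RDer\Sect(\hat x,-)$ commute, up to canonical weak equivalence, with the change-of-ring functors $\ringtransf_{(-)}$, compatibly with the operation of $\Frob_x$, which follows by degreewise application of the relevant functors and arguments as in \cite[Prop.~5.5.7]{Witte:PhD}; and the transformation behaviour of the splitting $s_\gamma$ established in Proposition~\ref{prop:transformation for algebraic L-function}. Since each $\ringtransf_{\cmplx{K}}$ is Waldhausen exact, it carries the class $[\id-\Frob_x\lcirclearrowright\RDer\Sect(\hat x,i^*\RDer k_*f_!f^*\cmplx{\sheaf{F}})]$ to the class of the endomorphism obtained by applying $\ringtransf_{\cmplx{K}}$ to all the data; and relation (R5) in the definition of $\D_\bullet(\cat{W})$ lets us split the class of a composite of weak equivalences into the product of the classes of the factors. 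For the modification factors one additionally invokes Proposition~\ref{prop:Euler factors are char elements} together with the observation that $\RDer\Sect(x,-)$ is the shifted cone of $\id-\Frob_x$ on $\RDer\Sect(\hat x,-)$, so that the class $[\RDer\Sect(x,i^*\RDer k_*f_!f^*\cmplx{\sheaf{F}})]\in\KTh_0(\Lambda[[G]],S)$ transforms compatibly with $\ringtransf_{(-)}$.

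For part~(1) I would combine Proposition~\ref{prop:ring change for coverings}.(1), which gives $\ringtransf_{\cmplx{P[[G]]^\delta}}f_!f^*\cmplx{\sheaf{F}}\isomorph f_!f^*\ringtransf_{\cmplx{\tilde{P}}}(\cmplx{\sheaf{F}})$, with the compatibilities above and with Proposition~\ref{prop:transformation for algebraic L-function}.(1). Part~(2) is formally the same, using Proposition~\ref{prop:ring change for coverings}.(2) and Proposition~\ref{prop:transformation for algebraic L-function}.(2) for the quotient map $\alpha\colon G\to G'$, which restricts to the identity on $\Gamma=\Gal(F_{\cyc}/F)$, so that $r=1$ and $\gamma$ is unchanged. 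In both cases the Frobenius compatibility of the geometric functors is the only thing to verify, and it is immediate from their definitions via Godement resolutions.

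Parts~(3) and~(4) are the substantive cases. For~(3) I would first rewrite $\ringtransf_{\Lambda[[G]]}f_!f^*\cmplx{\sheaf{F}}\isomorph f_{F'*}(g_!g^*)f_{F'}^*\cmplx{\sheaf{F}}$ by Proposition~\ref{prop:ring change for coverings}.(3); then use the base-change isomorphism $\RDer k_*f_{F'*}\isomorph f_{W,F'*}\RDer k'_*$ coming from $k\circ f_{F'}=f_{W,F'}\circ k'$ and the exactness of $f_{W,F'*}$ (a finite morphism); and then the finite base-change formula for $i^*f_{W,F'*}$ along the fibre $x_{F'}$. The point is that $\RDer\Sect(\hat x,i^*f_{W,F'*}(-))$ decomposes as a direct sum over $y\in x_{F'}$ of blocks on which $\Frob_x$ fixes $y$ but cyclically permutes the $f(y/x)$ geometric points above it, acting on $\RDer\Sect(\hat y,-)$ through $\Frob_y=\Frob_x^{f(y/x)}$; hence $\id-\Frob_x$ restricted to the $y$-block is a suitably \emph{induced} endomorphism whose class in $\KTh_1(\Lambda[[G']]_S)$ equals that of $\id-\Frob_y$ on $\RDer\Sect(\hat y,i_y^*\RDer k'_*g_!g^*f_{F'}^*\cmplx{\sheaf{F}})$, by the triangular-matrix manipulation already used in the proof of Proposition~\ref{prop:transformation for algebraic L-function}. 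Taking the product over $y\in x_{F'}$ yields the Euler-factor identity; the modification-factor identity then follows from Proposition~\ref{prop:transformation for algebraic L-function}.(3), whose index $r=[G:G'H]$ equals $[F'\cap F_{\cyc}:F]$, exactly the power of $\gamma$ appearing in the statement. Part~(4) is the mirror image, using Proposition~\ref{prop:ring change for coverings}.(4) and the same base-change and induction identities, now with $\Lambda[[G]]$ regarded as a $\Lambda[[G]]$-$\Lambda[[G']]$-bimodule.

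I expect the main obstacle to be the bookkeeping in~(3) and~(4): establishing $\RDer k_*f_{F'*}\isomorph f_{W,F'*}\RDer k'_*$ at the level of the explicit Godement-resolution functors, handling the case $x\in W-U$ through the local description of Lemma~\ref{lem:comparison with local cohomology} (where $F'/F$ may ramify at $x$), and pinning down the $\Frob_x$-equivariant decomposition of $\RDer\Sect(\hat x,i^*f_{W,F'*}(-))$ over $x_{F'}$ together with the resulting induction identity in $\KTh_1$. Once that is in place, the rest is a formal consequence of Propositions~\ref{prop:ring change for coverings} and~\ref{prop:transformation for algebraic L-function} and the relations in $\D_\bullet(\cat{W})$.
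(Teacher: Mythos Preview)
Your proposal is correct and follows essentially the same approach as the paper: both reduce parts~(1) and~(2) directly to Propositions~\ref{prop:ring change for coverings} and~\ref{prop:transformation for algebraic L-function} together with the compatibility of $\ringtransf$ with $\RDer\Sect$, $i^*$, $\RDer k_*$, and for parts~(3) and~(4) both isolate as the key step the induction identity $[\id-\Frob_x\lcirclearrowright\RDer\Sect(y\times_x\hat x,-)]=[\id-\Frob_y\lcirclearrowright\RDer\Sect(\hat y,-)]$ in $\KTh_1(\Lambda[[G']]_S)$ obtained via the triangular-matrix manipulation (the paper cites \cite[Thm.~8.4.(3)]{Witte:MCVarFF} for this, which is exactly the computation you sketch).
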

\begin{proof}
Note that the functor $\ringtransf$ commutes up to weak equivalences with $\RDer\Sect$, $i^*$, and $\RDer k_*$ \cite[5.5.7]{Witte:PhD} and apply Proposition~\ref{prop:ring change for coverings} and Proposition~\ref{prop:transformation for algebraic L-function}. Part $(1)$ and $(2)$ are direct consequences.

For Part $(3)$, we additionally need the same reasoning as in the proof of \cite[Thm.~8.4.(3)]{Witte:MCVarFF} to verify that for any $\cmplx{\sheaf{G}}$ in $\cat{PDG}^{\cont}(U_{F'},\Lambda)$
\begin{equation}\label{eqn:comparison of Frobx and Froby}
 [\id-\Frob_x\lcirclearrowright\RDer\Sect(y\times_{x}\hat{x},\RDer k_*g_!g^*\cmplx{\sheaf{G}})]=
 [\id-\Frob_y\lcirclearrowright\RDer\Sect(\hat{y},\RDer k_*g_!g^*f_{F'}^*\cmplx{\sheaf{G}})]
\end{equation}
in $\KTh_1(\Lambda[[G']]_S)$. Here, $g\colon U_{F_\infty}\mto U_{F'}$ denotes the system of coverings induced by $f$. This implies
the formula for $\ringtransf_{\Lambda[[G]]}\big(\ncL_{F_\infty/F}(x,\RDer k_*\cmplx{\sheaf{F}})\big)$. Moreover, we have a weak equivalence
\[
\ringtransf_{\Lambda[[G]]}\RDer\Sect(x,\RDer k_*f_!f^*\cmplx{\sheaf{F}})\wto
\RDer\Sect(x_{F'},\RDer k_*g_!g^*f_{F'}^*\cmplx{\sheaf{F}})
\]
in $\cat{PDG}^{\cont}(\Lambda[[G']])$. In particular,
\[
s_{\gamma^r}([\ringtransf_{\Lambda[[G]]}\RDer\Sect(x,\RDer k_*f_!f^*\cmplx{\sheaf{F}})])
=\prod_{y\in x_{F'}}s_{\gamma^r}([\RDer\Sect(y,\RDer k_*g_!g^*f_{F'}^*\cmplx{\sheaf{F}})])
\]
from which the formula for $\ringtransf_{\Lambda[[G]]}\big(\ncM_{F_\infty/F,\gamma}(x,\RDer k_*\cmplx{\sheaf{F}})\big)$ follows.

For Part $(4)$ we use \eqref{eqn:comparison of Frobx and Froby} to show
\begin{equation*}
\begin{split}
\prod_{y\in x_{F'}}\ringtransf_{\Lambda[[G]]}\big(\ncL_{F_\infty/F'}&(y,\RDer k_*\cmplx{\sheaf{G}})\big)=\\
&=\ringtransf_{\Lambda[[G]]}([\id-\Frob_x\lcirclearrowright\RDer\Sect(x_{F'}\times_{x}\hat{x},\RDer k_*g_!g^*\cmplx{\sheaf{G}})]^{-1})\\
&=[\id-\Frob_x\lcirclearrowright\RDer\Sect(\hat{x},\RDer k_*f_!f^*{f_{F'}}_*\cmplx{\sheaf{G}})]^{-1}\\
&=\ncL_{F_\infty/F}(x,\RDer k_* {f_{F'}}_*\cmplx{\sheaf{G}}).
\end{split}
\end{equation*}
On the other hand, we also have a weak equivalence
\[
 \ringtransf_{\Lambda[[G]]}\RDer\Sect(x_{F'},\RDer k_*g_!g^*\cmplx{\sheaf{G}})\wto \RDer\Sect(x,\RDer k_*f_!f^*{f_{F'}}_*\cmplx{\sheaf{G}}),
\]
thence the formula for the local modification factors.
\end{proof}

If $\sheaf{G}$ is a smooth $\Lambda$-adic sheaf on $U$ and $x$ is a point in $U$, it makes sense to consider the element
\[
\ncL_{F_\infty/F}(x,\RDer k_*\sheaf{G}^{\mdual_{\Lambda}}(1))^{\Mdual}\in\KTh_1(\Lambda[[G]],S)
\]
as an alternative Euler factor, which does not agree with $\ncL_{F_\infty/F}(x,\RDer k_*\sheaf{G})$ in general. We shall show below that
\[
\ncL_{F_\infty/F}(x,\RDer k_*\sheaf{G}^{\mdual_{\Lambda}}(1))^{\Mdual}=[\id-\Frob_x^{-1}\lcirclearrowright\RDer\Sect(\hat{x},\RDer i^! k_!f_!f^*\sheaf{G})]
\]
and take this as a definition for arbitrary complexes $\cmplx{\sheaf{F}}$ in $\cat{PDG}^{\cont}(U,\Lambda)$.

\begin{defn}
The dual non-commutative Euler factor of $k_!\cmplx{\sheaf{F}}$ at $x\in W$
is the element
\[
\ncL^{\Mdual}_{F_\infty/F}(x,k_!\cmplx{\sheaf{F}})\coloneqq[\id-\Frob_x^{-1}\lcirclearrowright\RDer\Sect(\hat{x},\RDer i^! k_!f_!f^*\cmplx{\sheaf{F}})]
\]
in $\KTh_1(\Lambda[[G]]_S)$.
\end{defn}

\begin{prop}\label{prop:dual Euler factors are char elements}
The inverse of the dual non-commutative Euler factor is a characteristic element for $\RDer\Sect(x,\RDer i^!k_!f_!f^*\cmplx{\sheaf{F}})$:
\[
\bh \ncL^{\Mdual}_{F_\infty/F}(x,k_!\cmplx{\sheaf{F}})=[\RDer\Sect(x,\RDer i^! k_!f_!f^*\cmplx{\sheaf{F}})]
\]
in $\KTh_0(\Lambda[[G]],S)$.
\end{prop}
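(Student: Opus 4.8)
The plan is to compute $\bh$ directly from the explicit formula \eqref{eqn:def of boundary hom}, reducing to the calculation already carried out for the ordinary Euler factor in Proposition~\ref{prop:Euler factors are char elements}; the only genuinely new point is the comparison of $\id-\Frob_x^{-1}$ with $\id-\Frob_x$. Write $\cmplx{M}\coloneqq\RDer\Sect(\hat{x},\RDer i^!k_!f_!f^*\cmplx{\sheaf{F}})$. By Lemma~\ref{lem:exactness of Ri upper shriek} the complex $\RDer i^!k_!f_!f^*\cmplx{\sheaf{F}}$ is an object of $\cat{PDG}^{\cont}(x,\Lambda[[G]])$, so \cite[Prop.~6.1.2]{Witte:PhD} provides the exact sequence
\[
0\mto\RDer\Sect(x,\RDer i^!k_!f_!f^*\cmplx{\sheaf{F}})\mto\cmplx{M}\xrightarrow{\id-\Frob_x}\cmplx{M}\mto 0
\]
in $\cat{PDG}^{\cont}(\Lambda[[G]])$. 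By Theorem~\ref{thm: S torsion local} the complex on the left lies in $\cat{PDG}^{\cont,w_H}(\Lambda[[G]])$, hence $\id-\Frob_x$, and therefore also $\id-\Frob_x^{-1}$, is a weak equivalence in $w_H\cat{PDG}^{\cont}(\Lambda[[G]])$, so that the classes appearing below are well-defined.

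Since $\Frob_x$ lies in the pro-cyclic group $\Gal(\algc{k(x)}/k(x))$ acting on $\cmplx{M}$, it acts degreewise by automorphisms, and one has the identity of endomorphisms $\id-\Frob_x^{-1}=-\Frob_x^{-1}\comp(\id-\Frob_x)$. Relation (R5) in the presentation of the $1$-type of $w_H\cat{PDG}^{\cont}(\Lambda[[G]])$ then yields
\[
\ncL^{\Mdual}_{F_\infty/F}(x,k_!\cmplx{\sheaf{F}})=[-\Frob_x^{-1}\lcirclearrowright\cmplx{M}]\,[\id-\Frob_x\lcirclearrowright\cmplx{M}]
\]
in $\KTh_1(\Lambda[[G]]_S)$, where the first factor is the image of a class in $\KTh_1(\Lambda[[G]])$ because $-\Frob_x^{-1}$ is already a quasi-isomorphism of $\cmplx{M}$. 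Applying $\bh$ and using that it is a homomorphism vanishing on the image of $\KTh_1(\Lambda[[G]])$, by exactness of the split localisation sequence \cite[Cor.~3.4]{Witte:Splitting}, one is left with $\bh[\id-\Frob_x\lcirclearrowright\cmplx{M}]=-[\Cone(\id-\Frob_x)]$. The displayed exact sequence identifies $\Cone(\id-\Frob_x)$ with $\RDer\Sect(x,\RDer i^!k_!f_!f^*\cmplx{\sheaf{F}})$ shifted by one, so that $-[\Cone(\id-\Frob_x)]=[\RDer\Sect(x,\RDer i^!k_!f_!f^*\cmplx{\sheaf{F}})]$ in $\KTh_0(\Lambda[[G]],S)$, which is the assertion.

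I do not expect a serious obstacle here: the argument is entirely formal once Theorem~\ref{thm: S torsion local} and the explicit description of $\bh$ are available, and it parallels the proof of Proposition~\ref{prop:Euler factors are char elements} with the single extra manipulation $\id-\Frob_x^{-1}=-\Frob_x^{-1}\comp(\id-\Frob_x)$. The one point requiring care is the bookkeeping of Waldhausen categories: $\cmplx{M}$ itself is merely an object of $\cat{PDG}^{\cont}(\Lambda[[G]])$, whereas the cone of $\id-\Frob_x$ on it is $S$-torsion, so one must ensure that $\bh$, relation (R5), and the class $[-\Frob_x^{-1}\lcirclearrowright\cmplx{M}]$ are all read off inside $w_H\cat{PDG}^{\cont}(\Lambda[[G]])$, and that this last class indeed lands in the subgroup $\KTh_1(\Lambda[[G]])$ annihilated by $\bh$.
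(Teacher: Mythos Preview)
Your argument is correct. The only difference from the paper is that you take a small detour: you invoke the exact sequence for $\id-\Frob_x$ and then factor $\id-\Frob_x^{-1}=-\Frob_x^{-1}\comp(\id-\Frob_x)$ to transfer the computation. The paper instead observes directly that $\RDer\Sect(x,\RDer i^!k_!f_!f^*\cmplx{\sheaf{F}})$ is weakly equivalent to the cone of $\id-\Frob_x^{-1}$ shifted by one (the sequence from \cite[Prop.~6.1.2]{Witte:PhD} works equally well with $\Frob_x^{-1}$, which is also a topological generator of $\Gal(\algc{k(x)}/k(x))$), and then applies \eqref{eqn:def of boundary hom} in one step. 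Your factorization through $-\Frob_x^{-1}$ and the appeal to (R5) are unnecessary, though harmless; the paper's route is shorter, but yours makes the parallel with Proposition~\ref{prop:Euler factors are char elements} more explicit.
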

\begin{proof}
The complex $\RDer\Sect(x,i^*\RDer k_*f_!f^*\cmplx{\sheaf{F}})$ is weakly equivalent to the cone of the endomorphism
\[
\RDer\Sect(\hat{x},\RDer i^! k_!f_!f^*\cmplx{\sheaf{F}})\xrightarrow{\id-\Frob_x^{-1}}\RDer\Sect(\hat{x},\RDer i^! k_!f_!f^*\cmplx{\sheaf{F}})
\]
shifted by one. Hence, the result follows from the explicit description of $\bh$ given in \cite[Thm. A.5]{Witte:MCVarFF}.
\end{proof}

\begin{defn}
For a topological generator $\gamma\in\Gamma$, the dual local modification factor $k_!\cmplx{\sheaf{F}}$ at $x$
is the element
\[
\ncM^{\Mdual}_{F_\infty/F,\gamma}(x,k_!\cmplx{\sheaf{F}})\coloneqq
\ncL^{\Mdual}_{F_\infty/F}(x, k_!\cmplx{\sheaf{F}})
s_{\gamma^{-1}}([\RDer\Sect(x,\RDer i^!k_!f_!f^*\cmplx{\sheaf{F}})])^{-1}.
\]
\end{defn}

We obtain the following transformation properties.

\begin{prop}\label{prop:transformation of dual Euler factors}
With $k\colon U\mto W$ as above, let $\Lambda$ be any adic $\Int_{p}$-algebra and let $\cmplx{\sheaf{F}}$ be a complex in $\cat{PDG}^{\cont}(U,\Lambda)$.
\begin{enumerate}
\item Let $\Lambda'$ be another adic $\Int_{p}$-algebra.
For any complex $\cmplx{P}$ of $\Lambda'$-$\Lambda[[G]]$-bimodules which is strictly perfect as complex of $\Lambda'$-modules we have
\[
\ringtransf_{\cmplx{P[[G]]^{\delta}}}(\ncL_{F_\infty/F}^{\Mdual}(x, k_!\cmplx{\sheaf{F}}))=\ncL_{F_\infty/F}^{\Mdual}(x,k_!\ringtransf_{\cmplx{\tilde{P}}}(\cmplx{\sheaf{F}}))
\]
in $\KTh_1(\Lambda'[[G]]_S)$ and
\[
\ringtransf_{\cmplx{P[[G]]^{\delta}}}(\ncM_{F_\infty/F,\gamma}^{\Mdual}(x,k_!\cmplx{\sheaf{F}}))=\ncM_{F_\infty/F,\gamma}^{\Mdual}(x,k_!\ringtransf_{\cmplx{\tilde{P}}}(\cmplx{\sheaf{F}}))
\]
in $\KTh_1(\Lambda'[[G]])$.

\item Let $F'_\infty/F$ be an admissible subextension of $F_\infty/F$ with Galois group $G'$. Then
\[
\ringtransf_{\Lambda[[G']]}(\ncL_{F_\infty/F}^{\Mdual}(x,k_!\cmplx{\sheaf{F}}))=\ncL_{F'_\infty/F}^{\Mdual}(x, k_!\cmplx{\sheaf{F}})
\]
in $\KTh_1(\Lambda[[G']]_S)$ and
\[
\ringtransf_{\Lambda[[G']]}(\ncM_{F_\infty/F,\gamma}^{\Mdual}(x, k_!\cmplx{\sheaf{F}}))=\ncM_{F'_\infty/F,\gamma}^{\Mdual}(x,k_!\cmplx{\sheaf{F}})
\]
in $\KTh_1(\Lambda[[G']])$.

\item Let $F'/F$ be a finite extension inside $F_\infty/F$. Set $r\coloneqq[F'\cap F_\cyc: F]$. Write $f_{F'}\colon U_{F'}\mto U$ for the corresponding \'etale covering and $x_{F'}$ for the fibre in $\Spec \IntR_{F'}$ above $x$. Let $G'\subset G$ be the Galois group of the admissible extension $F_\infty/F'$ and consider $\Lambda[[G]]$ as a $\Lambda[[G']]$-$\Lambda[[G]]$-bimodule. Then
\[
\ringtransf_{\Lambda[[G]]}\big(\ncL_{F_\infty/F}^{\Mdual}(x, k_!\cmplx{\sheaf{F}})\big)=\prod_{y\in x_{F'}}\ncL_{F_\infty/F'}^{\Mdual}(y,k_! f_{F'}^*\cmplx{\sheaf{F}})
\]
in $\KTh_1(\Lambda[[G']]_S)$ and
\[
\ringtransf_{\Lambda[[G]]}\big(\ncM_{F_\infty/F,\gamma}^{\Mdual}(x,k_!\cmplx{\sheaf{F}})\big)=\prod_{y\in x_{F'}}\ncM_{F_\infty/F',\gamma^r}^{\Mdual}(y,k_! f_{F'}^*\cmplx{\sheaf{F}})
\]
in $\KTh_1(\Lambda[[G']])$.

\item With the notation of $(3)$, assume that $\cmplx{\sheaf{G}}$ is a complex in $\cat{PDG}^{\cont}(U_{F'},\Lambda)$ and consider $\Lambda[[G]]$ as a $\Lambda[[G]]$-$\Lambda[[G']]$-bimodule. Then
\[
\prod_{y\in x_{F'}}\ringtransf_{\Lambda[[G]]}\big(\ncL_{F_\infty/F'}^{\Mdual}(y,k_!\cmplx{\sheaf{G}})\big)=\ncL_{F_\infty/F}^{\Mdual}(x, k_! {f_{F'}}_*\cmplx{\sheaf{G}})
\]
in $\KTh_1(\Lambda[[G]]_S)$ and
\[
\prod_{y\in x_{F'}}\ringtransf_{\Lambda[[G]]}\big(\ncM_{F_\infty/F',\gamma^r}^{\Mdual}(y, k_!\cmplx{\sheaf{G}})\big)=\ncM_{F_\infty/F',\gamma}^{\Mdual}(x,k_! {f_{F'}}_*\cmplx{\sheaf{G}})
\]
in $\KTh_1(\Lambda[[G]])$.
\end{enumerate}
\end{prop}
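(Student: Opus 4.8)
The plan is to run through the same mechanism as in the proof of Proposition~\ref{prop:transformation of Euler factors}, with the triple of functors $(\RDer i^!,k_!,f_!f^*)$ playing the role of $(i^*,\RDer k_*,f_!f^*)$. First I would record the compatibilities of $\ringtransf$ with the functors entering the definition of $\ncL^{\Mdual}_{F_\infty/F}(x,k_!\cmplx{\sheaf{F}})$: with $\RDer\Sect(\hat{x},-)$ and $\RDer\Sect(x,-)$ this is \cite[Prop.~5.5.7]{Witte:PhD}; with $k_!$ it is trivial, $k_!$ being a degreewise exact functor; with $f_!f^*$ it is Proposition~\ref{prop:ring change for coverings}; and with $\RDer i^!$ it follows, as in the proof of Lemma~\ref{lem:exactness of Ri upper shriek}, from the fact that $\God_W$ resolves by $i^!$-acyclic sheaves and commutes with tensoring by finitely generated modules — which is legitimate here because $W$ contains no place over $p$, so $x$ does not lie over $p$ and $\RDer i^!$ is available as a Waldhausen exact functor. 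I would also recall from Proposition~\ref{prop:transformation for algebraic L-function} how the splittings $s_\gamma$ and $s_{\gamma^{-1}}$ transform under the ring changes in question.

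With this in hand, parts (1) and (2) are immediate: applying $\ringtransf_{\cmplx{P[[G]]^{\delta}}}$ (resp.\ $\ringtransf_{\Lambda[[G']]}$) to the weak equivalence $\id-\Frob_x^{-1}$ on $\RDer\Sect(\hat{x},\RDer i^!k_!f_!f^*\cmplx{\sheaf{F}})$ yields, up to weak equivalence, the corresponding weak equivalence for $\ringtransf_{\cmplx{\tilde{P}}}(\cmplx{\sheaf{F}})$ (resp.\ for the subextension $F'_\infty/F$), by Proposition~\ref{prop:ring change for coverings}.(1)--(2); the statements for $\ncM^{\Mdual}$ then follow by combining this with the matching transformation of the factor $s_{\gamma^{-1}}([\RDer\Sect(x,\RDer i^!k_!f_!f^*\cmplx{\sheaf{F}})])^{-1}$ supplied by Proposition~\ref{prop:transformation for algebraic L-function}.(1)--(2). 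For parts (3) and (4) I would first establish the dual analogue of \eqref{eqn:comparison of Frobx and Froby}: writing $g\colon U_{F_\infty}\mto U_{F'}$ for the system of coverings induced by $f$, one has, for $\cmplx{\sheaf{G}}$ in $\cat{PDG}^{\cont}(U_{F'},\Lambda)$,
\[
[\id-\Frob_x^{-1}\lcirclearrowright\RDer\Sect(y\times_{x}\hat{x},\RDer i^!k_!g_!g^*\cmplx{\sheaf{G}})]=[\id-\Frob_y^{-1}\lcirclearrowright\RDer\Sect(\hat{y},\RDer i^!k_!g_!g^*f_{F'}^*\cmplx{\sheaf{G}})]
\]
in $\KTh_1(\Lambda[[G']]_S)$, proved exactly as \cite[Thm.~8.4.(3)]{Witte:MCVarFF} by comparing the Frobenius at $x$ with the appropriate power of the Frobenius at the points $y$ of the fibre $x_{F'}$. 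Feeding this into Proposition~\ref{prop:ring change for coverings}.(3)--(4) and the weak equivalences $\ringtransf_{\Lambda[[G]]}\RDer\Sect(x,\RDer i^!k_!f_!f^*\cmplx{\sheaf{F}})\wto\RDer\Sect(x_{F'},\RDer i^!k_!g_!g^*f_{F'}^*\cmplx{\sheaf{F}})$ yields the formulas for $\ncL^{\Mdual}$; then Proposition~\ref{prop:transformation for algebraic L-function}.(3)--(4) — with $r=[F'\cap F_\cyc:F]=[G:G'H]$, so that $\ringtransf_{\Lambda[[G]]}$ carries $s_{\gamma^{-1}}$ to $s_{\gamma^{-r}}=s_{(\gamma^{r})^{-1}}$ — gives the formulas for $\ncM^{\Mdual}$.

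The step I expect to be the main obstacle is the one invisible in the statement: the compatibility of $\ringtransf$ with $\RDer i^!$. Unlike $i^*$, the functor $\RDer i^!$ is built from a Godement resolution followed by a kernel, so one has to check that a (possibly infinite) tensor product followed by an inverse limit commutes with it up to weak equivalence. I would handle this by the argument of Lemma~\ref{lem:exactness of Ri upper shriek}, using that $\God_W$ consists of (infinite sums of) flasque sheaves, that \'etale cohomology of noetherian schemes commutes with filtered colimits, and that $i^!\God_W$ preserves quasi-isomorphisms and commutes with tensoring by finitely generated right modules. Everything else is a formal transport of the arguments of Propositions~\ref{prop:ring change for coverings} and~\ref{prop:transformation for algebraic L-function} through the defining formulas for $\ncL^{\Mdual}$ and $\ncM^{\Mdual}$, exactly as for Proposition~\ref{prop:transformation of Euler factors}. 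As a cross-check, when $\cmplx{\sheaf{F}}$ is a smooth $\Lambda$-adic sheaf $\sheaf{G}$ one can also deduce (1)--(4) from Proposition~\ref{prop:transformation of Euler factors} together with Lemma~\ref{lem:Mdual and ringtransf} and the identity $\ncL^{\Mdual}_{F_\infty/F}(x,k_!\sheaf{G})=\ncL_{F_\infty/F}(x,\RDer k_*\sheaf{G}^{\mdual_{\Lambda}}(1))^{\Mdual}$, but the direct route above covers arbitrary complexes without appealing to that identity.
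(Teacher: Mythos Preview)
Your proposal is correct and takes exactly the approach the paper does: the paper's entire proof is the single sentence ``The arguments are the same as in the proof of Prop.~\ref{prop:transformation of Euler factors}.'' You have simply spelled out what those arguments become in the dual setting, including the one nontrivial check (compatibility of $\ringtransf$ with $\RDer i^!$), which the paper leaves implicit.
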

\begin{proof}
The arguments are the same as in the proof of Prop.~\ref{prop:transformation of Euler factors}.
\end{proof}

\begin{prop}\label{prop:Euler factors under duality}\
\begin{enumerate}
\item\label{enum:EF comparison lc case} Let $\sheaf{G}$ be a smooth $\Lambda$-adic sheaf on $U$. Then
\begin{multline*}
(\ncL_{F_\infty/F}(x,\RDer k_*\sheaf{G}^{\mdual_\Lambda}(1)))^{\Mdual}=\ncL_{F_\infty/F}^{\Mdual}(x,k_!\sheaf{G})=\\
=
\begin{cases}
[-\Frob_x\lcirclearrowright \RDer\Sect(\hat{x},i^*f_!f^*\sheaf{G}(-1))]^{-1}\ncL_{F_\infty/F}(x,\sheaf{G}(-1))^{-1}
&\text{if $x\in U$}\\
[-\Frob_x\lcirclearrowright \RDer\Sect(\hat{x},i^*\RDer k_*f_!f^*\sheaf{G})]\ncL_{F_\infty/F}(x,\RDer k_*\sheaf{G})
&\text{if $x\in W-U$}
\end{cases}
\end{multline*}
in $\KTh_1(\Lambda[[G]],S)$ and
\[
(\ncM_{F_\infty/F,\gamma}(x,\RDer k_*\sheaf{G}^{\mdual_\Lambda}(1)))^{\Mdual}=\ncM_{F_\infty/F,\gamma}^{\Mdual}(x,k_!\sheaf{G})
\]
in $\KTh_1(\Lambda[[G]])$.
\item\label{enum:EF comparison point case} Let $\sheaf{G}$ be a $\Lambda$-adic sheaf on $x\in U$. Then
\[
\begin{aligned}
(\ncL_{F_\infty/F}(x,i_*\sheaf{G}^{\mdual_\Lambda}))^{\Mdual}&=\ncL_{F_\infty/F}^{\Mdual}(x,i_*\sheaf{G})\\
                                                             &=[-\Frob_x\lcirclearrowright \RDer\Sect(\hat{x},i^*f_!f^*i_*\sheaf{G})]^{-1}\ncL_{F_\infty/F}(x,i_*\sheaf{G})^{-1}
\end{aligned}
\]
in $\KTh_1(\Lambda[[G]],S)$ and
\[
(\ncM_{F_\infty/F,\gamma}(x,i_*\sheaf{G}^{\mdual_\Lambda}))^{\Mdual}=\ncM_{F_\infty/F,\gamma}^{\Mdual}(x,i_*\sheaf{G})
\]
in $\KTh_1(\Lambda[[G]])$.
\end{enumerate}
\end{prop}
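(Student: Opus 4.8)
The plan is to reduce the whole proposition to the single identity
\[
 (\ncL_{F_\infty/F}(x,\RDer k_*\sheaf{G}^{\mdual_\Lambda}(1)))^{\Mdual}=\ncL^{\Mdual}_{F_\infty/F}(x,k_!\sheaf{G})
\]
for a smooth $\Lambda$-adic sheaf $\sheaf{G}$ (together with the analogue for a sheaf supported at a point), everything else being a formal consequence. Writing $A\coloneqq\RDer\Sect(\hat x,i^*\RDer k_*f_!f^*\sheaf{G}^{\mdual_\Lambda}(1))$ and $B\coloneqq\RDer\Sect(\hat x,\RDer i^!k_!f_!f^*\sheaf{G})$, the defining expressions show that the left-hand side is $\Mdual$ applied to $[\id-\Frob_x\lcirclearrowright A]^{-1}$, which by the description of $\Mdual$ on $\KTh_1$ implicit in Definition~\ref{defn:Mdual} and the additivity of the duality functor equals $[\id-\Frob_x^{\Mdual}\lcirclearrowright A^{\Mdual}]$, while the right-hand side is $[\id-\Frob_x^{-1}\lcirclearrowright B]$. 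Thus it suffices to produce a weak equivalence $A^{\Mdual}\wto B$ in $\cat{PDG}^{\cont}(\Lambda^\op[[G]])$ intertwining $\Frob_x^{\Mdual}$ with $\Frob_x^{-1}$.

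To build it I would first use \eqref{eqn:duality for coverings} to replace $f_!f^*\sheaf{G}^{\mdual_\Lambda}$ by $\ringtransf_{\Lambda^\op[[G]]^\sharp}(f_!f^*\sheaf{G})^{\mdual_{\Lambda[[G]]}}$, so that all dualisation happens over the group ring $\Lambda[[G]]$, using that $\ringtransf$ commutes up to weak equivalence with $\RDer\Sect$, $i^*$ and $\RDer k_*$. Then I split into the two cases of the statement. If $x\in W-U$, Lemma~\ref{lem:comparison with local cohomology} identifies $A$ and $\tilde A\coloneqq\RDer\Sect(\hat x,i^*\RDer k_*f_!f^*\sheaf{G})$ with local Galois cohomology over $F_x^{\nr}$, Lemma~\ref{lem:strictly perfect resolution of local cohomology} replaces these by the explicit complexes $\cmplx{D}_{\hat x}(-)$, and Lemma~\ref{lem:explicit local duality}, biduality and $(1)^{\mdual}\cong(-1)$ give $A^{\Mdual}\simeq\tilde A[-1]$ with the right Frobenius compatibility; on the other hand, Lemma~\ref{lem:exactness of Ri upper shriek} applied with $\Sigma=W-U$ and $\cmplx{\sheaf{F}}=k_!f_!f^*\sheaf{G}$ (which vanishes on $\Sigma$) yields $\tilde A\simeq B[1]$, so the two shifts cancel and $A^{\Mdual}\simeq B$. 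If $x\in U$, then $i^*\RDer k_*$ is just restriction to $x$, so $A$ is the stalk $(f_!f^*\sheaf{G}^{\mdual_\Lambda}(1))_{\hat x}$ by \eqref{eqn:stalk}, Lemma~\ref{lem:explicit finite duality} gives $A^{\Mdual}\cong(f_!f^*\sheaf{G}(-1))_{\hat x}$ with $\Frob_x^{\Mdual}\leftrightarrow\Frob_x^{-1}$, while the exchange formula \eqref{eqn:exchange formula} (equivalently absolute purity $\RDer i^!\simeq i^*(-1)[-2]$) gives $B\simeq(f_!f^*\sheaf{G}(-1))_{\hat x}[-2]$, so again the two sides define the same class since the shift is even. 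The one delicate point is the bookkeeping of Tate twists and of the degree shifts — every odd shift inverts a class in $\KTh_1$ — and of the $\Frob_x\leftrightarrow\Frob_x^{-1}$ swap; the explicit resolutions of Section~\ref{sec:duality for adic sheaves} together with the topological $\KTh$-theory of Section~\ref{sec:perfect complexes of adic sheaves} are designed precisely to keep this mechanical, so I expect no conceptual obstruction.

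Granting the identity above, the explicit rewritings in \eqref{enum:EF comparison lc case} follow by elementary computations in the abelian group $\KTh_1(\Lambda[[G]])$: from $\id-\Frob_x^{-1}=-\Frob_x^{-1}(\id-\Frob_x)$, the relation $[-\Frob_x^{-1}\lcirclearrowright P]=[-\Frob_x\lcirclearrowright P]^{-1}$ (which holds because $[-1\lcirclearrowright P]$ has order two), and the shift rule $[f\lcirclearrowright\cmplx{P}[n]]=[f\lcirclearrowright\cmplx{P}]^{(-1)^n}$, applied to the shifts $B\simeq\tilde A[-1]$ for $x\in W-U$ and $B\simeq(f_!f^*\sheaf{G}(-1))_{\hat x}[-2]$ for $x\in U$ just obtained; part~\eqref{enum:EF comparison point case} is the degenerate case of a sheaf supported at $x$, where $f_!f^*i_*\sheaf{G}\cong i_*g_!g^*\sheaf{G}$ and $\RDer i^!i_*\cong\id$, so that there is neither a twist nor a shift and the argument is Lemma~\ref{lem:explicit finite duality} and \eqref{eqn:duality for coverings} alone. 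Finally, for the two modification-factor identities I would apply $\bh$ to the identity already proved: since $\bh$ commutes with $\Mdual$ by Proposition~\ref{prop:main abstract duality}, Propositions~\ref{prop:Euler factors are char elements} and~\ref{prop:dual Euler factors are char elements} yield
\[
 [\RDer\Sect(x,i^*\RDer k_*f_!f^*\sheaf{G}^{\mdual_\Lambda}(1))]^{\Mdual}=-[\RDer\Sect(x,\RDer i^!k_!f_!f^*\sheaf{G})]
\]
in $\KTh_0(\Lambda^\op[[G]],S)$, and combining this with $s_\gamma(A')^{\Mdual}=s_{\gamma^{-1}}((A')^{\Mdual})$ (Proposition~\ref{prop:s and Mdual commute}) and the additivity of the splitting $s_{\gamma^{-1}}$ turns the defining product for $\ncM_{F_\infty/F,\gamma}(x,\RDer k_*\sheaf{G}^{\mdual_\Lambda}(1))$ into that for $\ncM^{\Mdual}_{F_\infty/F,\gamma}(x,k_!\sheaf{G})$; the point-supported case is identical.
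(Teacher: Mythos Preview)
Your argument is correct and uses the same toolbox as the paper's proof: the identification \eqref{eqn:duality for coverings}, the explicit local and finite duality Lemmas~\ref{lem:explicit local duality} and~\ref{lem:explicit finite duality}, Lemma~\ref{lem:exactness of Ri upper shriek}, absolute purity, and Proposition~\ref{prop:s and Mdual commute} for the modification factors. The case $x\in W-U$ and part~\eqref{enum:EF comparison point case} are handled essentially identically in both.

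The one genuine organisational difference is in part~\eqref{enum:EF comparison lc case} for $x\in U$. You treat this case directly: identify $A$ with the stalk, dualise via Lemma~\ref{lem:explicit finite duality} and biduality to obtain $A^{\Mdual}\simeq(f_!f^*\sheaf{G}(-1))_{\hat x}$, and compare with $B$ via absolute purity $\RDer i^!\simeq i^*(-1)[-2]$. The paper instead first proves the case $x\in W-U$ and part~\eqref{enum:EF comparison point case}, and then for $x\in U$ writes the localisation triangle for $\ell\colon U\setminus\{x\}\hookrightarrow U$,
\[
 i_*\RDer i^!\sheaf{G}^{\mdual_\Lambda}(1)\mto\sheaf{G}^{\mdual_\Lambda}(1)\mto\RDer\ell_*\ell^*\sheaf{G}^{\mdual_\Lambda}(1),
\]
so that the Euler factor at $x$ splits as a product to which the two already-established cases apply; absolute purity enters only at the end for the explicit formula. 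Your direct route is a little shorter and avoids this reduction; the paper's route has the mild advantage of confining the delicate Frobenius/twist bookkeeping to the local-duality computation of Lemma~\ref{lem:explicit local duality} and not repeating it at points of $U$.
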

\begin{proof}
We only need to prove the formulas for the non-commutative Euler factors, the formulas for the local modification factors then follow from Proposition~\ref{prop:s and Mdual commute}.

We begin by proving \eqref{enum:EF comparison lc case} in the case that $x\in W-U$. By \eqref{eqn:duality for coverings}, combined with Lemma~\ref{lem:strictly perfect resolution of local cohomology} and Lemma~\ref{lem:comparison with local cohomology}, we have
\begin{multline*}
\ncL_{F_\infty/F}(x,i_*\RDer k_*\sheaf{G}^{\mdual_\Lambda}(1))=\\
\begin{aligned}
&=\ringtransf_{\Lambda^{\op}[[G]]^\sharp}([\id-\Frob_x\lcirclearrowright \RDer\Sect(\hat{x},(i^*\RDer k_*f_!f^*\sheaf{G})^{\mdual_{\Lambda[[G]]}}(1))]^{-1})\\
&=\ringtransf_{\Lambda^{\op}[[G]]^\sharp}([\id-\Frob_x\lcirclearrowright \cmplx{D}_{\hat{x}}((\RDer k_*f_!f^*\sheaf{G})^{\mdual_{\Lambda[[G]]}}(1))]^{-1}).
\end{aligned}
\end{multline*}
From the Definition~\ref{defn:Mdual} of $\Mdual$, Lemma~\ref{lem:explicit local duality}, and again Lemma~\ref{lem:comparison with local cohomology}, we conclude
\begin{multline*}
\left(\ringtransf_{\Lambda^{\op}[[G]]^\sharp}([\id-\Frob_x\lcirclearrowright \cmplx{D}_{\hat{x}}((\RDer k_*f_!f^*\sheaf{G})^{\mdual_{\Lambda[[G]]}}(1))]^{-1})\right)^{\Mdual}=\\
\begin{aligned}
&=[\id-\Frob_x^{\mdual_{\Lambda[[G]]^{\op}}}\lcirclearrowright
\cmplx{D}_{\hat{x}}((\RDer k_*f_!f^*\sheaf{G})^{\mdual_{\Lambda[[G]]}}(1))^{\mdual_{\Lambda[[G]]^\op}}]\\
&=[\id-\Frob_x^{-1}\lcirclearrowright\RDer\Sect(\hat{x},i^*\RDer k_*f_!f^*\sheaf{G})]^{-1}\\
&=[-\Frob_x^{-1}\lcirclearrowright \RDer\Sect(\hat{x},i^*\RDer k_*f_!f^*\sheaf{G})]^{-1}\ncL_{F_\infty/F}(x,\RDer k_*\sheaf{G}).
\end{aligned}
\end{multline*}
Finally,
\[
[\id-\Frob_x^{-1}\lcirclearrowright\RDer\Sect(\hat{x},i^*\RDer k_*f_!f^*\sheaf{G})]^{-1}=\ncL_{F_\infty/F}^{\Mdual}(x,k_!\sheaf{G})
\]
by Lemma~\ref{lem:exactness of Ri upper shriek}.

The validity of the first equality in \eqref{enum:EF comparison point case} follows similarly from Lemma~\ref{lem:explicit finite duality} and the exchange formula \eqref{eqn:exchange formula}:
\[
 \begin{aligned}
  (\ncL_{F_\infty/F}(x,i_*\sheaf{G}^{\mdual_\Lambda}))^{\Mdual}&=([\id-\Frob_x\lcirclearrowright (i^*f_!f^*i_*\sheaf{G}^{\mdual_\Lambda})_{\hat{x}}]^{-1})^{\Mdual}\\
                                                               &=\ringtransf_{\Lambda[[G]]^{\sharp}}([\id-\Frob_x^{\mdual_{\Lambda[[G]]^\op}}\lcirclearrowright((i^*f_!f^*i_*\sheaf{G}^{\mdual_\Lambda})_{\hat{x}})^{\mdual_{\Lambda[[G]]^\op}}])\\
                                                               &=\ringtransf_{\Lambda[[G]]^{\sharp}}([\id-\Frob_x^{-1}\lcirclearrowright \RDer\Sect(\hat{x},(i^*f_!f^*i_*\sheaf{G}^{\mdual_\Lambda}))^{\mdual_{\Lambda[[G]]^\op}}])\\
                                                               &=[\id-\Frob_x^{-1}\lcirclearrowright\RDer\Sect(\hat{x},\RDer i^!f_!f^*(i_*\sheaf{G}^{\mdual_{\Lambda}}))^{\mdual_{\Lambda^\op}}]\\
                                                               &=\ncL_{F_\infty/F}^{\Mdual}(x,i_*\sheaf{G})
 \end{aligned}
\]
Further, write $\ell\colon U'\mto U$ for the complement of $x$ in $U$. Then
\[
 \RDer \ell_*\ell^*f_!f^*i_*\sheaf{G}\isomorph\RDer \ell_* f_!f^*\ell^*i_*\sheaf{G}=0
\]
and hence,
\[
 \RDer i^!f_!f^*i_*\sheaf{G}\isomorph i^*f_!f^*i_*\sheaf{G},
\]
from which the second equality in \eqref{enum:EF comparison point case} follows.

For the proof of \eqref{enum:EF comparison lc case} in the case that $x\in U$, we observe that
\[
\begin{aligned}
 (\ncL_{F_\infty/F}(x,\sheaf{G}^{\mdual_\Lambda}(1)))^{\Mdual}&=(\ncL_{F_\infty/F}(x,\RDer \ell_*\ell^*\sheaf{G}^{\mdual_\Lambda}(1)))^{\Mdual}(\ncL_{F_\infty/F}(x,i_*\RDer i^!\sheaf{G}^{\mdual_\Lambda}(1)))^{\Mdual}\\
                                                              &=(\ncL_{F_\infty/F}(x,\RDer \ell_*\ell^*\sheaf{G}^{\mdual_\Lambda}(1)))^{\Mdual}(\ncL_{F_\infty/F}(x,i_*(i^*\sheaf{G})^{\mdual_\Lambda}))^{\Mdual}\\
                                                              &=\ncL_{F_\infty/F}^{\Mdual}(x,\ell_!\ell^*\sheaf{G})\ncL_{F_\infty/F}^{\Mdual}(x,i^*\sheaf{G})\\
                                                              &=\ncL_{F_\infty/F}^{\Mdual}(x,\sheaf{G})
\end{aligned}
\]
by what we have proved above. For the second equality, we use that by absolute purity \cite[Ch.~II, Cor.~1.6]{Milne:ADT}, there exists chain of weak equivalences
\[
 i^*f_!f^*\sheaf{G}(-1)\sim \RDer i^!f_!f^*\sheaf{G}[2].
\]
\end{proof}

\section{Euler factors for the cyclotomic extension}\label{sec:cyclotomic euler factors}

In the case $F_\infty=F_{\cyc}$, we can give a different description of $\ncL_{F_\infty/F}(x,\RDer k_*\cmplx{\sheaf{F}})$. We will undergo the effort to allow arbitrary adic $\Int_p$-algebras $\Lambda$ as coefficient rings, but in the end, we will use the results only in the case that $\Lambda$ is the valuation ring in a finite extension of $\Rat_p$. If one restricts to this case, some of the technical constructions that follow may be skipped.

Let $\Lambda[t]$ be the polynomial ring over $\Lambda$ in the indeterminate $t$ that is assumed to commute with the elements of $\Lambda$. In the appendix we define a Waldhausen category $w_t\cat{P}(\Lambda[t])$: The objects are perfect complexes of $\Lambda[t]$-modules and cofibrations are injective morphism of complexes such that the cokernel is again perfect. A weak equivalence is a morphism $f\colon \cmplx{P}\mto\cmplx{Q}$ of perfect complexes of $\Lambda[t]$-modules such that $\Lambda\Ltensor_{\Lambda[t]}f$ is a quasi-isomorphism of complexes of $\Lambda$-modules. Here, $\Lambda$ is considered as a $\Lambda$-$\Lambda[t]$-bimodule via the augmentation map and $\Lambda\Ltensor_{\Lambda[t]}\cdot$ denotes the total derived tensor product as functor between the derived categories.

If $\Lambda$ is noetherian, then the subset
\[
S_t\coloneqq\set{f(t)\in \Lambda[t] \given f(0)\in \Lambda^{\times}}\subset\Lambda[t]
\]
is a left and right denominator set, the localisation $\Lambda[t]_{S_t}$ is semi-local and $\Lambda[t]\mto\Lambda[t]_{S_t}$ induces an isomorphism
\[
\KTh_1(w_t\cat{P}(\Lambda[t]))\isomorph\KTh_1(\Lambda[t]_{S_t})
\]
(Proposition~\ref{prop:S_t denominator set}). For commutative adic rings, which are always noetherian \cite[Cor. 36.35]{Warner:TopRings}, we may further identify
\[
\KTh_1(\Lambda[t]_{S_t})\isomorph\Lambda[t]_{S_t}^\times
\]
via the determinant map. In general, $S_t$ is not a left or right denominator set. We then take
\[
\KTh_1(\Lambda[t]_{S_t})\coloneqq\KTh_1(w_t\cat{P}(\Lambda[t]))
\]
as a definition.

For any adic $\Int_p$-algebra $\Lambda$ and any $\gamma\in\Gamma\isomorph\Int_p$, the ring homomorphism
\[
\ev_{\gamma}\colon \Lambda[t]\mapsto \Lambda[[\Gamma]],\quad f(t)\mapsto f(\gamma).
\]
induces a homomorphism
\[
\ev_{\gamma}\colon \KTh_1(\Lambda[t]_{S_t})\mto\KTh_1(\Lambda[[\Gamma]]_S)
\]
(Proposition~\ref{prop:extension of evgamma}). In the noetherian case, the proof boils down to a verification that $\ev_{\gamma}(S_t)\subset S$.

\begin{defn}
For $\cmplx{\sheaf{F}}=(\cmplx{\sheaf{F}}_I)_{I\in\openideals_{\Lambda}}\in \cat{PDG}^{\cont}(U,\Lambda)$ we define
\begin{align*}
 L(x,\RDer k_*\cmplx{\sheaf{F}},t)&\coloneqq[\id-t\Frob_x\lcirclearrowright\cmplx{P}]^{-1}\in \KTh_1(\Lambda[t]_{S_t})\\
 L^\Mdual(x,k_!\cmplx{\sheaf{F}},t)&\coloneqq[\id-t\Frob_x^{-1}\lcirclearrowright \cmplx{Q}]\in\KTh_1(\Lambda[t]_{S_t})
\end{align*}
where
\begin{align*}
 \cmplx{P}&\coloneqq\Lambda[t]\tensor_{\Lambda}\varprojlim_{I\in\openideals_{\Lambda}}\RDer\Sect(\hat{x},i^*\RDer k_*\cmplx{\sheaf{F}}),\\
 \cmplx{Q}&\coloneqq\Lambda[t]\tensor_{\Lambda}\varprojlim_{I\in\openideals_{\Lambda}}\RDer\Sect(\hat{x},\RDer i^!k_!\cmplx{\sheaf{F}})
\end{align*}
For any $1\neq\gamma\in\Gamma$, we write $L(x,\RDer k_* \cmplx{\sheaf{F}},\gamma)$ and $L^{\Mdual}(x,k_! \cmplx{\sheaf{F}},\gamma)$ for the images of $L(x,\RDer k_*\cmplx{\sheaf{F}},t)$ and $L^\Mdual(x,k_!\cmplx{\sheaf{F}},t)$ under
\[
\KTh_1(\Lambda[t]_{S_t})\xrightarrow{\ev_\gamma}\KTh_1(\Lambda[[\Gamma]]_S).
\]
\end{defn}

Since the endomorphism $\id-t\Frob_x$ is canonical, it follows easily from the relations in the definition of $\D_\bullet(\cat{W})$ that $L(x,\cmplx{\sheaf{F}},t)$ does only depend on the weak equivalence class of $\cmplx{\sheaf{F}}$ and is multiplicative on exact sequences. So, it defines a homomorphism
\[
 L(x,\RDer k_*(-),t)\colon\KTh_0(\cat{PDG}^{\cont}(U,\Lambda))\mto \KTh_1(\Lambda[t]_{S_t}).
\]
The same is also true for $L^{\Mdual}(x,k_!\cmplx{\sheaf{F}},t)$.

\begin{prop}
Let $\gamma_x\in\Gamma$ be the image of $\Frob_x$ in $\Gamma$. Then
\begin{align*}
\ncL_{F_{\cyc}/F}(x,\RDer k_*\cmplx{\sheaf{F}})&=L(x,\RDer k_*\cmplx{\sheaf{F}},\gamma_x^{-1}),\\
\ncL^{\Mdual}_{F_{\cyc}/F}(x,k_!\cmplx{\sheaf{F}})&=L^{\Mdual}(x,k_!\cmplx{\sheaf{F}},\gamma_x).
\end{align*}
\end{prop}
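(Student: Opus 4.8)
The plan is to compute both sides of each identity as classes of explicit automorphisms of complexes and to match these, using crucially that the cyclotomic extension becomes unramified over $W$. Throughout, $F_\infty=F_\cyc$, so $G=\Gamma=\Gal(F_\cyc/F)$, $H=\{1\}$, and $\KTh_1(\Lambda[[G]]_S)=\KTh_1(w_H\cat{PDG}^{\cont}(\Lambda[[\Gamma]]))$; consequently it suffices to produce Frobenius-equivariant weak equivalences identifying the underlying complexes-with-endomorphism. Since $W$ contains no place over $p$ and $F_\cyc/F$ is unramified outside $p$, the extension $F_\cyc/F$ is unramified over all of $W$; hence the smooth $\Lambda[[\Gamma]]$-adic sheaf $\sheaf{N}\coloneqq f_!f^*\Lambda_U$ on $U$ extends to a smooth $\Lambda[[\Gamma]]$-adic sheaf $\sheaf{N}_W$ on $W$ with $k^*\sheaf{N}_W\isomorph\sheaf{N}$. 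Its stalk at the geometric point $\hat x$ over a closed point $x$ of $W$ is the free rank one module $\Lambda[[\Gamma]]^\sharp$, on which the inertia group acts trivially and the geometric Frobenius $\Frob_x$ acts by multiplication with $\gamma_x^{-1}$ (this is where the $\sharp$-convention intervenes). Applying the projection formula at each finite layer $F\subset K\subset F_\cyc$ and passing to the limit gives a natural isomorphism $f_!f^*\cmplx{\sheaf{F}}\isomorph\cmplx{\sheaf{F}}\tensor_\Lambda\sheaf{N}$ in $\cat{PDG}^{\cont}(U,\Lambda[[\Gamma]])$, compatible with the relevant functorialities.

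From $k^*\sheaf{N}_W\isomorph\sheaf{N}$ and the projection formula for $\RDer k_*$ I would obtain $\RDer k_* f_!f^*\cmplx{\sheaf{F}}\isomorph\RDer k_*\cmplx{\sheaf{F}}\tensor_\Lambda\sheaf{N}_W$, and then, after applying $i^*$ and $\RDer\Sect(\hat x,-)$, a Frobenius-equivariant weak equivalence
\[
\RDer\Sect(\hat x,i^*\RDer k_* f_!f^*\cmplx{\sheaf{F}})\isomorph\Lambda[[\Gamma]]^\sharp\ctensor_\Lambda\RDer\Sect(\hat x,i^*\RDer k_*\cmplx{\sheaf{F}}),
\]
under which $\Frob_x$ corresponds to $(\gamma_x^{-1}\cdot)\circ(\id\ctensor\Frob_x)$, since $\Frob_x$ acts diagonally and trivially on the inertia part of $\sheaf{N}_W$. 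For the dual complex I would argue in the same way, using the projection formula for $k_!$ and the fact that a (here rank one locally free) perfect coefficient sheaf passes through $\RDer i^!$; alternatively one reduces to the previous case via Lemma~\ref{lem:exactness of Ri upper shriek} and Lemma~\ref{lem:comparison with local cohomology}, treating $x\in U$ and $x\in W-U$ separately. Either way one gets a Frobenius-equivariant weak equivalence
\[
\RDer\Sect(\hat x,\RDer i^!k_! f_!f^*\cmplx{\sheaf{F}})\isomorph\Lambda[[\Gamma]]^\sharp\ctensor_\Lambda\RDer\Sect(\hat x,\RDer i^!k_!\cmplx{\sheaf{F}}),
\]
again with $\Frob_x$ acting by $(\gamma_x^{-1}\cdot)\circ(\id\ctensor\Frob_x)$, hence $\Frob_x^{-1}$ by $(\gamma_x\cdot)\circ(\id\ctensor\Frob_x^{-1})$.

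To conclude I would unwind the definitions. Put $\cmplx{C}\coloneqq\varprojlim_I\RDer\Sect(\hat x,i^*\RDer k_*\cmplx{\sheaf{F}}_I)$ and $\cmplx{D}\coloneqq\varprojlim_I\RDer\Sect(\hat x,\RDer i^!k_!\cmplx{\sheaf{F}}_I)$, which are $\Lambda$-perfect. The homomorphism $\ev_\gamma$ of Proposition~\ref{prop:extension of evgamma} is induced by the Waldhausen exact functor $\ringtransf_{\Lambda[[\Gamma]]}$ for $\Lambda[[\Gamma]]$ viewed as $\Lambda[[\Gamma]]$-$\Lambda[t]$-bimodule via $t\mapsto\gamma$, and therefore carries $[\id-t\Frob_x\lcirclearrowright\Lambda[t]\tensor_\Lambda\cmplx{C}]$ to $[\id-\gamma\Frob_x\lcirclearrowright\Lambda[[\Gamma]]\tensor_\Lambda\cmplx{C}]$, with $\gamma$ acting by multiplication. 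Combined with the two identifications above --- and with the fact that the class of $\id-\Frob_x$, resp. $\id-\Frob_x^{-1}$, on a complex is invariant under a Frobenius-equivariant weak equivalence, by the relations defining $\D_\bullet$ --- this yields $[\id-\Frob_x\lcirclearrowright\RDer\Sect(\hat x,i^*\RDer k_* f_!f^*\cmplx{\sheaf{F}})]=\ev_{\gamma_x^{-1}}([\id-t\Frob_x\lcirclearrowright\Lambda[t]\tensor_\Lambda\cmplx{C}])$ and $[\id-\Frob_x^{-1}\lcirclearrowright\RDer\Sect(\hat x,\RDer i^!k_! f_!f^*\cmplx{\sheaf{F}})]=\ev_{\gamma_x}([\id-t\Frob_x^{-1}\lcirclearrowright\Lambda[t]\tensor_\Lambda\cmplx{D}])$; inverting the first line and reading off the definitions gives $\ncL_{F_{\cyc}/F}(x,\RDer k_*\cmplx{\sheaf{F}})=L(x,\RDer k_*\cmplx{\sheaf{F}},\gamma_x^{-1})$ and $\ncL^{\Mdual}_{F_{\cyc}/F}(x,k_!\cmplx{\sheaf{F}})=L^{\Mdual}(x,k_!\cmplx{\sheaf{F}},\gamma_x)$ (both evaluations make sense, as $\gamma_x\neq 1$ since no finite place is completely decomposed in $F_\cyc/F$). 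The main obstacle is the bookkeeping: one must carefully track the geometric Frobenius through the $\sharp$-twist to confirm that $\Frob_x$ acts on the stalk $\Lambda[[\Gamma]]^\sharp$ by $\gamma_x^{-1}$ rather than $\gamma_x$ --- which is exactly what forces the evaluation at $\gamma_x^{-1}$ in the first identity and at $\gamma_x$ in the second --- and one must check that the projection-formula isomorphisms are natural in the $\cat{PDG}^{\cont}$-formalism and compatible with the Frobenius action, so that the chains of weak equivalences are Frobenius-equivariant at each stage.
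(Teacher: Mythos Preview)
Your argument is correct and follows essentially the same strategy as the paper's proof: both exploit that $F_\cyc/F$ is unramified over all of $W$ to identify $\RDer\Sect(\hat x,i^*\RDer k_* f_!f^*\cmplx{\sheaf{F}})$ (resp.\ its $\RDer i^! k_!$-analogue) with $\Lambda[[\Gamma]]^\sharp\ctensor_\Lambda\RDer\Sect(\hat x,i^*\RDer k_*\cmplx{\sheaf{F}})$, on which $\Frob_x$ acts as $\gamma_x^{-1}\tensor\Frob_x$, and then read off the result from the definition of $\ev_\gamma$.

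The packaging differs slightly. The paper first applies smooth base change to obtain $f_!f^*\RDer k_*\cmplx{\sheaf{F}}\wto\RDer k_* f_!f^*\cmplx{\sheaf{F}}$ and proper base change for $k_!$, reduces to $x\in U=W$, and then uses proper base change again to get $i^*f_!f^*\cmplx{\sheaf{F}}\isomorph g_!g^*i^*\cmplx{\sheaf{F}}$ together with an explicit stalk isomorphism $\alpha$. You instead phrase the same reductions via the projection formula with the smooth rank-one sheaf $\sheaf{N}_W=f_!f^*\Lambda_W$; this is equivalent and arguably cleaner, since $\sheaf{N}_W$ being locally free of rank one makes the projection formulas for both $\RDer k_*$ and $\RDer i^!$ transparent. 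The paper is a bit more explicit about one technical point you mention only in passing: it replaces $\varprojlim_I\RDer\Sect(\hat x,i^*\cmplx{\sheaf{F}})$ by a strictly perfect complex with an endomorphism compatible with $\Frob_x$ up to chain homotopy and invokes that homotopic auto-equivalences have the same class in $\KTh_1$ \cite[Lemma~3.1.6, Lemma~3.3.2]{Witte:PhD}; your appeal to the $\D_\bullet$-relations covers the same ground.
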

\begin{proof}
Since $p$ is invertible on $W$, the extension $F_{\cyc}/F$ is unramified over $W$. By the smooth base change theorem applied to the \'etale morphism $f_K\colon W_K\mto W$ for each finite subextension $K/F$ of $F_{\cyc}/F$ and the quasi-compact morphism $k\colon U\mto W$ there exists a weak equivalence
\[
f_!f^*\RDer k_*\cmplx{\sheaf{F}}\wto \RDer k_*f_!f^*\cmplx{\sheaf{F}}
\]
in $\cat{PDG}^{\cont}(W,\Lambda)$. By the proper base change theorem, there exists also an isomorphism
\[
 f_!f^*k_!\cmplx{\sheaf{F}}\isomorph k_!f_!f^*\cmplx{\sheaf{F}}
\]
Hence, we may assume $x\in U=W$.

For any finite subextension $K/F$ in $F_{\cyc}/F$ write $x_K$ for the set of places of $K$ lying over $x$ and $g\colon x_{F_{\cyc}}\mto x$ for the corresponding system of Galois covers. (We note that this system might not be admissible in the sense of \cite[Def. 2.6]{Witte:MCVarFF} for any base field $\FF\subset k(x)$: for example if $F=\Rat$ and $x=(\ell)$ with $\ell\neq p$ splitting in the cyclotomic $\Int_p$-extension of $\Rat$.) By the proper base change theorem there exists an isomorphism
\[
i^*f_!f^*\cmplx{\sheaf{F}}\isomorph g_!g^*i^*\cmplx{\sheaf{F}}.
\]
From Lemma~\ref{lem:exactness of Ri upper shriek} we can then also infer the existence of a weak equivalence
\[
 \RDer i^!f_!f^*\cmplx{\sheaf{F}}\wto g_!g^*\RDer i^!\cmplx{\sheaf{F}}.
\]

We will now concentrate on the proof of the equality
\[
\ncL_{F_{\cyc}/F}(x,\cmplx{\sheaf{F}})=L(x,\cmplx{\sheaf{F}},\gamma_x^{-1}).
\]
The proof of the equality for the dual Euler factors follows along the same lines, with $\Frob_x$ replaced by $\Frob_x^{-1}$ and $\gamma_x^{-1}$ replaced by $\gamma_x$.

By our choice of the embedding $\algc{F}\subset \algc{F}_x$, we have a compatible system of morphisms $\Spec \algc{k(x)}\mto x_K$ for each $K\subset F_{\cyc}$ and hence, distinguished isomorphisms
\[
\alpha\colon \Int[\Gal(K/F)]\tensor_{\Int}\sheaf{M}_{\hat{x}}\mto ({g_K}_!g_K^*\sheaf{M})_{\hat{x}}
\]
for the stalk $\sheaf{M}_{\hat{x}}$ in $\hat{x}$ of any \'etale sheaf $\sheaf{M}$ on $x$. The action of the Frobenius $\Frob_x$ on the righthand side corresponds to the operation of $\cdot\gamma_x^{-1}\tensor\Frob_x$ on the left-hand side. By compatibility, we may extend $\alpha$ to an isomorphism
\[
\alpha\colon\ringtransf_{\Lambda[[\Gamma]]}\RDer\Sect(\hat{x},i^*\cmplx{\sheaf{F}})\isomorph \RDer\Sect(\hat{x},g_!g^*i^*\cmplx{\sheaf{F}})
\]
in $\cat{PDG}^{\cont}(\Lambda[[\Gamma]])$. Hence,
\[
\ncL_{F_{\cyc}/F}(x,\cmplx{\sheaf{F}})=[\id-\gamma_x^{-1}\tensor \Frob_x \lcirclearrowright \ringtransf_{\Lambda[[\Gamma]]}\RDer\Sect(\hat{x},i^*\cmplx{\sheaf{F}})]^{-1}
\]
in $\KTh_1(\Lambda[[\Gamma]]_S)$. Furthermore, we may choose a strictly perfect complex of $\Lambda$-modules $\cmplx{P}$ with an endomorphism $f$ and a quasi-isomorphism
\[
\beta\colon\cmplx{P}\mto\varprojlim_{I\in\openideals_{\Lambda}}\RDer\Sect(\hat{x},\cmplx{\sheaf{G}})
\]
under which $f$ and $\Frob_x$ are compatible up to chain homotopy \cite[Lemma~3.3.2]{Witte:PhD}. The endomorphism
\[
 \id-tf\colon\Lambda[t]\tensor_{\Lambda}\cmplx{P}\mto\Lambda[t]\tensor_{\Lambda}\cmplx{P}
\]
is clearly a weak equivalence in $w_t\cat{P}(\Lambda[t])$. By \cite[Lemma 3.1.6]{Witte:PhD}, homotopic weak auto-equivalences have the same class in the first $\KTh$-group. Hence, we may conclude
\[
[\id-tf\lcirclearrowright \Lambda[t]\tensor_{\Lambda}\cmplx{P}]^{-1}=L(x,\RDer k_*\cmplx{\sheaf{F}},t)
\]
in $\KTh_1(\Lambda[t]_{S_t})$ and
\[
L(x,\RDer k_*\cmplx{\sheaf{F}},\gamma_x^{-1})=\ncL_{F_{\cyc}/F}(x,\RDer k_*\cmplx{\sheaf{F}})
\]
in $\KTh_1(\Lambda[[\Gamma]]_S)$.
\end{proof}

We will make this construction a little more explicit in the case that $\sheaf{F}$ is a $\Lambda$-adic sheaf on $U$. If $x\in U$, recall from \eqref{eqn:stalk} that there is a weak equivalence
\[
\sheaf{F}_{\hat{x}}\wto\RDer\Sect(\hat{x},i^*\RDer k_* \sheaf{F})
\]
in $\cat{PDG}^{\cont}(\Lambda)$ compatible with the operation of the Frobenius $\Frob_x$ on both sides.

Hence, we have
\begin{equation}\label{eqn:Euler factor}
L(x,\RDer k_*\sheaf{F},t)=
[\Lambda[t]\tensor_{\Lambda}\sheaf{F}_{\hat{x}}\xrightarrow{\id-t\Frob_x}\Lambda[t]\tensor_{\Lambda}\sheaf{F}_{\hat{x}}]^{-1}
\end{equation}
in $\KTh_1(\Lambda[t]_{S_t})$ and
\[
\ncL_{F_{\cyc}/F}(x,\RDer k_*\sheaf{F})=
[\Lambda[[\Gamma]]\tensor_{\Lambda}\sheaf{F}_{\hat{x}}\xrightarrow{\id-\gamma_x^{-1}\tensor\Frob_x}\Lambda[[\Gamma]]\tensor_{\Lambda}\sheaf{F}_{\hat{x}}]^{-1}
\]
in $\KTh_1(\Lambda[[\Gamma]]_S)$. In particular, if $\Lambda$ is commutative, then the isomorphism
\[
\KTh_1(\Lambda[t]_{S_t})\xrightarrow{\det}\Lambda[t]_{S_t}^\times
\]
sends $L(x,\RDer k_*\sheaf{F},t)$ to the inverse of the reverse characteristic polynomial of the geometric Frobenius operation on $\sheaf{F}_{\hat{x}}$.

If $\sheaf{F}$ is smooth in $x\in U$, then by absolute purity \cite[Ch.~II, Cor.~1.6]{Milne:ADT}, there exists chain of weak equivalences
\[
 \sheaf{F}(-1)_{\hat{x}}\sim \RDer\Sect(\hat{x},\RDer i^!k_!\sheaf{F})[2].
\]
Hence,
\begin{equation}\label{eqn:dual Euler factor smooth case}
L^\Mdual(x,k_!\sheaf{F},t)=
[\Lambda[t]\tensor_{\Lambda}\sheaf{F}(-1)_{\hat{x}}\xrightarrow{\id-t\Frob_x^{-1}}\Lambda[t]\tensor_{\Lambda}\sheaf{F}(-1)_{\hat{x}}]
\end{equation}
in $\KTh_1(\Lambda[t]_{S_t})$ and
\[
\ncL^\Mdual_{F_{\cyc}/F}(x,k_!\sheaf{F})=
[\Lambda[[\Gamma]]\tensor_{\Lambda}\sheaf{F}(-1)_{\hat{x}}\xrightarrow{\id-\gamma_x\tensor\Frob_x^{-1}}\Lambda[[\Gamma]]\tensor_{\Lambda}\sheaf{F}(-1)_{\hat{x}}]
\]
in $\KTh_1(\Lambda[[\Gamma]]_S)$. If $\sheaf{F}=i_*\sheaf{G}$ for some $\sheaf{G}$ in $\cat{PDG}^{\cont}(x,\Lambda)$, then there exists a weak equivalence
\[
 \sheaf{G}_{\hat{x}}\wto \RDer\Sect(\hat{x},\RDer i^!k_!\sheaf{F}).
\]
Hence,
\begin{equation}\label{eqn:dual Euler factor skyscraper case}
L^\Mdual(x,k_!\sheaf{F},t)=
[\Lambda[t]\tensor_{\Lambda}\sheaf{G}_{\hat{x}}\xrightarrow{\id-t\Frob_x^{-1}}\Lambda[t]\tensor_{\Lambda}\sheaf{G}_{\hat{x}}]
\end{equation}
in $\KTh_1(\Lambda[t]_{S_t})$ and
\[
\ncL^\Mdual_{F_{\cyc}/F}(x,k_!\sheaf{F})=
[\Lambda[[\Gamma]]\tensor_{\Lambda}\sheaf{G}_{\hat{x}}\xrightarrow{\id-\gamma_x\tensor\Frob_x^{-1}}\Lambda[[\Gamma]]\tensor_{\Lambda}\sheaf{G}_{\hat{x}}]
\]
in $\KTh_1(\Lambda[[\Gamma]]_S)$.

If $x\in W-U$, there exists by Lemma~\ref{lem:comparison with local cohomology}, Lemma~\ref{lem:strictly perfect resolution of local cohomology} and Lemma~\ref{lem:exactness of Ri upper shriek} a chain of weak equivalences
\[
\cmplx{D}_{\hat{x}}(\sheaf{F})\sim\RDer\Sect(\hat{x},i^*\RDer k_*f_!f^*\sheaf{F})\sim \RDer\Sect(\hat{x},\RDer i^!k_!f_!f^*\sheaf{F})[1]
\]
compatible with the Frobenius operation.

We conclude that for $x\in W-U$,
\begin{equation*}
\begin{aligned}
L(x,\RDer k_*\sheaf{F},t)&=[\id-t\Frob_x\lcirclearrowright \Lambda[t]\tensor_{\Lambda}D^0_{\hat{x}}(\sheaf{F})]^{-1}[\id-t\Frob_x\lcirclearrowright\Lambda[t]\tensor_{\Lambda}D^1_{\hat{x}}(\sheaf{F})],\\
L^{\Mdual}(x,k_!\sheaf{F},t)&=[\id-t\Frob_x^{-1}\lcirclearrowright \Lambda[t]\tensor_{\Lambda}D^0_{\hat{x}}(\sheaf{F})]^{-1}[\id-t\Frob_x^{-1}\lcirclearrowright\Lambda[t]\tensor_{\Lambda}D^1_{\hat{x}}(\sheaf{F})]
\end{aligned}
\end{equation*}
in $\KTh_1(\Lambda[t]_{S_t})$ and
\begin{equation*}
\begin{aligned}
\ncL_{F_{\cyc}/F}(x,\RDer k_*\sheaf{F})=
&[\id-\gamma_x^{-1}\tensor\Frob_x\lcirclearrowright\Lambda[[\Gamma]]\tensor_{\Lambda}D^0_{\hat{x}}(\sheaf{F})]^{-1}\\
&[\id-\gamma_x^{-1}\tensor\Frob_x\lcirclearrowright\Lambda[[\Gamma]]\tensor_{\Lambda}D^1_{\hat{x}}(\sheaf{F})],\\
\ncL_{F_{\cyc}/F}^{\Mdual}(x,k_!\sheaf{F})=
&[\id-\gamma_x\tensor\Frob_x^{-1}\lcirclearrowright\Lambda[[\Gamma]]\tensor_{\Lambda}D^0_{\hat{x}}(\sheaf{F})]^{-1}\\
&[\id-\gamma_x\tensor\Frob_x^{-1}\lcirclearrowright\Lambda[[\Gamma]]\tensor_{\Lambda}D^1_{\hat{x}}(\sheaf{F})].
\end{aligned}
\end{equation*}

Let $N$ be the stalk of $\sheaf{F}$ in the geometric point $\Spec \algc{F}$, viewed as $\Gal(\algc{F}_x/F_x)$-module. If the image of $\Gal(\algc{F}_x/F_x^{\nr})$ in the automorphism group of $N$ has trivial $p$-Sylow subgroups, then $N^{\Gal(\algc{F}_x/F_x^{\nr})}=D^{0}_{\hat{x}}(\sheaf{F})$ and the differential of $\cmplx{D}_{\hat{x}}(\sheaf{F})$ is trivial. Our formula then simplifies to
\begin{equation*}
\begin{aligned}
L(x,\RDer k_*\sheaf{F},t)&=
[\id-t\Frob_x\lcirclearrowright\Lambda[t]\tensor_{\Lambda}N^{\Gal(\algc{F}_x/F_x^{\nr})}]^{-1}\\
&\phantom{=}[\id-tq_x\Frob_x\lcirclearrowright\Lambda[t]\tensor_{\Lambda}N^{\Gal(\algc{F}_x/F_x^{\nr})}],\\
L^{\Mdual}(x,k_!\sheaf{F},t)&=[\id-t\Frob_x^{-1}\lcirclearrowright\Lambda[t]\tensor_{\Lambda}N^{\Gal(\algc{F}_x/F_x^{\nr})}]^{-1}\\
&\phantom{=}[\id-tq_x^{-1}\Frob_x^{-1}\lcirclearrowright\Lambda[t]\tensor_{\Lambda}N^{\Gal(\algc{F}_x/F_x^{\nr})}],
\end{aligned}
\end{equation*}
where $q_x$ is the order of the residue field $k(x)$. In particular, this is the case if $N$ is unramified in $x$.

Conversely, assume that the differential of $\cmplx{D}_{\hat{x}}(\sheaf{F})$ is an isomorphism. Since the operation of $\Frob_x$ commutes with the differential, we then have
\[
L(x,\RDer k_*\sheaf{F},t)=L^{\Mdual}(x,k_!\sheaf{F},t)=1.
\]

If $\Lambda=\IntR_C$ is the valuation ring of a finite field extension $C$ of $\Rat_p$, then we may replace $C\tensor_{\IntR_C}N$ by its semi-simplification $(C\tensor_{\IntR_C}N)^{\sesi}$ as a $\Gal(\algc{F}_x/F_x)$-module and obtain a corresponding decomposition $(C\tensor_{\IntR_C}\cmplx{D}_{\hat{x}}(\sheaf{F}))^{\sesi}$ of the complex $\cmplx{D}_{\hat{x}}(\sheaf{F})$. Note that
\[
(C\tensor_{\IntR_C}N)^{\sesi}=((C\tensor_{\IntR_C}N)^{\sesi})^{\Gal(\algc{F}_x/F_x^{\nr})}\oplus V
\]
with $V^{\Gal(\algc{F}_x/F_x^{\nr})}=0$. In particular, on each simple part of $(C\tensor_{\IntR_C}\cmplx{D}_{\hat{x}}(\sheaf{F}))^{\sesi}$, the differential is either trivial or an isomorphism. We conclude
\begin{equation}\label{eqn:modified Euler factor}
\begin{aligned}
\det L(x,\RDer k_*\sheaf{F},t)&=
\det(\id-t\Frob_x\lcirclearrowright ((C\tensor_{\IntR_C}N)^{\sesi})^{\Gal(\algc{F}_x/F_x^{\nr})})^{-1}\\
&\phantom{=}\det(\id-tq_x\Frob_x\lcirclearrowright((C\tensor_{\IntR_C}N)^{\sesi})^{\Gal(\algc{F}_x/F_x^{\nr})}),\\
\det L^{\Mdual}(x,k_!\sheaf{F},t)&=
\det(\id-t\Frob_x^{-1}\lcirclearrowright((C\tensor_{\IntR_C}N)^{\sesi})^{\Gal(\algc{F}_x/F_x^{\nr})})^{-1}\\
&\phantom{=}\det(\id-tq_x^{-1}\Frob_x^{-1}\lcirclearrowright((C\tensor_{\IntR_C}N)^{\sesi})^{\Gal(\algc{F}_x/F_x^{\nr})}),
\end{aligned}
\end{equation}
in the units $(C[t]_{(t)})^\times$ of the localisation of $C[t]$ at the prime ideal $(t)$.

We can also give more explicit formulas for the local modification factors. We will not make use of the following calculations in any other part of the text.

Let $M$ be one of the finitely generated and projective $\Lambda$-modules $\sheaf{F}_{\hat{x}}$, $\sheaf{F}(-1)_{\hat{x}}$, $D^0_{\hat{x}}(\sheaf{F})$, $D_{\hat{x}}^1(\sheaf{F})$. Then $M$ comes equipped with a continuous action of the Galois group $\Gal(\algc{k(x)}/k(x))$. Let $k(x)_\cyc$ denote the unique $\Int_p$-extension of $k(x)$ and let $r$ be the index of the image of $\Gamma'\coloneqq\Gal(k(x)_\cyc/k(x))$ in $\Gamma=\Gal(F_\cyc/F)$. Fix a topological generator $\gamma\in\Gamma$.

Clearly,
\[
 [\id-\gamma_x^{-1}\tensor\Frob_x\lcirclearrowright \Lambda[[\Gamma]]\tensor_{\Lambda}M]=
 \ringtransf_{\Lambda[[\Gamma]]}([\id-\gamma_x^{-1}\tensor\Frob_x\lcirclearrowright \Lambda[[\Gamma']]\tensor_{\Lambda}M])
\]
in $\KTh_1(\Lambda[[\Gamma]]_S)$, while
\begin{multline*}
\ringtransf_{\Lambda[[\Gamma]]}(s_{\gamma^r}([\coker(\id-\gamma_x^{-1}\tensor \Frob_x\lcirclearrowright\Lambda[[\Gamma']]\tensor_{\Lambda}M)]))=\\
s_\gamma(\ringtransf_{\Lambda[[\Gamma]]}([\coker(\id-\gamma_x^{-1}\tensor \Frob_x\lcirclearrowright\Lambda[[\Gamma']]\tensor_{\Lambda}M)]))\\
=s_\gamma([\coker(\id-\gamma_x^{-1}\tensor \Frob_x\lcirclearrowright\Lambda[[\Gamma]]\tensor_{\Lambda}M)])
\end{multline*}
by Proposition~\ref{prop:transformation for algebraic L-function}. Hence, it suffices to consider the case that $x$ does not split in $F_\cyc/F$. So, we assume from now on that $r=1$.

The image of $\Gal(\algc{k(x)}/k(x)_\cyc)$ in the automorphism group of $M$ is a finite commutative group $\Delta$ of order $d$ prime to $p$. Write
\[
e_\Delta\coloneqq\frac{1}{d}\sum_{\delta\in\Delta}\delta
\]
for the corresponding idempotent in the endomorphism ring. We thus obtain a canonical decomposition
\[
M\isomorph M'\oplus M''
\]
of $M$ with $M'\coloneqq e_\Delta M$ and $M''\coloneqq (\id-e_\Delta)M$.

Since
\[
(\id-\delta)(\id-e_{\Delta})m=(\id-\delta)m
\]
for every $\delta\in\Delta$ and every $m\in M$, the action of $\Delta$ on $M''$ is faithful. Since $d$ is prime to $p$, the action of $\Delta$ on $M''/\Jac(\Lambda)M''$ is still faithful. Indeed, the kernel $K$ of $\id-\delta\lcirclearrowright M''$ is a direct summand of $M''$ with $K\neq M''$. The Nakayama lemma then implies $K/\Jac(\Lambda) K\neq M''/\Jac(\Lambda)M''$ such that $\delta\lcirclearrowright M''/\Jac(\Lambda)M''$ cannot be the identity.

Note that $K$ is trivial if $\delta$ is a generator of $\Delta$. In this case, $\id-\delta\lcirclearrowright  M''/\Jac(\Lambda)M''$ must be an automorphism. We may apply this to a suitable $p^n$-th power of $\Frob_x$ to infer that $\id-\Frob_x\lcirclearrowright  M''/\Jac(\Lambda)M''$ is an automorphism. By the Nakayama lemma for $\Lambda[[\Gamma]]$ we conclude that the endomorphism $\id-\gamma_x^{-1}\tensor\Frob_x$ of $\Lambda[[\Gamma]]\tensor_{\Lambda}M''$ is also an automorphism.

The $\Lambda$-module $M'$ can be seen as $\Lambda[[\Gamma]]$-module with $\gamma_x$ acting as $\Frob_x$ and by Example~\ref{exmpl:algebraic L-functions} we have
\[
[\id-\gamma_x^{-1}\tensor\Frob_x\lcirclearrowright \Lambda[[\Gamma]]\tensor_{\Lambda}M'']=
s_{\gamma_x}([\coker(\id-\gamma_x^{-1}\tensor\Frob_x\lcirclearrowright \Lambda[[\Gamma]]\tensor_{\Lambda}M')])^{-1}
\]

We conclude that
\begin{multline*}
[\id-\gamma_x^{-1}\tensor\Frob_x\lcirclearrowright \Lambda[[\Gamma]]\tensor_{\Lambda}M]s_\gamma([\coker(\id-\gamma_x^{-1}\tensor\Frob_x\lcirclearrowright \Lambda[[\Gamma]]\tensor_{\Lambda}M)])=\\
[\id-\gamma_x^{-1}\tensor\Frob_x\lcirclearrowright \Lambda[[\Gamma]]\tensor_{\Lambda}M'']\frac{s_\gamma}{s_{\gamma_x}}([\coker(\id-\gamma_x^{-1}\tensor\Frob_x\lcirclearrowright \Lambda[[\Gamma]]\tensor_{\Lambda}M')]).
\end{multline*}
In particular, if $x\in U$ and $\HF^0(\Spec k(x)_\cyc,i^*\sheaf{F})=0$, then
\[
\ncM_{F_\cyc/F,\gamma}(x,\RDer k_*\sheaf{F})=\ncL_{F_\cyc/F}(x,\RDer k_*\sheaf{F}).
\]
If $x\in U$, $\HF^0(\Spec k(x)_\cyc,i^*\sheaf{F})=\sheaf{F}_{\hat{x}}$ and $\gamma=\gamma_x$, then
\[
\ncM_{F_\cyc/F,\gamma}(x,\RDer k_*\sheaf{F})=1.
\]
The same considerations apply to the dual local modification factors.

\begin{rem}
Let $V^0$ denote the set of closed points of $V$. Note that the infinite product
\[
\prod_{x\in V^0}\ncM_{F_\cyc/F,\gamma}(x,\RDer k_*\sheaf{F})
\]
does not converge in the compact topology of $\KTh_1(\Lambda[[\Gamma]])$. Indeed, by the Chebotarev density theorem,  we may find for each non-trivial finite subextensions $F'/F$ of $F_\cyc/F$ an infinite subset $S\subset U$ of closed points such that the elements
\[
\ncM_{F_\cyc/F,\gamma}(x,\RDer k_*\sheaf{F})\in\KTh_1(\Lambda[[\Gamma]])
\]
for $x\in S$ have a common non-trivial image in $\KTh_1(\Lambda/\Jac(\Lambda)[[\Gal(F'/F)]])$.
\end{rem}

\section{Artin representations}\label{sec:Artin reps}

Let $\IntR_C$ be the valuation ring of a finite extension field $C$ of $\Rat_p$ inside the fixed algebraic closure $\algc{\Rat}_p$ and assume as before that $\Gamma\isomorph\Int_p$. The augmentation map $\aug\colon \IntR_C[[\Gamma]]\mto \IntR_C$ extends to a map
\[
\aug\colon \KTh_1(\IntR_C[[\Gamma]]_S)\xrightarrow[\isomorph]{\det}\IntR_C[[\Gamma]]_S^\times\mto \PSp^1(C).
\]
Indeed, let $\frac{a}{s}\in\IntR_C[[\Gamma]]_S^\times$. Since $\IntR_C[[\Gamma]]$ is a unique factorisation domain and the augmentation ideal is a principal prime ideal, we may assume that not both $a$ and $s$ are contained in the augmentation ideal. Hence, we obtain a well-defined element
\[
\aug\left(\frac{a}{s}\right)\coloneqq[\aug(a): \aug(s)]\in \PSp^1(C)=C\cup\set{\infty},
\]
with $[x:y]$ denoting the standard projective coordinates of $\PSp^1(C)$. Note that this map agrees with $\aug'$ in \cite[\S 2.4]{Kakde2}. We further note that $\det a^{\Mdual}=((\det a)^\sharp)^{-1}$ for any $a\in \KTh_1(\IntR_C[[\Gamma]]_S)$. Since $\sharp\colon \IntR_C[[\Gamma]]_S\mto \IntR_C[[\Gamma]]_S$ maps $\gamma\in\Gamma$ to $\gamma^{-1}$ and is given by the identity on $\IntR_C$, we conclude that
\begin{equation}\label{eqn:evaluation and MDual}
\aug(a^\Mdual)=\aug(a)^{-1}.
\end{equation}
Finally, note that the diagram
\[
\xymatrix{
 \KTh_1(\IntR_C[t]_{S_t})\ar[r]^{\det}\ar[d]^{t\mapsto \gamma}& (C[t]_{(t)})^\times \ar[d]^{\frac{f}{g}\mapsto [f(1):g(1)]}\\
 \KTh_1(\IntR_C[[\Gamma]]_S)\ar[r]^{\aug}& \PSp^1(C)
}
\]
commutes for any choice of $\gamma\in\Gamma$ with $\gamma\neq 1$. On the right downward pointing map, $\frac{f}{g}$ denotes a reduced fraction.

From now on, we let $F$ denote a totally real number field. Consider an Artin representation $\rho\colon \Gal_F\mto \Gl_d(\IntR_C)$ (i.\,e.\ with open kernel) over $\IntR_C$. We will write
\[
 \check{\rho}\colon \Gal_F\mto \Gl_d(\IntR_C),\qquad g\mapsto \rho(g^{-1})^t
\]
for the dual representation. Assume for simplicity that $\rho$ is unramified over $U$: For each $x\in U$,  the restriction $\rho\restriction_{\Gal(\algc{F}_x/F_x^\nr)}$ of $\rho$ to $\Gal(\algc{F}_x/F_x^\nr)$ is trivial. Then $\rho$ corresponds to the smooth $\IntR_C$-adic sheaf $\sheaf{M}(\rho)$ on $U\subset W$ defined by \eqref{eqn:def of rep sheaf} and therefore, to an object in $\cat{PDG}^{\cont}(U,\IntR_C)$. Analogously, $\check{\rho}$ corresponds to $\sheaf{M}(\check{\rho})=\sheaf{M}(\rho)^{\mdual_{\IntR_C}}$.

We set $\Sigma\coloneqq\Spec(\IntR_F)-W$, $\Tau\coloneqq W-U$. Since the image of $\Gal(\algc{F}_x/F_x)$ in $\Gl_d(\IntR_C)$ is finite, the base change of $\rho\restriction_{\Gal(\algc{F}_x/F_x)}$ to $C$ is automatically a semi-simple representation of $\Gal(\algc{F}_x/F_x)$ for any $x\in X$. We let $\rho_x$ denote the representation of $\Gal(F_x^\nr/F_x)$ obtained from $\rho\restriction_{\Gal(\algc{F}_x/F_x)}$ by taking invariants under $\Gal(\algc{F}_x/F_x^\nr)$.

From \eqref{eqn:Euler factor}, \eqref{eqn:dual Euler factor smooth case}, and \eqref{eqn:modified Euler factor} we conclude
\begin{equation}\label{eqn:evaluation of Euler factors}
\begin{aligned}
\aug(\ncL_{F_\cyc/F}(x,&\RDer k_*\sheaf{M}(\rho)(n)))\\
&=\begin{cases}
[1 : \det(1-\rho_x(\Frob_x)q_x^{-n})] & \text{if $x\in U$,}\\
[\det(1-\rho_{x}(\Frob_x)q_x^{1-n}) : \det(1-\rho_{x}(\Frob_x)q_x^{-n})] & \text{if $x\in \Tau$},
\end{cases}\\
\aug(\ncL^{\Mdual}_{F_\cyc/F}(x,&k_!\sheaf{M}(\rho)(n)))\\
&=\begin{cases}
[\det(1-\check{\rho}_x(\Frob_x)q_x^{n-1}):1] & \text{if $x\in U$,}\\
[\det(1-\check{\rho}_{x}(\Frob_x)q_x^{n-1}) : \det(1-\check{\rho}_{x}(\Frob_x)q_x^{n})] & \text{if $x\in \Tau$},
\end{cases}
\end{aligned}
\end{equation}
where $q_x$ denotes the number of elements of the residue field $k(x)$. Classically, one often demands that $\rho$ is also unramified over $T$. Therefore, we stress that the above considerations  hold even if this is not the case. Note that $\det(1-\rho_x(\Frob_x)q_x^{-n})=0$ if and only if $n=0$ and $\rho_x$ contains the trivial representation as a subrepresentation.

For any open dense subscheme $V$ of $X$, we write $V^0$ for the set of closed points of $V$. Let $\alpha\colon \algc{\Rat}_p\mto \C$ be an embedding of $\algc{\Rat}_p$ into the complex numbers. We can then associate to the complex Artin representation $\alpha\comp\rho$ the classical $\Sigma$-truncated $\Tau$-modified Artin $L$-function with the product formula
\[
L_{\Sigma,\Tau}(\alpha\comp\rho,s)\coloneqq\prod_{x\in U^0}\det(1-\alpha\comp\rho_x(\Frob_x)q_x^{-s})^{-1}\prod_{x\in \Tau}\frac{\det(1-\alpha\comp\rho_{x}(\Frob_x)q_x^{1-s})}{\det(1-\alpha\comp\rho_{x}(\Frob_x)q_x^{-s})}
\]
for $\Re s>1$. Note that we follow the geometric convention of using the geometric Frobenius in the definition of the Artin $L$-function as in \cite{CoatesLichtenbaum:lAdicZetaFunctions}. With this convention, we have
\[
\begin{split}
L_{\Sigma,\Tau}(\alpha\comp\rho,n)&=\prod_{x\in W^0}\alpha(\aug(\ncL_{F_\cyc/F}(x,\RDer k_*\sheaf{M}(\rho)(n))))\\
                               &=\prod_{x\in W^0}\alpha(\aug(\ncL^\Mdual_{F_\cyc/F}(x,k_!\sheaf{M}(\check{\rho})(1-n))))^{-1}
\end{split}
\]
for all $n\in\Int$, $n>1$.

By \cite[Cor 1.4]{CoatesLichtenbaum:lAdicZetaFunctions} there exists for each $n\in\Int$, $n<0$ a well defined number $L_{\Sigma,\Tau}(\rho,n)\in C$ such that
\[
\alpha(L_{\Sigma,\Tau}(\rho,n))=L_{\Sigma,\Tau}(\alpha\comp\rho,n)\in \C
\]
Consequently,
\[
L_{\Sigma',\Tau'}(\rho,n)=L_{\Sigma,\Tau}(\rho,n)\prod_{x\in \Sigma'\cup \Tau'-\Sigma\cup \Tau}\aug(\ncL_{F_\cyc/F}(x,\RDer k_*\sheaf{M}(\rho)(n)))^{-1}
\]
if $\Sigma\subset\Sigma'$ and $\Tau\subset \Tau'$ with disjoint subsets $\Sigma'$ and $\Tau'$ of $X=\Spec \IntR_F$ such that $\rho$ is unramified over $X-\Sigma'-\Tau'$ and all the primes over $p$ are contained in $\Sigma'$.

Let $\cycchar_F\colon \Gal_F\mto \Int_p^\times$ denote the cyclotomic character such that
\[
\sigma(\zeta)=\zeta^{\cycchar_F(\sigma)}
\]
for every $\sigma\in\Gal_F$ and $\zeta\in\mu_{p^\infty}$. Further, we write $\omega_F\colon \Gal_F\mto \mu_{p-1}$ for the Teichm\"uller character, i.\,e.\ the composition of $\cycchar_F$ with the projection $\Int_p^\times\mto\mu_{p-1}$. Finally, we set
$\epsilon_F\coloneqq\cycchar_F\omega^{-1}$ and note that $\epsilon_F$ factors through $\Gamma=\Gal(F_\cyc/F)$.

Assume that $\rho$ factors through the Galois group of a totally real field extension of $F$. Then $\sheaf{M}(\rho)$ is smooth at $\infty$. Assume further that $U=W$, such that $\Tau$ is empty. Under Conjecture~\ref{conj:vanishing of mu} it follows from \cite{Greenberg:ArtinLfunctions} and from the validity of the classical main conjecture that for every integer $n$ there exist unique elements
\[
\ncL_{F_\cyc/F}(U,\sheaf{M}(\rho\omega^n_F)(1-n))\in \KTh_1(\IntR_C[[\Gamma]]_S)
\]
such that
\begin{equation}\label{eqn:values of padic Lfunctions}
\begin{aligned}
\eval_{\epsilon_F^{n}}(\ncL_{F_\cyc/F}(U,\sheaf{M}(\rho\omega^n_F)(1-n)))&=\ncL_{F_\cyc/F}(U,\sheaf{M}(\rho)(1)),\\
\aug(\ncL_{F_\cyc/F}(U,\sheaf{M}(\rho\omega^n_F)(1-n)))&=L_{\Sigma,\emptyset}(\rho\omega^n_F,1-n)\quad\text{if $n>1$}.
\end{aligned}
\end{equation}
with $\eval_{\epsilon_F^{n}}$ as defined in Example~\ref{exmpl:def of eval}. Beware that Greenberg uses the arithmetic convention for $L$-functions.

\begin{defn}\label{defn:global modification factors}
Let $\gamma\in \Gamma$ be a topological generator. We define the \emph{global modification factor} for $\sheaf{M}(\rho\omega^n_F)(1)$ and $f\colon U_{F_\cyc}\mto U$ to be the element
\begin{equation*}
\ncM_{F_\cyc/F,\gamma}(U,\sheaf{M}(\rho)(1))\coloneqq
\ncL_{F_\cyc/F}(U,\sheaf{M}(\rho)(1))s_\gamma([\RDer\Sectc(U,f_!f^*\sheaf{M}(\rho)(1))])
\end{equation*}
in $\KTh_1(\IntR_C[[\Gamma]])$.
\end{defn}

\section{Non-commutative \texorpdfstring{$L$}{L}-functions for \texorpdfstring{$\Lambda$-}{}adic sheaves}\label{sec:main results}

Throughout this section, we assume the validity of Conjecture~\ref{conj:vanishing of mu}. We recall the main theorem of \cite{Kakde2}.

\begin{thm}\label{thm:Kakdes theorem}
Let $U\subset X$ be a dense open subscheme with complement $\Sigma$ and assume that $p$ is invertible on $U$. Assume that $F_\infty/F$ is a really admissible extension which is unramified over $U$ and that $G\coloneqq\Gal(F_\infty/F)$ is a $p$-adic Lie group. Then there exists unique elements $\tilde{\ncL}_{F_\infty/F}(U,(\Int_p)_U(1))\in\KTh_1(\Int_p[[G]]_S)/\cSK_1(\Int_p[[G]])$ such that
\begin{enumerate}
\item \[\bh \tilde{\ncL}_{F_\infty/F}(U,(\Int_p)_U(1))=-[\RDer\Sectc(U,f_!f^*(\Int_p)_U(1))],\]
\item For any Artin representation $\rho$ factoring through $G$
    \[
    \eval_{\rho}(\tilde{\ncL}_{F_\infty/F}(U,(\Int_p)_U(1)))=\ncL_{F_\cyc/F}(U,\sheaf{M}(\rho)(1))
    \]
\end{enumerate}
\end{thm}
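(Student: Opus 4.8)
The plan is to deduce Theorem~\ref{thm:Kakdes theorem} by combining the existence part of the classical non-commutative main conjecture (as proved by Ritter--Weiss and Kakde) with the uniqueness statement coming from the vanishing of $\cSK_1$ for sufficiently large extensions, established in Section~\ref{sec:SK}. In other words, this theorem is not really proved here from scratch; the point is to package the known result in the language of the $\KTh$-theoretic framework of Section~\ref{sec:Framework} and the algebraic $L$-functions of Section~\ref{sec:algebraic Lfunctions}.

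First I would recall that by Artin--Verdier duality and the comparison of \'etale with Galois cohomology (the discussion at the end of Section~\ref{sec:AdmissibleExt}), the complex $\RDer\Sectc(U,f_!f^*(\Int_p)_U(1))[-3]$ is quasi-isomorphic to the complex $C(F_\infty/F)$ appearing in \cite[Thm.~2.11]{Kakde2}; under Conjecture~\ref{conj:vanishing of mu} it lies in $\cat{PDG}^{\cont,w_H}(\Int_p[[G]])$ by Theorem~\ref{thm: S torsion global} (applied with $W=U$, $\Lambda=\Int_p$, $\cmplx{\sheaf{F}}=(\Int_p)_U(1)$). Kakde's theorem then provides an element of $\KTh_1(\Int_p[[G]]_S)$ whose image under $\bh$ is $-[\RDer\Sectc(U,f_!f^*(\Int_p)_U(1))]$ and which satisfies the interpolation property. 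Here one must check that the normalisation of the interpolation property in \cite{Kakde2} (phrased via $\aug'$ and twists by $\epsilon_F^n$) matches property (2) above; this is exactly the content of the compatibility \eqref{eqn:values of padic Lfunctions} together with the identification of $\aug$ with $\aug'$ and the sign discussion in Example~\ref{exmpl:def of eval}. So existence of $\tilde{\ncL}_{F_\infty/F}(U,(\Int_p)_U(1))$ in the quotient $\KTh_1(\Int_p[[G]]_S)/\cSK_1(\Int_p[[G]])$ follows.

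For uniqueness, suppose $x_1,x_2\in\KTh_1(\Int_p[[G]]_S)$ both satisfy (1) and (2). Then $u\coloneqq x_1 x_2^{-1}$ lies in the kernel of $\bh$, hence in $\KTh_1(\Int_p[[G]])$, and $\eval_\rho(u)=1$ for every Artin representation $\rho$ of $G$. One knows (this is the classical input, e.g.\ from \cite{CFKSV} or \cite{Kakde2}) that an element of $\KTh_1(\Int_p[[G]])$ which evaluates trivially under every Artin character lies in $\cSK_1(\Int_p[[G]])$. Therefore $x_1$ and $x_2$ have the same image in $\KTh_1(\Int_p[[G]]_S)/\cSK_1(\Int_p[[G]])$, which is the asserted uniqueness.

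The main obstacle — more precisely, the only genuinely non-trivial bookkeeping — is the matching of conventions in part (2): one has to verify carefully that $\ncL_{F_\cyc/F}(U,\sheaf{M}(\rho)(1))$ as defined through the chain $\eval_\rho=\ringtransf_{\IntR_C[[\Gamma]]}\circ\ringtransf_{\rho^\sharp[[G]]^\delta}$ reproduces, up to the sign twist noted in Example~\ref{exmpl:def of eval}, the $p$-adic $L$-function appearing in Kakde's formulation, and that the passage from $G$ to $\Gamma$ via $\eval_\rho$ is compatible with the splitting $s_\gamma$ in the sense of Proposition~\ref{prop:transformation for algebraic L-function}. Once these identifications are in place the theorem is immediate. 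I would also remark that the uniqueness here is only up to $\cSK_1$; the stronger rigidification, removing this ambiguity by varying $F_\infty$ and imposing compatibility, is precisely what Theorem~\ref{thm:modification factors Z1} will achieve using Corollary~\ref{cor:vanishing extension}.
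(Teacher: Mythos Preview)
Your proposal is correct and takes essentially the same approach as the paper: both recognise this theorem as a direct translation of \cite[Thm.~2.11]{Kakde2} into the present notational framework, with the substantive content being the careful matching of conventions (the paper's proof explicitly records that Kakde uses the arithmetic convention for $L$-values, that his $\bh$ differs in sign, and that his complex $\mathcal{C}(F_\infty/F)$ is the shift by $3$ of $\RDer\Sectc(U,f_!f^*(\Int_p)_U(1))$). One small remark: your opening sentence slightly overstates the role of Section~\ref{sec:SK} --- the vanishing results there are not needed for \emph{this} theorem, since uniqueness is only claimed modulo $\cSK_1$; you correctly acknowledge this yourself in your final paragraph.
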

\begin{proof}
This is \cite[Thm.~2.11]{Kakde2} translated into our notations. Recall that our $\eval_{\rho\cycchar_F^{-n}}$ corresponds to $\eval_{\bar{\rho}\cycchar_F^n}$ in the notation of the cited article. Moreover, Kakde uses the arithmetic convention in the definition of $L$-values. Further, note that the $p$-adic $L$-function $\ncL_{F_\cyc/F}(U,\sheaf{M}(\rho)(1))$ is uniquely determined by the values $\aug(\eval_{\epsilon_F^n}(\ncL_{F_\cyc/F}(U,\sheaf{M}(\rho)(1))))$ for $n<0$ and $n\equiv 0 \mod p-1$.
Finally, Kakde's complex $\mathcal{C}(F_{\infty}/F)$ corresponds to $\RDer\Sectc(U,f_!f^*(\Int_p)_U(1))$ shifted by $3$ and therefore, the images of the two complexes under $\bh$ differ by a sign, but at the same time, his definition of $\bh$ differs by a sign from ours.
\end{proof}

We will improve this theorem as follows. Let $\Xi\coloneqq\Xi_F$ be the set of pairs $(U,F_\infty)$ such that $U\subset X$ is a dense open subscheme with $p$ invertible on $U$ and $F_\infty/F$ is a really admissible extension unramified over $U$.

\begin{thm}\label{thm:modification factors Z1}
Let $\gamma\in \Gamma\coloneqq\Gal(F_{\cyc}/F)$ be a topological generator. There exists a unique family of elements
\[
\left(\ncM_{F_\infty/F,\gamma}(U,(\Int_p)_U(1))\right)_{(U,F_\infty)\in \Xi}
\]
such that
\begin{enumerate}
\item $\ncM_{F_\infty/F,\gamma}(U,(\Int_p)_U(1))\in \KTh_1(\Int_p[[\Gal(F_\infty/F)]])$,
\item if $U\subset U'$ with complement $\Sigma$ and $(U,F_\infty), (U',F_\infty)\in \Xi$, then
\[
\ncM_{F_\infty/F,\gamma}(U',(\Int_p)_{U'}(1))=\ncM_{F_\infty/F,\gamma}(U,(\Int_p)_U(1))\prod_{x\in\Sigma}\ncM_{F_\infty/F,\gamma}(x,(\Int_p)_{U'}(1)),
\]
\item if $(U,F_\infty),(U,F'_\infty)\in \Xi$ such that $F'_\infty\subset F_\infty$ is a subfield, then
\[
\ringtransf_{\Int_p[[\Gal(F'_\infty/F)]]}(\ncM_{F_\infty/F,\gamma}(U,(\Int_p)_U(1)))=\ncM_{F'_\infty/F,\gamma}(U,(\Int_p)_U(1)),
\]
\item if $(U,F_\infty)\in \Xi$ and $\rho\colon \Gal(F_\infty/F)\mto \Gl_{n}(\IntR_C)$ is an Artin representation, then
\[
\eval_{\rho}(\ncM_{F_\infty/F,\gamma}(U,(\Int_p)_U(1)))=\ncM_{F_{\cyc}/F,\gamma}(U,\sheaf{M}(\rho)(1))
\]
with $\ncM_{F_{\cyc}/F,\gamma}(U,\sheaf{M}(\rho)(1))$ as in Definition~\ref{defn:global modification factors}.
\end{enumerate}
\end{thm}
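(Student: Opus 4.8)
The plan is to manufacture the family by correcting Kakde's $\tilde{\ncL}$ with the algebraic $L$-function $s_\gamma$, and then to pin down the resulting $\cSK_1$-indeterminacy by descending from large auxiliary extensions supplied by Section~\ref{sec:SK}. Fix $(U,F_\infty)\in\Xi$ and put $G\coloneqq\Gal(F_\infty/F)$, $H\coloneqq\Gal(F_\infty/F_{\cyc})$. By Theorem~\ref{thm: S torsion global} the complex $\RDer\Sectc(U,f_!f^*(\Int_p)_U(1))$ lies in $\cat{PDG}^{\cont,w_H}(\Int_p[[G]])$, so $s_\gamma$ applied to its class is an element of $\KTh_1(\Int_p[[G]]_S)$. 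When $G$ is a $p$-adic Lie group, Theorem~\ref{thm:Kakdes theorem} gives $\tilde{\ncL}_{F_\infty/F}(U,(\Int_p)_U(1))\in\KTh_1(\Int_p[[G]]_S)/\cSK_1(\Int_p[[G]])$. For a general really admissible $F_\infty/F$ I would first extend this: writing $G=\varprojlim_N G/N$ over the open normal subgroups $N\triangleleft G$ with $N\subset H$, each $F_\infty^N/F$ is really admissible, unramified over $U$, with $\Gal(F_\infty^N/F)=(H/N)\rtimes\Gamma$ a one-dimensional $p$-adic Lie group, so Theorem~\ref{thm:Kakdes theorem} applies to each; the uniqueness clause, together with the compatibility of the boundary classes (Proposition~\ref{prop:ring change for coverings}) and of the Artin values under $\ringtransf_{\Int_p[[G/N]]}$, shows the $\tilde{\ncL}_{F_\infty^N/F}(U,(\Int_p)_U(1))$ form a coherent system, and passing to the limit (using $\widehat{\KTh}_1(\Int_p[[G]])=\KTh_1(\Int_p[[G]])$ from \cite[Prop.~1.5.3]{FK:CNCIT} and $\cSK_1(\Int_p[[G]])=\varprojlim_N\cSK_1(\Int_p[[G/N]])$) produces $\tilde{\ncL}_{F_\infty/F}(U,(\Int_p)_U(1))$ with the same two characterising properties. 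Choosing a lift $\ncL_{F_\infty/F}(U,(\Int_p)_U(1))\in\KTh_1(\Int_p[[G]]_S)$, set
\[
\ncM^{\flat}_{F_\infty/F,\gamma}\coloneqq\ncL_{F_\infty/F}(U,(\Int_p)_U(1))\cdot s_\gamma\bigl([\RDer\Sectc(U,f_!f^*(\Int_p)_U(1))]\bigr).
\]
Since $\bh\,\ncL=-[\RDer\Sectc(U,f_!f^*(\Int_p)_U(1))]$ and $\bh\circ s_\gamma=\id$, the product has trivial boundary, hence lies in $\KTh_1(\Int_p[[G]])$, well-defined modulo $\cSK_1(\Int_p[[G]])\subset\KTh_1(\Int_p[[G]])$.

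\textbf{Removing the indeterminacy; definition of the family.} Given $(U,F_\infty)\in\Xi$, let $\Sigma$ be the finite set of places of $F$ outside $U$, so $\Sigma$ contains the places over $p$ and $F_\infty$ sits inside $F^{(p)}_{\Sigma}$ after a prime-to-$p$ base change. Conjecture~\ref{conj:vanishing of mu} guarantees (by the discussion after it) that $\Gal(F^{(p)}_{\Sigma}/F_{\cyc})$ is a free pro-$p$ group, hence that $\HF^2(\Gal(F^{(p)}_{\Sigma}/F'),\Rat_p/\Int_p)=0$ for all intermediate totally real $F'$; this is exactly the hypothesis of the remark following Corollary~\ref{cor:vanishing extension}, so I obtain a really admissible $L_\infty$ with $F_\infty\subset L_\infty\subset F^{(p)}_{\Sigma}$, $(U,L_\infty)\in\Xi$, $\Gal(L_\infty/F)\twoheadrightarrow\Gal(F_\infty/F)$, and $\cSK_1(\Int_p[[\Gal(L_\infty/F)]])\to\cSK_1(\Int_p[[\Gal(F_\infty/F)]])$ the zero map. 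I then \emph{define}
\[
\ncM_{F_\infty/F,\gamma}(U,(\Int_p)_U(1))\coloneqq\ringtransf_{\Int_p[[\Gal(F_\infty/F)]]}\bigl(\ncM^{\flat}_{L_\infty/F,\gamma}\bigr)\in\KTh_1(\Int_p[[\Gal(F_\infty/F)]]),
\]
which is independent of the lift (the ambiguity dies under $\ringtransf$). Independence of the choice of $L_\infty$ follows by passing to a common larger admissible extension — a compositum, again enlarged via Corollary~\ref{cor:vanishing extension} — and using that $\ringtransf$ carries $\ncM^{\flat}$ of the larger extension to $\ncM^{\flat}$ of the smaller one modulo $\cSK_1$ (which in turn comes from $\ringtransf(\tilde{\ncL}_{\text{big}})=\tilde{\ncL}_{\text{small}}$ in $\KTh_1/\cSK_1$, Proposition~\ref{prop:transformation for algebraic L-function}, and Proposition~\ref{prop:ring change for coverings}), the residual $\cSK_1$ then dying downstairs.

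\textbf{Uniqueness and verification of the axioms.} For uniqueness, let $(\mathfrak{M}_{F_\infty/F})$ be any family satisfying (1)--(4); by (3) applied along $L_\infty\supset F_\infty$ it suffices to determine $\mathfrak{M}_{L_\infty/F}$ modulo $\cSK_1(\Int_p[[\Gal(L_\infty/F)]])$. Set $\ncL'\coloneqq\mathfrak{M}_{L_\infty/F}\cdot s_\gamma([\RDer\Sectc(U,f_!f^*(\Int_p)_U(1))])^{-1}$: its boundary is $-[\RDer\Sectc(U,f_!f^*(\Int_p)_U(1))]$, and by (4) together with Definition~\ref{defn:global modification factors}, Proposition~\ref{prop:transformation for algebraic L-function}(1),(2) and Proposition~\ref{prop:ring change for coverings}(1),(2), $\eval_\rho(\ncL')=\ncL_{F_{\cyc}/F}(U,\sheaf{M}(\rho)(1))$ for every Artin $\rho$ through $\Gal(L_\infty/F)$; so by the (extended) uniqueness of Theorem~\ref{thm:Kakdes theorem}, $\ncL'\equiv\tilde{\ncL}_{L_\infty/F}$, whence $\mathfrak{M}_{L_\infty/F}\equiv\ncM^{\flat}_{L_\infty/F,\gamma}$ mod $\cSK_1$, and pushing down recovers $\ncM_{F_\infty/F,\gamma}(U,(\Int_p)_U(1))$. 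Property (1) is built in. Property (3) holds because one may use the \emph{same} $L_\infty$ for $F'_\infty\subset F_\infty$ (the vanishing transition factors through $\Gal(F_\infty/F)$) and apply transitivity of $\ringtransf$ along $\Gal(L_\infty/F)\twoheadrightarrow\Gal(F_\infty/F)\twoheadrightarrow\Gal(F'_\infty/F)$. For (2), choose $L_\infty$ unramified over the larger $U'$ and reduce, via Proposition~\ref{prop:transformation of Euler factors}(2), to the identity
\[
\ncM^{\flat}_{L_\infty/F,\gamma}(U')=\ncM^{\flat}_{L_\infty/F,\gamma}(U)\prod_{x\in U'\setminus U}\ncM_{L_\infty/F,\gamma}(x,(\Int_p)_{U'}(1));
\]
this splits into the excision identity $[\RDer\Sectc(U',\cdot)]=[\RDer\Sectc(U,\cdot)]+\sum_x[\RDer\Sect(x,\cdot)]$ (hence additivity of $s_\gamma$), the fact that $\ncL_{L_\infty/F}(x,\cdot)\,s_\gamma([\RDer\Sect(x,\cdot)])$ is the local modification factor of Section~\ref{sec:euler factors}, and the relation $\ncL_{L_\infty/F}(U',\cdot)\equiv\ncL_{L_\infty/F}(U,\cdot)\prod_x\ncL_{L_\infty/F}(x,\cdot)$ mod $\cSK_1$, which one extracts from the uniqueness in Theorem~\ref{thm:Kakdes theorem} by matching boundaries (Proposition~\ref{prop:Euler factors are char elements}) and Artin values (the interpolation formula $L_{\Sigma',\emptyset}=L_{\Sigma,\emptyset}\prod(\dots)$ of Section~\ref{sec:Artin reps} against \eqref{eqn:values of padic Lfunctions}). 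Finally (4) is the computation $\eval_\rho(\ncM_{F_\infty/F,\gamma}(U,(\Int_p)_U(1)))=\eval_\rho(\ncM^{\flat}_{L_\infty/F,\gamma})=\ncL_{F_{\cyc}/F}(U,\sheaf{M}(\rho)(1))\cdot s_\gamma([\RDer\Sectc(U,f_!f^*\sheaf{M}(\rho)(1))])=\ncM_{F_{\cyc}/F,\gamma}(U,\sheaf{M}(\rho)(1))$, using that $\eval_\rho$ on $\Gal(F_\infty/F)$ factors through $\ringtransf_{\Int_p[[\Gal(F_\infty/F)]]}$, Theorem~\ref{thm:Kakdes theorem}(2), and Propositions~\ref{prop:transformation for algebraic L-function} and \ref{prop:ring change for coverings}, matched with Definition~\ref{defn:global modification factors}.

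\textbf{Main obstacle.} The delicate point is the $\cSK_1$-bookkeeping: one must (i) extend Theorem~\ref{thm:Kakdes theorem}, and in particular extract from its uniqueness clause the change-of-$U$ and change-of-field compatibilities that are not literally in its statement, to all really admissible $F_\infty/F$; and (ii) invoke Corollary~\ref{cor:vanishing extension} so as to produce, for every $(U,F_\infty)\in\Xi$, an auxiliary extension that is \emph{simultaneously} large enough to annihilate the relevant $\cSK_1$-transition map and ramified only outside $U$ — this last constraint is precisely where Conjecture~\ref{conj:vanishing of mu} re-enters, through the freeness of $\Gal(F^{(p)}_{\Sigma}/F_{\cyc})$ and hence the Leopoldt-type vanishing needed in the remark after that corollary.
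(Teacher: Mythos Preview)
Your overall architecture is close to the paper's, but there is a genuine gap in the step where you try to keep the auxiliary extension unramified over the \emph{same} $U$. You invoke the remark after Corollary~\ref{cor:vanishing extension} and claim that Conjecture~\ref{conj:vanishing of mu} supplies its hypothesis, namely $\HF^2(\Gal(F^{(p)}_\Sigma/F'),\Rat_p/\Int_p)=0$ for every finite totally real $F'/F$ inside $F^{(p)}_\Sigma$. This is the Leopoldt conjecture for $F'$, and it does \emph{not} follow from $\mu=0$. Freeness of $\Gal(F^{(p)}_\Sigma/F_\cyc)$ gives $\cd_p\leq 1$ only above $F_\cyc$; for a finite $F'/F$ the Hochschild--Serre sequence for $1\to\Gal(F^{(p)}_\Sigma/F'_\cyc)\to\Gal(F^{(p)}_\Sigma/F')\to\Gamma'\to 1$ yields
\[
\HF^2(\Gal(F^{(p)}_\Sigma/F'),\Rat_p/\Int_p)\;\isomorph\;\HF^1\bigl(\Gamma',\Hom(\X_\Sigma(F'_\cyc),\Rat_p/\Int_p)\bigr)\;\isomorph\;\bigl(\X_\Sigma(F'_\cyc)^{\Gamma'}\bigr)^\vee,
\]
where $\X_\Sigma(F'_\cyc)$ is the usual Iwasawa module; its $\Gamma'$-invariants vanish iff Leopoldt holds for $F'$, which is open and logically independent of $\mu=0$. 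The paper's own remark makes exactly this caveat (``we cannot prescribe the ramification locus''), and the Leopoldt hypothesis in the remark is stated as an additional assumption, not a consequence.

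The paper sidesteps this by using Corollary~\ref{cor:vanishing extension} in its unconditional form: one accepts that the auxiliary extension $F'_\infty$ may acquire new ramification, shrinks $U$ to some $U'\subset U$ with $(U',F'_\infty)\in\Xi$, and compensates in the \emph{definition} by the product of local modification factors over $U\smallsetminus U'$:
\[
\ncM_{F_\infty/F,\gamma}(U,(\Int_p)_U(1))\coloneqq\ringtransf_{\Int_p[[\Gal(F_\infty/F)]]}(m)\prod_{x\in U\smallsetminus U'}\ncM_{F_\infty/F,\gamma}(x,(\Int_p)_U(1)).
\]
This simultaneously removes the need for Leopoldt and builds property~(2) into the construction. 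In the uniqueness argument the same trick is used implicitly: one first shows the defect $d(F_\infty)=m_2^{-1}m_1$ is independent of $U$ via~(2), so that one is free to test it on a smaller $U'$ where the larger $F'_\infty$ is unramified. If you rewrite your construction allowing $U'\subsetneq U$ and inserting the local correction factors, the rest of your argument (the extension of Kakde's theorem to general really admissible $F_\infty$, the $\cSK_1$-bookkeeping, and the verification of~(3) and~(4)) goes through essentially as you wrote it.
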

\begin{proof}
\emph{Uniqueness:} Assume that $m_k(U,F_\infty)$, $k=1,2$ are two families with the listed properties. Then
\[
d(F_\infty)\coloneqq m_2(U,F_\infty)^{-1} m_1(U,F_\infty)
\]
does not depend on $U$ as a consequence of $(2)$.

Let $(U,F_\infty)\in\Xi$ be any pair such that $F_\infty/F_{\cyc}$ is finite and write $f\colon U_{F_\infty}\mto U$ for the system of coverings of $U$ associated to $F_\infty/F$. Then $(4)$ implies that the elements
\[
m_i(U,F_\infty)s_\gamma(-[\RDer\Sectc(U,f_!f^*(\Int_p)_U(1))])
\]
both agree with $\tilde{\ncL}_{F_\infty/F}(U,(\Int_p)_U(1))$ modulo $\cSK_1(\Int_p[[\Gal(F_\infty/F)]])$. Hence,
\[
d(F_\infty)\in \cSK_1(\Int_p[[\Gal(F_\infty/F)]]).
\]
By Corollary~\ref{cor:vanishing extension}, we may find a pair $(U',F'_\infty)\in \Xi$ such that $F'_\infty/F_\infty$ is finite, $U'\subset U$, and
\begin{equation}\label{eqn:right choice of a larger field to trivialize Sk}
\ringtransf_{\Int_p[[\Gal(F_\infty/F)]]}\colon \cSK_1(\Int_p[[\Gal(F'_\infty/F)]])\mto\cSK_1(\Int_p[[\Gal(F_\infty/F)]])
\end{equation}
is the zero map. We conclude from $(3)$ that $d(F_\infty)=1$ for all $(U,F_\infty)$ with $F_\infty/F_\cyc$ finite. Now for any really admissible extension $F_\infty/F$,
\[
\KTh_1(\Int_p[[\Gal(F_\infty/F)]])=\varprojlim_{F'_\infty}\KTh_1(\Int_p[[\Gal(F'_\infty/F)]])
\]
where $F'_\infty$ runs through the really admissible subextensions of $F_\infty/F$ with $F'_\infty/F_\cyc$ finite \cite[Prop.~1.5.3]{FK:CNCIT}. We conclude that $d(F_\infty)=1$ in general.

\emph{Existence:} It suffices to construct the elements for $(U,F_\infty)\in\Xi$ with $F_\infty/F_\cyc$ finite. Choose $(U',F'_\infty)$ as above such that the map in \eqref{eqn:right choice of a larger field to trivialize Sk} becomes trivial. Pick any $m\in\KTh_1(\Int_p[[\Gal(F'_\infty/F)]])$ such that
\begin{multline*}
m s_\gamma(-[\RDer\Sectc(U,f_!f^*(\Int_p)_U(1))])\equiv\\
 \tilde{\ncL}_{F'_\infty/F}(U',(\Int_p)_{U'}(1)) \mod \cSK_1(\Int_p[[\Gal(F'_\infty/F)]])
\end{multline*}
Define
\[
\ncM_{F_\infty/F,\gamma}(U,(\Int_p)_U(1))\coloneqq\ringtransf_{\Int_p[[\Gal(F_\infty/F)]]}(m)\prod_{x\in U-U'}\ncM_{F_\infty/F,\gamma}(x,(\Int_p)_U(1)).
\]
By Proposition~\ref{prop:transformation for algebraic L-function} and Proposition~\ref{prop:transformation of Euler factors} we conclude that
\[
\begin{split}
 \ncM_{F_\infty/F,\gamma}(U,(\Int_p)_U(1))s_\gamma(-[\RDer\Sectc(U,f_!f^*(\Int_p)_U(1))]) &\equiv \tilde{\ncL}_{F_\infty/F}(U,(\Int_p)_{U}(1)) \\
 \mod \cSK_1(\Int_p[[\Gal(F_\infty/F)]])
\end{split}
\]
and that $\ncM_{F_\infty/F,\gamma}(U,(\Int_p)_U(1))$ satisfies
\[
\eval_{\rho}(\ncM_{F_\infty/F,\gamma}(U,(\Int_p)_U(1)))=\ncM_{F_{\cyc}/F,\gamma}(U,\sheaf{M}(\rho)(1))
\]
for any Artin representation $\rho\colon \Gal(F_\infty/F)\mto \Gl_{n}(\IntR_C)$. In particular, the system
\[
 (\ncM_{F_\infty/F,\gamma}(U,(\Int_p)_U(1)))_{(F_\infty,U)\in\Xi}
\]
satisfies $(1)$ and $(4)$. By construction and again by Proposition~\ref{prop:transformation of Euler factors}, it is independent of the choices of $(U',F'_\infty)$ and $m$ and satisfies $(2)$ and $(3)$.
\end{proof}

\begin{cor}\label{cor:existence of equivariant zeta functions}
There exists a unique family of elements
\[
\left(\ncL_{F_\infty/F}(U,(\Int_p)_U(1))\right)_{(U,F_\infty)\in \Xi}
\]
such that
\begin{enumerate}
\item $\ncL_{F_\infty/F}(U,(\Int_p)_U(1))\in \KTh_1(\Int_p[[\Gal(F_\infty/F)]]_S)$,
\item if $(U,F_\infty)\in \Xi$ and $f\colon U_{F_\infty}\mto U$ denotes the associated system of coverings, then
\[
\bh\ncL_{F_\infty/F}(U,(\Int_p)_U(1))=-[\RDer\Sectc(U,f_!f^*(\Int_p)_U(1))]
\]
\item if $U'\subset U$ with complement $\Sigma$ and $(U',F_\infty), (U,F_\infty)\in \Xi$, then
\[
\ncL_{F_\infty/F}(U,(\Int_p)_U(1))=\ncL_{F_\infty/F}(U',(\Int_p)_{U'}(1))\prod_{x\in\Sigma}\ncL_{F_\infty/F}(x,(\Int_p)_U(1)),
\]
\item if $(U,F_\infty),(U,F'_\infty)\in \Xi$ such that $F'_\infty\subset F_\infty$ is a subfield, then
\[
\ringtransf_{\Int_p[[\Gal(F'_\infty/F)]]}(\ncL_{F_\infty/F}(U,(\Int_p)_U(1)))=\ncL_{F'_\infty/F}(U,(\Int_p)_U(1)),
\]
\item if $(U,F_\infty)\in \Xi$ and $\rho\colon \Gal(F_\infty/F)\mto \Gl_{d}(\IntR_C)$ is an Artin representation, then
\[
\eval_{\rho}(\ncL_{F_\infty/F}(U,(\Int_p)_U(1)))=\ncL_{F_{\cyc}/F}(U,\sheaf{M}(\rho)(1)).
\]
\end{enumerate}
\end{cor}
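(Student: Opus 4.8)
\emph{Proof proposal.}
The plan is to \emph{define} the elements $\ncL_{F_\infty/F}(U,(\Int_p)_U(1))$ as the product of the global modification factors of Theorem~\ref{thm:modification factors Z1} with the non-commutative algebraic $L$-functions of Section~\ref{sec:algebraic Lfunctions}, and then to read off properties (1)--(5) and uniqueness from the corresponding statements there. Fix once and for all a topological generator $\gamma\in\Gamma$, and for $(U,F_\infty)\in\Xi$ with associated system of coverings $f\colon U_{F_\infty}\mto U$ put
\[
\ncL_{F_\infty/F}(U,(\Int_p)_U(1))\coloneqq\ncM_{F_\infty/F,\gamma}(U,(\Int_p)_U(1))\,s_\gamma\bigl(-[\RDer\Sectc(U,f_!f^*(\Int_p)_U(1))]\bigr).
\]
Since $\ncM_{F_\infty/F,\gamma}(U,(\Int_p)_U(1))$ lies in $\KTh_1(\Int_p[[\Gal(F_\infty/F)]])=\ker\bh$ and $s_\gamma$ is a section of $\bh$, property~(1) is immediate and property~(2) follows from $\bh s_\gamma=\id$.

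For property~(3) I would combine property~(2) of Theorem~\ref{thm:modification factors Z1} with the excision sequence of Section~\ref{sec:S-torsion}, which, for $U'\subset U$ with complement $\Sigma$ and $k\colon U'\mto U$, yields $[\RDer\Sectc(U,f_!f^*(\Int_p)_U(1))]=[\RDer\Sectc(U',f_!f^*(\Int_p)_{U'}(1))]+\sum_{x\in\Sigma}[\RDer\Sect(x,i^*\RDer k_*f_!f^*(\Int_p)_U(1))]$ in $\KTh_0(\Int_p[[G]],S)$; as $s_\gamma$ is a homomorphism of abelian groups, the local contributions $s_\gamma([\RDer\Sect(x,\dots)])$ cancel against those occurring in the definition $\ncM_{F_\infty/F,\gamma}(x,(\Int_p)_U(1))=\ncL_{F_\infty/F}(x,(\Int_p)_U(1))\,s_\gamma([\RDer\Sect(x,\dots)])$ from Section~\ref{sec:euler factors}, leaving exactly the Euler factors. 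Property~(4) follows from property~(3) of Theorem~\ref{thm:modification factors Z1} together with Proposition~\ref{prop:transformation for algebraic L-function}(2) (observing that $s_\gamma$ depends only on the image of $\gamma$ in $G/H$, so no reindexing of $\gamma$ occurs) and Proposition~\ref{prop:ring change for coverings}(2), which makes $\ringtransf_{\Int_p[[\Gal(F'_\infty/F)]]}$ commute up to weak equivalence with $f_!f^*$ and hence with $\RDer\Sectc$. Property~(5) follows from property~(4) of Theorem~\ref{thm:modification factors Z1}, the definition of $\ncM_{F_\cyc/F,\gamma}(U,\sheaf{M}(\rho)(1))$ in Definition~\ref{defn:global modification factors}, and the fact that $\eval_\rho$ commutes with $s_\gamma$ and with $\RDer\Sectc(U,f_!f^*(-))$, again by Proposition~\ref{prop:transformation for algebraic L-function}(1)--(2) and Proposition~\ref{prop:ring change for coverings}(1) applied to $\sheaf{M}(\rho)=\ringtransf_{\rho^\sharp}f_!f^*(\Int_p)_U$ (equation \eqref{eqn:def of rep sheaf}).

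Uniqueness, and simultaneously independence of the construction from the choice of $\gamma$, I would deduce by reduction to the uniqueness in Theorem~\ref{thm:modification factors Z1}: if $(\ncL'_{F_\infty/F}(U,(\Int_p)_U(1)))$ is any family satisfying (1)--(5), then by (2) the elements
\[
m'(U,F_\infty)\coloneqq\ncL'_{F_\infty/F}(U,(\Int_p)_U(1))\,s_\gamma\bigl(-[\RDer\Sectc(U,f_!f^*(\Int_p)_U(1))]\bigr)^{-1}
\]
lie in $\KTh_1(\Int_p[[\Gal(F_\infty/F)]])$, and running the computations of the previous paragraph backwards shows that the family $(m'(U,F_\infty))$ satisfies the four defining properties of the global modification factors in Theorem~\ref{thm:modification factors Z1} for this $\gamma$; hence $m'(U,F_\infty)=\ncM_{F_\infty/F,\gamma}(U,(\Int_p)_U(1))$ and so $\ncL'_{F_\infty/F}(U,(\Int_p)_U(1))=\ncL_{F_\infty/F}(U,(\Int_p)_U(1))$. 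The only genuine work is the bookkeeping in property~(3): one must verify that the excision sequence is compatible, as a sequence in $\cat{PDG}^{\cont,w_H}(\Int_p[[G]])$, with the precise local complexes entering the definitions of the non-commutative Euler and local modification factors at the points of $\Sigma$, and that all the induced classes in $\KTh_0$ carry the correct signs; everything else is a formal consequence of Theorem~\ref{thm:modification factors Z1}, the homomorphism properties of $\bh$ and $s_\gamma$, and the functoriality statements in Propositions~\ref{prop:transformation for algebraic L-function}, \ref{prop:ring change for coverings}, and \ref{prop:transformation of Euler factors}.
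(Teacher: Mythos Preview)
Your proposal is correct and follows exactly the same approach as the paper: define $\ncL_{F_\infty/F}(U,(\Int_p)_U(1))$ as $\ncM_{F_\infty/F,\gamma}(U,(\Int_p)_U(1))\,s_\gamma(-[\RDer\Sectc(U,f_!f^*(\Int_p)_U(1))])$ and deduce uniqueness by multiplying any competing family with $s_\gamma([\RDer\Sectc(\ldots)])$ and invoking the uniqueness in Theorem~\ref{thm:modification factors Z1}. The paper's proof is much terser, omitting the verification of properties~(1)--(5) that you spell out; your additional bookkeeping for property~(3) via excision and the cancellation of the $s_\gamma$-terms is correct and is precisely what the paper leaves implicit.
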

\begin{proof}
Fix a topological generator $\gamma\in\Gamma$ and set
\[
\ncL_{F_\infty/F}(U,(\Int_p)_U(1))\coloneqq \ncM_{F_\infty/F,\gamma}(U,(\Int_p)_U(1))s_{\gamma}(-[\RDer\Sectc(U,f_!f^*(\Int_p)_U(1))]).
\]
If $(\ell(U,F_\infty))_{(U,F_\infty)\in\Xi}$ is a second family with the listed properties, then
\[
\ell(U,F_\infty)s_{\gamma}([\RDer\Sectc(U,f_!f^*(\Int_p)_U(1))])=\ncM_{F_\infty/F,\gamma}(U,(\Int_p)_U(1))
\]
by the uniqueness of $\ncM_{F_\infty/F,\gamma}(U,(\Int_p)_U(1))$.
\end{proof}

% \begin{cor}\label{cor:existence of dual equivariant zeta functions}
% There exists a unique family of elements
% \[
% \left(\ncL^\Mdual_{F_\infty/F}(U,(\Int_p)_U)\right)_{(U,F_\infty)\in \Xi}
% \]
% such that
% \begin{enumerate}
% \item $\ncL^\Mdual_{F_\infty/F}(U,(\Int_p)_U)\in \KTh_1(\Int_p[[\Gal(F_\infty/F)]]_S)$,
% \item if $(U,F_\infty)\in \Xi$ and $f\colon U_{F_\infty}\mto U$ denotes the associated system of coverings, then
% \[
% \bh\ncL^\Mdual_{F_\infty/F}(U,(\Int_p)_U)=[\RDer\Sect(U,f_!f^*(\Int_p)_U)]
% \]
% \item if $U'\subset U$ with complement $\Sigma$ and $(U',F_\infty), (U,F_\infty)\in \Xi$, then
% \[
% \ncL^\Mdual_{F_\infty/F}(U,(\Int_p)_U)=\ncL^\Mdual_{F_\infty/F}(U',(\Int_p)_{U'})\prod_{x\in\Sigma}\ncL^\Mdual_{F_\infty/F}(x,(\Int_p)_U),
% \]
% \item if $(U,F_\infty),(U,F'_\infty)\in \Xi$ and $F'_\infty\subset F_\infty$ is a subfield, then
% \[
% \ringtransf_{\Int_p[[\Gal(F'_\infty/F)]]}(\ncL^\Mdual_{F_\infty/F}(U,(\Int_p)_U))=\ncL^\Mdual_{F'_\infty/F}(U,(\Int_p)_U),
% \]
% \item if $(U,F_\infty)\in \Xi$ and $\rho\colon \Gal(F_\infty/F)\mto \Gl_{n}(\IntR_C)$ is an Artin representation, then
% \[
% \eval_{\rho}(\ncL^\Mdual_{F_\infty/F}(U,(\Int_p)_U))=\ncL^\Mdual_{F_{\cyc}/F}(U,\sheaf{M}(\rho)).
% \]
% \end{enumerate}
% Moreover, these elements satisfy
% \[
%  \ncL^\Mdual_{F_\infty/F}(U,(\Int_p)_U)=(\ncL_{F_\infty/F}(U,(\Int_p)_U(1))^\Mdual.
% \]
% \end{cor}

Let $\Theta\coloneqq\Theta_F$ be the set of triples $(U,F_\infty,\Lambda)$ such that $U\subset X$ is a dense open subscheme with $p$ invertible on $U$, $F_\infty/F$ is a really admissible extension unramified over $U$ and $\Lambda$ is an adic $\Int_p$-algebra.

\begin{thm}\label{thm:general modification factors}
Let $\gamma\in \Gamma\coloneqq\Gal(F_{\cyc}/F)$ be a topological generator. There exists a unique family of homomorphisms
\[
\left(\ncM_{F_\infty/F,\gamma}(U,(-)(1))\colon \KTh_0(\cat{PDG}^{\cont,\infty}(U,\Lambda))\mto\KTh_1(\Lambda[[\Gal(F_\infty/F)]])\right)_{(U,F_\infty,\Lambda)\in \Theta}
\]
such that
\begin{enumerate}
\item for any $(U,F_\infty,\Int_p)\in\Theta$, $\ncM_{F_\infty/F,\gamma}(U,(\Int_p)_U(1))$ is the element constructed in Theorem~\ref{thm:modification factors Z1},
\item if $j\colon U'\mto U$ is an open immersion and $(U',F_\infty,\Lambda), (U,F_\infty,\Lambda)\in \Theta$, then
\[
\ncM_{F_\infty/F,\gamma}(U,\cmplx{\sheaf{F}}(1))=\ncM_{F_\infty/F,\gamma}(U',j^*\cmplx{\sheaf{F}}(1))
\prod_{x\in U-U'}\ncM_{F_\infty/F,\gamma}(x,\cmplx{\sheaf{F}}(1)),
\]
for any $\cmplx{\sheaf{F}}$ in $\cat{PDG}^{\cont,\infty}(U,\Lambda)$.
\item if $(U,F_\infty,\Lambda),(U,F'_\infty,\Lambda)\in \Theta$ such that $F'_\infty\subset F_\infty$ is a subfield, then
\[
\ringtransf_{\Lambda[[\Gal(F'_\infty/F)]]}(\ncM_{F_\infty/F,\gamma}(U,\cmplx{\sheaf{F}}(1)))=\ncM_{F'_\infty/F,\gamma}(U,\cmplx{\sheaf{F}}(1)),
\]
for any $\cmplx{\sheaf{F}}$ in $\cat{PDG}^{\cont,\infty}(U,\Lambda)$.
\item if $(U,F_\infty,\Lambda),(U,F_\infty,\Lambda')\in \Theta$ and $\cmplx{P}$ is a complex of $\Lambda'$-$\Lambda[[\Gal(F_\infty/F)]]$-bimodules, strictly perfect as complex of $\Lambda'$-modules, then
\[
\ringtransf_{\cmplx{{P[[\Gal(F_\infty/F)]]^{\delta}}}}(\ncM_{F_\infty/F,\gamma}(U,\cmplx{\sheaf{F}}(1)))=
\ncM_{F_\infty/F,\gamma}(U,\ringtransf_{\cmplx{\tilde{P}}}(\cmplx{\sheaf{F}})(1))
\]
for any $\cmplx{\sheaf{F}}$ in $\cat{PDG}^{\cont,\infty}(U,\Lambda)$.
\end{enumerate}
\end{thm}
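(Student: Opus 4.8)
The plan is to deduce both uniqueness and existence by reducing, via properties (2)--(4), to the already--settled case of the constant sheaf $(\Int_p)_U$ over $\Int_p$ treated in Theorem~\ref{thm:modification factors Z1}.

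\emph{Uniqueness.} Suppose two families satisfy (1)--(4); it suffices to show they agree on a single object $\cmplx{\sheaf F}$ of $\cat{PDG}^{\cont,\infty}(U,\Lambda)$. By (2), applied to the removal of finitely many closed points, we may replace $U$ by an arbitrarily small dense open subscheme on which $p$ is still invertible, at the expense of the local modification factors $\ncM_{F_\infty/F,\gamma}(x,\cmplx{\sheaf F}(1))$ of Section~\ref{sec:euler factors}, which are already uniquely determined. Shrinking $U$ in this way we may assume that all cohomology sheaves of $\cmplx{\sheaf F}$ are lisse. Since $p\neq2$, a complex in $\cat{PDG}^{\cont}(U,\Lambda)$ is smooth at $\infty$ if and only if each of its cohomology sheaves is: the stalk at $\Spec\algc F$ of a perfect complex of $\Lambda/I$-modules carrying an action of a complex conjugation $c$ splits into $c$-isotypic parts because $1/2\in\Lambda/I$, and being $c$-equivariantly quasi-isomorphic to a complex with trivial $c$-action is equivalent to the minus part being acyclic, i.e.\ to all cohomology being of trivial type. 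Filtering $\cmplx{\sheaf F}$ by its canonical truncations, whose graded pieces are the shifted cohomology sheaves and still lie in $\cat{PDG}^{\cont,\infty}(U,\Lambda)$, additivity reduces us to the case where $\cmplx{\sheaf F}=\sheaf F$ is a single smooth $\Lambda$-adic sheaf, lisse on $U$ and smooth at $\infty$. Such an $\sheaf F$ is trivialised by a finite \'etale covering $U_{F'}\to U$ with $F'/F$ Galois and unramified over $U$, and smoothness at $\infty$ forces all complex conjugations to fix $F'$, so $F'$ is totally real. Replacing $F_\infty$ by the really admissible extension $F_\infty F'$ and using (3) to descend again, we may assume $F'\subset F_\infty$, and then $\sheaf F=\sheaf M(\rho)=\ringtransf_{\rho^\sharp}f_!f^*((\Int_p)_U)$ for the representation $\rho$ of $G=\Gal(F_\infty/F)$ on the fibre of $\sheaf F$, cf.\ \eqref{eqn:def of rep sheaf} and \eqref{eqn:def of twisted sheaf}. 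Now (4), applied with $\Lambda'=\Lambda$, base sheaf $(\Int_p)_U$ over $\Int_p$, and bimodule $\rho^\sharp$, forces
\[
\ncM_{F_\infty/F,\gamma}(U,\sheaf M(\rho)(1))=\ringtransf_{\rho^\sharp[[G]]^\delta}\bigl(\ncM_{F_\infty/F,\gamma}(U,(\Int_p)_U(1))\bigr),
\]
whose right-hand side is the element of Theorem~\ref{thm:modification factors Z1} by (1). Hence the two families agree.

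\emph{Existence.} We take the recipe dictated by the above reduction as a definition: remove the local Euler corrections to pass to a trivialising totally real cover, enlarge $F_\infty$ to a really admissible extension through which the monodromy factors, apply the appropriate change-of-ring functor $\ringtransf$ to the element of Theorem~\ref{thm:modification factors Z1}, and descend back to $F_\infty$ re-inserting the local modification factors. One then has to check that this is independent of the choices (of $U$, of the cover $F'$, of the really admissible enlargement of $F_\infty$, and of the truncation), that it is additive on short exact sequences and invariant under weak equivalence so that it descends to a homomorphism out of $\KTh_0(\cat{PDG}^{\cont,\infty}(U,\Lambda))$, and that it satisfies (1)--(4). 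All of this follows by combining the transformation properties of the local modification factors (Proposition~\ref{prop:transformation of Euler factors}), of the splitting $s_\gamma$ and the non-commutative algebraic $L$-functions (Proposition~\ref{prop:transformation for algebraic L-function}), of the covering functors $f_!f^*$ (Proposition~\ref{prop:ring change for coverings}), and the evident transitivity of the functors $\ringtransf$ under composition of bimodules; throughout one uses Theorem~\ref{thm: S torsion global} to know that $\RDer\Sectc(U,f_!f^*\cmplx{\sheaf F}(1))$ lies in $\cat{PDG}^{\cont,w_H}(\Lambda[[G]])$, so that $s_\gamma$ of its class and the Euler factors are defined. Property (1) holds because for $\cmplx{\sheaf F}=(\Int_p)_U$ and $\Lambda=\Int_p$ the reduction is the identity, (2) is built into the construction, and (3), (4) then follow from the cited lemmas together with the uniqueness already proved, applied over the enlarged field or ring.

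\emph{Main obstacle.} The genuinely delicate part is the coherence of the reduction: different choices of the trivialising totally real field $F'$, of the really admissible enlargement of $F_\infty$, and of $U$ must all lead to the same element, which amounts to verifying that the transformation identities of Propositions~\ref{prop:ring change for coverings}, \ref{prop:transformation for algebraic L-function} and \ref{prop:transformation of Euler factors} assemble into a consistent system. The subsidiary points --- that smoothness at $\infty$ is detected on cohomology sheaves, so that canonical truncation stays inside $\cat{PDG}^{\cont,\infty}$; that the trivialising cover is automatically totally real; and that the bookkeeping may be carried out at the level of finite quotients of $\Lambda$ and of really admissible subextensions with $F_\infty/F_{\cyc}$ finite, using \cite[Prop.~1.5.3]{FK:CNCIT} to pass to the limit --- are comparatively routine but must be set up carefully.
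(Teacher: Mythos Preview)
Your overall strategy---reduce to the constant sheaf $(\Int_p)_U$ via properties (2)--(4)---is exactly the paper's, and your argument that a trivialising cover of a single lisse sheaf smooth at $\infty$ must be totally real is fine. The gap is in the sentence ``filtering $\cmplx{\sheaf F}$ by its canonical truncations, whose graded pieces are the shifted cohomology sheaves and still lie in $\cat{PDG}^{\cont,\infty}(U,\Lambda)$.'' This fails on two counts. For general adic $\Lambda$, cohomology does not commute with the transition maps $\Lambda/J\tensor_{\Lambda/I}(-)$, so neither the canonical truncations nor the cohomology sheaves of an inverse system in $\cat{PDG}^{\cont}(U,\Lambda)$ form objects of that category. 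Even after reducing to finite $\Lambda$ (which you correctly note is possible, but which must come \emph{before} this step, not as a ``subsidiary point'' afterwards), the cohomology sheaves of a DG-flat perfect complex are constructible but in general not flat over $\Lambda$, so they are not $\Lambda$-adic sheaves in the paper's sense and do not define classes in $\KTh_0(\cat{PDG}^{\cont,\infty}(U,\Lambda))$.

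The paper sidesteps this by never reducing to single sheaves. After passing to finite $\Lambda$ and replacing $\cmplx{\sheaf F}$ by a strictly perfect complex of locally constant flat sheaves (by shrinking $U$), it invokes \cite[Prop.~6.8]{Witte:MCVarFF} to produce a \emph{complex} $\cmplx{P}$ of $\Lambda$-$\Int_p[[G]]$-bimodules, strictly perfect over $\Lambda$, together with a weak equivalence $\ringtransf_{\cmplx{P}}f_!f^*(\Int_p)_U\wto\cmplx{\sheaf F}$; then property (4) applied with this $\cmplx{P}$ forces the definition directly. Your argument can be repaired along these lines: drop the canonical truncation, keep the complex intact, and use a complex of bimodules rather than a single module $\rho^\sharp$.
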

\begin{proof}
Applying $(4)$ to the $\Lambda/I$-$\Lambda[[G]]$-bimodule $\Lambda/I[[G]]$ for any open two-sided ideal $I$ of $\Lambda$ and using
\[
\KTh_1(\Lambda[[G]])=\varprojlim_{I\in\openideals_{\Lambda}}\KTh_1(\Lambda/I[[G]]),
\]
we conclude that it is sufficient to consider triples $(U,F_\infty,\Lambda)\in\Theta$ with $\Lambda$ a finite ring. So, let $\Lambda$ be finite. Since $\ncM_{F_\infty/F,\gamma}(U,\cmplx{\sheaf{F}}(1))$ depends only on the class of $\cmplx{\sheaf{F}}$ in $\KTh_0(\cat{PDG}^{\cont,\infty}(U,\Lambda))$, we may assume that $\cmplx{\sheaf{F}}$ is a bounded complex of flat constructible \'etale sheaves of $\Lambda$-modules. Using $(2)$ we may shrink $U$ until $\cmplx{\sheaf{F}}$ is a complex of locally constant \'etale sheaves. Hence, there exists a $(U,F'_\infty,\Lambda)\in \Theta$ such that $F_\infty/F$ is a subextension of $F'_\infty/F$ and such that the restriction of $\cmplx{\sheaf{F}}$ to $U_K$ for some finite subextension $K/F$ of $F'_\infty/F$ is a complex of constant sheaves. By $(3)$, we may replace $F_\infty$ by $F'_\infty$. We may then find a complex of $\Lambda$-$\Int_p[[\Gal(F_\infty/F)]]$-bimodules $\cmplx{P}$, strictly perfect as complex of $\Lambda$ modules and a weak equivalence
\[
\ringtransf_{\cmplx{P}}f_!f^*(\Int_p)_U(1)\wto \cmplx{\sheaf{F}}(1)
\]
\cite[Prop. 6.8]{Witte:MCVarFF}. By $(4)$, the only possible definition of $\ncM_{F_\infty/F,\gamma}(U,\cmplx{\sheaf{F}}(1))$ is
\[
\ncM_{F_\infty/F,\gamma}(U,\cmplx{\sheaf{F}}(1))\coloneqq
\ringtransf_{\cmplx{{P[[\Gal(F_\infty/F)]]^{\delta}}}}(\ncM_{F_\infty/F,\gamma}(U,(\Int_p)_U(1))).
\]
It is then clear that this construction satisfies the given properties.
\end{proof}

\begin{thm}\label{thm:general dual modification factors}
Let $\gamma\in \Gamma\coloneqq\Gal(F_{\cyc}/F)$ be a topological generator. There exists a unique family of homomorphisms
\[
\left(\ncM^{\Mdual}_{F_\infty/F,\gamma}(U,-)\colon \KTh_0(\cat{PDG}^{\cont,\infty}(U,\Lambda))\mto\KTh_1(\Lambda[[\Gal(F_\infty/F)]])\right)_{(U,F_\infty,\Lambda)\in \Theta}
\]
such that
\begin{enumerate}
\item for any $(U,F_\infty,\Int_p)\in\Theta$,
\[
\ncM^\Mdual_{F_\infty/F,\gamma}(U,(\Int_p)_U)=(\ncM_{F_\infty/F,\gamma}(U,(\Int_p)_U(1)))^{\Mdual}
\]
\item if $j\colon U'\mto U$ is an open immersion and $(U',F_\infty,\Lambda), (U,F_\infty,\Lambda)\in \Theta$, then
\[
\ncM^\Mdual_{F_\infty/F,\gamma}(U,\cmplx{\sheaf{F}})=\ncM^{\Mdual}_{F_\infty/F,\gamma}(U',j^*\cmplx{\sheaf{F}})
\prod_{x\in U-U'}\ncM^\Mdual_{F_\infty/F,\gamma}(x,\cmplx{\sheaf{F}}),
\]
for any $\cmplx{\sheaf{F}}$ in $\cat{PDG}^{\cont,\infty}(U,\Lambda)$.
\item if $(U,F_\infty,\Lambda),(U,F'_\infty,\Lambda)\in \Theta$ such that $F'_\infty\subset F_\infty$ is a subfield, then
\[
\ringtransf_{\Lambda[[\Gal(F'_\infty/F)]]}(\ncM^{\Mdual}_{F_\infty/F,\gamma}(U,\cmplx{\sheaf{F}}(1)))=\ncM^{\Mdual}_{F'_\infty/F,\gamma}(U,\cmplx{\sheaf{F}}),
\]
for any $\cmplx{\sheaf{F}}$ in $\cat{PDG}^{\cont,\infty}(U,\Lambda)$.
\item if $(U,F_\infty,\Lambda),(U,F_\infty,\Lambda')\in \Theta$ and $\cmplx{P}$ is a complex of $\Lambda'$-$\Lambda[[\Gal(F_\infty/F)]]$-bimodules, strictly perfect as complex of $\Lambda'$-modules, then
\[
\ringtransf_{\cmplx{{P[[\Gal(F_\infty/F)]]^{\delta}}}}(\ncM^{\Mdual}_{F_\infty/F,\gamma}(U,\cmplx{\sheaf{F}}))=
\ncM^{\Mdual}_{F_\infty/F,\gamma}(U,\ringtransf_{\cmplx{\tilde{P}}}(\cmplx{\sheaf{F}}))
\]
for any $\cmplx{\sheaf{F}}$ in $\cat{PDG}^{\cont,\infty}(U,\Lambda)$.
\end{enumerate}
Moreover, for any $(U,F_\infty,\Lambda)\in\Theta$ and any smooth $\Lambda$-adic sheaf $\sheaf{F}$ on $U$, we have
\[
 \ncM^\Mdual_{F_\infty/F,\gamma}(U,\sheaf{F})=(\ncM_{F_\infty/F,\gamma}(U,\sheaf{F}^{\mdual_\Lambda}(1)))^{\Mdual}.
\]
\end{thm}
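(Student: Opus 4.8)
The plan is to follow the scheme of the proof of Theorem~\ref{thm:general modification factors}, feeding in the duality machinery of Section~\ref{sec:Duality on the level of Kgroups} (Lemma~\ref{lem:Mdual and ringtransf}, Proposition~\ref{prop:main abstract duality}) together with the duality identities for local factors of Proposition~\ref{prop:Euler factors under duality}. For the \emph{uniqueness} I would run the reductions from the uniqueness part of Theorem~\ref{thm:general modification factors} verbatim: property~$(4)$ and the identity $\KTh_1(\Lambda[[G]])=\varprojlim_{I\in\openideals_{\Lambda}}\KTh_1(\Lambda/I[[G]])$ reduce to finite $\Lambda$; then $\ncM^{\Mdual}_{F_\infty/F,\gamma}(U,\cmplx{\sheaf F})$ depends only on the class of $\cmplx{\sheaf F}$ in $\KTh_0$, so $\cmplx{\sheaf F}$ may be taken to be a bounded complex of flat constructible sheaves; property~$(2)$ lets one shrink $U$ until $\cmplx{\sheaf F}$ is locally constant, and property~$(3)$ lets one enlarge $F_\infty$ until $\cmplx{\sheaf F}$ is pulled back from a complex of constant sheaves. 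By \cite[Prop.~6.8]{Witte:MCVarFF} there is then a complex $\cmplx P$ of $\Lambda$-$\Int_p[[G]]$-bimodules, strictly perfect as complex of $\Lambda$-modules, with a weak equivalence $\ringtransf_{\cmplx{P}}f_!f^*(\Int_p)_U\wto\cmplx{\sheaf F}$, and properties~$(4)$ and~$(1)$ then force
\[
\ncM^{\Mdual}_{F_\infty/F,\gamma}(U,\cmplx{\sheaf F})=\ringtransf_{\cmplx{P[[G]]^{\delta}}}\bigl((\ncM_{F_\infty/F,\gamma}(U,(\Int_p)_U(1)))^{\Mdual}\bigr),
\]
which pins the whole family down.

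For \emph{existence} I would take the displayed formula as the definition, for one fixed system of the above choices, and verify $(1)$--$(4)$ by repeating the existence part of Theorem~\ref{thm:general modification factors} with the extra decoration by $\Mdual$. Property~$(1)$ is built in; properties~$(3)$ and~$(4)$ follow from their counterparts for $\ncM_{F_\infty/F,\gamma}$, from Lemma~\ref{lem:Mdual and ringtransf} (compatibility of $\Mdual$ with $\ringtransf_{\cmplx{Q[[G]]^{\delta}}}$, with $\ringtransf_{\Lambda[[G']]}$, and with the composition laws for these change-of-ring functors), and from Proposition~\ref{prop:main abstract duality} (compatibility of $\Mdual$ with the localisation sequence); property~$(2)$ follows from the multiplicativity of $\ncM_{F_\infty/F,\gamma}$ over $U$ together with the duality identities for the local modification factors recorded in Propositions~\ref{prop:Euler factors under duality} and~\ref{prop:transformation of dual Euler factors}. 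Once $(1)$--$(4)$ hold for this family, independence of all auxiliary choices follows a posteriori from the uniqueness statement just proved.

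For the \emph{final identity}, let $\sheaf F$ be a smooth $\Lambda$-adic sheaf on $U$ lying in $\cat{PDG}^{\cont,\infty}(U,\Lambda)$; since $p\neq 2$, smoothness at $\infty$ forces any complex conjugation to act trivially on the stalk of $\sheaf F$ at $\Spec\algc F$, so $\sheaf F^{\mdual_{\Lambda}}$ is again smooth at $\infty$ and both sides are defined. Using $(2)$ and $(3)$ (and the matching reductions for $\ncM_{F_\infty/F,\gamma}$, which are compatible with $\mdual_{\Lambda}$) I would reduce to the case $\sheaf F=\sheaf M(\rho)=\ringtransf_{\rho^{\sharp}}f_!f^*(\Int_p)_U$ for a continuous representation $\rho$ of $G$ on a finitely generated projective $\Lambda$-module. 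By \eqref{eqn:duality for coverings}, Proposition~\ref{prop:ring change for coverings}(1) and Lemma~\ref{lem:Mdual and ringtransf}(1), the sheaf $\sheaf F^{\mdual_{\Lambda}}$ is weakly equivalent to $\ringtransf_{(\rho^{\sharp})^{\mdual,\sharp}}f_!f^*(\Int_p)_U$ (concretely the sheaf $\sheaf M(\check\rho)$ of the contragredient). Set $x\coloneqq\ncM_{F_\infty/F,\gamma}(U,(\Int_p)_U(1))$. Then Theorem~\ref{thm:general modification factors}(4), applied over $\Lambda^{\op}$ to the bimodule $(\rho^{\sharp})^{\mdual,\sharp}$ and the sheaf $(\Int_p)_U$, gives $\ncM_{F_\infty/F,\gamma}(U,\sheaf F^{\mdual_{\Lambda}}(1))=\ringtransf_{(\rho^{\sharp})^{\mdual,\sharp}[[G]]^{\delta}}(x)$; applying $\Mdual$ and Lemma~\ref{lem:Mdual and ringtransf}(1) to the bimodule $(\rho^{\sharp})^{\mdual,\sharp}$, together with the identity $((\rho^{\sharp})^{\mdual,\sharp})^{\mdual,\sharp}=\rho^{\sharp}$, yields
\[
\bigl(\ncM_{F_\infty/F,\gamma}(U,\sheaf F^{\mdual_{\Lambda}}(1))\bigr)^{\Mdual}=\ringtransf_{\rho^{\sharp}[[G]]^{\delta}}(x^{\Mdual}).
\]
On the other hand, property~$(4)$ for $\ncM^{\Mdual}$ (with bimodule $\rho^{\sharp}$ and the sheaf $(\Int_p)_U$) together with property~$(1)$ gives $\ncM^{\Mdual}_{F_\infty/F,\gamma}(U,\sheaf F)=\ringtransf_{\rho^{\sharp}[[G]]^{\delta}}(x^{\Mdual})$ as well, so the two sides agree.

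The step I expect to be the main obstacle is the careful matching of all the duality decorations, in particular the identification of $\sheaf F^{\mdual_{\Lambda}}$ with $\ringtransf_{(\rho^{\sharp})^{\mdual,\sharp}}f_!f^*(\Int_p)_U$: this is where one must use the interpretation of $\ringtransf_{\cmplx{\tilde P}}$ as a derived tensor product of complexes of sheaves, combine it with \eqref{eqn:duality for coverings}, and keep track that the various occurrences of $\Mdual$, $\mdual$, $\sharp$ and the diagonal $G$-actions compose in the way claimed. Everything else is bookkeeping on top of results already in place.
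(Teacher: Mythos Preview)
Your proposal is correct and follows essentially the same approach as the paper, whose proof is the single line ``We proceed as in Theorem~\ref{thm:general modification factors} and use Lemma~\ref{lem:Mdual and ringtransf}.'' You have simply unpacked this: the reductions for uniqueness are verbatim those of Theorem~\ref{thm:general modification factors}, the forced definition via $\ringtransf_{\cmplx{P[[G]]^{\delta}}}\bigl((\ncM_{F_\infty/F,\gamma}(U,(\Int_p)_U(1)))^{\Mdual}\bigr)$ is the right one, and the verification of properties $(2)$--$(4)$ and of the final identity uses exactly the duality inputs you name (Lemma~\ref{lem:Mdual and ringtransf}, Proposition~\ref{prop:Euler factors under duality}, Proposition~\ref{prop:transformation of dual Euler factors}).
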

\begin{proof}
We proceed as in Theorem~\ref{thm:general modification factors} and use Lemma~\ref{lem:Mdual and ringtransf}.
\end{proof}

\begin{prop}\label{prop:changing the generator}
Assume that $\gamma,\gamma'$ are two topological generators of $\Gamma$. Then
\begin{align*}
\frac{\ncM_{F_\infty/F,\gamma}}{\ncM_{F_\infty/F,\gamma'}}(U,\cmplx{\sheaf{F}}(1))
&=\frac{s_{\gamma}}{s_{\gamma'}}([\RDer\Sectc(U,f_!f^*\cmplx{\sheaf{F}}(1))])\\
\frac{\ncM^{\Mdual}_{F_\infty/F,\gamma}}{\ncM^{\Mdual}_{F_\infty/F,\gamma'}}(U,\cmplx{\sheaf{F}})
&=\frac{s_{(\gamma')^{-1}}}{s_{\gamma^{-1}}}([\RDer\Sect(U,f_!f^*\cmplx{\sheaf{F}})])
\end{align*}
for any $(U,F_\infty,\Lambda)\in\Theta$ and any $\cmplx{\sheaf{F}}$ in $\cat{PDG}^{\cont,\infty}(U,\Lambda)$.
\end{prop}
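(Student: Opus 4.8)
The plan is to reduce both identities to the tautological case $\cmplx{\sheaf{F}}=(\Int_p)_U$ over $\Lambda=\Int_p$, where the first identity is essentially the assertion that the non-commutative $L$-function $\ncL_{F_\infty/F}(U,(\Int_p)_U(1))$ of Corollary~\ref{cor:existence of equivariant zeta functions} does not depend on the choice of topological generator. Indeed, that corollary gives, for every topological generator $\gamma$,
\[
\ncL_{F_\infty/F}(U,(\Int_p)_U(1))=\ncM_{F_\infty/F,\gamma}(U,(\Int_p)_U(1))\,s_{\gamma}\bigl(-[\RDer\Sectc(U,f_!f^*(\Int_p)_U(1))]\bigr),
\]
while its left-hand side is characterised by properties not involving $\gamma$. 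Since $s_\gamma$ is a group homomorphism (so $s_\gamma(-A)=s_\gamma(A)^{-1}$) and $\KTh_1$ is abelian, equating the expressions attached to $\gamma$ and $\gamma'$ yields
\[
\frac{\ncM_{F_\infty/F,\gamma}}{\ncM_{F_\infty/F,\gamma'}}(U,(\Int_p)_U(1))=\frac{s_\gamma}{s_{\gamma'}}\bigl([\RDer\Sectc(U,f_!f^*(\Int_p)_U(1))]\bigr).
\]

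For general $(U,F_\infty,\Lambda)\in\Theta$ and $\cmplx{\sheaf{F}}\in\cat{PDG}^{\cont,\infty}(U,\Lambda)$ I would run the reductions used in the proof of Theorem~\ref{thm:general modification factors}, observing that both sides of the asserted identity are homomorphisms on $\KTh_0(\cat{PDG}^{\cont,\infty}(U,\Lambda))$: for the right-hand side this uses Theorem~\ref{thm: S torsion global}, so that $\RDer\Sectc(U,j_!f_!f^*(-)(1))$ lands in $\cat{PDG}^{\cont,w_H}(\Lambda[[G]])$ and induces a homomorphism on $\KTh_0$, together with the additivity of $s_\gamma$ and $s_{\gamma'}$. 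Passing to $\Lambda$ finite is harmless because $\KTh_1(\Lambda[[G]])=\varprojlim_I\KTh_1(\Lambda/I[[G]])$; shrinking $U$ is harmless because both sides are multiplicative under removing a closed point $x$ (for the right-hand side by excision in $\KTh_0$) and the contribution of $x$ agrees on both sides, the Euler factor $\ncL_{F_\infty/F}(x,-)$ entering $\ncM_{F_\infty/F,\gamma}(x,-)$ being $\gamma$-independent; and enlarging $F_\infty$ is harmless by property~(3) of Theorem~\ref{thm:general modification factors} on the left, respectively Proposition~\ref{prop:transformation for algebraic L-function}.(3) together with Proposition~\ref{prop:ring change for coverings}.(2) on the right. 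After these reductions we may assume $\cmplx{\sheaf{F}}(1)\simeq\ringtransf_{\cmplx{P}}f_!f^*(\Int_p)_U(1)$ for a complex $\cmplx{P}$ of $\Lambda$-$\Int_p[[G]]$-bimodules, strictly perfect over $\Lambda$, so that $\ncM_{F_\infty/F,\gamma}(U,\cmplx{\sheaf{F}}(1))=\ringtransf_{\cmplx{P[[G]]^\delta}}\ncM_{F_\infty/F,\gamma}(U,(\Int_p)_U(1))$; applying $\ringtransf_{\cmplx{P[[G]]^\delta}}$ to the tautological identity above and invoking Proposition~\ref{prop:transformation for algebraic L-function}.(1), Proposition~\ref{prop:ring change for coverings}.(1) and the compatibility of $\ringtransf$ with $\RDer\Sect$, $j_!$ and $f_!f^*$ (\cite[Prop.~5.5.7]{Witte:PhD}) then gives the first formula in general.

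For the second formula I would use the same pattern, now based on Theorem~\ref{thm:general dual modification factors} and Lemma~\ref{lem:Mdual and ringtransf}, reducing to $\cmplx{\sheaf{F}}=(\Int_p)_U$, where $\ncM^{\Mdual}_{F_\infty/F,\gamma}(U,(\Int_p)_U)=(\ncM_{F_\infty/F,\gamma}(U,(\Int_p)_U(1)))^{\Mdual}$ by property~(1) of that theorem. Applying $\Mdual$ to the first formula for $(\Int_p)_U$ and using Proposition~\ref{prop:s and Mdual commute}, which gives $s_\gamma(B)^{\Mdual}=s_{\gamma^{-1}}(B^{\Mdual})$ and the analogue for $\gamma'$, transforms the right-hand side into $\frac{s_{\gamma^{-1}}}{s_{(\gamma')^{-1}}}\bigl([\RDer\Sectc(U,f_!f^*(\Int_p)_U(1))]^{\Mdual}\bigr)$. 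Now Artin--Verdier duality for coverings \eqref{eqn:Artin-Verdier duality for coverings}, applied with $\sheaf{F}=(\Int_p)_U$ (so $\sheaf{F}^{\mdual_{\Int_p}}=(\Int_p)_U$), provides a strictly perfect representative of $\RDer\Sectc(U,f_!f^*(\Int_p)_U(1))$ whose $\Mdual$-dual is weakly equivalent to $\RDer\Sect(U,f_!f^*(\Int_p)_U)[-3]$; hence $[\RDer\Sectc(U,f_!f^*(\Int_p)_U(1))]^{\Mdual}=-[\RDer\Sect(U,f_!f^*(\Int_p)_U)]$ in $\KTh_0$, since shifting by an odd degree reverses the sign. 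This minus sign turns $\frac{s_{\gamma^{-1}}}{s_{(\gamma')^{-1}}}$ into $\frac{s_{(\gamma')^{-1}}}{s_{\gamma^{-1}}}$, giving $\frac{s_{(\gamma')^{-1}}}{s_{\gamma^{-1}}}\bigl([\RDer\Sect(U,f_!f^*(\Int_p)_U)]\bigr)$; transporting this back through $\ringtransf$ exactly as in the first case produces the asserted formula for arbitrary $\cmplx{\sheaf{F}}$.

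The only genuinely delicate point is the sign bookkeeping: the shift by $[-3]$ in \eqref{eqn:Artin-Verdier duality for coverings}, combined with $[\cmplx{C}[n]]=(-1)^n[\cmplx{C}]$ in $\KTh_0$, is precisely what accounts for the \emph{reversed} order $s_{(\gamma')^{-1}}/s_{\gamma^{-1}}$ in the dual formula (rather than $s_{\gamma^{-1}}/s_{(\gamma')^{-1}}$). Everything else is formal: the compatibility of the reduction steps with both sides is handled exactly as in the proofs of Theorems~\ref{thm:general modification factors} and \ref{thm:general dual modification factors}, and the remaining manipulations take place inside the abelian groups $\KTh_1$ and use only the homomorphism properties of $s_\gamma$, $\ringtransf$ and $\Mdual$.
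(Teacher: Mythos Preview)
Your proposal is correct and follows essentially the same route as the paper. The paper's proof is more terse: having observed that the identities hold for the local modification factors (by their very definition) and for $\cmplx{\sheaf{F}}=(\Int_p)_U$ (via Corollary~\ref{cor:existence of equivariant zeta functions} and Proposition~\ref{prop:s and Mdual commute}), it simply notes that the right-hand side times $\ncM_{F_\infty/F,\gamma'}$ satisfies the four characterising properties of Theorem~\ref{thm:general modification factors} (resp.\ Theorem~\ref{thm:general dual modification factors}) and invokes the \emph{uniqueness} clauses of those theorems. Your explicit reduction (finite $\Lambda$, shrinking $U$, enlarging $F_\infty$, expressing $\cmplx{\sheaf{F}}$ as a twist of $(\Int_p)_U$) is precisely the argument that underlies those uniqueness proofs, so the two approaches are the same in substance; the paper just packages it into one line. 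Your handling of the dual case, including the use of \eqref{eqn:Artin-Verdier duality for coverings} to obtain $[\RDer\Sectc(\cdots)]^{\Mdual}=-[\RDer\Sect(\cdots)]$ and the resulting sign flip, is exactly what the paper's invocation of Proposition~\ref{prop:s and Mdual commute} implicitly requires. One small slip: when descending along $F'_\infty\supset F_\infty$ you want part~(2) of Proposition~\ref{prop:transformation for algebraic L-function} (a surjection $G\to G'$), not part~(3).
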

\begin{proof}
By definition, these identities hold for the local modification factors and by Corollary~\ref{cor:existence of equivariant zeta functions} and Proposition~\ref{prop:s and Mdual commute} they hold for $\cmplx{\sheaf{F}}=(\Int_p)_U$. Hence,
\begin{align*}
\ncM_{F_\infty/F,\gamma}(U,\cmplx{\sheaf{F}}(1))&=
\frac{s_{\gamma}}{s_{\gamma'}}([\RDer\Sectc(U,f_!f^*\cmplx{\sheaf{F}}(1))])
\ncM_{F_\infty/F,\gamma'}(U,\cmplx{\sheaf{F}}(1))\\
\ncM^\Mdual_{F_\infty/F,\gamma}(U,\cmplx{\sheaf{F}})&=
\frac{s_{(\gamma')^{-1}}}{s_{\gamma^{-1}}}([\RDer\Sect(U,f_!f^*\cmplx{\sheaf{F}})])
\ncM^\Mdual_{F_\infty/F,\gamma'}(U,\cmplx{\sheaf{F}})
\end{align*}
by the uniqueness assertion in Theorem~\ref{thm:general modification factors} and Theorem~\ref{thm:general dual modification factors}.
\end{proof}

\begin{thm}\label{thm:change of base field}
Let $F'/F$ be a finite extension of totally real fields. Set $r\coloneqq [F'\cap F_\cyc:F]$ and let $\gamma\in\Gal(F_{\cyc}/F)$ be a topological generator. Assume that $(U,F_\infty,\Lambda)\in\Theta_F$ with $F'\subset F_\infty$ and write $f_{F'}\colon U_{F'}\mto U$ for the associated covering. Then
\begin{enumerate}
 \item for every $\cmplx{\sheaf{F}}$ in $\cat{PDG}^{\cont,\infty}(U,\Lambda)$,
\begin{align*}
\ncM_{F_\infty/F',\gamma^r}(U_{F'},f_{F'}^*\cmplx{\sheaf{F}}(1))
&=\ringtransf_{\Lambda[[\Gal(F_\infty/F)]]}\ncM_{F_\infty/F,\gamma}(U,\cmplx{\sheaf{F}}(1)),\\
\ncM^{\Mdual}_{F_\infty/F',\gamma^r}(U_{F'},f_{F'}^*\cmplx{\sheaf{F}})
&=\ringtransf_{\Lambda[[\Gal(F_\infty/F)]]}\ncM^{\Mdual}_{F_\infty/F,\gamma}(U,\cmplx{\sheaf{F}});
\end{align*}
\item for every $\cmplx{\sheaf{G}}$ in $\cat{PDG}^{\cont,\infty}(U_{F'},\Lambda)$,
\begin{align*}
\ncM_{F_\infty/F,\gamma}(U,{f_{F'}}_*\cmplx{\sheaf{G}}(1))
&=\ringtransf_{\Lambda[[\Gal(F_\infty/F)]]}\ncM_{F_\infty/F',\gamma^r}(U_{F'},\cmplx{\sheaf{G}}(1)),\\
\ncM^{\Mdual}_{F_\infty/F,\gamma}(U,{f_{F'}}_*\cmplx{\sheaf{G}})
&=\ringtransf_{\Lambda[[\Gal(F_\infty/F)]]}\ncM^{\Mdual}_{F_\infty/F',\gamma^r}(U_{F'},\cmplx{\sheaf{G}}).
\end{align*}
\end{enumerate}
\end{thm}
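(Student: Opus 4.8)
Write $G\coloneqq\Gal(F_\infty/F)$ and $G'\coloneqq\Gal(F_\infty/F')$, an open subgroup of $G$ with $[G:G'H]=[F'\cap F_\cyc:F]=r$, so that $\gamma^r$ topologically generates the image of $\Gal(F'_\cyc/F')$ in $\Gamma$ and Propositions~\ref{prop:transformation for algebraic L-function}.(3),~\ref{prop:transformation of Euler factors}.(3) and~\ref{prop:transformation of dual Euler factors}.(3) apply with this $r$. The first move is to reduce to the structure sheaf over $\Int_p$: by $\KTh_1(\Lambda[[G']])=\varprojlim_I\KTh_1(\Lambda/I[[G']])$ and property~(4) of Theorems~\ref{thm:general modification factors} and~\ref{thm:general dual modification factors} we may take $\Lambda$ finite, and then — replacing $\cmplx{\sheaf{F}}$ (resp.\ $\cmplx{\sheaf{G}}$) by a quasi-isomorphic complex of flat constructible sheaves, shrinking $U$ (hence $U_{F'}$) until it is locally constant, and passing to a really admissible $\tilde{F}_\infty\supseteq F_\infty$ over which it is trivialised — \cite[Prop.~6.8]{Witte:MCVarFF} together with Proposition~\ref{prop:ring change for coverings}.(1),(4) exhibits $\cmplx{\sheaf{F}}(1)$ (resp.\ ${f_{F'}}_*\cmplx{\sheaf{G}}(1)$) as $\ringtransf_{\cmplx{P}}$ applied to $\tilde{f}_!\tilde{f}^*(\Int_p)_U(1)$. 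Combining the transformation rules of Propositions~\ref{prop:ring change for coverings},~\ref{prop:transformation for algebraic L-function},~\ref{prop:transformation of Euler factors},~\ref{prop:transformation of dual Euler factors} with properties~(2)--(4) of Theorems~\ref{thm:general modification factors} and~\ref{thm:general dual modification factors}, used over $F$ and over $F'$, reduces all four asserted identities to the single statement
\begin{equation*}
\ncM_{F_\infty/F',\gamma^r}(U_{F'},(\Int_p)_{U_{F'}}(1))=\ringtransf_{\Int_p[[\Gal(F_\infty/F)]]}\ncM_{F_\infty/F,\gamma}(U,(\Int_p)_U(1))\tag{$\star$}
\end{equation*}
for $(U,F_\infty)\in\Xi_F$ with $F'\subseteq F_\infty$, the functor on the right being the change-of-group functor for the open subgroup $G'\subset G$; the dual identities then follow from $(\star)$ via the last line of Theorem~\ref{thm:general dual modification factors}, Lemma~\ref{lem:Mdual and ringtransf}.(3) and Proposition~\ref{prop:s and Mdual commute}, using $(f_{F'}^*\sheaf{F})^{\mdual}=f_{F'}^*(\sheaf{F}^{\mdual})$ and $({f_{F'}}_*\sheaf{G})^{\mdual}={f_{F'}}_*(\sheaf{G}^{\mdual})$ for the finite \'etale covering $f_{F'}$.

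To prove $(\star)$ I would argue as in Theorem~\ref{thm:modification factors Z1}. Using its property~(3) and $\KTh_1(\Int_p[[G']])=\varprojlim\KTh_1(\Int_p[[\Gal(F''_\infty/F')]])$ over really admissible $F''_\infty/F$ with $F''_\infty/F_\cyc$ finite, we may assume $F_\infty/F_\cyc$ finite, so that $G$ and $G'$ are $p$-adic Lie groups and Theorem~\ref{thm:Kakdes theorem} is available over $F$ and over $F'$. Multiplying both sides of $(\star)$ by $s_{\gamma^r}(-[\RDer\Sectc(U_{F'},g_!g^*(\Int_p)_{U_{F'}}(1))])$, with $g$ the induced system of coverings of $U_{F'}$, and invoking Proposition~\ref{prop:transformation for algebraic L-function}.(3), the compatibility of $\ringtransf$ with $\RDer\Sectc$, Proposition~\ref{prop:ring change for coverings}.(3) and the finiteness of $f_{F'}$, the congruences with Kakde's $L$-function recorded in the proof of Theorem~\ref{thm:modification factors Z1} (over $F$ and over $F'$) show that the two sides of $(\star)$ are congruent modulo $\cSK_1(\Int_p[[G']])$ as soon as one knows the base-change identity
\begin{equation*}
\ringtransf_{\Int_p[[\Gal(F_\infty/F)]]}\tilde{\ncL}_{F_\infty/F}(U,(\Int_p)_U(1))\equiv\tilde{\ncL}_{F_\infty/F'}(U_{F'},(\Int_p)_{U_{F'}}(1))\pmod{\cSK_1(\Int_p[[G']])}.\tag{$\dagger$}
\end{equation*}
The resulting difference of the two sides of $(\star)$ then lies in $\cSK_1(\Int_p[[G']])$, is independent of $U$ (Theorem~\ref{thm:modification factors Z1}.(2), Proposition~\ref{prop:transformation of Euler factors}.(3)), and — exactly as in the proof of Theorem~\ref{thm:modification factors Z1}, using Corollary~\ref{cor:vanishing extension} over $F'$ followed by passage to the Galois closure over $F$, together with Theorem~\ref{thm:modification factors Z1}.(3) and $(\dagger)$ applied at the level of the enlarged extension — is annihilated by base change to a suitable finite $L_\infty/F_\infty$; hence it vanishes and $(\star)$ holds.

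There remains $(\dagger)$, the base change of Kakde's non-commutative $L$-function. By the uniqueness in Theorem~\ref{thm:Kakdes theorem} over $F'$ it suffices to check that $\ringtransf_{\Int_p[[\Gal(F_\infty/F)]]}\tilde{\ncL}_{F_\infty/F}(U,(\Int_p)_U(1))$ has the correct $\bh$-image and interpolation property. The $\bh$-statement is immediate, since $\bh$ commutes with $\ringtransf$ and $\ringtransf_{\Int_p[[\Gal(F_\infty/F)]]}\RDer\Sectc(U,f_!f^*(\Int_p)_U(1))$ is weakly equivalent to $\RDer\Sectc(U_{F'},g_!g^*(\Int_p)_{U_{F'}}(1))$ by Proposition~\ref{prop:ring change for coverings}.(3) and finiteness of $f_{F'}$. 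For the interpolation property one needs, for every Artin representation $\rho$ of $G'$, the Mackey-type identity $\eval_\rho\circ\ringtransf_{\Int_p[[\Gal(F_\infty/F)]]}=\ringtransf_{\IntR_C[[\Gal(F_\cyc/F)]]}\circ\eval_{\Ind\rho}$ of homomorphisms out of $\KTh_1(\Int_p[[G]]_S)$ (proved with the bimodules defining $\eval$, together with $\sheaf{M}(\Ind\rho)\isomorph{f_{F'}}_*\sheaf{M}(\rho)$), combined with $\omega_F\restriction_{\Gal_{F'}}=\omega_{F'}$, $\epsilon_F\restriction_{\Gal_{F'}}=\epsilon_{F'}$ and the inductivity of Artin $L$-functions $L_{\Sigma,\emptyset}(\Ind_{F'}^{F}\rho',s)=L_{\Sigma',\emptyset}(\rho',s)$; these identify the $p$-adic $L$-values of~\eqref{eqn:values of padic Lfunctions} on the two sides of $(\dagger)$. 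I expect this last step, $(\dagger)$, to be the main obstacle: because $\tilde{\ncL}_{F_\infty/F}$ is only well-defined modulo $\cSK_1$ and is a priori attached to $F$, one must transport the full classical Artin $L$-function formalism — in particular inductivity — through the evaluation maps $\eval_\rho$, restriction and induction, and verify the corresponding Mackey compatibilities at the level of first $\KTh$-groups; everything else reduces to the transformation properties already established in the preceding sections.
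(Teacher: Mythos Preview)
Your overall strategy---reduce to finite $\Lambda$, shrink $U$, pass to a larger really admissible extension, then to $\Int_p$-coefficients and the constant sheaf, finally kill the $\cSK_1$-ambiguity via Corollary~\ref{cor:vanishing extension}---matches the paper's, and your treatment of $(\star)$ via $(\dagger)$ is essentially the paper's argument for what it calls $q=1$: one uses the Mackey identity $\aug_{F'}\comp\eval_{\rho'\epsilon_{F'}^n}\comp\ringtransf_{\mathrm{res}}=\aug_{F}\comp\eval_{\epsilon_F^n\Ind_{F'}^F\rho'}$ together with inductivity of Artin $L$-values to see that the quotient lies in $\cSK_1$, and then the vanishing trick.

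The gap is your claim that \emph{all four} identities reduce to the single statement $(\star)$. Your reduction works for part~(1), but not for part~(2). In part~(2) the sheaf $\cmplx{\sheaf{G}}$ lives on $U_{F'}$, and the functor $\ringtransf_{\Lambda[[\Gal(F_\infty/F)]]}$ on the right-hand side is induction from $G'$ to $G$, not restriction. After reducing $\cmplx{\sheaf{G}}$ to $(\Int_p)_{U_{F'}}$ one is left with the assertion
\[
\ncM_{F_\infty/F,\gamma}(U,{f_{F'}}_*(\Int_p)_{U_{F'}}(1))
=\ringtransf_{\mathrm{ind}}\,\ncM_{F_\infty/F',\gamma^r}(U_{F'},(\Int_p)_{U_{F'}}(1)),
\]
which is the paper's $q'=1$ and is \emph{not} a formal consequence of $(\star)$. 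The paper handles it by introducing the $\Int_p$-$\Int_p[[G]]$-bimodule $M=\Int_p[G'\backslash G]$, so that ${f_{F'}}_*(\Int_p)_{U_{F'}}\simeq\ringtransf_{\tilde M}(\Int_p)_U$ and $\ncM_{F_\infty/F,\gamma}(U,{f_{F'}}_*(\Int_p)_{U_{F'}}(1))=\ringtransf_{M[[G]]^\delta}\ncM_{F_\infty/F,\gamma}(U,(\Int_p)_U(1))$; the resulting quotient $q'$ is then shown to lie in $\cSK_1$ using the \emph{other} Mackey identity
\[
\aug_F\comp\eval_{\rho\epsilon_F^n}\comp\ringtransf_{\mathrm{ind}}
=\aug_{F'}\comp\eval_{\epsilon_{F'}^n\Res_F^{F'}\rho}
\]
together with $\eval_{\Ind_{F'}^F\Res_F^{F'}\rho}=\eval_{\rho}\comp\ringtransf_{M[[G]]^\delta}$ and the equality $L_{\Sigma,\emptyset}(\Ind_{F'}^F\Res_F^{F'}\rho,s)=L_{\Sigma_{F'},\emptyset}(\Res_F^{F'}\rho,s)$. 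In short, you need two base-change statements at the $\Int_p$-level, one for restriction and one for induction, each fed by its own Mackey formula; your plan only establishes the first. The interchange law you invoke, equation \eqref{eqn:ringtransf under change of fields II} in the paper, lets you move the bimodule twist past induction, but it does not manufacture $q'=1$ out of $q=1$.
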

\begin{proof}
We prove the identities for the global modification factors; the proof for the global dual modification factors is the same.

We first note that for any complex $\cmplx{P}$ of $\Lambda'$-$\Lambda[[\Gal(F_\infty/F)]]$-bimodules, strictly perfect as complex of $\Lambda'$-modules, there exists an obvious isomorphism of complexes of $\Lambda'[[\Gal(F_\infty/F')]]$-$\Lambda[[\Gal(F_\infty/F)]]$-bimodules
\begin{multline*}
 \Lambda'[[\Gal(F_\infty/F)]]\tensor_{\Lambda'[[\Gal(F_\infty/F)]]}\cmplx{{P[[\Gal(F_\infty/F)]]^\delta}}\isomorph\\
 \cmplx{{P[[\Gal(F_\infty/F')]]^\delta}}\tensor_{\Lambda[[\Gal(F_\infty/F')]]}\Lambda[[\Gal(F_\infty/F)]].
\end{multline*}
Hence,
\begin{equation}\label{eqn:ringtransf under change of fields I}
 \ringtransf_{\Lambda'[[\Gal(F_\infty/F)]]}\comp\ringtransf_{\cmplx{{P[[\Gal(F_\infty/F)]]^\delta}}}=\ringtransf_{\cmplx{{P[[\Gal(F_\infty/F')]]^\delta}}}\comp\ringtransf_{\Lambda[[\Gal(F_\infty/F)]]}
\end{equation}
as homomorphisms from $\KTh_1(\Lambda[[\Gal(F_\infty/F)]])$ to $\KTh_1(\Lambda'[[\Gal(F_\infty/F')]])$. Likewise, for a complex $\cmplx{Q}$ of $\Lambda'$-$\Lambda[[\Gal(F_\infty/F')]]$-bimodules, strictly perfect as complex of $\Lambda'$-modules, we have an equality
\begin{equation}\label{eqn:ringtransf under change of fields II}
 \ringtransf_{\Lambda'[[\Gal(F_\infty/F)]]}\comp\ringtransf_{\cmplx{{Q[[\Gal(F_\infty/F')]]^\delta}}}=\ringtransf_{\cmplx{{Q[[\Gal(F_\infty/F)]]^\delta}}}\comp\ringtransf_{\Lambda[[\Gal(F_\infty/F)]]}.
\end{equation}
in $\Hom(\KTh_1(\Lambda[[\Gal(F_\infty/F')]]),\KTh_1(\Lambda'[[\Gal(F_\infty/F)]]))$.

In particular, we may reduce to the case of finite $\Int_p$-algebras $\Lambda$ by choosing $\cmplx{P}=\Lambda=\cmplx{Q}$ with the trivial action of $\Gal(F_\infty/F)$ and $\Gal(F_\infty/F')$, respectively. By Proposition~\ref{prop:transformation of Euler factors}.(4) we may then shrink $U$ until $\cmplx{\sheaf{F}}$ and $\cmplx{\sheaf{G}}$ may be assumed to be strictly perfect complexes of locally constant \'etale sheaves. Using the identities \eqref{eqn:ringtransf under change of fields I} and \eqref{eqn:ringtransf under change of fields II} again, we may reduce to the case $\Lambda=\Int_p$ and $\cmplx{\sheaf{F}}=(\Int_p)_U$, $\cmplx{\sheaf{G}}=(\Int_p)_{U_{F'}}$. We may then further reduce to the case that $F_\infty/F'_\cyc$ is a finite extension.

Setting
\begin{align*}
q&\coloneqq\frac{\ncM_{F_\infty/F',\gamma^r}(U_{F'},f_{F'}^*(\Int_p)_{U_{F'}}(1))}{\ringtransf_{\Lambda[[\Gal(F_\infty/F)]]}\ncM_{F_\infty/F,\gamma}(U,(\Int_p)_U(1))}\in \KTh_1(\Lambda[[\Gal(F_\infty/F')]]),\\
q'&\coloneqq\frac{\ncM_{F_\infty/F,\gamma}(U,{f_{F'}}_*(\Int_p)_{U_{F'}}(1))}{\ringtransf_{\Lambda[[\Gal(F_\infty/F)]]}\ncM_{F_\infty/F',\gamma^r}(U_{F'},(\Int_p)_U(1))}\in\KTh_1(\Lambda[[\Gal(F_\infty/F)]]),
\end{align*}
it suffices to show that $q=1$ and $q'=1$.

Let $g\colon U_{F_\infty}\mto U_{F'}$ denote the restriction of $f\colon U_{F_\infty}\mto U$. Write
\[
 M\coloneqq\Int_p[\Gal(F_\infty/F')\backslash \Gal(F_\infty/F)]
\]
for the $\Int_p$-$\Int_p[[\Gal(F_\infty/F)]]$-bimodule freely generated as $\Int_p$-module by the right cosets $\Gal(F_\infty/F)\sigma$ for $\sigma\in \Gal(F_\infty/F)$ and on which $\tau\in\Gal(F_\infty/F)$ operates by right multiplication. From Proposition~\ref{prop:ring change for coverings} we conclude
\begin{gather*}
\begin{split}
\ringtransf_{\Int_p[[\Gal(F_\infty/F)]]}\RDer\Sectc(U,f_!f^*(\Int_p)_U(1))&\sim\RDer\Sectc(U,{f_{F'}}_*g_!g^*f_{F'}^*(\Int_p)_U(1))\\
                                                                     &\sim\RDer\Sectc(U_{F'},g_!g^*(\Int_p)_{U_{F'}}(1)),
\end{split}\\
\begin{split}
\ringtransf_{\Int_p[[\Gal(F_\infty/F)]]}\RDer\Sectc(U_{F'},g_!g^*&(\Int_p)_{U_{F'}}(1))\\
                                                                          &\sim\ringtransf_{\Int_p[[\Gal(F_\infty/F)]]}\RDer\Sectc(U,{f_{F'}}_*g_!g^*(\Int_p)_{U_{F'}}(1))\\
                                                                          &\sim\RDer\Sectc(U,f_!f^*{f_{F'}}_*(\Int_p)_{U_{F'}}(1))\\
                                                                          &\sim\ringtransf_{M[[\Gal(F_\infty/F)]]^\delta}\RDer\Sectc(U,f_!f^*(\Int_p)_U(1)).\\
\end{split}
\end{gather*}
Additionally, we note that
\[
 \ncM_{F_\infty/F,\gamma}(U,{f_{F'}}_*(\Int_p)_U(1))=
\ringtransf_{M[[\Gal(F_\infty/F)]]^\delta}\ncM_{F_\infty/F,\gamma}(U,(\Int_p)_U(1))
\]
by Theorem~\ref{thm:general modification factors}.

From this and from Proposition~\ref{prop:transformation for algebraic L-function}, we conclude
\begin{align*}
q&=\frac{\ncL_{F_\infty/F'}(U_{F'},(\Int_p)_{U_{F'}}(1))}{\ringtransf_{\Lambda[[\Gal(F_\infty/F)]]}\ncL_{F_\infty/F}(U,(\Int_p)_U(1))},\\
q'&=\frac{\ringtransf_{M[[\Gal(F_\infty/F)]]^\delta}\ncL_{F_\infty/F}(U,(\Int_p)_U(1))}{\ringtransf_{\Lambda[[\Gal(F_\infty/F)]]}\ncL_{F_\infty/F'}(U_{F'},(\Int_p)_U(1))}.
\end{align*}

Let $C/\Rat_p$ be a finite field extension and
\begin{align*}
\rho'&\colon \Gal(F_\infty/F')\mto\Gl_d(\IntR_C)\\
\rho &\colon \Gal(F_\infty/F)\mto\Gl_d(\IntR_C)
\end{align*}
be Artin representations. Write
\begin{align*}
\aug_{F}&\colon \IntR_C[[\Gal(F_{\cyc}/F)]]\mto \IntR_C\\
\aug_{F'}&\colon \IntR_C[[\Gal(F'_{\cyc}/F')]]\mto \IntR_C
\end{align*}
for the augmentation maps. We denote by $\Ind_{F'}^F\rho'$ and $\Res_{F}^{F'}\rho$ the induced an restricted representations, respectively.

Then for every $n\in\Int$
\[
 \aug_{F'}\comp\eval_{\rho'\epsilon_{F'}^n}\comp \ringtransf_{\Int_p[[\Gal(F_\infty/F)]]}=\aug_{F}\comp\eval_{\Ind_{F'}^F\rho'\epsilon_{F'}^n}=\aug_{F}\comp\eval_{\epsilon_F^n\Ind_{F'}^F\rho'}
\]
as maps from $\KTh_1(\Int_p[[\Gal(F_\infty/F)]]_S)$ to $\PSp^1(C)$ and
\[
 \aug_{F}\comp\eval_{\rho\epsilon_{F}^n}\comp \ringtransf_{\Int_p[[\Gal(F_\infty/F)]]}=\aug_{F'}\comp\eval_{\Res_{F}^{F'}\rho\epsilon_F^n}=\aug_{F'}\comp\eval_{\epsilon_{F'}^n\Res_{F}^{F'}\rho}
\]
as maps from $\KTh_1(\Int_p[[\Gal(F_\infty/F')]]_S)$ to $\PSp^1(C)$. From \eqref{eqn:values of padic Lfunctions} and the transformation properties of the complex Artin $L$-functions with respect to inflation and restriction we conclude that for $n<-1$ and $\Sigma\coloneqq X-U$
\begin{align*}
\aug_{F'}\comp\eval_{\rho'\epsilon_{F'}^n}&(\ringtransf_{\Lambda[[\Gal(F_\infty/F)]]}\ncL_{F_\infty/F}(U,(\Int_p)_U(1)))=\\
                                                                                    &=L_{\Sigma,\emptyset}(\omega_{F}^{-n}\Ind_{F'}^F\rho',1+n)\\
                                                                                    &=L_{\Sigma_{F'},\emptyset}(\rho'\omega_{F'}^{-n},1+n)\\
                                                                                     &=\aug_{F'}\comp\eval_{\rho'\epsilon_{F'}^n}(\ncL_{F_\infty/F'}(U_{F'},(\Int_p)_{U_{F'}}(1))),\\
\aug_F\comp \eval_{\rho\epsilon_{F}^n}&(\ringtransf_{\Lambda[[\Gal(F_\infty/F)]]}\ncL_{F_\infty/F'}(U_{F'},(\Int_p)_{U_{F'}}(1))=\\
                                                                                                   &=L_{\Sigma_{F'},\emptyset}(\omega_{F'}^{-n}\Res_{F}^{F'}\rho,1+n)\\
                                                                                                   &= L_{\Sigma,\emptyset}(\omega_{F}^{-n}\Ind_{F}^{F'}\Res_{F}^{F'}\rho,1+n)\\
                                                                                         &=\aug_{F}\comp \eval_{\epsilon_{F}^n\Ind_{F}^{F'}\Res_{F}^{F'} \rho}(\ncL_{F_\infty/F}(U,(\Int_p)_U(1))))\\
                                                                                       &=\aug_{F}\comp \eval_{\rho\epsilon_{F}^n}(\ringtransf_{M[[\Gal(F_\infty/F)]]^\delta}\ncL_{F_\infty/F}(U,(\Int_p)_U(1))).
\end{align*}
From \cite[Lemma~3.4]{Burns:MCinNCIwasawaTh+RelConj} we conclude that $\eval_{\rho'}(q)=1$ in $\KTh_1(\IntR_C[[\Gamma]])$ and thus $\aug_{F'}(\eval_{\rho'}(q))=1$ in $C$ for every Artin representation $\rho'$ of $\Gal(F_\infty/F')$.
In particular, with $K$ running through the finite Galois extension fields of $F$ in $F_\infty$, the images of $q$ in the groups $\KTh_1(\Rat_p[\Gal(K/F)])$ are trivial. This implies
\[
q\in\cSK_1(\Int_p[[\Gal(F_\infty/F')]]).
\]
 Using Corollary~\ref{cor:vanishing extension} we find a suitable admissible extension $L_\infty/F$ unramified over $U'\subset U$ such that
\[
\ringtransf_{\Int_p[[\Gal(F_\infty/F')]]}\colon\cSK_1(\Int_p[[\Gal(L_\infty/F')]])\mto\cSK_1(\Int_p[[\Gal(F_\infty/F')]])
\]
is the zero map. As
\[
 q=\ringtransf_{\Int_p[[\Gal(F_\infty/F')]]}\left(\frac{\ncM_{L_\infty/F',\gamma^r}(U'_{F'},f_{F'}^*(\Int_p)_{U'_{F'}}(1))}{\ringtransf_{\Lambda[[\Gal(L_\infty/F)]]}\ncM_{L_\infty/F,\gamma}(U',(\Int_p)_{U'}(1))}\right),
\]
we conclude $q=1$. The proof that $q'=1$ follows the same pattern.
\end{proof}

\begin{defn}
Let $F$ be a totally real field, $k\colon U\mto W$ be an open immersion of open dense subschemes of $X=\Spec \IntR_F$ such that $p$ is invertible on $W$, and $\Lambda$ be an adic $\Int_p$-algebra. Fix a topological generator $\gamma\in\Gal(F_{\cyc}/F)$. For any $\cmplx{\sheaf{F}}$ in $\cat{PDG}^{\cont,\infty}(U,\Lambda)$, and any really admissible extension $F_\infty/F$ unramified over $U$, we set
\begin{align*}
\ncM_{F_\infty/F,\gamma}(W,\RDer k_*\cmplx{\sheaf{F}}(1))&\coloneqq \ncM_{F_\infty/F,\gamma}(U,\cmplx{\sheaf{F}}(1))\mkern-10mu\prod_{x\in W-U}\mkern-10mu\ncM_{F_\infty/F,\gamma}(x,\RDer k_*f_!f^*\cmplx{\sheaf{F}}(1)),\\
\ncM^{\Mdual}_{F_\infty/F,\gamma}(W,\RDer k_*\cmplx{\sheaf{F}})&\coloneqq \ncM^{\Mdual}_{F_\infty/F,\gamma}(U,\cmplx{\sheaf{F}})\mkern-10mu\prod_{x\in W-U}\mkern-10mu\ncM^{\Mdual}_{F_\infty/F,\gamma}(x,k_!f_!f^*\cmplx{\sheaf{F}})
\end{align*}
in $\KTh_1(\Lambda[[\Gal(F_\infty/F)]])$ and
\begin{align*}
\ncL_{F_\infty/F}(W,\RDer k_*\cmplx{\sheaf{F}}(1))&\coloneqq \ncM_{F_\infty/F,\gamma}(W,\RDer k_*\cmplx{\sheaf{F}}(1))s_\gamma(-[\RDer\Sectc(W,\RDer k_*f_!f^*\cmplx{\sheaf{F}}(1))]),\\
\ncL^{\Mdual}_{F_\infty/F}(W,k_!\cmplx{\sheaf{F}})&\coloneqq \ncM^{\Mdual}_{F_\infty/F,\gamma}(W,k_!\cmplx{\sheaf{F}})s_{\gamma^{-1}}([\RDer\Sect(W,k_!f_!f^*\cmplx{\sheaf{F}})])
\end{align*}
in $\KTh_1(\Lambda[[\Gal(F_\infty/F)]]_S)$.
\end{defn}

Note that we do not assume that $F_\infty/F$ is unramified over $W$. If it is unramified over $W$, then
\begin{align*}
\RDer\Sectc(W,\RDer k_*f_!f^*\cmplx{\sheaf{F}}(1))&=\RDer\Sectc(W,f_!f^*\RDer k_*\cmplx{\sheaf{F}}(1)),\\
\RDer\Sect(W,k_!f_!f^*\cmplx{\sheaf{F}})&=\RDer\Sect(W,f_!f^*k_!\cmplx{\sheaf{F}})
\end{align*}
and the two possible definitions of $\ncM_{F_\infty/F,\gamma}(W,\RDer k_*\cmplx{\sheaf{F}}(1))$ and $\ncM^{\Mdual}_{F_\infty/F,\gamma}(W,k_!\cmplx{\sheaf{F}})$ agree. Moreover, by Proposition~\ref{prop:changing the generator}, $\ncL_{F_\infty/F}(W,\RDer k_*\cmplx{\sheaf{F}}(1))$ and $\ncL^{\Mdual}_{F_\infty/F}(W,k_!\cmplx{\sheaf{F}})$ do not depend on the choice of $\gamma$.

In the following corollary, we compile a list of the transformation properties of $\ncL_{F_\infty/F}(W,\RDer k_*\cmplx{\sheaf{F}}(1))$ and $\ncL^{\Mdual}_{F_\infty/F}(W,k_!\cmplx{\sheaf{F}})$.

\begin{cor}\label{cor:transformation properties for Lfunctions}
Let $F$ be a totally real field, $k\colon U\mto W$ be an open immersion of open dense subschemes of $X=\Spec \IntR_F$ such that $p$ is invertible on $W$, and $\Lambda$ be an adic $\Int_p$-algebra. Fix a  $\cmplx{\sheaf{F}}$ in $\cat{PDG}^{\cont,\infty}(U,\Lambda)$, and a really admissible extension $F_\infty/F$ unramified over $U$.
\begin{enumerate}
\item Write $f\colon U_{F_\infty}\mto U$ for the system of coverings associated to $F_\infty/F$. Then
\begin{align*}
\bh\ncL_{F_\infty/F}(W,\RDer k_*\cmplx{\sheaf{F}}(1))&=-[\RDer\Sectc(W,\RDer k_* f_!f^*\cmplx{\sheaf{F}}(1))],\\
\bh\ncL^{\Mdual}_{F_\infty/F}(W,k_!\cmplx{\sheaf{F}})&=[\RDer\Sect(W,k_! f_!f^*\cmplx{\sheaf{F}})]
\end{align*}
\item If $\cmplx{\sheaf{G}}$ and $\cmplx{\sheaf{F}}$ are weakly equivalent in $\cat{PDG}^{\cont,\infty}(U,\Lambda)$, then
\begin{align*}
 \ncL_{F_\infty/F}(W,\RDer k_*\cmplx{\sheaf{F}}(1))&=\ncL_{F_\infty/F}(W,\RDer k_*\cmplx{\sheaf{G}}(1)),\\
 \ncL^{\Mdual}_{F_\infty/F}(W,k_!\cmplx{\sheaf{F}})&=\ncL^{\Mdual}_{F_\infty/F}(W,k_!\cmplx{\sheaf{G}}).
\end{align*}
\item If $0\mto\cmplx{\sheaf{F}'}\mto \cmplx{\sheaf{F}}\mto\cmplx{\sheaf{F}''}\mto 0$ is an exact sequence in $\cat{PDG}^{\cont,\infty}(U,\Lambda)$, then
\begin{align*}
 \ncL_{F_\infty/F}(W,\RDer k_*\cmplx{\sheaf{F}}(1))&=\ncL_{F_\infty/F}(W,\RDer k_*\cmplx{\sheaf{F}'}(1))\ncL_{F_\infty/F}(W,\RDer k_*\cmplx{\sheaf{F}''}(1)),\\
 \ncL^{\Mdual}_{F_\infty/F}(W,k_!\cmplx{\sheaf{F}})&=\ncL^{\Mdual}_{F_\infty/F}(W,k_!\cmplx{\sheaf{F}'})\ncL^{\Mdual}_{F_\infty/F}(W,k_!\cmplx{\sheaf{F}''}).
\end{align*}
\item If $W'$ is an open dense subscheme of $X$ on which $p$ is invertible and $k'\colon W\mto W'$ is an open immersion, then
\begin{align*}
\ncL_{F_\infty/F}(W',\RDer (k'k)_*\cmplx{\sheaf{F}}(1))=&\ncL_{F_\infty/F}(W,\RDer k_*\cmplx{\sheaf{F}}(1))\\
&\prod_{x\in W'-W}\ncL_{F_\infty/F}(x,\RDer (k'k)_*\cmplx{\sheaf{F}}(1)),\\
\ncL^{\Mdual}_{F_\infty/F}(W',(k'k)_!\cmplx{\sheaf{F}})=&\ncL^{\Mdual}_{F_\infty/F}(W,k_!\cmplx{\sheaf{F}})\\
&\prod_{x\in W'-W}\ncL^{\Mdual}_{F_\infty/F}(x,(k'k)_!\cmplx{\sheaf{F}}).
\end{align*}
\item If $i\colon x\mto U$ is a closed point, then
\begin{align*}
\ncL_{F_\infty/F}(W,\RDer k_*i_*i^*\cmplx{\sheaf{F}}(1))&=\ncL_{F_\infty/F}(x,\cmplx{\sheaf{F}}(1)),\\
\ncL^{\Mdual}_{F_\infty/F}(W,k_!i_*\RDer i^!\cmplx{\sheaf{F}})&=\ncL^{\Mdual}_{F_\infty/F}(x,\cmplx{\sheaf{F}}).
\end{align*}
\item If $F'_\infty/F$ is a really admissible subextension of $F_\infty/F$, then
\begin{align*}
\ringtransf_{\Lambda[[\Gal(F'_\infty/F)]]}(\ncL_{F_\infty/F}(W,\RDer k_*\cmplx{\sheaf{F}}(1)))&=\ncL_{F'_\infty/F}(W,\RDer k_*\cmplx{\sheaf{F}}(1)),\\
\ringtransf_{\Lambda[[\Gal(F'_\infty/F)]]}(\ncL^{\Mdual}_{F_\infty/F}(W,k_!\cmplx{\sheaf{F}}))&=\ncL^{\Mdual}_{F'_\infty/F}(W,k_!\cmplx{\sheaf{F}}).
\end{align*}
\item If $\Lambda'$ is another adic $\Int_p$-algebra and $\cmplx{P}$ is a complex of $\Lambda'$-$\Lambda[[\Gal(F_\infty/F)]]$-bimodules, strictly perfect as complex of $\Lambda'$-modules, then
\begin{align*}
\ringtransf_{\cmplx{{P[[\Gal(F_\infty/F)]]^{\delta}}}}(\ncL_{F_\infty/F}(W,\RDer k_*\cmplx{\sheaf{F}}(1)))&=
\ncL_{F_\infty/F}(W,\RDer k_*\ringtransf_{\cmplx{\tilde{P}}}(\cmplx{\sheaf{F}})(1)),\\
\ringtransf_{\cmplx{{P[[\Gal(F_\infty/F)]]^{\delta}}}}(\ncL^{\Mdual}_{F_\infty/F}(W,k_!\cmplx{\sheaf{F}}))&=
\ncL^{\Mdual}_{F_\infty/F}(W,k_!\ringtransf_{\cmplx{\tilde{P}}}(\cmplx{\sheaf{F}})).
\end{align*}
\item If $F'/F$ is a finite extension inside $F_\infty$ and $f_{F'}\colon U_{F'}\mto U$ the associated covering, then
\begin{align*}
\ringtransf_{\Lambda[[\Gal(F_\infty/F)]]}(\ncL_{F_\infty/F}(W,\RDer k_*\cmplx{\sheaf{F}}(1)))&=
\ncL_{F_\infty/F'}(W_{F'},\RDer k_*f_{F'}^*\cmplx{\sheaf{F}}(1)),\\
\ringtransf_{\Lambda[[\Gal(F_\infty/F)]]}(\ncL^{\Mdual}_{F_\infty/F}(W,k_!\cmplx{\sheaf{F}}))&=
\ncL^{\Mdual}_{F_\infty/F'}(W_{F'},k_!f_{F'}^*\cmplx{\sheaf{F}}).
\end{align*}
\item With the notation of $(8)$, if $\cmplx{\sheaf{G}}$ is in $\cat{PDG}^{\cont,\infty}(U_{F'},\Lambda)$, then
\begin{align*}
\ringtransf_{\Lambda[[\Gal(F_\infty/F)]]}(\ncL_{F_\infty/F'}(W_{F'},\RDer k_*\cmplx{\sheaf{G}}(1)))&=
\ncL_{F_\infty/F}(W,\RDer k_*{f_{F'}}_*\cmplx{\sheaf{G}}(1)),\\
\ringtransf_{\Lambda[[\Gal(F_\infty/F)]]}(\ncL^{\Mdual}_{F_\infty/F'}(W_{F'},k_!\cmplx{\sheaf{G}}))&=
\ncL^{\Mdual}_{F_\infty/F}(W,k_!{f_{F'}}_*\cmplx{\sheaf{G}}).
\end{align*}
\item If $\sheaf{F}$ is a smooth $\Lambda$-adic sheaf on $U$ which is smooth at $\infty$, then
\[
\ncL^{\Mdual}_{F_\infty/F}(W,k_!\sheaf{F})=(\ncL_{F_\infty/F}(W,\RDer k_*\sheaf{F}^{\mdual_{\Lambda}}(1)))^{\Mdual}.
\]
\item If $C/\Rat_p$ is a finite field extension and $\rho\colon \Gal(F_\infty/F)\mto \Gl_d(\IntR_C)$ is an Artin representation, then
\begin{align*}
\eval_{\rho\epsilon_F^{-n}}(\ncL_{F_\infty/F}(W,\RDer k_*(\Int_p)_U(1)))&=\ncL_{F_\cyc/F}(W,\RDer k_*\sheaf{M}(\rho\omega_F^n)(1-n)),\\
\eval_{\rho\epsilon_F^{n}}(\ncL^{\Mdual}_{F_\infty/F}(W,\RDer k_*(\Int_p)_U))&=\ncL^{\Mdual}_{F_\cyc/F}(W,\RDer k_*\sheaf{M}(\rho\omega_F^{-n})(n)),
\end{align*}
for any integer $n$.
\item If $C/\Rat_p$ is a finite field extension and $\rho\colon \Gal_F\mto \Gl_d(\IntR_C)$ is an Artin representation which factors through a totally real field and which is unramified over $U$, then
\begin{align*}
 \aug(\ncL_{F_\cyc/F}(W,\RDer k_*\sheaf{M}(\rho\omega_F^n)(1-n)))&=L_{\Sigma,\Tau}(\rho\omega_F^n,1-n),\\
 \aug(\ncL^{\Mdual}_{F_\cyc/F}(W,k_!\sheaf{M}(\rho\omega^{-n}_F)(n)))&=L_{\Sigma,\Tau}(\check{\rho}\omega^n_F,1-n)^{-1}
\end{align*}
with $\Sigma\coloneqq X-W$, $\Tau\coloneqq W-U$ and any integer $n>1$.
\end{enumerate}
\end{cor}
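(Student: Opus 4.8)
The plan is to deduce all thirteen assertions from the definitions
\[
\ncL_{F_\infty/F}(W,\RDer k_*\cmplx{\sheaf{F}}(1))=\ncM_{F_\infty/F,\gamma}(W,\RDer k_*\cmplx{\sheaf{F}}(1))\,s_\gamma\bigl(-[\RDer\Sectc(W,\RDer k_*f_!f^*\cmplx{\sheaf{F}}(1))]\bigr),
\]
\[
\ncL^{\Mdual}_{F_\infty/F}(W,k_!\cmplx{\sheaf{F}})=\ncM^{\Mdual}_{F_\infty/F,\gamma}(W,k_!\cmplx{\sheaf{F}})\,s_{\gamma^{-1}}\bigl([\RDer\Sect(W,k_!f_!f^*\cmplx{\sheaf{F}})]\bigr)
\]
by reading off, for each part, the corresponding transformation behaviour of the three ingredients: the global modification factors, for which one has Theorems~\ref{thm:general modification factors}, \ref{thm:general dual modification factors} and \ref{thm:change of base field} together with the defining product expansion $\ncM_{F_\infty/F,\gamma}(W,\RDer k_*\cmplx{\sheaf{F}}(1))=\ncM_{F_\infty/F,\gamma}(U,\cmplx{\sheaf{F}}(1))\prod_{x\in W-U}\ncM_{F_\infty/F,\gamma}(x,\RDer k_*f_!f^*\cmplx{\sheaf{F}})$ and its dual; the local non-commutative Euler and modification factors, for which one has Propositions~\ref{prop:transformation of Euler factors}, \ref{prop:transformation of dual Euler factors}, \ref{prop:Euler factors under duality}, \ref{prop:Euler factors are char elements} and \ref{prop:dual Euler factors are char elements}; and the algebraic $L$-functions $s_\gamma$, for which one has Propositions~\ref{prop:transformation for algebraic L-function}, \ref{prop:s and Mdual commute} and \ref{prop:changing the generator}. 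Throughout one uses that $\RDer\Sectc(W,\RDer k_*f_!f^*(-)(1))$, $\RDer\Sect(W,k_!f_!f^*(-))$, $\RDer\Sect(x,i^*\RDer k_*f_!f^*(-))$ and $\RDer\Sect(x,\RDer i^!k_!f_!f^*(-))$ are Waldhausen exact functors with values in $\cat{PDG}^{\cont,w_H}(\Lambda[[\Gal(F_\infty/F)]])$ by Theorems~\ref{thm: S torsion global} and \ref{thm: S torsion local}, so that their $\KTh_0$-classes are invariant under weak equivalences, additive on exact sequences, split apart along the excision triangles, and compatible with the ring-change functors $\ringtransf$ up to weak equivalence \cite[5.5.7]{Witte:PhD}; and that $s_\gamma$, $\ringtransf$ and $(-)^{\Mdual}$ are group homomorphisms with $\bh\circ s_\gamma=\id$ and $\bh$ vanishing on the image of $\KTh_1(\Lambda[[\Gal(F_\infty/F)]])$.

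With this dictionary, parts (1)--(9) are formal. Part (1) is immediate from $\bh\circ s_\gamma=\id$ and the vanishing of $\bh$ on $\KTh_1$. Parts (2) and (3) follow because $\ncM_{F_\infty/F,\gamma}(W,\RDer k_*(-)(1))$, $\ncM^{\Mdual}_{F_\infty/F,\gamma}(W,k_!(-))$ and the two cohomology classes are all homomorphisms out of $\KTh_0(\cat{PDG}^{\cont,\infty}(U,\Lambda))$ and $s_\gamma$ is a group homomorphism. For parts (4) and (5) one decomposes the cohomology classes along the excision triangles and uses Proposition~\ref{prop:Euler factors are char elements} (respectively Proposition~\ref{prop:dual Euler factors are char elements}) to identify the product of the local modification factor and the algebraic $L$-function at a point $x$ with the corresponding (dual) non-commutative Euler term; part (5) additionally invokes the proper base change weak equivalences $i^*f_!f^*i_*i^*\cmplx{\sheaf{F}}\sim i^*f_!f^*\cmplx{\sheaf{F}}$ and $\RDer i^!f_!f^*i_*\RDer i^!\cmplx{\sheaf{F}}\sim\RDer i^!f_!f^*\cmplx{\sheaf{F}}$ recorded in the proof of Proposition~\ref{prop:Euler factors under duality}. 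Part (6) is the $\ringtransf_{\Lambda[[\Gal(F'_\infty/F)]]}$-compatibility of the three ingredients together with Proposition~\ref{prop:ring change for coverings}.(2); part (7) is the analogous statement for the bimodule-twist functor $\ringtransf_{\cmplx{P[[\Gal(F_\infty/F)]]^\delta}}$, using Theorem~\ref{thm:general modification factors}.(4), Proposition~\ref{prop:transformation of Euler factors}.(1), Proposition~\ref{prop:transformation for algebraic L-function}.(1) and Proposition~\ref{prop:ring change for coverings}.(1); and parts (8), (9) combine Theorem~\ref{thm:change of base field}, Propositions~\ref{prop:transformation of Euler factors}.(3)--(4), \ref{prop:transformation of dual Euler factors}.(3)--(4), \ref{prop:transformation for algebraic L-function}.(2)--(3) and Proposition~\ref{prop:ring change for coverings}.(3)--(4), moving the ring-change functors past the cohomology complexes by means of the interchange identities \eqref{eqn:ringtransf under change of fields I}, \eqref{eqn:ringtransf under change of fields II}, and keeping track of the passage from the topological generator $\gamma$ to $\gamma^r$ with $r=[F'\cap F_\cyc:F]$.

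For part (10) one again assembles: with $\sheaf{F}$ smooth and smooth at $\infty$, expand both sides, apply the last formula of Theorem~\ref{thm:general dual modification factors} for the global factor over $U$, Proposition~\ref{prop:Euler factors under duality}.(1) for the local factors at $x\in W-U$, Proposition~\ref{prop:s and Mdual commute} for the $s$-parts, and the Artin--Verdier identity $[\RDer\Sect(W,k_!f_!f^*\sheaf{F})]^{\Mdual}=[\RDer\Sectc(W,\RDer k_*f_!f^*\sheaf{F}^{\mdual_\Lambda}(1))]$ in $\KTh_0(\Lambda^{\op}[[\Gal(F_\infty/F)]],S)$, which follows from \eqref{eqn:Artin-Verdier duality for coverings} together with Remark~\ref{rem:exchanging W and V} and Proposition~\ref{prop:compatibility of the LambdaH dual}. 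For part (11) the point is the isomorphism of $\IntR_C$-adic sheaves $\sheaf{M}(\rho\omega_F^n)(1-n)\cong\sheaf{M}(\rho\epsilon_F^{-n})(1)$, which is just $\cycchar_F=\epsilon_F\omega_F$ on the level of twisting characters; applying parts (6) and (7) to the bimodule $(\rho\epsilon_F^{-n})^{\sharp}$ followed by $\ringtransf_{\IntR_C[[\Gamma]]}$ rewrites the left-hand side as $\ncL_{F_\cyc/F}(W,\RDer k_*\sheaf{M}(\rho\epsilon_F^{-n})(1))$, and by the twist isomorphism and part (2) the right-hand side is the same object; the dual statement is handled identically, using \eqref{eqn:evaluation and MDual}. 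The genuinely substantive step is part (12): here one reduces by parts (4) and (5) to the case $W=U$ together with a product of (dual) Euler factors over $W-U$, applies the augmentation $\aug$, inserts \eqref{eqn:values of padic Lfunctions} (and its dual via \eqref{eqn:evaluation and MDual}) for the $U$-contribution and the explicit evaluations \eqref{eqn:evaluation of Euler factors} of $\aug$ on the (dual) Euler factors at the points of $U$ and of $\Tau=W-U$, and matches the outcome with the Euler product for $L_{\Sigma,\Tau}(\rho\omega_F^n,1-n)$ through the change-of-truncation formula from Section~\ref{sec:Artin reps}. I expect the main obstacle to lie precisely in this last matching, and to a lesser degree in part (10): keeping the cyclotomic and Teichm\"uller characters $\epsilon_F$, $\omega_F$, the Tate twists $(1)$ versus $(1-n)$, the passage to the contragredient $\check{\rho}$, and the geometric-versus-arithmetic normalisation conventions consistent, so that the reductions to \eqref{eqn:values of padic Lfunctions} and \eqref{eqn:evaluation of Euler factors} come out with the correct exponents and on the correct side of the $\PSp^1(C)$-valued augmentation.
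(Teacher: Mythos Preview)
Your proposal is correct and follows essentially the same route as the paper: decompose each $\ncL$ into its $\ncM$-factor and its $s_\gamma$-factor, and then invoke the corresponding transformation results for each piece. A few minor deviations are worth noting. For (5) the paper argues more simply that the Euler factor of $\RDer k_*i_*i^*\cmplx{\sheaf{F}}$ at any $y\neq x$ is $1$, and then uses (4) together with Theorem~\ref{thm:general modification factors}.(2) to collapse the whole $W$-$L$-function to the single factor at $x$; your base-change equivalences at $x$ are fine but you also need this vanishing at $y\neq x$. For (12) the paper does not invoke (5) at all: the first identity is literally the combination of \eqref{eqn:values of padic Lfunctions} (for the $U$-part) with \eqref{eqn:evaluation of Euler factors} (for the points of $\Tau$) via (4), and the second identity is obtained from the first by applying (10) and then \eqref{eqn:evaluation and MDual}, rather than by a parallel direct computation of the dual; this is both shorter and avoids the bookkeeping with $\check\rho$ and the Tate twists that you flag as a potential obstacle.
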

\begin{proof}
Properties (1)--(4) are clear by definition. For Property (5) we notice that for $y\in W$
\[
\ncL_{F_\infty/F}(y,\RDer k_*i_*i^*\cmplx{\sheaf{F}}(1))=\begin{cases}
                                                  \ncL_{F_\infty/F}(x,\cmplx{\sheaf{F}}(1)) & \text{if $y=x$,}\\
                                                  1 & \text{else.}
                                                 \end{cases}
\]
Hence,
\[
\ncL_{F_\infty/F}(W,\RDer k_*i_*i^*\cmplx{\sheaf{F}}(1))=\ncL_{F_\infty/F}(U,i_*i^*\sheaf{F}(1))=\ncL_{F_\infty/F}(x,\cmplx{\sheaf{F}}(1))
\]
by (4) and by Theorem~\ref{thm:general modification factors}.(2). The proof for the dual $L$-function is analogous.

Properties (6) and (7) follow from Theorem~\ref{thm:general modification factors} or Theorem~\ref{thm:general dual modification factors} combined with Proposition~\ref{prop:transformation for algebraic L-function} and either Proposition~\ref{prop:transformation of Euler factors} or Proposition~\ref{prop:transformation of dual Euler factors}. For Properties (8) and (9) one applies Theorem~\ref{thm:change of base field}. Property (10) follows from the last part of Theorem~\ref{thm:general dual modification factors} combined with Proposition~\ref{prop:s and Mdual commute} and Proposition~\ref{prop:Euler factors under duality}. Property (11) is just a special case of (7) in a different notation.

It remains to prove (12). The first identity is simply the combination of \eqref{eqn:values of padic Lfunctions} and \eqref{eqn:evaluation of Euler factors}. The second identity follows from \eqref{eqn:evaluation and MDual}, Property (10) and the first identity.
\end{proof}

\section{CM-admissible extensions}\label{sec:CM-extensions}

\begin{defn}
Let $F$ be a totally real number field and $F_\infty/F$ an admissible extension. We call $F_\infty/F$ \emph{CM-admissible} if $F_\infty$ is a CM-field, i.\,e.\ it is totally imaginary and there exists a (unique) involution $\cc\in\Gal(F_\infty/F)$ such that the fixed field $F_\infty^{+}$ of $\cc$ is totally real.
\end{defn}

Note that for a CM-admissible extension $F_\infty/F$ with Galois group
\[
G\coloneqq\Gal(F_\infty/F),
\]
the automorphism $\cc$ is uniquely determined and commutes with every other field automorphism of $F_\infty$. As usual, we write
\[
e_{-}\coloneqq\frac{1-\cc}{2},\quad e_{+}\coloneqq\frac{1+\cc}{2}\in\Lambda[[G]].
\]
for the corresponding central idempotents.

The extension $F^{+}_\infty/F$ is Galois and hence, a really admissible extension. We set $G^+\coloneqq\Gal(F_\infty^{+}/F)$. Moreover, we fix as before an immersion $k\colon U\mto W$ of open dense subschemes of $X=\Spec F$ such that $F_\infty/F$ is unramified over $U$ and $p\neq 2$ is invertible on $W$. Let
\[
 f^+\colon U_{F_\infty^+}\mto U
\]
denote the restriction of the family of coverings $f\colon U_{F_\infty}\mto U$ to $U_{F_\infty^+}$.

If $F_\infty$ contains the $p$-th roots of unity and hence, the $p^n$-th roots of unity for all $n\geq 1$, the cyclotomic character
\[
{\cycchar}_F\colon \Gal_F\mto \Int_p^\times,\quad g\zeta=\zeta^{\cycchar_F(g)},\quad g\in\Gal_F, \zeta\in\mu_{p^{\infty}}
\]
factors through $G\coloneqq\Gal(F_\infty/F)$.  We then obtain for every odd $n\in\Int$ a ring isomorphism
\[
 \Lambda[[G]]\mto \Lambda[[G_{+}]]\times \Lambda[[G_{+}]],\qquad G\ni g\mapsto (g^+,\cycchar_F(g)^n g^+),
\]
where $g^+$ denotes the image of $g\in G$ in $G^+$. The projections onto the two components corresponds to the decomposition of $\Lambda[[G]]$ with respect to $e_+$ and $e_-$.

We will construct the corresponding decomposition of $\cat{A}(\Lambda[[G]])$, where
\[
\cat{A}\in\set{\cat{PDG}^\cont, w_H\cat{PDG}^\cont, \cat{PDG}^{\cont,w_H}}.
\]
 Write $\Lambda(\cycchar_F^n)^\sharp$ for the $\Lambda$-$\Lambda[[G]]$-bimodule $\Lambda$ with $g\in G$ acting by $\cycchar_F^{n}(g^{-1})$ from the right and $\Lambda(\cycchar_F^n)^\sharp[[G]]^\delta$ for the $\Lambda[[G]]$-$\Lambda[[G]]$-bimodule $\Lambda[[G]]\tensor_{\Lambda}\Lambda(\cycchar_F^n)^\sharp$ with the diagonal right action of $G$. According to Example~\ref{exmpl:example functors}, we obtain Waldhausen exact functors
\[
\ringtransf_{\Lambda(\cycchar_F^n)^\sharp[[G]]^\delta}\colon\cat{A}(\Lambda[[G]])\mto\cat{A}(\Lambda[[G]]).
\]
 Moreover, considering $\Lambda[[G^+]]$ as a $\Lambda[[G^+]]$-$\Lambda[[G]]$-bimodule or as a $\Lambda[[G]]$-$\Lambda[[G^+]]$-bimodule, we obtain Waldhausen exact functors
\[
\ringtransf_{\Lambda[[G^+]]}\colon \cat{A}(\Lambda[[G]])\mto\cat{A}(\Lambda[[G^+]]),\qquad \ringtransf_{\Lambda[[G^+]]}\colon \cat{A}(\Lambda[[G^+]])\mto\cat{A}(\Lambda[[G]]).
\]
Note there exists isomorphisms of $\Lambda[[G]]$-$\Lambda[[G]]$-bimodules
\begin{equation*}
\begin{aligned}
e_+\Lambda[[G]]&\isomorph \Lambda[[G^+]]\tensor_{\Lambda[[G^+]]}\Lambda[[G^+]]\\
e_-\Lambda[[G]]&\isomorph \Lambda(\cycchar_F^n)^\sharp[[G]]^\delta\tensor_{\Lambda[[G]]}e_+\Lambda[[G]]\tensor_{\Lambda[[G]]}\Lambda(\cycchar_F^{-n})^\sharp[[G]]^\delta
\end{aligned}
\end{equation*}
for every odd $n\in\Int$ such that the composition
\[
 \ringtransf_{\Lambda[[G^+]]}\comp \ringtransf_{\Lambda[[G^+]]}\colon \cat{A}(\Lambda[[G]])\mto\cat{A}(\Lambda[[G]])
\]
is just the projection onto the $e_+$-component, whereas the projection onto the $e_-$-component may be written as
\[
 \ringtransf_{\Lambda(\cycchar_F^{n})^\sharp[[G]]^\delta\tensor_{\Lambda[[G]]}\Lambda[[G^+]]}\comp\ringtransf_{\Lambda[[G^+]]\tensor_{\Lambda[[G]]}\Lambda(\cycchar_F^{-n})^\sharp[[G]]^\delta}\colon \cat{A}(\Lambda[[G]])\mto\cat{A}(\Lambda[[G]]).
\]
We further note that
\[
 \ringtransf_{\Lambda(\cycchar_F^{n})^\sharp[[G]]^\delta}(f_!f^*\cmplx{\sheaf{F}})\isomorph f_!f^*\cmplx{\sheaf{F}}(n).
\]

If $\Lambda'$ is another adic $\Int_p$-algebra and $\cmplx{P}$ is a complex of $\Lambda'$-$\Lambda[[G]]$-bimodules, strictly perfect as complex of $\Lambda'$-modules, we set
\begin{equation}\label{eqn:decomposition in odd and even part}
 \cmplx{P}_+\coloneqq\cmplx{P}e_+,\qquad \cmplx{P}_-\coloneqq\cmplx{P}e_-
\end{equation}
such that $\cc$ acts trivially on $\cmplx{P}_+$ and by $-1$ on $\cmplx{P}_-$. Both are again complex of $\Lambda'$-$\Lambda[[G]]$-bimodules and strictly perfect as complex of $\Lambda'$-modules. In particular, we have an isomorphism of complexes of $\Lambda'[[G]]$-$\Lambda[[G]]$-bimodules
\[
 \cmplx{P[[G]]^\delta}\isomorph\cmplx{P_+[[G]]^\delta}\oplus\cmplx{P_-[[G]]^\delta}.
\]
Beware that $\cmplx{P_+[[G]]^\delta}$ differs from $\cmplx{P[[G]]^\delta} e_+$. The element $\cc$ acts as $\cc\tensor\id$ on the first complex and trivially on the second. In fact, we have
\begin{align*}
 \cmplx{P_+[[G]]^\delta}e_+&= e_+\cmplx{P_+[[G]]^\delta},&\cmplx{P_+[[G]]^\delta}e_-&= e_-\cmplx{P_+[[G]]^\delta},\\
 \cmplx{P_-[[G]]^\delta}e_+&= e_-\cmplx{P_-[[G]]^\delta},&\cmplx{P_-[[G]]^\delta}e_-&= e_+\cmplx{P_-[[G]]^\delta}.
\end{align*}
Moreover, the Waldhausen exact functors
\begin{align*}
\cat{PDG}^{\cont}(U,\Lambda)\mto \cat{PDG}^{\cont}(U,\Lambda'),&\qquad \cmplx{\sheaf{F}}\mapsto \ringtransf_{\cmplx{\widetilde{P_+}}}(\cmplx{\sheaf{F}}),\\
\cat{PDG}^{\cont}(U,\Lambda)\mto \cat{PDG}^{\cont}(U,\Lambda'),&\qquad \cmplx{\sheaf{F}}\mapsto \ringtransf_{\cmplx{\widetilde{P_-}}}(\cmplx{\sheaf{F}})(1)
\end{align*}
map complexes in $\cat{PDG}^{\cont,\infty}(U,\Lambda)$ to complexes in $\cat{PDG}^{\cont,\infty}(U,\Lambda')$.

Throughout the rest of this section, we assume the validity of Conjecture~\ref{conj:vanishing of mu}.

\begin{cor}\label{cor: S torsion global CM case}
Assume that $F_\infty/F$ is any CM-admissible extension unramified over $U$. For any $\cmplx{\sheaf{F}}$ in $\cat{PDG}^{\cont,\infty}(U,\Lambda)$, the complexes
\begin{align*}
 e_{+}\RDer\Sectc(W,\RDer k_*f_!f^*\cmplx{\sheaf{F}}(1)),&& e_{-}\RDer\Sectc(W,\RDer k_*f_!f^*\cmplx{\sheaf{F}}),\\
 e_{+}\RDer\Sect(W,k_!f_!f^*\cmplx{\sheaf{F}}),&& e_{-}\RDer\Sect(W,k_!f_!f^*\cmplx{\sheaf{F}}(1))
\end{align*}
are in $\cat{PDG}^{\cont,w_H}(\Lambda[[G]])$.
\end{cor}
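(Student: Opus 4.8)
The plan is to treat the four complexes one $e_{\pm}$-component at a time. The $e_+$-components reduce to the really admissible extension $F_\infty^{+}/F$, to which Theorem~\ref{thm: S torsion global} applies; the $e_-$-components are reduced to the $e_+$-case by an odd Tate twist, after first enlarging $F_\infty$ to contain $\mu_{p^\infty}$.

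\emph{The $e_+$-components.} First note that $F_\infty^{+}/F$ is really admissible: it is Galois (as $\cc$ is central in $G$), unramified outside a finite set, contains the totally real field $F_{\cyc}$, and $H^{+}=\Gal(F_\infty^{+}/F_{\cyc})=H/\langle\cc\rangle$ inherits a topologically finitely generated open pro-$p$ subgroup from $H$ because $\langle\cc\rangle$ has order prime to $p$. Under the ring isomorphism $e_+\Lambda[[G]]\isomorph\Lambda[[G^{+}]]$ (and correspondingly $e_+\Lambda[[H]]\isomorph\Lambda[[H^{+}]]$), the $e_+$-component $e_+\RDer\Sectc(W,\RDer k_*f_!f^*\cmplx{\sheaf{F}}(1))$, regarded as an object over $\Lambda[[G^{+}]]$, is weakly equivalent to $\RDer\Sectc(W,\RDer k_*{f^{+}}_!(f^{+})^*\cmplx{\sheaf{F}}(1))$; this follows from Proposition~\ref{prop:ring change for coverings}.(2) together with the compatibility of $\ringtransf_{\Lambda[[G^{+}]]}$ with $\RDer k_*$, $\RDer\Sectc$ and Tate twist. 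Likewise $e_+\RDer\Sect(W,k_!f_!f^*\cmplx{\sheaf{F}})$ corresponds to $\RDer\Sect(W,k_!{f^{+}}_!(f^{+})^*\cmplx{\sheaf{F}})$. By Theorem~\ref{thm: S torsion global} applied to $F_\infty^{+}/F$, both lie in $\cat{PDG}^{\cont,w_{H^{+}}}(\Lambda[[G^{+}]])$; since $\Lambda[[H^{+}]]\isomorph e_+\Lambda[[H]]$ is a finitely generated projective $\Lambda[[H]]$-module, a perfect complex of $\Lambda[[H^{+}]]$-modules is a perfect complex of $\Lambda[[H]]$-modules, and hence $e_+\RDer\Sectc(W,\RDer k_*f_!f^*\cmplx{\sheaf{F}}(1))$ and $e_+\RDer\Sect(W,k_!f_!f^*\cmplx{\sheaf{F}})$ are in $\cat{PDG}^{\cont,w_H}(\Lambda[[G]])$.

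\emph{The $e_-$-components.} I would first reduce to the case $\mu_{p^\infty}\subset F_\infty$. Replacing $F_\infty$ by $F_\infty(\mu_p)$ yields again a CM-admissible extension of $F$ unramified over $U$: a compositum of CM fields is CM, $\mu_p$ is unramified outside $p$ while $p$ is invertible on $U$, and $\Gal(F_\infty(\mu_p)/F_{\cyc})$ still has a topologically finitely generated open pro-$p$ subgroup (a pro-$p$ Sylow subgroup, using that $[F_\infty(\mu_p):F_\infty]$ is prime to $p$). As $F_{\cyc}\subseteq F_\infty$ and $F_{\cyc}(\mu_p)=F(\mu_{p^\infty})$ for $p$ odd, the character $\cycchar_F$ then factors through $\Gal(F_\infty(\mu_p)/F)$. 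By Example~\ref{exmpl:example functors}.(2) and Proposition~\ref{prop:ring change for coverings}.(2) the functor $\ringtransf_{\Lambda[[G]]}$ attached to the quotient $\Gal(F_\infty(\mu_p)/F)\mto G$ sends the four $e_-$-components over $F_\infty(\mu_p)$ to those over $F_\infty$ and preserves membership in the categories $\cat{PDG}^{\cont,w_{(-)}}$, so we may assume $\mu_{p^\infty}\subset F_\infty$. Now fix an odd integer $n$. Since $\cycchar_F(\cc)=-1$, twisting by $\cycchar_F^{n}$ interchanges the $\cc$-action, so for any $\cmplx{C}$ in $\cat{PDG}^{\cont}(\Lambda[[G]])$ one has $e_-\cmplx{C}\isomorph(e_+\cmplx{C}(n))(-n)$, and the twist functors $\ringtransf_{\Lambda(\cycchar_F^{\pm n})^\sharp[[G]]^\delta}$ preserve $\cat{PDG}^{\cont,w_H}(\Lambda[[G]])$ by Example~\ref{exmpl:example functors}.(1). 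Taking $\cmplx{C}=\RDer\Sectc(W,\RDer k_*f_!f^*\cmplx{\sheaf{F}})$ and using $\ringtransf_{\Lambda(\cycchar_F^{n})^\sharp[[G]]^\delta}(f_!f^*\cmplx{\sheaf{F}})\isomorph f_!f^*\cmplx{\sheaf{F}}(n)$, the remaining task is that $e_+\RDer\Sectc(W,\RDer k_*f_!f^*\cmplx{\sheaf{F}}(n))\in\cat{PDG}^{\cont,w_H}(\Lambda[[G]])$; but $\cmplx{\sheaf{F}}(n)=\cmplx{\sheaf{F}}(n-1)(1)$ with $\cmplx{\sheaf{F}}(n-1)$ smooth at $\infty$, because twisting a complex smooth at $\infty$ by the even integer $n-1$ again gives a complex smooth at $\infty$ (a complex conjugation acts trivially on $\mu_{p^m}^{\otimes(n-1)}$), so this is the $e_+$-case. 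Similarly, taking $\cmplx{C}=\RDer\Sect(W,k_!f_!f^*\cmplx{\sheaf{F}}(1))$ reduces the last complex to $e_+\RDer\Sect(W,k_!f_!f^*\cmplx{\sheaf{F}}(n+1))$ with $\cmplx{\sheaf{F}}(n+1)$ smooth at $\infty$ (the integer $n+1$ being even), again covered by the $e_+$-case.

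The main obstacle is the bookkeeping in the $e_-$-case: one must enlarge $F_\infty$ to contain $\mu_{p^\infty}$ without losing CM-admissibility or unramifiedness over $U$, and then keep precise track of the odd Tate twist built into the $e_-$-projection, choosing the parity of the auxiliary twist so that the sheaf to which Theorem~\ref{thm: S torsion global} is finally applied is still smooth at $\infty$.
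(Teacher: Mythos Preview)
Your proof is correct and follows essentially the same strategy as the paper: reduce the $e_+$-components to the really admissible extension $F_\infty^{+}/F$ via $\ringtransf_{\Lambda[[G^{+}]]}$ and invoke Theorem~\ref{thm: S torsion global}, and handle the $e_-$-components by adjoining $\mu_p$ and using an odd cyclotomic twist to pass to the $e_+$-case. The paper simply fixes the twist to be $n=\pm 1$ (so that the sheaf appearing after the twist is $\cmplx{\sheaf{F}}$ itself rather than an even Tate twist of it), whereas you carry a general odd $n$ and then observe that $\cmplx{\sheaf{F}}(n-1)$ and $\cmplx{\sheaf{F}}(n+1)$ remain smooth at~$\infty$; this is the same argument with slightly more bookkeeping.
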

\begin{proof}
Without loss of generality, we may enlarge $F_\infty$ by adjoining the $p$-th roots of unity. The claim of the corollary is then an immediate consequence of Theorem~\ref{thm: S torsion global} applied to
\begin{align*}
\ringtransf_{\Lambda[[G^+]]}(\RDer\Sectc(W,\RDer k_*f_!f^*\cmplx{\sheaf{F}}(1)))&\sim \RDer\Sectc(W,\RDer k_*(f^+)_!{(f^+)}^*\cmplx{\sheaf{F}}(1)),\\
\ringtransf_{\Lambda[[G^+]]}(\RDer\Sect(W,k_!f_!f^*\cmplx{\sheaf{F}}))&\sim \RDer\Sect(W,k_!(f^+)_!(f^+)^*\cmplx{\sheaf{F}}),\\
\ringtransf_{\Lambda[[G^+]]\tensor_{\Lambda[[G]]}\Lambda(\cycchar_F)^\sharp[[G]]^\delta}(\RDer\Sectc(W,\RDer k_*f_!f^*\cmplx{\sheaf{F}}))&\sim\RDer\Sectc(W,\RDer k_*(f^+)_!(f^+)^*\cmplx{\sheaf{F}}(1)),\\
\ringtransf_{\Lambda[[G^+]]\tensor_{\Lambda[[G]]}\Lambda(\cycchar_F^{-1})^\sharp[[G]]^\delta}(\RDer\Sect(W,k_!f_!f^*\cmplx{\sheaf{F}}(1)))&\sim\RDer\Sect(W,k_!(f^+)_!(f^+)^*\cmplx{\sheaf{F}})
\end{align*}
\end{proof}

Assume that $F_\infty/F$ is CM-admissible and that $F_\infty$ contains the $p$-th roots of unity. For any $\cmplx{\sheaf{F}}$ in $\cat{PDG}^{\cont,\infty}(U,\Lambda)$, we set
\begin{align*}
\ncL^+_{F_\infty/F}(W,\RDer k_*\cmplx{\sheaf{F}}(1))&\coloneqq \ringtransf_{\Lambda[[G^+]]}(\ncL_{F^+_\infty/F}(W,\RDer k_*\cmplx{\sheaf{F}}(1))),\\
\ncL^{\Mdual,+}_{F_\infty/F}(W,k_!\cmplx{\sheaf{F}})&\coloneqq \ringtransf_{\Lambda[[G^+]]}(\ncL^{\Mdual}_{F^+_\infty/F}(W,k_!\cmplx{\sheaf{F}})),\\
\ncL^{-}_{F_\infty/F}(W,\RDer k_*\cmplx{\sheaf{F}})&\coloneqq \ringtransf_{\Lambda(\cycchar_F^{-1})^\sharp[[G]]^\delta\tensor_{\Lambda[[G]]}\Lambda[[G^+]]}(\ncL_{F^+_\infty/F}(W,\RDer k_*\cmplx{\sheaf{F}}(1))),\\
\ncL^{\Mdual,-}_{F_\infty/F}(W,k_!\cmplx{\sheaf{F}}(1))&\coloneqq  \ringtransf_{\Lambda(\cycchar_F)^\sharp[[G]]^\delta\tensor_{\Lambda[[G]]}\Lambda[[G^+]]}(\ncL^{\Mdual}_{F^+_\infty/F}(W,k_!\cmplx{\sheaf{F}}))
\end{align*}
in $\KTh_1(\Lambda[[G]]_S)$. We extend this definition to CM-admissible subextensions $F'_\infty/F$ with $F'_\infty$ not containing the $p$-th roots of unity by taking the image of the elements under
\[
 \ringtransf_{\Lambda[[\Gal(F'_\infty/F)]]}\colon \KTh_1(\Lambda[[G]]_S)\mto\KTh_1(\Lambda[[\Gal(F'_\infty/F)]]_S).
\]
Furthermore, for $\varepsilon\in\set{+,-}$, $x\in W$ and $\cmplx{\sheaf{F}}$ in $\cat{PDG}^{\cont}(U,\Lambda)$ we set
\[
\begin{aligned}
 \ncL^{\varepsilon}_{F_\infty/F}(x,\RDer k_*\cmplx{\sheaf{F}})&\coloneqq\ringtransf_{e_{\varepsilon}\Lambda[[G]]}(\ncL_{F_\infty/F}(x,\RDer k_*\cmplx{\sheaf{F}})),\\
 \ncL^{\Mdual,\varepsilon}_{F_\infty/F}(x,k_!\cmplx{\sheaf{F}})&\coloneqq\ringtransf_{e_{\varepsilon}\Lambda[[G]]}(\ncL^{\Mdual}_{F_\infty/F}(x,k_!\cmplx{\sheaf{F}})).
\end{aligned}
\]
We will write $-\varepsilon\in\set{+,-}$ for the opposite sign.

Assume that $C/\Rat_p$ is a finite field extension and $\rho\colon \Gal_F\mto \Gl_d(\IntR_C)$ is an Artin representation unramified over $U$. If $\rho(\sigma)=-\id$ for every complex conjugation $\sigma\in\Gal_F$, then $\rho$ factors through a CM-extension of $F$. In particular, $\sheaf{M}(\rho\omega_F^{-1})$ is smooth on $U$ and at $\infty$ and we may define elements
\begin{equation}\label{eqn:twisted classical L-functions}
\ncL_{F_\cyc/F}(W,\RDer k_*\sheaf{M}(\rho\omega^n_F)(-n)),\quad \ncL^{\Mdual}_{F_\cyc/F}(W,k_!\sheaf{M}(\rho\omega^n_F)(1-n))
\end{equation}
by identifying $\sheaf{M}(\rho\omega^n_F)(-n)$ with $\sheaf{M}(\rho\omega_F^{-1}\omega^{n+1}_F)(1-(n+1))$. In particular,
\begin{align*}
 \aug(\ncL_{F_\cyc/F}(W,\RDer k_*\sheaf{M}(\rho\omega_F^n)(-n)))&=L_{\Sigma,\Tau}(\rho\omega_F^n,-n),\\
 \aug(\ncL^{\Mdual}_{F_\cyc/F}(W,k_!\sheaf{M}(\rho\omega^{-n}_F)(1-n)))&=L_{\Sigma,\Tau}(\check{\rho}\omega^n_F,-n)^{-1}
\end{align*}
with $\Sigma\coloneqq X-W$, $\Tau\coloneqq W-U$ and any integer $n>0$. If $\rho$ is any Artin representation that factors through a CM-extension, then we can decompose it as in \eqref{eqn:decomposition in odd and even part} into two subrepresentations $\rho_+$ and $\rho_-$ such that
\[
 \rho_+(\sigma)=\id,\qquad \rho_-(\sigma)=-\id
\]
for all complex conjugations $\sigma\in\Gal_F$.

\begin{cor}\label{cor:transformation properties of L-functions CM case}
Let $F$ be a totally real field, $k\colon U\mto W$ be an open immersion of open dense subschemes of $X=\Spec \IntR_F$ such that $p$ is invertible on $W$, and $\Lambda$ be an adic $\Int_p$-algebra. Fix a  $\cmplx{\sheaf{F}}$ in $\cat{PDG}^{\cont,\infty}(U,\Lambda)$, and a CM-admissible extension $F_\infty/F$ unramified over $U$. If $\varepsilon=+$, we choose $n$ to be an even integer. We choose $n$ to be odd if $\varepsilon=-$.
\begin{enumerate}
\item Write $f\colon U_{F_\infty}\mto U$ for the system of coverings associated to $F_\infty/F$. Then
\begin{align*}
\bh\ncL^{\varepsilon}_{F_\infty/F}(W,\RDer k_*\cmplx{\sheaf{F}}(1+n))&=-[e_\varepsilon\RDer\Sectc(W,\RDer k_* f_!f^*\cmplx{\sheaf{F}}(1+n))],\\
\bh\ncL^{\Mdual,\varepsilon}_{F_\infty/F}(W,k_!\cmplx{\sheaf{F}}(n))&=[e_\varepsilon\RDer\Sect(W,k_! f_!f^*\cmplx{\sheaf{F}}(n))]
\end{align*}
\item If $\cmplx{\sheaf{G}}$ and $\cmplx{\sheaf{F}}$ are weakly equivalent in $\cat{PDG}^{\cont,\infty}(U,\Lambda)$, then
\begin{align*}
 \ncL^{\varepsilon}_{F_\infty/F}(W,\RDer k_*\cmplx{\sheaf{F}}(1+n))&=\ncL^{\varepsilon}_{F_\infty/F}(W,\RDer k_*\cmplx{\sheaf{G}}(1+n)),\\
 \ncL^{\Mdual,\varepsilon}_{F_\infty/F}(W,k_!\cmplx{\sheaf{F}}(n))&=\ncL^{\Mdual,\varepsilon}_{F_\infty/F}(W,k_!\cmplx{\sheaf{G}}(n)).
\end{align*}
\item If $0\mto\cmplx{\sheaf{F}'}\mto \cmplx{\sheaf{F}}\mto\cmplx{\sheaf{F}''}\mto 0$ is an exact sequence in $\cat{PDG}^{\cont,\infty}(U,\Lambda)$, then
\begin{align*}
 \ncL^{\varepsilon}_{F_\infty/F}(W,\RDer k_*\cmplx{\sheaf{F}}(1+n))&=\ncL^{\varepsilon}_{F_\infty/F}(W,\RDer k_*\cmplx{\sheaf{F}'}(1+n))\ncL^{\varepsilon}_{F_\infty/F}(W,\RDer k_*\cmplx{\sheaf{F}''}(1+n)),\\
 \ncL^{\Mdual,\varepsilon}_{F_\infty/F}(W,k_!\cmplx{\sheaf{F}}(n))&=\ncL^{\Mdual,\varepsilon}_{F_\infty/F}(W,k_!\cmplx{\sheaf{F}'}(n))\ncL^{\Mdual,\varepsilon}_{F_\infty/F}(W,k_!\cmplx{\sheaf{F}''}(n)).
\end{align*}
\item If $W'$ is an open dense subscheme of $X$ on which $p$ is invertible and $k'\colon W\mto W'$ is an open immersion, then
\begin{align*}
\ncL^{\varepsilon}_{F_\infty/F}(W',\RDer (k'k)_*\cmplx{\sheaf{F}}(1+n))=&\ncL^{\varepsilon}_{F_\infty/F}(W,\RDer k_*\cmplx{\sheaf{F}}(1+n))\\
&\prod_{x\in W'-W}\ncL^{\varepsilon}_{F_\infty/F}(x,\RDer (k'k)_*\cmplx{\sheaf{F}}(1+n)),\\
\ncL^{\Mdual,\varepsilon}_{F_\infty/F}(W',(k'k)_!\cmplx{\sheaf{F}}(n))=&\ncL^{\Mdual,\varepsilon}_{F_\infty/F}(W,k_!\cmplx{\sheaf{F}}(n))\\
&\prod_{x\in W'-W}\ncL^{\Mdual,\varepsilon}_{F_\infty/F}(x,(k'k)_!\cmplx{\sheaf{F}}(n)).
\end{align*}
\item If $i\colon x\mto U$ is a closed point, then
\begin{align*}
\ncL^{\varepsilon}_{F_\infty/F}(W,\RDer k_*i_*i^*\cmplx{\sheaf{F}}(1+n))&=\ncL_{F_\infty/F}(x,\cmplx{\sheaf{F}}(1+n)),\\
\ncL^{\Mdual,\varepsilon}_{F_\infty/F}(W,k_!i_*\RDer i^!\cmplx{\sheaf{F}}(n))&=\ncL^{\Mdual,\varepsilon}_{F_\infty/F}(x,\cmplx{\sheaf{F}}(n)).
\end{align*}
\item If $F'_\infty/F$ is a CM-admissible subextension of $F_\infty/F$, then
\begin{align*}
\ringtransf_{\Lambda[[\Gal(F'_\infty/F)]]}(\ncL^\varepsilon_{F_\infty/F}(W,\RDer k_*\cmplx{\sheaf{F}}(1+n)))&=\ncL^\varepsilon_{F'_\infty/F}(W,\RDer k_*\cmplx{\sheaf{F}}(1+n)),\\
\ringtransf_{\Lambda[[\Gal(F'_\infty/F)]]}(\ncL^{\Mdual,\varepsilon}_{F_\infty/F}(W,k_!\cmplx{\sheaf{F}}(n)))&=\ncL^{\Mdual,\varepsilon}_{F'_\infty/F}(W,k_!\cmplx{\sheaf{F}}(n)).
\end{align*}
If $F'_\infty/F$ is a really admissible subextension of $F_\infty/F$, then
\begin{align*}
\ringtransf_{\Lambda[[\Gal(F'_\infty/F)]]}(\ncL^+_{F_\infty/F}(W,\RDer k_*\cmplx{\sheaf{F}}(1+n)))&=\ncL_{F'_\infty/F}(W,\RDer k_*\cmplx{\sheaf{F}}(1+n)),\\
\ringtransf_{\Lambda[[\Gal(F'_\infty/F)]]}(\ncL^{\Mdual,+}_{F_\infty/F}(W,k_!\cmplx{\sheaf{F}}(n)))&=\ncL^{\Mdual}_{F'_\infty/F}(W,k_!\cmplx{\sheaf{F}}(n)),\\
\ringtransf_{\Lambda[[\Gal(F'_\infty/F)]]}(\ncL^-_{F_\infty/F}(W,\RDer k_*\cmplx{\sheaf{F}}(1+n)))&=1\\
\ringtransf_{\Lambda[[\Gal(F'_\infty/F)]]}(\ncL^{\Mdual,-}_{F_\infty/F}(W,k_!\cmplx{\sheaf{F}}(n)))&=1.
\end{align*}
\item If $\Lambda'$ is another adic $\Int_p$-algebra and $\cmplx{P}$ is a complex of $\Lambda'$-$\Lambda[[\Gal(F_\infty/F)]]$-bimodules, strictly perfect as complex of $\Lambda'$-modules, then
\begin{align*}
\ringtransf_{\cmplx{{P_+[[\Gal(F_\infty/F)]]^{\delta}}}}(\ncL^{\varepsilon}_{F_\infty/F}(W,\RDer k_*\cmplx{\sheaf{F}}(1+n)))&=
\ncL^{\varepsilon}_{F_\infty/F}(W,\RDer k_*\ringtransf_{\cmplx{\widetilde{P_+}}}(\cmplx{\sheaf{F}})(1+n)),\\
\ringtransf_{\cmplx{{P_-[[\Gal(F_\infty/F)]]^{\delta}}}}(\ncL^{\varepsilon}_{F_\infty/F}(W,\RDer k_*\cmplx{\sheaf{F}}(1+n)))&=
\ncL^{-\varepsilon}_{F_\infty/F}(W,\RDer k_*\ringtransf_{\cmplx{\widetilde{P_-}}}(\cmplx{\sheaf{F}})(1+n)),\\
\ringtransf_{\cmplx{{P_+[[\Gal(F_\infty/F)]]^{\delta}}}}(\ncL^{\Mdual,\varepsilon}_{F_\infty/F}(W,k_!\cmplx{\sheaf{F}}(n)))&=
\ncL^{\Mdual,\varepsilon}_{F_\infty/F}(W,k_!\ringtransf_{\cmplx{\widetilde{P_+}}}(\cmplx{\sheaf{F}}(n))),\\
\ringtransf_{\cmplx{{P_-[[\Gal(F_\infty/F)]]^{\delta}}}}(\ncL^{\Mdual,\varepsilon}_{F_\infty/F}(W,k_!\cmplx{\sheaf{F}}(n)))&=
\ncL^{\Mdual,-\varepsilon}_{F_\infty/F}(W,k_!\ringtransf_{\cmplx{\widetilde{P_-}}}(\cmplx{\sheaf{F}}(n))).
\end{align*}
\item If $F'/F$ is a finite extension inside $F_\infty$ such that $F'$ is totally real and $f_{F'}\colon U_{F'}\mto U$ is the associated covering, then
\begin{align*}
\ringtransf_{\Lambda[[\Gal(F_\infty/F)]]}(\ncL^{\varepsilon}_{F_\infty/F}(W,\RDer k_*\cmplx{\sheaf{F}}(1+n)))&=
\ncL^{\varepsilon}_{F_\infty/F'}(W_{F'},\RDer k_*f_{F'}^*\cmplx{\sheaf{F}}(1+n)),\\
\ringtransf_{\Lambda[[\Gal(F_\infty/F)]]}(\ncL^{\Mdual,\varepsilon}_{F_\infty/F}(W,k_!\cmplx{\sheaf{F}})(n))&=
\ncL^{\Mdual,\varepsilon}_{F_\infty/F'}(W_{F'},k_!f_{F'}^*\cmplx{\sheaf{F}}(n)).
\end{align*}
\item With the notation of $(8)$, if $\cmplx{\sheaf{G}}$ is in $\cat{PDG}^{\cont,\infty}(U_{F'},\Lambda)$, then
\begin{align*}
\ringtransf_{\Lambda[[\Gal(F_\infty/F)]]}(\ncL^{\varepsilon}_{F_\infty/F'}(W_{F'},\RDer k_*\cmplx{\sheaf{G}}(1+n)))&=
\ncL^{\varepsilon}_{F_\infty/F}(W,\RDer k_*{f_{F'}}_*\cmplx{\sheaf{G}}(1+n)),\\
\ringtransf_{\Lambda[[\Gal(F_\infty/F)]]}(\ncL^{\Mdual,\varepsilon}_{F_\infty/F'}(W_{F'},k_!\cmplx{\sheaf{G}}(n)))&=
\ncL^{\Mdual,\varepsilon}_{F_\infty/F}(W,k_!{f_{F'}}_*\cmplx{\sheaf{G}}(n)).
\end{align*}
\item If $\sheaf{F}$ is a smooth $\Lambda$-adic sheaf on $U$ which is smooth at $\infty$, then
\[
\ncL^{\Mdual,\varepsilon}_{F_\infty/F}(W,k_!\sheaf{F}(n))=(\ncL^{\varepsilon}_{F_\infty/F}(W,\RDer k_*\sheaf{F}^{\mdual_{\Lambda}}(1-n)))^{\Mdual}.
\]
\item If $C/\Rat_p$ is a finite field extension and $\rho\colon \Gal(F_\infty/F)\mto \Gl_d(\IntR_C)$ is an Artin representation, then
\begin{align*}
\eval_{\rho}(\ncL^{\varepsilon}_{F_\infty/F}(W,\RDer k_*(\Int_p)_U(1+n)))&=\ncL_{F_\cyc/F}(W,\RDer k_*\sheaf{M}(\rho_{\varepsilon})(1+n)),\\
\eval_{\rho}(\ncL^{\Mdual,\varepsilon}_{F_\infty/F}(W,\RDer k_*(\Int_p)_U(n)))&=\ncL_{F_\cyc/F}(W,\RDer k_*\sheaf{M}(\rho_{\varepsilon})(n)),
\end{align*}
\end{enumerate}
\end{cor}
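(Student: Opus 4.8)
The plan is to deduce every item from the corresponding item of Corollary~\ref{cor:transformation properties for Lfunctions}, applied to the really admissible extension $F^+_\infty/F$, by transporting it along the Waldhausen exact functors $\ringtransf_{\Lambda[[G^+]]}$ and $\ringtransf_{\Lambda(\cycchar_F^{\mp n})^\sharp[[G]]^\delta\tensor_{\Lambda[[G]]}\Lambda[[G^+]]}$ that realise the two components of the ring decomposition $\Lambda[[G]]\isomorph\Lambda[[G^+]]\times\Lambda[[G^+]]$ discussed above. First I would dispose of CM-admissible subextensions $F'_\infty$ not containing the $p$-th roots of unity: by the very definition of $\ncL^\varepsilon_{F'_\infty/F}$ and $\ncL^{\Mdual,\varepsilon}_{F'_\infty/F}$ these are the $\ringtransf_{\Lambda[[\Gal(F'_\infty/F)]]}$-images of the elements over $F_\infty$, and since $\ringtransf$ is a homomorphism compatible with all operations below, it is enough to establish every identity under the extra hypothesis $\mu_p\subset F_\infty$.

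With this reduction, recall that $\ncL^+_{F_\infty/F}(W,\RDer k_*\cmplx{\sheaf{F}}(1+n))=\ringtransf_{\Lambda[[G^+]]}\ncL_{F^+_\infty/F}(W,\RDer k_*\cmplx{\sheaf{F}}(1+n))$, and, using $\ringtransf_{\Lambda(\cycchar_F^{n})^\sharp[[G]]^\delta}(f_!f^*\cmplx{\sheaf{F}})\isomorph f_!f^*\cmplx{\sheaf{F}}(n)$, that $\ncL^-_{F_\infty/F}(W,\RDer k_*\cmplx{\sheaf{F}}(1+n))$ is the image of $\ncL_{F^+_\infty/F}(W,\RDer k_*\cmplx{\sheaf{F}}(1+n))$ under the bimodule twist; symmetrically for the dual functions. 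Items (1)--(9) are then formal: I would use that each $\ringtransf$ functor commutes up to weak equivalence with $\RDer\Sect$, $\RDer k_*$, $k_!$, $i^*$, $\RDer i^!$ and with $f_!f^*$ and ${f_{F'}}_*$ (by \cite[5.5.7]{Witte:PhD}, Example~\ref{exmpl:example functors}, Proposition~\ref{prop:ring change for coverings}), that it induces the commutative diagram of localisation sequences and hence intertwines the $\bh$-maps, and that the complexes in sight lie in $\cat{PDG}^{\cont,w_H}(\Lambda[[G]])$ by Corollary~\ref{cor: S torsion global CM case}. For (1) one additionally observes, as in the proof of Corollary~\ref{cor: S torsion global CM case}, that $e_\varepsilon\RDer\Sectc(W,\RDer k_*f_!f^*\cmplx{\sheaf{F}}(1+n))$ is the image under the relevant $\ringtransf$ functor of $\RDer\Sectc(W,\RDer k_*(f^+)_!(f^+)^*\cmplx{\sheaf{F}}(1))$. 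The second halves of (6) are immediate, since $\ringtransf_{\Lambda[[\Gal(F'_\infty/F)]]}$ on the $e_-$-component is zero once $F'_\infty$ is totally real, while on the $e_+$-component it recovers $\ncL_{F'_\infty/F}$.

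For (10) I would combine Corollary~\ref{cor:transformation properties for Lfunctions}(10) with Lemma~\ref{lem:Mdual and ringtransf}, which commutes $\Mdual$ past the bimodule twists $\ringtransf_{\Lambda(\cycchar_F^{\pm n})^\sharp[[G]]^\delta}$ (the $\cycchar_F$-twist being traded with the Tate twist, and the sign of the exponent flipping under dualisation, which is exactly why $n\mapsto 1-n$ appears); Proposition~\ref{prop:s and Mdual commute} handles the algebraic $L$-factor. For (11), the key point is the identity of Waldhausen exact functors $\ringtransf_{\rho^\sharp[[G]]^\delta}\comp\ringtransf_{e_\varepsilon\Lambda[[G]]}=\ringtransf_{(\rho_\varepsilon)^\sharp[[G]]^\delta}$ on $\cat{PDG}^{\cont}$, coming from $e_\varepsilon\rho^\sharp=\rho_\varepsilon^\sharp$ and the definition \eqref{eqn:def of eval} of $\eval_\rho$; after identifying $\sheaf{M}(\rho_\varepsilon)$ via \eqref{eqn:def of rep sheaf} this reduces (11) to Corollary~\ref{cor:transformation properties for Lfunctions}(11) for $F^+_\infty/F$.

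The main obstacle is pure bookkeeping: organising the interplay between the $e_-$-projection — which, written through a ring isomorphism $\Lambda[[G]]\isomorph\Lambda[[G^+]]\times\Lambda[[G^+]]$, incorporates a twist by $\cycchar_F^{\pm 1}$ and hence shifts Tate twists by one — and the Tate twist in the $L$-function, so that the parity restriction on $n$ (even for $\varepsilon=+$, odd for $\varepsilon=-$) always lands on the side where the complex is $S$-torsion. In particular, for the dual functions the conventions force the simultaneous substitutions $\varepsilon\mapsto-\varepsilon$ and $n\mapsto 1-n$ visible in (7) and (10), and one must check the twisting isomorphisms are applied consistently. None of the individual steps is deep once the right functorial identities are isolated.
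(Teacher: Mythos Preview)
Your proposal is correct and follows essentially the same approach as the paper, which simply states that the corollary is ``an easy consequence of the preceding remarks and Corollary~\ref{cor:transformation properties for Lfunctions}.'' You have supplied precisely the elaboration the paper omits: transporting each item of Corollary~\ref{cor:transformation properties for Lfunctions} along the functors implementing the $e_\pm$-decomposition, after first reducing to the case $\mu_p\subset F_\infty$.
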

\begin{proof}
This is an easy consequence of the preceding remarks and Corollary~\ref{cor:transformation properties for Lfunctions}.
\end{proof}

\subsection{Calculation of the cohomology}\label{sec:cohomology}

We retain the notation from the previous section. Our objective is to investigate the cohomology of the complexes
\[
\RDer\Sectc(W,\RDer k_*f_!f^*\sheaf{F}(1)),\quad \RDer\Sect(W,k_!f_!f^*\sheaf{F}),\quad \RDer\Sect(\Sigma,i^*\RDer k_*f_!f^*\sheaf{F})
\]
for a $\Lambda$-adic sheaf $\sheaf{F}$ on $U$ in order to tie the connection to classical objects in Iwasawa theory.

The following two propositions are direct consequences of Proposition~\ref{prop:S-torsion exchange property},  Theorem~\ref{thm: S torsion local}, and Corollary~\ref{cor: S torsion global CM case}.

\begin{prop}
Let $F_\infty/F$ be any admissible extension unramified over $U$. Assume that $i\colon x\mto W$ is a closed point not lying over $p$. Then
\[
\HF^s(x,i^*\RDer k_*f_!f^*\sheaf{F})\isomorph \varprojlim_{F'}\HF^{s-1}(x_{F'_\cyc},i^*\RDer k_*\sheaf{F})
\]
where $F'$ runs through the finite subextensions of $F_\infty/F$ and $x_{F'_\cyc}=x\times_W W_{F'_\cyc}$ contains the places of $F'_\cyc$ lying over $x$. In particular,
\[
\HF^s(x,i^*\RDer k_*f_!f^*\sheaf{F})=0
\]
for $s\neq 1$ if $x\in U$ and for $s\neq 1,2$ if $x\in W-U$.
\end{prop}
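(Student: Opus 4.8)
The statement combines the $S$-torsion exchange result of Proposition~\ref{prop:S-torsion exchange property} with the local finiteness from Theorem~\ref{thm: S torsion local}. The plan is to apply Proposition~\ref{prop:S-torsion exchange property} to the complex $(\cmplx{Q}_J)_J\coloneqq\RDer\Sect(x,i^*\RDer k_*f_!f^*\sheaf{F})$, which lies in $\cat{PDG}^{\cont,w_H}(\Lambda[[G]])$ precisely by Theorem~\ref{thm: S torsion local} (taking $\Sigma=\set{x}$ there). That proposition gives an isomorphism of $\Lambda[[G]]$-modules
\[
\varprojlim_{J\in\openideals_{\Lambda[[G]]}}\HF^{s}(\cmplx{Q}_J)\isomorph\varprojlim_{I,U'}\varinjlim_{n}\HF^{s-1}(\cmplx{Q}_{J_{I,U',n}}),
\]
where the $J_{I,U',n}$ are the cofinal system of ideals $\ker(\Lambda[[G]]\to\Lambda/I[[G/U'\Gamma^{p^n}]])$. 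The first step is to identify the left-hand side with $\HF^s(x,i^*\RDer k_*f_!f^*\sheaf{F})$, which holds because cohomology of the total derived section functor commutes with the inverse limit over $\openideals_{\Lambda[[G]]}$ (as recorded after Definition~\ref{defn:PDG(Lambda)}, $\HF^i((\cmplx{P}_I))=\varprojlim_I\HF^i(\cmplx{P}_I)$).

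The second step is to rewrite the right-hand side. Passing to the quotient $\Lambda/I[[G/U'\Gamma^{p^n}]]$ and using $\ringtransf$-compatibility of $\RDer\Sect$, $i^*$, $\RDer k_*$ with change of coefficient ring (\cite[Prop.~5.5.7]{Witte:PhD}) together with the base-change description of $f_!f^*$ under shrinking the extension (Proposition~\ref{prop:ring change for coverings}.(2)), one identifies the transition system $\varinjlim_n\HF^{s-1}(\cmplx{Q}_{J_{I,U',n}})$ with the system $\varprojlim_{F'}\HF^{s-1}(x_{F'_\cyc},i^*\RDer k_*\sheaf{F})$, where $F'$ ranges over finite subextensions of $F_\infty/F$ and $x_{F'_\cyc}=x\times_W W_{F'_\cyc}$ comprises the places of $F'_\cyc$ over $x$. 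Concretely: fixing $U'$ corresponds to fixing the finite level $F'=F_\infty^{U'}$, fixing $I$ kills enough of $\Lambda$, and the direct limit over $n$ in $\Gamma^{p^n}$ assembles the cyclotomic tower over $F'$, reproducing $x_{F'_\cyc}$; the inverse limits over $I$ and over $F'$ then recombine into $\varprojlim_{F'}$. (One may assume, as in the proof of Theorem~\ref{thm: S torsion local}, that we have first reduced to finite $\Lambda$, but the $\varprojlim_I$ in Proposition~\ref{prop:S-torsion exchange property} is exactly what lets us state the identity over arbitrary adic $\Lambda$.)

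For the vanishing assertions, observe that $x_{F'_\cyc}$ is a finite disjoint union of spectra of finite fields, so $i^*\RDer k_*\sheaf{F}$ restricted there is governed by Lemma~\ref{lem:comparison with local cohomology}. If $x\in U$, then $i^*\RDer k_*\sheaf{F}\simeq i'^*\sheaf{F}$ is concentrated in degree $0$ (a Galois module over $\Gal_{k(x)}$), so $\RDer\Sect(x_{F'_\cyc},-)$ has cohomology only in degrees $0,1$; after the degree shift by one in the displayed isomorphism, $\HF^s(x,i^*\RDer k_*f_!f^*\sheaf{F})$ can only be nonzero for $s=1$ (the $s=2$ contribution, i.e.\ $\HF^1$ of a finite field with finite coefficients, vanishes, matching the statement). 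If $x\in W-U$, Lemma~\ref{lem:comparison with local cohomology} and Lemma~\ref{lem:strictly perfect resolution of local cohomology} show $i^*\RDer k_*\sheaf{F}$ on $x_{F'_\cyc}$ is represented by a two-term complex $\cmplx{D}_{\hat{x}}(\sheaf{F})$ in degrees $0,1$, so $\RDer\Sect$ lives in degrees $0,1$ as well; after the shift, $\HF^s=0$ for $s\neq 1,2$. The main obstacle is the bookkeeping in the second step: carefully matching the cofinal ideal system $\set{J_{I,U',n}}$ against the double indexing by finite subfields $F'$ and the cyclotomic levels, and checking the base-change isomorphisms are compatible with all the transition maps so that the limits genuinely commute as claimed.
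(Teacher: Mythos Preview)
Your approach is exactly the one the paper has in mind: apply Theorem~\ref{thm: S torsion local} to place the complex in $\cat{PDG}^{\cont,w_H}(\Lambda[[G]])$, then invoke Proposition~\ref{prop:S-torsion exchange property} for the degree shift, and identify the cofinal system $J_{I,U',n}$ with the tower of finite layers $F'_n$ inside $F'_{\cyc}$ over the finite subextensions $F'$. That part is fine.

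There is a slip in your vanishing argument. You write that $x_{F'_\cyc}$ is ``a finite disjoint union of spectra of finite fields'' and that the $s=2$ contribution in the $x\in U$ case disappears because ``$\HF^1$ of a finite field with finite coefficients vanishes.'' Neither is correct: $\HF^1$ of a finite field with nontrivial finite $p$-torsion coefficients is typically nonzero, and the residue fields at the points of $x_{F'_\cyc}$ are not finite. Since $F'_{\cyc}/F'$ is a $\Int_p$-extension unramified at $x$ (as $x\nmid p$), each residue field is the unique $\Int_p$-extension of the corresponding finite residue field of $x_{F'}$. Its absolute Galois group is $\prod_{\ell\neq p}\Int_\ell$, which has $p$-cohomological dimension $0$. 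This is the actual reason $\HF^{s-1}(x_{F'_\cyc},-)$ with $p$-torsion coefficients vanishes for $s-1>0$; combined with the fact that $i^*\RDer k_*\sheaf{F}$ has cohomology sheaves concentrated in degree $0$ (if $x\in U$) or degrees $0,1$ (if $x\in W-U$, via Lemma~\ref{lem:strictly perfect resolution of local cohomology}), you get the stated ranges $s=1$ and $s\in\{1,2\}$. Once you correct this point, the argument goes through.
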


\begin{prop}\label{prop:calculation of plus and minus parts}
Let $F$ be totally real and $F_\infty/F$ be a CM-admissible extension unramified over $U$. Assume that $p$ is invertible on $W$ and that $\sheaf{F}$ is smooth at $\infty$. If Conjecture~\ref{conj:vanishing of mu} is valid, then
\begin{align*}
e_+\HF^s_c(W,\RDer k_*f_!f^*\sheaf{F}(1))&\isomorph\varprojlim_{F'}e_+\HF^{s-1}_c(W_{F'_\cyc},\RDer k_*\sheaf{F}(1))\\
e_-\HF^s_c(W,\RDer k_*f_!f^*\sheaf{F})&\isomorph\varprojlim_{F'}e_-\HF^{s-1}_c(W_{F'_\cyc},\RDer k_*\sheaf{F})\\
e_+\HF^s(W,k_!f_!f^*\sheaf{F})&\isomorph\varprojlim_{F'}e_+\HF^{s-1}(W_{F'_\cyc},k_!\sheaf{F})\\
e_-\HF^s(W,k_!f_!f^*\sheaf{F}(1))&\isomorph\varprojlim_{F'}e_-\HF^{s-1}(W_{F'_\cyc},k_!\sheaf{F}(1))
\end{align*}
where $F'$ runs through the finite subextensions of $F_\infty/F$. In particular, if $\sheaf{F}$ is smooth over $U$,
\begin{enumerate}
\item $e_+\HF^s_c(W,\RDer k_*f_!f^*\sheaf{F}(1))=e_-\HF^s_c(W,\RDer k_*f_!f^*\sheaf{F})=0$ for $s\neq 2$ if $U\neq W$ and for $s\neq 2,3$ if $W=U$.
\item $e_+\HF^s(W,k_!f_!f^*\sheaf{F})=e_-\HF^s(W,k_!f_!f^*\sheaf{F}(1))=0$ for $s\neq 2$ if $U\neq W$ or if $U=W$ and $F_\infty/F_\cyc$ is infinite and for $s\neq 1,2$ if $U=W$ and $F_\infty/F$ is finite.
\end{enumerate}
\end{prop}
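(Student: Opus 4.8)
The plan is to read off every assertion from Proposition~\ref{prop:S-torsion exchange property}, applied to the four complexes that Corollary~\ref{cor: S torsion global CM case} places in $\cat{PDG}^{\cont,w_H}(\Lambda[[G]])$. As in the proof of Theorem~\ref{thm: S torsion global} one first reduces, via \cite[Prop.~4.8]{Witte:MCVarFF}, to $\Lambda$ finite, and one may enlarge $F_\infty$ so that it contains $\mu_p$ without changing the groups $\HF^s$ (these being computed over the fixed Galois group). Using the ring splitting $\Lambda[[G]]\isomorph\Lambda[[G^+]]\times\Lambda[[G^+]]$ and the twisting functors $\ringtransf_{\Lambda(\cycchar_F^{\pm1})^\sharp[[G]]^\delta}$ from the discussion preceding Corollary~\ref{cor: S torsion global CM case}, the two $e_{-}$-statements turn into $e_{+}$-statements for the really admissible extension $F_\infty^{+}/F$ applied to a Tate twist of $\sheaf{F}$; so it is enough to handle $e_{+}\RDer\Sectc(W,\RDer k_*f_!f^*\sheaf{F}(1))$ and $e_{+}\RDer\Sect(W,k_!f_!f^*\sheaf{F})$ for a really admissible $F_\infty/F$. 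For these Theorem~\ref{thm: S torsion global} gives membership in $\cat{PDG}^{\cont,w_H}(\Lambda[[G]])$, so Proposition~\ref{prop:S-torsion exchange property} immediately produces isomorphisms $\HF^s(-)\isomorph\varprojlim_{I,U}\varinjlim_n\HF^{s-1}\bigl((-)_{J_{I,U,n}}\bigr)$.

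The next step is to identify the right-hand side with $\varprojlim_{F'}e_{+}\HF^{s-1}_c(W_{F'_\cyc},\RDer k_*\sheaf{F}(1))$, resp.\ its $k_!$-analogue. For a cofinal family of indices let $L_n\subset F_\infty$ be the fixed field of $U\Gamma^{p^n}$; since $\Gamma^{p^n}\cap H=1$ one checks that $L_n=F'_{\cyc,n}$ for $F':=L_0$ a finite subextension of $F_\infty/F$, and that $\bigcup_nL_n$ is the fixed field of $U$. By Proposition~\ref{prop:ring change for coverings}.(1),(2) and the exactness of $j_!$ and $k_!$, the $J_{I,U,n}$-component of $f_!f^*\sheaf{F}$ is $f^{(n)}_!(f^{(n)})^*\sheaf{F}_I$ for the covering $f^{(n)}\colon U_{L_n}\mto U$, so $\HF^{s-1}\bigl((-)_{J_{I,U,n}}\bigr)$ is the honest étale cohomology $e_{+}\HF^{s-1}_c(W_{L_n},\RDer k_*\sheaf{F}_I(1))$ of a finite-type scheme. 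The transition maps of $\varinjlim_n$ are induced by the inclusions $\Lambda[[G]](\gamma^{p^n}-1)^{-1}\subset\Lambda[[G]](\gamma^{p^{n+1}}-1)^{-1}$, i.e.\ by multiplication with $N_n=\sum_{k=0}^{p-1}\gamma^{p^nk}$, and on cohomology this is pullback along the covering $W_{L_{n+1}}\mto W_{L_n}$; hence $\varinjlim_n$ computes the cohomology of the pro-scheme $W_{F'_\cyc}=\varprojlim_nW_{F'_{\cyc,n}}$, which is exactly what produces the degree shift by one. Finally $\varprojlim_I$ reassembles the $\Lambda$-adic sheaf, and $\varprojlim_U$ becomes an inverse limit, along corestriction, over the finite subextensions $F'\subset F_\infty^\Gamma$ that occur; a short cofinality argument --- every finite subextension of $F_\infty/F$ sits inside $F'_\cyc$ for some such $F'$ --- replaces it by $\varprojlim_{F'}$ over all finite subextensions, giving the displayed isomorphisms.

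For the ``in particular'' I would take $\sheaf{F}$ smooth on $U$ and, through the isomorphisms just proved, reduce to locating the degrees in which the colimit over the cyclotomic tower of $e_{+}\HF^\bullet_c(W_{F'_{\cyc,m}},\RDer k_*\sheaf{F}_I(1))$, resp.\ $e_{+}\HF^\bullet(W_{F'_{\cyc,m}},k_!\sheaf{F}_I)$, is concentrated. Over a $\Int_p$-extension the top cohomological degree --- degree $3$ for cohomology with proper support on all of $X$ --- is annihilated in the colimit, exactly as in the computation recalled in Section~\ref{sec:AdmissibleExt}, while deleting points from $X$ lowers the cohomological dimension; feeding this, together with the excision sequences relating $W$ to $U$ and to the finite set $W-U$ (where the local contributions are governed by the concentration behind Theorem~\ref{thm: S torsion local}) and the single degree shift, into the isomorphisms of the first two paragraphs pins down the asserted ranges, the finite/infinite distinction for $\Gal(F_\infty/F_\cyc)$ entering precisely in whether the lowest $\HF^\bullet$ survives. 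The $e_{-}$-parts then follow from the reduction of the first paragraph, the decisive input remaining Conjecture~\ref{conj:vanishing of mu} (already used through Corollary~\ref{cor: S torsion global CM case}). I expect the main obstacle to be the middle step: matching the abstract double limit of Proposition~\ref{prop:S-torsion exchange property} with the concrete inverse limit over finite subextensions --- nailing the transition maps of the $\varinjlim_n$, the identification $L_n=F'_{\cyc,n}$ and the ensuing cofinality, and keeping the one degree shift consistent with the concentration results; the vanishing ranges themselves are then bookkeeping over standard facts about étale cohomology of $\Int_p$-extensions.
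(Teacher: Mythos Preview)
Your proposal is correct and follows the same route as the paper, which simply states that the proposition is a direct consequence of Proposition~\ref{prop:S-torsion exchange property}, Theorem~\ref{thm: S torsion local}, and Corollary~\ref{cor: S torsion global CM case}; you have spelled out in detail how these three inputs combine, including the identification of the abstract double limit with $\varprojlim_{F'}$ and the reduction of the $e_-$-parts to $e_+$-parts via twisting. One small point of phrasing: the sentence ``one may enlarge $F_\infty$ so that it contains $\mu_p$ without changing the groups $\HF^s$'' is slightly imprecise, since enlarging $F_\infty$ does change $G$ and hence the complexes; what you mean (and what makes the reduction legitimate, exactly as in the proof of Corollary~\ref{cor: S torsion global CM case}) is that both sides of the claimed isomorphisms are compatible with $\ringtransf_{\Lambda[[G]]}$ for the quotient map $\Gal(F_\infty(\mu_p)/F)\twoheadrightarrow G$, so the statement for $F_\infty$ follows from that for $F_\infty(\mu_p)$.
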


In particular, we obtain the following corollary. We will explain in the next section in what sense this is a generalisation of \cite[Thm. 4.6]{GreitherPopescu:EquivariantMC}.

\begin{cor}\label{cor:projective dimension property}
Let $F$ be totally real and $F_\infty/F$ be a CM-admissible extension unramified over $U$. Assume that $p$ is invertible on $W\neq U$ and that $\sheaf{F}$ is smooth over $U$ and at $\infty$. If Conjecture~\ref{conj:vanishing of mu} is valid, then
\begin{align*}
e_+\HF^2_c(W,\RDer k_*f_!f^*\sheaf{F}(1)),&& e_-\HF^2_c(W,\RDer k_*f_!f^*\sheaf{F}),\\
e_+\HF^2(W,k_!f_!f^*\sheaf{F}), && e_-\HF^2(W,k_!f_!f^*\sheaf{F}(1))
\end{align*}
are finitely generated and projective as $\Lambda[[H]]$-modules. In particular, they have strictly perfect resolutions of length equal to $1$ as $\Lambda[[G]]$-modules. Hence, we may consider their classes in $\KTh_0(\Lambda[[G]],S)$ and obtain
\begin{align*}
[e_+\HF^2_c(W,\RDer k_*f_!f^*\sheaf{F}(1))]&=[e_+\RDer\Sectc(W,\RDer k_*f_!f^*\sheaf{F}(1))],\\
[e_-\HF^2_c(W,\RDer k_*f_!f^*\sheaf{F})]&=[e_-\RDer\Sectc(W,\RDer k_*f_!f^*\sheaf{F})],\\
[e_+\HF^2(W,k_!f_!f^*\sheaf{F})]&=[e_+\RDer\Sect(W,k_!f_!f^*\sheaf{F})],\\
[e_-\HF^2(W,k_!f_!f^*\sheaf{F}(1))]&=[e_-\RDer\Sect(W,k_!f_!f^*\sheaf{F}(1))].
\end{align*}
\end{cor}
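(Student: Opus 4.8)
The plan is to reduce everything to the vanishing/projectivity results already in hand and to Lemma~\ref{lem:S-torsion complexes of length two}. First I would observe that by Proposition~\ref{prop:calculation of plus and minus parts}, under Conjecture~\ref{conj:vanishing of mu} and the hypotheses $W\neq U$, $\sheaf{F}$ smooth over $U$ and at $\infty$, each of the four complexes in question has cohomology concentrated in a single degree, namely degree $2$. Concretely, $e_+\RDer\Sectc(W,\RDer k_*f_!f^*\sheaf{F}(1))$ is weakly equivalent to $e_+\HF^2_c(W,\RDer k_*f_!f^*\sheaf{F}(1))[-2]$, and similarly for the other three. So the first step is purely bookkeeping: invoke Proposition~\ref{prop:calculation of plus and minus parts}.(1)--(2) to get concentration in degree $2$.

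Next I would show the degree-$2$ cohomology module is finitely generated and projective over $\Lambda[[H]]$. By Corollary~\ref{cor: S torsion global CM case} each of the four complexes lies in $\cat{PDG}^{\cont,w_H}(\Lambda[[G]])$, hence its limit over $\openideals_{\Lambda[[G]]}$ is a perfect complex of $\Lambda[[H]]$-modules. A perfect complex of $\Lambda[[H]]$-modules with cohomology concentrated in a single degree has that cohomology finitely generated; to get projectivity I would either appeal directly to the single-degree concentration (a perfect complex quasi-isomorphic to a module in one degree forces that module to be perfect, i.e.\ of finite projective dimension, but over the possibly non-regular ring $\Lambda[[H]]$ one needs an extra argument) or — the cleaner route — reduce as in the proof of Theorem~\ref{thm: S torsion global} to the case $\Lambda$ and $H$ finite and $F_\infty/F_\cyc$ finite, pick a strictly perfect representative $\cmplx{P}$ of length $2$ (using that the cohomology is concentrated in the top degree after a shift, so one may truncate), and apply Lemma~\ref{lem:S-torsion complexes of length two}: perfectness over $\Lambda[[H]]$ together with $\HF^{-1}=0$ gives $\HF^0$ finitely generated and projective over $\Lambda[[H]]$ by the equivalence (\ref{enum:S-torsion complexes of length two:injectivity and f.g.})$\Leftrightarrow$(\ref{enum:S-torsion complexes of length two:injectivity and f.g.p.}). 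This also yields a strictly perfect resolution of length $1$ of the module as a $\Lambda[[G]]$-module — indeed, applying $C_\gamma$ to such a module (viewed as an object of $\cat{SP}(\Lambda[[H]],G)$) via Proposition~\ref{prop:comparison with strictly perfect LamdaH-complexes} gives precisely a two-term strictly perfect complex of $\Lambda[[G]]$-modules representing it.

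Finally, the class identity in $\KTh_0(\Lambda[[G]],S)$ follows from the additivity relation $[\cmplx{P}_2]=[\cmplx{P}_1]+[\cmplx{P}_3]$ on exact sequences, recalled just before the statement of Theorem~\ref{thm:Kakdes theorem}: since the complex is weakly equivalent to its degree-$2$ cohomology placed in degree $2$ (a single shift), and shifting by an even integer $2$ leaves the class in $\KTh_0$ unchanged (the class of $\cmplx{Q}[2]$ equals that of $\cmplx{Q}$, as one sees from the exact triangles built out of stupid truncations, each contributing with the sign $(-1)^{\text{shift}}$), we get $[e_+\RDer\Sectc(W,\RDer k_*f_!f^*\sheaf{F}(1))]=[e_+\HF^2_c(W,\RDer k_*f_!f^*\sheaf{F}(1))]$, and the same for the other three, which is exactly the asserted list of equalities.

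\textbf{Main obstacle.} The one genuinely non-formal point is the \emph{projectivity} of the degree-$2$ cohomology over $\Lambda[[H]]$: single-degree concentration plus perfectness over $\Lambda[[H]]$ gives finite projective dimension, but promoting ``finite projective dimension concentrated in one degree'' to ``projective'' requires the reduction to finite $\Lambda$ and $H$ and the precise implication in Lemma~\ref{lem:S-torsion complexes of length two}, exactly as in the proof of Theorem~\ref{thm: S torsion global}. Once projectivity is secured, the length-$1$ resolution and the $\KTh_0$-identity are immediate, so I expect that reduction step to be where the real content sits, though it is entirely parallel to arguments already carried out in Sections~\ref{sec:Framework} and~\ref{sec:S-torsion}.
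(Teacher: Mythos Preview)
Your overall architecture is correct and matches the paper: concentration in degree $2$ via Proposition~\ref{prop:calculation of plus and minus parts}, then projectivity over $\Lambda[[H]]$ via Lemma~\ref{lem:S-torsion complexes of length two}, then the class identity. The substantive issue is exactly where you flagged it, but your proposed fix does not work.

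You need a strictly perfect complex of $\Lambda[[G]]$-modules of length $2$ (in degrees $-1,0$ after a shift) quasi-isomorphic to the single cohomology module, because that is the input hypothesis of Lemma~\ref{lem:S-torsion complexes of length two}. Your suggestion ``one may truncate'' does not produce this: if $\cmplx{P}$ is strictly perfect with cohomology only in degree $2$, smart truncation gives $\ker d^1\to P^2$, but $\ker d^1$ is only known to have finite projective dimension, not to be projective. Your reduction to finite $\Lambda$ and $H$ does not help either: even then $\Lambda[[G]]$ need not have finite global dimension (e.g.\ $\FF_p[H]$ for $p\mid |H|$ has infinite global dimension), so there is no reason a single-degree perfect complex should admit a length-$2$ strictly perfect model.

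The paper supplies the missing step by a Nakayama/amplitude argument: for every simple $\Lambda[[G]]$-module $M$ one has
\[
M\Ltensor_{\Lambda[[G]]}\cmplx{P}\;\sim\;e_{\pm}\RDer\Sect(W,k_!\,(M\tensor_{\Lambda[[G]]/\Jac(\Lambda[[G]])}(f_!f^*\sheaf{F})_{\Jac(\Lambda[[G]])})),
\]
which is an \'etale cohomology complex on $W$ with coefficients in a constructible sheaf, hence has cohomology only in degrees $1$ and $2$. Since $\Lambda[[G]]$ is semi-local, this amplitude bound for all simple $M$ forces the existence of a strictly perfect representative concentrated in degrees $1$ and $2$; shifting gives the length-$2$ complex $\cmplx{Q}$ in degrees $-1,0$ to which Lemma~\ref{lem:S-torsion complexes of length two} applies directly (no reduction to finite $\Lambda,H$ is needed). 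Once you have $\cmplx{Q}$, your remaining steps---projectivity over $\Lambda[[H]]$, the length-$1$ resolution, and the $\KTh_0$ identity $[X]=[\cmplx{Q}]=[e_{\pm}\RDer\Sect(\cdots)]$---go through exactly as you outlined.
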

\begin{proof}
We give the argument for $X\coloneqq e_+\HF^2(W,k_!f_!f^*\sheaf{F})$; the proof of the other cases is essentially the same. The $\Lambda[[G]]$-module $X$ is the only non-vanishing cohomology group of the perfect complex of $\Lambda[[G]]$-modules
\[
\cmplx{P}\coloneqq\varprojlim_{I\in\openideals_{\Lambda[[G]]}}e_+\RDer\Sect(W,k_!(f_!f^*\sheaf{F})_I).
\]
Since for any simple $\Lambda[[G]]$-module $M$,
\[
M\Ltensor_{\Lambda[[G]]}\cmplx{P}\sim
e_+\RDer\Sect(W,k_!M\tensor_{\Lambda[[G]]/\Jac(\Lambda[[G]])}(f_!f^*\sheaf{F})_{\Jac(\Lambda[[G]])})
\]
has no cohomology except in degrees $1$ and $2$, we conclude that there exists a strictly perfect complex $\cmplx{Q}$ of $\Lambda[[G]]$-modules concentrated in degrees $-1$ and $0$ and quasi-isomorphic to $X$. By Corollary~\ref{cor: S torsion global CM case}, we know that $\cmplx{Q}$ is also perfect as complex of $\Lambda[[H]]$-modules. By Lemma~\ref{lem:S-torsion complexes of length two}, we conclude that $X$ is finitely generated and projective as $\Lambda[[H]]$-module.

%Since $X$ is finitely generated as $\Lambda[[H]]$-module by Theorem~\ref{thm: S torsion global}, it cannot be projective as a $\Lambda[[G]]$-module unless it is $0$.
%
%To show that $X$ is a projective $\Lambda[[H]]$-module, it is sufficient to show that for any simple $\Lambda[[H]]$-module $M$,
%the complex $M\Ltensor_{\Lambda[[H]]}\cmplx{P}$ has no cohomology in degree $-1$. We may assume that $\Lambda$ and $H$ are finite. Write $g\colon U_{F_\infty}\mto U_{F_\cyc}$ for the Galois covering of $U_{F_\cyc}$ with Galois group $H$. Then
%\[
%M\Ltensor_{\Lambda[[H]]}\cmplx{P}[1]\sim e_+\RDer\Sect(W_{F_\cyc},k_!M\tensor_{\Lambda[H]}g_!g^*\sheaf{F})
%\]
%and
%\[
%e_+\HF^0(W_{F_\cyc},k_!M\tensor_{\Lambda[H]}g_!g^*\sheaf{F})=0
%\]
%as desired.
%
We then have
\[
[X]=[\cmplx{Q}]=[e_+\RDer\Sect(W,k_!f_!f^*\sheaf{F})]
\]
in $\KTh_0(\Lambda[[G]],S)$.
\end{proof}

\section{Realisations of abstract 1-motives}\label{sec:GreitherPopescu}

As a central result of this section, we want to establish the link with the theory of abstract $1$-motives considered in \cite{GreitherPopescu:EquivariantMC}. 

Assume that $F$ is any number field and let $U\subset W$ be two open dense subschemes of $X=\Spec \IntR_F$. Write $k\colon U\mto W$ for the corresponding open immersion. Fix a closed subscheme structure on the complement $\Tau$ of $U$ in $W$ and write $i\colon \Tau\mto W$ for the closed immersion.  For any scheme $S$ we write $\Gm_S$ for the \'etale sheaf of units on $S$.  Recalling that the stalk of $\Gm_W$ in a geometric point over a closed point is given by the units of the strict henselisation of the local ring in this closed point, we see that
$$
\Gm_{W}\mto i_*\Gm_\Tau. %Surjectivity: The stalks are the units of the strict henselisation.
$$
is a surjection. We define $\Gm_{W,\Tau}$ to be its kernel. Write $\eta\colon \Spec F\mto W$ for the generic point and set
\[
\sheaf{P}_\Tau\coloneqq\ker \eta_*\Gm_F\mto i_*i^*(\eta_*\Gm_F/\Gm_{W,\Tau}).
\]
In other words, $\sheaf{P}_\Tau$ is the subsheaf of $\eta_*\Gm_F$ of elements congruent to $1$ modulo $\Tau$. For any subscheme $Z$ of $W$ we let $\Div_Z$ denote the sheaf of divisors on $W$ with support on $Z$. Hence, we obtain an exact $9$-diagram
\[
\xymatrix{
&0\ar[d]&0\ar[d]&0\ar[d]&\\
0\ar[r]&\Gm_{W,\Tau}\ar[r]\ar[d]&\Gm_{W}\ar[r]\ar[d]&i_{*}\Gm_\Tau\ar[r]\ar[d]&0\\
0\ar[r]&\sheaf{P}_\Tau\ar[r]\ar[d]^{\dv}&\eta_*\Gm_{F}\ar[r]\ar[d]&i_*i^*(\eta_*\Gm_K/\Gm_{W,\Tau})\ar[r]\ar[d]&0\\
0\ar[r]&\Div_{U}\ar[r]\ar[d]&\Div_{W}\ar[r]\ar[d]&\Div_{\Tau}\ar[r]\ar[d]&0\\
&0&0&0&}
\]
The third row is clearly also exact in the category of presheaves on $W$. Moreover,
\[
i_*i^*(\eta_*\Gm_F/\Gm_{W,\Tau})(W)=\bigoplus_{v\in \Tau} F_v^\times/\mathcal{U}_{v,n_v}
\]
where $n_v$ is the multiplicity of $v$ in $\Tau$, $F_v$ is the completion of $F$ at $v$ and $\mathcal{U}_{n,n_v}\subset F_v$ is the group of units $f$ such that the valuation of $1-f$ is larger or equal to $n_v$. From the weak approximation theorem we conclude that the second row of the $9$-diagram is also exact in the category of presheaves. The same is also true for the third column. Hilbert 90 and the Leray spectral sequence then imply that
\[
\HF^1(W,\sheaf{P}_\Tau)=\HF^1(W,\eta_*\Gm_{F})=0.
\]
We conclude that
\[
\HF^1(W,\Gm_{W,\Tau})=\coker \sheaf{P}_\Tau(W)\xrightarrow{\dv}\Div_U(W)
\]
is the ray class group of $W$ with respect to the modulus $\Tau$. If $K/F$ is a possibly infinite algebraic extension of $F$, it follows from \cite[VII, Cor. 5.8]{SGA4-2} that
\[
\HF^1(W_K,\Gm_{W_K,\Tau_K})=\coker\left(\sheaf{P}_{\Tau_K}(W_K)\mto \varinjlim_{K'\subset K}\Div_{U_{K'}}(W_{K'})\right)
\]
with
\[
\varinjlim_{K'\subset K}\Div_{U_{K'}}(W_{K'})=\bigoplus_{v}\Gamma_v,
\]
where $v$ ranges over the places of $K$ lying over the closed points of $U$ and $\Gamma_v$ denotes the value group of the associated, possibly non-discrete valuation.

Assume now that $p$ is invertible on $W$. We then obtain an exact $9$-diagram
\[
\xymatrix{
&0\ar[d]&0\ar[d]&0\ar[d]&\\
0\ar[r]&j_!\mu_{p^n}\ar[r]\ar[d]&\mu_{p^n}\ar[r]\ar[d]&i_{*}i^*\mu_{p^n}\ar[r]\ar[d]&0\\
0\ar[r]&\Gm_{W,\Tau}\ar[r]\ar[d]^{p^n}&\Gm_{W}\ar[r]\ar[d]^{p^n}&i_{*}\Gm_\Tau\ar[r]\ar[d]^{p^n}&0\\
0\ar[r]&\Gm_{W,\Tau}\ar[r]\ar[d]&\Gm_{W}\ar[r]\ar[d]&i_{*}\Gm_\Tau\ar[r]\ar[d]&0\\
&0&0&0&}
\]
and hence, an exact sequence
\[
0\mto j_!\mu_{p^n}\mto\sheaf{P}_\Tau\xrightarrow{(p^n,\dv)}\sheaf{P}_\Tau\oplus \Div_U
\xrightarrow{\left(\begin{smallmatrix}\dv \\ -p^n\end{smallmatrix}\right)}\Div_U\mto 0.
\]
We take global sections on $W$. Since $\HF^1(W,\sheaf{P}_\Tau)=0$ and since multiplication by $p^n$ is injective on $\Div_U(W)$ we obtain
\[
\HF^1(W,j_!\mu_{p^n})=\set*{f\in \sheaf{P}_\Tau(W)\given
\begin{aligned}
&\dv(f)=p^nD,\\
&D\in\Div_U(W)
\end{aligned}
}/\set{g^{p^n}\given g\in \sheaf{P}_\Tau(W)}.
\]
Note that this group does not depend on the subscheme structure of $\Tau$. So, we might as well consider it with the reduced scheme structure.

We now assume in addition that $F$ is totally real and fix a CM-admissible extension $F_\infty/F$ such that $F_\infty/F_\cyc$ is finite.  Passing to the direct limit over all finite subextensions $F'/F$ of $F_\infty/F$, we obtain
\begin{equation}\label{eqn:calculation of exceptional image of unit roots}
\HF^1(W_{F_\infty},j_!\mu_{p^n})=\set*{f\in \sheaf{P}_{\Tau_{F_\infty}}(W_{F_\infty})\given
\begin{aligned}
&\dv(f)=p^nD,\\
&D\in\Div_{U_{F_\infty}}(W_{F_\infty})
\end{aligned}
}/\set{g^{p^n}\given g\in \sheaf{P}_{\Tau_{F_\infty}}(W_{F_\infty})}.
\end{equation}

Write $\Sigma$ for the complement of $W$ in $\Spec \IntR_F[\frac{1}{p}]$.  The Iwasawa-theoretic $1$-motive associated to $(F_\infty,\Sigma_{F_\infty},\Tau_{F_\infty})$ is the complex of abelian groups
\[
\Motive^{F_\infty}_{\Sigma_{F_\infty},\Tau_{F_\infty}}\colon\qquad \Div_{\Sigma_{F_\infty}}(X_{F_\infty})\xrightarrow{\delta} \HF^1(X_{F_\infty},\Gm_{X_{F_{\infty}},\Tau_{F_\infty}})\tensor_{\Int}\Int_p
\]
sitting in degrees $0$ and $1$ \cite[\S 3.1]{GreitherPopescu:EquivariantMC}. Its group of $p^n$-torsion points is defined to be
\begin{align*}
\Motive^{F_\infty}_{\Sigma_{F_\infty},\Tau_{F_\infty}}[p^n]&\coloneqq
\set*{(D,c)\given
\begin{aligned}
&D\in\Div_{\Sigma_{F_\infty}}(X_{F_\infty}),\\
&c\in \HF^1(X_{F_\infty},\Gm_{X_{F_{\infty}},\Tau_{F_\infty}})\tensor_{\Int}\Int_p,\\
&\delta(D)=p^nc
\end{aligned}}\tensor_{\Int}\Int/(p^n)\\
&=\HF^0(\Motive^{F_\infty}_{\Sigma_{F_\infty},\Tau_{F_\infty}}\Ltensor_{\Int}\Int/(p^n))
\end{align*}
and its $p$-adic Tate module is given by
\[
\Tate_p\Motive^{F_\infty}_{\Sigma_{F_\infty},\Tau_{F_\infty}}\coloneqq \varprojlim_{n}\Motive^{F_\infty}_{\Sigma_{F_\infty},\Tau_{F_\infty}}[p^n]
\]
\cite[Def. 2.2, Def. 2.3]{GreitherPopescu:EquivariantMC}.

\begin{rem}
The complex of abelian groups $\Motive^{F_\infty}_{\Sigma_{F_\infty},\Tau_{F_\infty}}$ is an abstract $1$-motive in the sense of \cite[Def. 2.1]{GreitherPopescu:EquivariantMC} only if $\HF^1(X_{F_\infty},\Gm_{X_{F_{\infty}},\Tau_{F_\infty}})\tensor_{\Int}\Int_p$ is divisible of finite corank. The proof of \cite[Lemma 2.8]{GreitherPopescu:EquivariantMC} shows that this is true if and only if $\HF^1(X_{F_\infty},\Gm_{X_{F_{\infty}}})\tensor_{\Int}\Int_p$ is divisible of finite corank. By \cite[Thm. 11.1.8 ]{NSW:CohomNumFields} this is equivalent to the Galois group $\X_{\nr}(F_\infty)$ of the maximal abelian unramified pro-$p$-extension of $F_\infty$ being a finitely generated $\Int_p$-module. This is true if $\X_{\nr}(F_\infty(\mu_p))$ is a finitely generated $\Int_p$-module. By \cite[Thm 13.24]{Wa:ICF} the latter statement is equivalent to $e_{-}\X_{\nr}(F_\infty(\mu_p))$ being finitely generated over $\Int_p$, which is in turn equivalent to the Galois group of the maximal abelian pro-$p$-extension  unramified outside the primes over $p$ of the maximal totally real subfield $F_\infty(\mu_p)^{+}$ being finitely generated over $\Int_p$ \cite[Cor. 11.4.4]{NSW:CohomNumFields}. Hence, $\Motive^{F_\infty}_{\Sigma_{F_\infty},\Tau_{F_\infty}}$ is an abstract $1$-motive under Conjecture~\ref{conj:vanishing of mu}.
\end{rem}

\begin{prop}\label{prop:connection of 1-motives with etale cohomology}
There is a short exact sequence
\[
0\mto \HF^0(X_{F_\infty},\Gm_{X_{F_\infty},\Tau_{F_\infty}})\tensor_{\Int}\Int/(p^n)\mto \HF^1(W_{F_\infty},k_!\mu_{p^n})\mto\Motive^{F_\infty}_{\Sigma_{F_\infty},\Tau_{F_\infty}}[p^n]\mto 0.
\]
In particular, there are isomorphisms
\begin{align*}
e_{-}\HF^1(W_{F_\infty},k_!\mu_{p^n})&\isomorph e_{-}\Motive^{F_\infty}_{\Sigma_{F_\infty},\Tau_{F_\infty}}[p^n],\\
e_{-}\HF^1(W_{F_\infty},k_!(\Int_p)_{W_{F_\infty}}(1))&\isomorph e_{-}\Tate_p\Motive^{F_\infty}_{\Sigma_{F_\infty},\Tau_{F_\infty}}.
\end{align*}
\end{prop}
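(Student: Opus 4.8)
The plan is to compare the étale cohomology of $k_!\mu_{p^n}$ on $W_{F_\infty}$ with the explicit description of $\HF^1(W_{F_\infty},k_!\mu_{p^n})$ obtained in \eqref{eqn:calculation of exceptional image of unit roots} and with the definition of $\Motive^{F_\infty}_{\Sigma_{F_\infty},\Tau_{F_\infty}}[p^n]$. First I would unwind the formula \eqref{eqn:calculation of exceptional image of unit roots}: an element of $\HF^1(W_{F_\infty},k_!\mu_{p^n})$ is represented by a function $f\in\sheaf{P}_{\Tau_{F_\infty}}(W_{F_\infty})$ with $\dv(f)=p^nD$ for some $D\in\Div_{U_{F_\infty}}(W_{F_\infty})=\Div_{X_{F_\infty}}(X_{F_\infty})-\Div_{\Sigma_{F_\infty}}(X_{F_\infty})$, modulo $p^n$-th powers of such functions. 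Sending $f$ to the class of $D$ in $\Div_{\Sigma_{F_\infty}}(X_{F_\infty})$ (that is, composing the divisor of $f$ on all of $X_{F_\infty}$ with the projection that forgets the part supported on $U$, noting that $\dv_{X_{F_\infty}}(f)$ may have a component along $\Sigma_{F_\infty}$ since $f\in\sheaf{P}_{\Tau_{F_\infty}}$ is only required to be a unit away from $\Sigma_{F_\infty}\cup\Tau_{F_\infty}$ up to the congruence condition) produces the pair $(D_{\Sigma},[f])$ with $\delta(D_\Sigma)=p^n[f]$ in $\HF^1(X_{F_\infty},\Gm_{X_{F_\infty},\Tau_{F_\infty}})\tensor_\Int\Int_p$. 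This is exactly a $p^n$-torsion point of the $1$-motive, so the assignment defines a surjection $\HF^1(W_{F_\infty},k_!\mu_{p^n})\qto\Motive^{F_\infty}_{\Sigma_{F_\infty},\Tau_{F_\infty}}[p^n]$.

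Next I would identify the kernel. An element $f$ maps to zero precisely when $D_\Sigma$ becomes trivial modulo $p^n$ and $[f]$ is trivial in $\HF^1(X_{F_\infty},\Gm_{X_{F_\infty},\Tau_{F_\infty}})\tensor_\Int\Int_p$ modulo the congruence-and-$p^n$-power relations; tracing through, the kernel is the image of those $f$ that extend to global sections of $\Gm_{X_{F_\infty},\Tau_{F_\infty}}$, i.e.\ of $\HF^0(X_{F_\infty},\Gm_{X_{F_\infty},\Tau_{F_\infty}})$, reduced modulo $p^n$. The cleanest way to make this rigorous is probably not to argue by hand but to produce the short exact sequence directly from a distinguished triangle: on $X_{F_\infty}$ one has the excision/localisation triangle relating $k_!\mu_{p^n}$ (extended by zero to $X$), $\mu_{p^n}$ and $i_\Sigma{}_*i_\Sigma^*$-terms, together with the Kummer sequence $0\to\mu_{p^n}\to\Gm_{X_{F_\infty},\Tau_{F_\infty}}\xrightarrow{p^n}\Gm_{X_{F_\infty},\Tau_{F_\infty}}\to 0$ which was already established in the course of deriving \eqref{eqn:calculation of exceptional image of unit roots}. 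Combining the long exact cohomology sequences, using $\HF^1(W_{F_\infty},\sheaf{P}_{\Tau_{F_\infty}})=0$ and the injectivity of $p^n$ on $\Div_{U_{F_\infty}}(W_{F_\infty})$ established above, and recognising $\HF^0(\Motive^{F_\infty}_{\Sigma_{F_\infty},\Tau_{F_\infty}}\Ltensor_\Int\Int/(p^n))$ as the term that appears, yields the asserted three-term exact sequence
\[
0\mto \HF^0(X_{F_\infty},\Gm_{X_{F_\infty},\Tau_{F_\infty}})\tensor_{\Int}\Int/(p^n)\mto \HF^1(W_{F_\infty},k_!\mu_{p^n})\mto\Motive^{F_\infty}_{\Sigma_{F_\infty},\Tau_{F_\infty}}[p^n]\mto 0.
\]

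For the two displayed isomorphisms I would apply the idempotent $e_-$. Since $p\neq 2$ the idempotents $e_\pm$ are defined over $\Int_p$, and applying $e_-$ to the exact sequence is exact. The point is that $e_-$ kills the left-hand term: $\HF^0(X_{F_\infty},\Gm_{X_{F_\infty},\Tau_{F_\infty}})$ is a subgroup of the global units of the CM field tower, on which complex conjugation $\cc$ acts, and $e_-$ of the $p$-completed unit group vanishes because the $\cc$-anti-invariants of the units of a CM field modulo torsion are trivial — the unit rank of a CM field equals that of its maximal totally real subfield, so the $(-)$-part is finite of order prime to $p$ (for $p$ odd it is killed by the relevant Herbrand-type argument, or directly: $e_-(\IntR^\times\tensor\Int_p)=0$). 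Here $\Tau_{F_\infty}$-congruence only shrinks this group, so the vanishing is preserved. Hence $e_-\HF^1(W_{F_\infty},k_!\mu_{p^n})\isomorph e_-\Motive^{F_\infty}_{\Sigma_{F_\infty},\Tau_{F_\infty}}[p^n]$, and passing to the inverse limit over $n$ (using that all groups involved are finite, so $\varprojlim$ is exact) gives $e_-\HF^1(W_{F_\infty},k_!(\Int_p)_{W_{F_\infty}}(1))\isomorph e_-\Tate_p\Motive^{F_\infty}_{\Sigma_{F_\infty},\Tau_{F_\infty}}$. The main obstacle I anticipate is bookkeeping rather than conceptual: setting up the localisation and Kummer triangles on $X_{F_\infty}$ compatibly and checking that the connecting maps match the map $\delta$ in the definition of the $1$-motive, together with the care needed because $\Tau_{F_\infty}$ is infinite in the tower so one must commute cohomology with the direct limit over finite subextensions (which is where \cite[VII, Cor. 5.8]{SGA4-2} enters, exactly as in the derivation of \eqref{eqn:calculation of exceptional image of unit roots}).
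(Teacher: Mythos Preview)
Your approach is exactly what the paper does, except that the paper outsources all the work after \eqref{eqn:calculation of exceptional image of unit roots} to \cite[Prop.~3.2, Cor.~3.4]{GreitherPopescu:EquivariantMC}; you are essentially reconstructing their argument. The skeleton --- build the surjection from the explicit description of $\HF^1(W_{F_\infty},k_!\mu_{p^n})$ to $\Motive^{F_\infty}_{\Sigma_{F_\infty},\Tau_{F_\infty}}[p^n]$, identify the kernel with the $\Tau$-units of $\IntR_{F_\infty}$ modulo $p^n$-th powers, then apply $e_-$ --- is correct. Your bookkeeping worry about the map is legitimate but solvable: the point is that at the level of $F_\infty$ the value groups at primes above $p$ are $\Int[1/p]$ (because $F_\cyc/F$ is totally ramified there), so the $p$-part of $\dv_{X_{F_\infty}}(f)$ is $p^n$-divisible and can be absorbed into the second coordinate $c$ of the torsion pair.

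There is one genuine imprecision in your argument for the $e_-$-vanishing of the kernel. The claim that ``the $(-)$-part is finite of order prime to $p$'' is false at finite CM levels: the minus part of $\IntR_{F'}^\times$ is $\mu(F')$ up to a $2$-group, and $\mu_{p^\infty}(F')$ can be nontrivial. What saves you at the level of $F_\infty$ is a dichotomy you do not state: either $\mu_p\not\subset F_\infty$, in which case $\mu_{p^\infty}(F_\infty)=1$; or $\mu_p\subset F_\infty$, and then since $F_\cyc\subset F_\infty$ one has $F(\mu_{p^\infty})=F_\cyc(\mu_p)\subset F_\infty$, so $\mu_{p^\infty}(F_\infty)=\mu_{p^\infty}$ is $p$-divisible and $\mu_{p^\infty}\tensor\Int/(p^n)=0$. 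In either case the $p$-primary roots of unity contribute nothing after $\tensor\,\Int/(p^n)$, and then the ``equal unit ranks'' argument disposes of the free part. (If $\Tau\neq\emptyset$ you can alternatively note that the congruence $f\equiv 1$ modulo a prime of residue characteristic $\neq p$ already kills all $p$-power roots of unity.)
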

\begin{proof}
This follows from \eqref{eqn:calculation of exceptional image of unit roots} and \cite[Prop. 3.2, Cor. 3.4]{GreitherPopescu:EquivariantMC}. Note that the proofs of these statements do not make use of the divisibility of the group
$\HF^1(X_{F_\infty},\Gm_{X_{F_{\infty}},\Tau_{F_\infty}})\tensor_{\Int}\Int_p$.
\end{proof}

\begin{rem}
We refer to \cite{GreitherPopescu:AbstractMotivesAndTatesClass} for related work of Greither and Popescu.
\end{rem}

Writing again $f\colon U_{F_\infty}\mto U$ for the system of coverings associated to $F_\infty/F$ we conclude from Proposition~\ref{prop:calculation of plus and minus parts}:

\begin{cor}\label{cor:comparison with GreitherPopescu}
Assume that $F_\infty/F$ is unramified over $U$. Under Conjecture~\ref{conj:vanishing of mu}, there are isomorphisms
\begin{align*}
e_{-}\HF^2(W,k_!f_!f^*\mu_{p^n})&\isomorph e_{-}\Motive^{F_\infty}_{\Sigma_{F_\infty},\Tau_{F_\infty}}[p^n],\\
e_{-}\HF^2(W,k_!f_!f^*(\Int_p)_U(1))&\isomorph e_{-}\Tate_p\Motive^{F_\infty}_{\Sigma_{F_\infty},\Tau_{F_\infty}}.
\end{align*}
In particular,
\[
 \bh \ncL^{\Mdual,-}_{F_\infty/F}(W,k_!(\Int_p)_U(1))=\left[e_{-}\Tate_p\Motive^{F_\infty}_{\Sigma_{F_\infty},\Tau_{F_\infty}}\right]
\]
in $\KTh_0(\Int_p[[\Gal(F_\infty/F)]],S)$.
\end{cor}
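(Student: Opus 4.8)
The plan is to feed the constant sheaves $(\Int/p^n)_U$ and $(\Int_p)_U$ into the cohomological computation of Proposition~\ref{prop:calculation of plus and minus parts} and then translate the outcome into $1$-motives by Proposition~\ref{prop:connection of 1-motives with etale cohomology}. First I would note that $(\Int/p^n)_U$ and $(\Int_p)_U$ are smooth $\Lambda$-adic sheaves on $U$ which are also smooth at $\infty$, complex conjugation acting trivially on them, so Proposition~\ref{prop:calculation of plus and minus parts} applies. Using $(\Int/p^n)_U(1)\isomorph\mu_{p^n}$ and specialising its fourth line to $s=2$ gives
\[
e_-\HF^2(W,k_!f_!f^*\mu_{p^n})\isomorph\varprojlim_{F'}e_-\HF^1(W_{F'_\cyc},k_!\mu_{p^n}),
\]
and, with $\sheaf{F}=(\Int_p)_U$,
\[
e_-\HF^2(W,k_!f_!f^*(\Int_p)_U(1))\isomorph\varprojlim_{F'}e_-\HF^1(W_{F'_\cyc},k_!(\Int_p)_U(1)),
\]
the limits running over the finite subextensions $F'/F$ of $F_\infty/F$ with corestriction as transition maps. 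For each such $F'$ the field $F'_\cyc$ is CM-admissible over $F$ with $F'_\cyc/F_\cyc$ finite, so Proposition~\ref{prop:connection of 1-motives with etale cohomology} identifies $e_-\HF^1(W_{F'_\cyc},k_!\mu_{p^n})$ with $e_-\Motive^{F'_\cyc}_{\Sigma_{F'_\cyc},\Tau_{F'_\cyc}}[p^n]$ and $e_-\HF^1(W_{F'_\cyc},k_!(\Int_p)_U(1))$ with $e_-\Tate_p\Motive^{F'_\cyc}_{\Sigma_{F'_\cyc},\Tau_{F'_\cyc}}$, compatibly with the transition maps. Passing to the inverse limit over $F'$ and identifying the result with $e_-\Motive^{F_\infty}_{\Sigma_{F_\infty},\Tau_{F_\infty}}[p^n]$, resp.\ $e_-\Tate_p\Motive^{F_\infty}_{\Sigma_{F_\infty},\Tau_{F_\infty}}$, yields the first two displayed isomorphisms of the corollary.

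For the $\KTh_0$-identity I would work in $\KTh_0(\Int_p[[\Gal(F_\infty/F)]],S)$. Since $F_\infty\supseteq F_\cyc$ the extension $F_\infty/F$ is infinite, so Proposition~\ref{prop:calculation of plus and minus parts} shows that $e_-\RDer\Sect(W,k_!f_!f^*(\Int_p)_U(1))$ has cohomology concentrated in degree $2$; arguing as in the proof of Corollary~\ref{cor:projective dimension property} (via Corollary~\ref{cor: S torsion global CM case} and Lemma~\ref{lem:S-torsion complexes of length two}) this complex admits a strictly perfect $\Int_p[[\Gal(F_\infty/F)]]$-resolution of length one and
\[
[e_-\RDer\Sect(W,k_!f_!f^*(\Int_p)_U(1))]=[e_-\HF^2(W,k_!f_!f^*(\Int_p)_U(1))]=[e_-\Tate_p\Motive^{F_\infty}_{\Sigma_{F_\infty},\Tau_{F_\infty}}],
\]
the last equality by the second isomorphism just established. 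On the other hand, Corollary~\ref{cor:transformation properties of L-functions CM case}(1), applied with $\varepsilon=-$ and the odd twist $n=1$, gives $\bh\,\ncL^{\Mdual,-}_{F_\infty/F}(W,k_!(\Int_p)_U(1))=[e_-\RDer\Sect(W,k_!f_!f^*(\Int_p)_U(1))]$. Combining the two displays proves the claim.

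The one step that is not pure bookkeeping is the passage to the limit: identifying $\varprojlim_{F'}e_-\Motive^{F'_\cyc}_{\Sigma_{F'_\cyc},\Tau_{F'_\cyc}}[p^n]$ (and its analogue after $\varprojlim_n$) with the $1$-motive, resp.\ its $p$-adic Tate module, at level $F_\infty$. This requires both the functoriality in the base field of the comparison map of Proposition~\ref{prop:connection of 1-motives with etale cohomology} and the description of the abstract $1$-motive of Greither and Popescu as a colimit of its finite-level analogues from \cite[\S2--3]{GreitherPopescu:EquivariantMC}. When $\Gal(F_\infty/F_\cyc)$ is finite — in particular for the minimal admissible extensions — this is automatic, since the indexing system then has $F'_\cyc=F_\infty$ as its maximal element; everything else follows by assembling the results already at hand.
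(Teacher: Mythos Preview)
Your argument is correct and follows the same route as the paper: combine Proposition~\ref{prop:calculation of plus and minus parts} with Proposition~\ref{prop:connection of 1-motives with etale cohomology} for the isomorphisms, and invoke Corollary~\ref{cor:transformation properties of L-functions CM case}(1) together with Corollary~\ref{cor:projective dimension property} for the $\KTh_0$-identity.

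However, you over-complicate the limit step because you have overlooked the standing assumption of Section~\ref{sec:GreitherPopescu} that $F_\infty/F_\cyc$ is \emph{finite}. Under this hypothesis the inverse system $\{F'_\cyc\}_{F'}$ has $F_\infty$ itself as a cofinal (indeed terminal) element, so the limit in Proposition~\ref{prop:calculation of plus and minus parts} collapses immediately to $e_-\HF^1(W_{F_\infty},k_!\mu_{p^n})$ and $e_-\HF^1(W_{F_\infty},k_!(\Int_p)_U(1))$, and Proposition~\ref{prop:connection of 1-motives with etale cohomology} applies directly at level $F_\infty$ without any passage through intermediate $F'_\cyc$. This also disposes of your (incorrect) claim that each $F'_\cyc$ is CM-admissible: for totally real $F'$ the field $F'_\cyc$ is totally real, not CM, but this does not matter since the intermediate levels are never actually needed. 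Your closing paragraph already acknowledges this simplification; it is in fact the only case that occurs.
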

\begin{proof}
Combine Proposition~\ref{prop:connection of 1-motives with etale cohomology} with Proposition~\ref{prop:calculation of plus and minus parts} and use Corollary~\ref{cor:transformation properties of L-functions CM case}.
\end{proof}

In particular, \cite[Thm. 4.6]{GreitherPopescu:EquivariantMC} reduces to the special case $\sheaf{F}=(\Int_p)_U$ of Corollary~\ref{cor:projective dimension property}. Moreover, if $\Gal(F_\infty/F)$ is commutative, we may identify the Fitting ideal and the characteristic ideal of $e_{-}\Tate_p\Motive^{F_\infty}_{\Sigma_{F_\infty},\Tau_{F_\infty}}$ over $\Int_p[[\Gal(F_\infty/F)]]$. The characteristic ideal may then be viewed as an element of
\[
 (\Int_p[[\Gal(F_\infty/F)]]_S)^{\times}/\Int_p[[\Gal(F_\infty/F)]]^\times\isomorph\KTh_0(\Int_p[[\Gal(F_\infty/F)]],S).
\]
Under this identification, it corresponds to the class $\left[e_{-}\Tate_p\Motive^{F_\infty}_{\Sigma_{F_\infty},\Tau_{F_\infty}}\right]^{-1}$. Furthermore, the interpolation property (11) in Corollary~\ref{cor:transformation properties of L-functions CM case} shows that the element $\ncL^{\Mdual,-}_{F_\infty/F}(W,k_!(\Int_p)_U(1))^{-1}$ agrees with the element $e_{+}+\theta_{\Sigma,\Tau}^{(\infty)}$ with $\Sigma\coloneqq X-W$, $\Tau\coloneqq W-U$ in the notation of \cite[Def.~5.16]{GreitherPopescu:EquivariantMC}. In particular, we recover the version of the equivariant main conjecture formulated in \cite[Thm.~5.6]{GreitherPopescu:EquivariantMC} as a special case of Corollary~\ref{cor:comparison with GreitherPopescu}. 

In the same way, one can also recover its non-commutative generalisation in \cite[Thm.~3.3]{Nickel:EIwTh}, which states that Nickel's non-commutative Fitting invariant of $e_{-}\Tate_\ell\Motive^{F_\infty}_{\Sigma_{F_\infty},\Tau_{F_\infty}}$ is generated by the reduced norm of $\ncL^{\Mdual,-}_{F_\infty/F,\Sigma,\Tau}(\Int_\ell(1))$. By the argument before \cite[Conj.~2.1]{Nickel:EIwTh}, this is in fact equivalent to Cor.~\ref{cor:comparison with GreitherPopescu}. However, Nickel only considers the case that $F_\infty/F$ is unramified over $W$.

With some technical effort, one can further extend Prop.~\ref{prop:connection of 1-motives with etale cohomology} and Cor.~\ref{cor:comparison with GreitherPopescu} by allowing $U$ to contain a finite number of points $x$ such that $F_\infty/F$ is ramified over $x$, but with a ramification index prime to $\ell$. The interested reader may consult \cite[Ch.~5, Ch.~6]{Witte:Habil} for a detailed exposition, which parallels the disquisition in \cite{Witte:NCIMCFuncField} on the same phenomenon in the function field case.

\section*{Appendix: Localisation in polynomial rings}

Let $R$ be any associative ring with $1$ and let $R[t]$ be the polynomial ring over $R$ in one indeterminate $t$ that commutes with the elements of $R$. Write $\cat{SP}(R[t])$ and $\cat{P}(R[t])$ for the Waldhausen categories of strictly perfect and perfect complexes of $R[t]$-modules. Consider $R$ as a $R$-$R[t]$-bimodule via the augmentation map
\[
 R[t]\mto R,\qquad t\mapsto 0.
\]
We then define full subcategories
\begin{gather*}
\cat{SP}^{w_t}(R[t])\coloneqq\set{\cmplx{P}\in \cat{SP}(R[t])\given \text{\(R\tensor_{R[t]}\cmplx{P}\) is acyclic}},\\
\cat{P}^{w_t}(R[t])\coloneqq\set{\cmplx{P}\in \cat{P}(R[t])\given \text{\(\cmplx{P}\) is  quasi-isomorphic to a complex in \(\cat{SP}^{w_t}(R[t])\)}}.
\end{gather*}
These categories are in fact Waldhausen subcategories of $\cat{SP}(R[t])$ and $\cat{P}(R[t])$, respectively, since they are closed under shifts and extensions \cite[3.1.1]{Witte:PhD}. We can then construct new Waldhausen categories $w_t\cat{SP}(R[t])$ and $w_t\cat{P}(R[t])$ with the same objects, morphisms, and cofibrations as $\cat{SP}(R[t])$ and $\cat{P}(R[t])$, but with weak equivalences being those morphisms with cone in $\cat{SP}^{w_t}(R[t])$ and $\cat{P}^{w_t}(R[t])$, respectively. By the approximation theorem \cite[1.9.1]{ThTr:HAKTS+DC}, the inclusion functor $w_t\cat{SP}(R[t])\mto w_t\cat{P}(R[t])$ induces isomorphisms
\[
 \KTh_n(w_t\cat{SP}(R[t]))\isomorph \KTh_n(w_t\cat{P}(R[t]))
\]
for all $n\geq 0$.

It might be reassuring to know that, if $R$ is noetherian, we can identify these $\KTh$-groups for $n\geq 1$ with the $\KTh$-groups of a localisation of $R[t]$: Set
\[
S_t\coloneqq\set{f(t)\in R[t] \given f(0)\in R^{\times}}
\]

\begin{propA}\label{prop:S_t denominator set}
Assume that $R$ is a noetherian. Then $S_t$ is a left (and right) denominator set in the sense of \cite[Ch. 10]{GW:NoncommNoethRings} such that the localisation $ R[t]_{S_t}$ exists and is noetherian. Its Jacobson radical $\Jac(R[t]_{S_t})$ is generated by the Jacobson radical $\Jac(R)$ of $R$ and $t$. In particular, if $R$ is semi-local, then so is $R[t]_{S_t}$.

Moreover, the category $\cat{SP}^{w_t}(R[t])$ consists precisely of those complexes $\cmplx{P}$ in $\cat{SP}(R[t])$ with $S_t$-torsion cohomology. In particular,
\[
\KTh_n(w_t\cat{SP}(R[t]))\isomorph \KTh_{n}(R[t]_{S_t})
\]
for $n\geq 1$.
\end{propA}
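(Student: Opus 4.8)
The plan is to prove the three assertions in order; the left (and right) Ore property of $S_t$ is the one substantive point, and everything else follows from it by standard localisation arguments. First I would observe that every $s\in S_t$ is a non--zero--divisor in $R[t]$: since $t$ is central and regular in $R[t]$ and $s$ has unit constant term, the relation $sa=0$ (resp.\ $as=0$) forces $a\in\bigcap_n t^nR[t]=0$ by $t$--adic descent, so the reversibility axioms for a denominator set are automatic and only the Ore conditions need work. For the left Ore condition, given $a\in R[t]$ of degree $d$ and $s=\sum_{i=0}^{e}s_it^i\in S_t$ with $s_0\in R^{\times}$, I would pass to the power series ring $R[[t]]$, into which $R[t]$ embeds and in which $s$ is invertible, and set $b\coloneqq as^{-1}=\sum_{m\ge0}b_mt^m$. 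The identity $bs=a$ forces the linear recurrence $\sum_{i=0}^{e}b_{m-i}s_i=0$ for $m>d$, which — because $s_0\in R^{\times}$ — makes the consecutive windows $u_m\coloneqq(b_m,\dots,b_{m+e-1})\in R^{e}$ satisfy $u_{m+1}=u_mT$ for all large $m$ and a fixed $T\in\Mat_e(R)$. Since $R$ is noetherian, the left $R$--submodule of $R^{e}$ generated by all the $u_m$ is finitely generated, so an ascending--chain argument yields $\ell\ge1$ with $u_{m_0+\ell}\in\sum_{i<\ell}Ru_{m_0+i}$; the resulting relation produces $s'\in S_t$ with $s'b\in R[t]$, and then $a'\coloneqq s'b$ satisfies $s'a=a's$. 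The right Ore condition follows by the same argument over $R^{\op}$, using that $t$ remains central so that $(R[t])^{\op}=R^{\op}[t]$. I expect converting the noetherian hypothesis on $R$ into the explicit denominator $s'$ to be the main obstacle of the whole proof.

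Granting this, $R[t]_{S_t}$ exists and is noetherian as a localisation of a noetherian ring at a denominator set \cite[Ch.~10]{GW:NoncommNoethRings}. For the Jacobson radical I would note that for $c\in R[t]$ and $s\in S_t$ the elements $s-tc$ and $s-ct$ lie in $S_t$ (their constant term is $s(0)\in R^{\times}$), and that for $c\in\Jac(R)[t]$ the element $s+c$ lies in $S_t$ (its constant term lies in $R^{\times}+\Jac(R)=R^{\times}$); hence $1\pm ta$, $1\pm at$, and $1+a'$ are units for every $a\in R[t]_{S_t}$ and every $a'$ in the two--sided ideal $\Jac(R)R[t]_{S_t}$, so $tR[t]_{S_t}+\Jac(R)R[t]_{S_t}\subseteq\Jac(R[t]_{S_t})$. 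The reverse inclusion, and the statement that $R[t]_{S_t}$ is semilocal when $R$ is, follow at once from the identification $R[t]_{S_t}/(tR[t]_{S_t}+\Jac(R)R[t]_{S_t})\isomorph R/\Jac(R)$.

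Next, the description of $\cat{SP}^{w_t}(R[t])$. For $\cmplx{P}\in\cat{SP}(R[t])$ every term $P^n$ is $t$--torsion--free (projective over $R[t]$, on which $t$ is regular), so from the short exact sequence $0\to\cmplx{P}\xrightarrow{t}\cmplx{P}\to R\tensor_{R[t]}\cmplx{P}\to0$ and its long exact cohomology sequence one reads off that $R\tensor_{R[t]}\cmplx{P}$ is acyclic if and only if $t$ acts bijectively on each $\HF^q(\cmplx{P})$. It then remains to show, for a finitely generated $R[t]$--module $N$, that $t$ acts bijectively on $N$ if and only if $N$ is $S_t$--torsion. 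If $tN=N$ then $S_t^{-1}N$ is a finitely generated $R[t]_{S_t}$--module with $tS_t^{-1}N=S_t^{-1}N$, hence $S_t^{-1}N=0$ by Nakayama since $t\in\Jac(R[t]_{S_t})$ by the first part; conversely, if $N$ is $S_t$--torsion then $N/tN=S_t^{-1}(N/tN)=0$ (the augmentation $R[t]\to R$ carries $S_t$ into $R^{\times}$), and applying the same to the finitely generated submodule $N[t]=\ker(t\colon N\to N)$ gives $N[t]=tN[t]=0$. Combining this with the cohomological statement proves $\cat{SP}^{w_t}(R[t])=\{\cmplx{P}\in\cat{SP}(R[t])\mid \HF^{*}(\cmplx{P})\text{ is }S_t\text{--torsion}\}$.

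Finally, the identification of $\KTh$--groups. By the previous paragraph the weak equivalences of $w_t\cat{SP}(R[t])$ are exactly the morphisms inverted by the exact functor $R[t]_{S_t}\tensor_{R[t]}-\colon\cat{SP}(R[t])\to\cat{SP}(R[t]_{S_t})$ (exact because $S_t^{-1}R[t]$ is flat as a right $R[t]$--module), which therefore induces homomorphisms $\KTh_n(w_t\cat{SP}(R[t]))\to\KTh_n(R[t]_{S_t})$. Comparing the Waldhausen fibration sequence attached to $(\cat{SP}(R[t]),w_t)$ with the localisation sequence for the denominator set $S_t$ — whose ``torsion'' terms agree once one resolves $S_t$--torsion modules of finite projective dimension — a diagram chase gives the asserted isomorphism in degrees $n\ge1$; the restriction to $n\ge1$ is the usual one, forced by the failure of the $\KTh_0$ of the torsion category to inject. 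For this last step I would simply invoke the localisation statements already available in \cite{ThTr:HAKTS+DC} and \cite{Witte:Splitting} rather than reprove them.
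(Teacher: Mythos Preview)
Your argument is correct and, for the Ore condition and the Jacobson radical, essentially identical to the paper's: both set up the coefficient recurrence $u_{m+1}=u_mT$ and invoke the ascending chain condition in $R^e$ to produce the denominator; you phrase this via the embedding into $R[[t]]$, the paper writes out the same recursion directly, but the content is the same. The Jacobson radical computation also matches.

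There are two points where you diverge from the paper. For the characterisation of $\cat{SP}^{w_t}(R[t])$, the paper uses a single Nakayama step for strictly perfect complexes: over any noetherian ring $A$, a strictly perfect complex is acyclic if and only if it is so modulo $\Jac(A)$; applying this once over $R$ and once over $R[t]_{S_t}$, and using $R[t]_{S_t}/\Jac(R[t]_{S_t})\isomorph R/\Jac(R)$, gives immediately that $R\tensor_{R[t]}\cmplx{P}$ is acyclic iff $R[t]_{S_t}\tensor_{R[t]}\cmplx{P}$ is. Your route via the exact sequence $0\to\cmplx{P}\xrightarrow{t}\cmplx{P}\to R\tensor_{R[t]}\cmplx{P}\to0$ and the analysis of when $t$ acts bijectively on a finitely generated module is perfectly valid but more hands-on; the paper's argument is shorter and avoids the separate treatment of $N[t]$.

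For the $\KTh$-theory identification, your sketch (comparing the Waldhausen fibration with Quillen's localisation sequence) is the right shape, but the references you cite do not quite deliver it off the shelf: \cite{Witte:Splitting} treats Iwasawa algebras, and the Thomason--Trobaugh localisation theorem needs adaptation here. The paper instead invokes the Weibel--Yao localisation theorem \cite{WeibYao:Localization}, which applies directly to this Ore situation and gives $\KTh_n(w_t\cat{SP}(R[t]))\isomorph\KTh_n(R[t]_{S_t})$ for $n\ge1$ (with the expected $\KTh_0$ discrepancy) in one step.
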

\begin{proof}
Clearly, the set $S_t$ consists of non-zero divisors, such that we only need to check the Ore condition:
\[
\forall s\in S_t\colon\forall a\in R[t]\colon\exists x\in R[t]\colon \exists y\in S_t\colon xs=ya.
\]
Moreover, we may assume that $s(0)=y(0)=1$. Write
\[
s=1-\sum_{i=1}^\infty s_it^i,\quad a=\sum_{i=0}^\infty a_i t^i,\quad x=\sum_{i=0}^\infty x_i t^i,\quad y=1+\sum_{i=1}^\infty y_it^i.
\]
and assume that $s_i=a_{i-1}=0$ for $i>n$. Comparing coefficients, we obtain the recurrence equation
\begin{equation*}
(*)\qquad x_i=\sum_{j=0}^{i-1}x_js_{i-j}+\sum_{j=1}^iy_ja_{i-j}+a_i=\sum_{j=1}^iy_j b_{i-j}+b_i
\end{equation*}
with
\[
b_i\coloneqq\sum_{j=0}^{i-1}b_js_{i-j}+a_i.
\]
Write $B_i\coloneqq(b_{i-n+1},\dots,b_{i})\in R^n$ with the convention that $b_i=0$ for $i<0$. Then for $i\geq n$
\[
B_i=B_{i-1}S=B_{n-1}S^{i-n+1}
\]
with
\[
S\coloneqq\begin{pmatrix}
 0      & \dots & \dots  & s_n\\
 1      & \ddots&        & s_{n-1}\\
 0      & \ddots& 0      & \vdots \\
 \vdots &       & 1      &  s_1
 \end{pmatrix}.
\]
Since $R$ was assumed to be noetherian, there exists a $m\geq n$ and $y_n,\dots, y_{m}\in R$ such that
\[
0=\sum_{j=n}^m y_j B_{m-j}+B_m=\sum_{j=n}^m y_j B_{i-j}+B_{i}
\]
for all $i\geq m$. Hence, we can find a solution $(x_i,y_i)_{i=0,1,2\dots}$ of equation~$(*)$ with $x_{i}=y_i=0$ for $i>m$ and $y_i=0$ for $i<n$. This shows that $S_t$ is indeed a left denominator set such that $R[t]_{S_t}$ exists and is noetherian \cite[Thm. 10.3, Cor. 10.16]{GW:NoncommNoethRings}.

Let $N\subset R[t]$ be the semi-prime ideal of $R[t]$ generated by $t$ and the Jacobson ideal $\Jac(R)$ of $R$. Then $S_t$ is precisely the set of elements of $\Lambda[t]$ which are units modulo $N$. In particular, the localisation $N_{S_t}$ is a semi-prime ideal of $R[t]_{S_t}$ such that
\[
R[t]_{S_t}/N_{S_t}=R[t]/N=R/\Jac(R)
\]
\cite[Thm. 10.15, 10.18]{GW:NoncommNoethRings}. We conclude $\Jac(R[t]_{S_t})\subset N_{S_t}$. For the other inclusion it suffices to note that for every $s\in S_t$ and every $n\in N$, the element $s+n$ is a unit modulo $N$.

The Nakayama lemma implies that for any noetherian ring $R$ with Jacobson radical $\Jac(R)$, a strictly perfect complex of $R$-modules $\cmplx{P}$ is acyclic if and only if $R/\Jac(R)\tensor_R\cmplx{P}$ is acyclic. Hence, if $\cmplx{P}$ is a strictly perfect complex of $R[t]$-modules, then $R\tensor_{R[t]}\cmplx{P}$ is acyclic if and only if $R[t]_{S_t}\tensor_{R}\cmplx{P}$ is acyclic. This shows that $\cat{SP}^{w_t}(R[t])$ consists precisely of those complexes $\cmplx{P}$ in $\cat{SP}(R[t])$ with $S_t$-torsion cohomology. From the localisation theorem in \cite{WeibYao:Localization} we conclude that the Waldhausen exact functor
\[
w_t\cat{SP}(R[t])\mto\cat{SP}(R[t]_{S_t}),\qquad \cmplx{P}\mapsto R[t]_{S_t}\tensor_{R[t]}\cmplx{P}
\]
induces isomorphisms
\[
\KTh_n(w_t\cat{SP}(R[t]))\isomorph
\begin{cases}
 \KTh_n(R[t]_{S_t}) & \text{if $n>0$,}\\
 \im\left(\KTh_0(R[t])\mto\KTh_0(R[t]_{S_t})\right) & \text{if $n=0$.}
\end{cases}
\]
\end{proof}

 The set $S_t$ fails to be a left denominator set if $R=\FF_p\langle\langle x,y\rangle\rangle$ is the power series ring in two non-commuting indeterminates: $a(1-xt)=by$ has no solution with $a\in R[t]$, $b\in S_t$. Note also that a commutative adic ring is always noetherian \cite[Cor. 36.35]{Warner:TopRings}. In this case, $S_t$ is the union of the complements of all maximal ideals of $\Lambda[t]$ containing $t$ and the determinant provides an isomorphism
\[
\KTh_1(w_t\cat{SP}(\Lambda[t]))\isomorph\KTh_1(\Lambda[t]_{S_t})\xrightarrow[\isomorph]{\det}\Lambda[t]_{S_t}^\times.
\]

For any adic $\Int_p$-algebra $\Lambda$ and any $\gamma\in\Gamma\isomorph\Int_p$, we have a ring homomorphism
\[
\ev_{\gamma}\colon \Lambda[t]\mapsto \Lambda[[\Gamma]],\quad f(t)\mapsto f(\gamma).
\]
inducing homomorphisms $\KTh_n(\Lambda[t])\mto\KTh_n(\Lambda[[\Gamma]])$.

\begin{propA}\label{prop:extension of evgamma}
Assume that $\gamma\neq 1$. Then the ring homomorphism $\ev_{\gamma}$ induces homomorphisms
\[
\ev_\gamma\colon\KTh_n(w_t\cat{P}(\Lambda[[t]]))\isomorph\KTh_n(w_t\cat{SP}(\Lambda[t]))\mto\KTh_n(\Lambda[[\Gamma]]_S)
\]
for all $n\geq 0$.
\end{propA}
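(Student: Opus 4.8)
The statement asserts that $\ev_\gamma\colon \Lambda[t]\to\Lambda[[\Gamma]]$ induces maps on the relative $\KTh$-groups $\KTh_n(w_t\cat{P}(\Lambda[t]))\to\KTh_n(\Lambda[[\Gamma]]_S)$. Since by definition $\KTh_n(\Lambda[[\Gamma]]_S)=\KTh_n(w_H\cat{PDG}^{\cont}(\Lambda[[\Gamma]]))$ with $H$ trivial (so $w_H$ is localisation at Venjakob's canonical Ore set $S=S_{\Lambda[[\Gamma]]}$), and since by the approximation theorem $\KTh_n(w_t\cat{P}(\Lambda[t]))\isomorph\KTh_n(w_t\cat{SP}(\Lambda[t]))$, it suffices to produce a Waldhausen exact functor from $w_t\cat{SP}(\Lambda[t])$ to $w\cat{SP}(\Lambda[[\Gamma]])$ (or to $w_H\cat{PDG}^{\cont}(\Lambda[[\Gamma]])$) that is given on underlying complexes by $\cmplx{P}\mapsto \Lambda[[\Gamma]]\tensor_{\Lambda[t]}\cmplx{P}$ via $\ev_\gamma$. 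The main point to verify is that this functor sends weak equivalences of $w_t\cat{SP}(\Lambda[t])$ to weak equivalences in the target, i.e.\ that it sends $\cat{SP}^{w_t}(\Lambda[t])$ into the subcategory of complexes that become acyclic after inverting $S$.

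First I would treat the noetherian case, where the structure is cleanest. If $\Lambda[[\Gamma]]$ is noetherian (e.g.\ $\Lambda$ commutative, hence noetherian by \cite[Cor.~36.35]{Warner:TopRings}, or more generally $\Lambda$ noetherian), then by Proposition~\ref{prop:S_t denominator set} the set $S_t\subset\Lambda[t]$ is a denominator set and $\cat{SP}^{w_t}(\Lambda[t])$ consists exactly of the strictly perfect complexes with $S_t$-torsion cohomology; similarly, by Lemma~\ref{lem:ore set is right and left}, $S$ consists exactly of $f\in\Lambda[[\Gamma]]$ with $\Lambda[[\Gamma]]/\Lambda[[\Gamma]]f$ finitely generated over $\Lambda$, and a perfect complex of $\Lambda[[\Gamma]]$-modules lies in the relevant acyclic subcategory iff its cohomology is $S$-torsion. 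Thus the key reduction is to show that $\ev_\gamma(S_t)\subset S$. This is a concrete computation: for $f(t)\in S_t$ we have $f(0)\in\Lambda^\times$, and I must show $\Lambda[[\Gamma]]/\Lambda[[\Gamma]]f(\gamma)$ is finitely generated over $\Lambda$. Writing $t'\coloneqq \gamma-1$ and expanding $f(\gamma)=f(1+t')=f(1)+ t'g(t')$ (or more directly working modulo $\Jac(\Lambda)$ and reducing to the finite case as in Lemma~\ref{lem:ore set in the finite case}), one sees that $f(\gamma)$ has unit constant term in the skew-power-series presentation $\Lambda[[\Gamma]]=\Lambda[[t']]$ only if $f(1)\in\Lambda^\times$; in general $f(1)$ need not be a unit, but $f(\gamma)$ still becomes a unit in $\Lambda[[\Gamma]]_S$ because modulo $\Jac(\Lambda)$ and modulo any open subgroup the image of $\gamma$ in a finite quotient differs from $1$, so $f$ evaluated there is a product of a unit and a power of $t'$, hence invertible after inverting $t'$; by Lemma~\ref{lem:ore set in the finite case} this forces $f(\gamma)\in S$ in the finite case, and passing to the limit over open ideals and open subgroups (using that $S$ is detected finitely level by level, cf.\ the proof of Lemma~\ref{lem:ore set is right and left}) gives $\ev_\gamma(S_t)\subset S$ in general. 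It is precisely here that the hypothesis $\gamma\neq 1$ is used: if $\gamma=1$ then $\ev_\gamma(t)=0\notin S$ and the statement fails.

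Granting $\ev_\gamma(S_t)\subset S$, the functoriality of Waldhausen $\KTh$-theory does the rest in the noetherian case: by the localisation theorem \cite{WeibYao:Localization}, $\KTh_n(w_t\cat{SP}(\Lambda[t]))$ agrees with $\KTh_n(\Lambda[t]_{S_t})$ for $n\geq 1$ (and with the image of $\KTh_0(\Lambda[t])$ in $\KTh_0(\Lambda[t]_{S_t})$ for $n=0$), $\KTh_{n}(\Lambda[[\Gamma]]_S)$ agrees with its usual definition, and the ring map $\Lambda[t]_{S_t}\to\Lambda[[\Gamma]]_S$ extending $\ev_\gamma$ (which exists by the universal property of localisation once $\ev_\gamma(S_t)\subset S$) induces the desired homomorphisms. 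For $n=0$ one checks directly that the boundary of the image of a class is the image of the boundary, so the map descends to the relative group. For general, possibly non-noetherian, $\Lambda$, I would instead argue at the level of Waldhausen categories without forming $\Lambda[t]_{S_t}$: the tensor functor $\cmplx{P}\mapsto\Lambda[[\Gamma]]\tensor_{\Lambda[t]}\cmplx{P}$ along $\ev_\gamma$ is exact on strictly perfect complexes (since $\Lambda[[\Gamma]]$ is flat, indeed free, over $\Lambda[t]$ via $\ev_\gamma$ — one can write down an explicit basis, or reduce modulo open ideals), and one checks that if $\Lambda\Ltensor_{\Lambda[t]}\cmplx{P}$ is acyclic then $\cmplx{P}$, viewed after base change to $\Lambda[[\Gamma]]$, lies in $\cat{SP}^{w_H}(\Lambda[[\Gamma]])$: reducing modulo $\Jac(\Lambda)[[\Gamma]]$ and then modulo each open subgroup of $\Gamma$ reduces this to the finite case treated above, using \cite[Prop.~4.8]{Witte:MCVarFF}. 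Hence the functor induces $\ev_\gamma\colon\KTh_n(w_t\cat{SP}(\Lambda[t]))\to\KTh_n(w_H\cat{PDG}^{\cont}(\Lambda[[\Gamma]]))=\KTh_n(\Lambda[[\Gamma]]_S)$ for all $n\geq 0$.

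\textbf{Main obstacle.} I expect the genuine work to be the verification $\ev_\gamma(S_t)\subset S$, or its categorical avatar that $\ev_\gamma$ sends $\cat{SP}^{w_t}$-acyclic complexes to $w_H$-acyclic ones, in the non-noetherian setting: the clean denominator-set formalism of Proposition~\ref{prop:S_t denominator set} and Lemma~\ref{lem:ore set is right and left} is unavailable, so one must work level by level over the open ideals $I\in\openideals_\Lambda$ and open subgroups of $\Gamma$, invoke the finite-level characterisation of $S$ from Lemma~\ref{lem:ore set in the finite case}, and patch the results together using \cite[Prop.~4.8]{Witte:MCVarFF} and the appendix localisation lemma. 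Everything else — flatness of $\ev_\gamma$, Waldhausen exactness of the base-change functor, compatibility with the boundary map in degree $0$ — is routine.
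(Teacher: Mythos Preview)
Your overall strategy is correct: it suffices to show that the base-change functor $\cmplx{P}\mapsto\Lambda[[\Gamma]]\tensor_{\Lambda[t]}\cmplx{P}$ sends $\cat{SP}^{w_t}(\Lambda[t])$ into $\cat{SP}^{w_H}(\Lambda[[\Gamma]])$ (with $H$ trivial), and the reduction modulo $\Jac(\Lambda)$ via \cite[Prop.~4.8]{Witte:MCVarFF} is exactly what the paper does first. But your argument for the crucial step --- why $f(\gamma)\in S$ once $\Lambda$ is finite --- is confused in two places.

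First, the claim that ``modulo any open subgroup the image of $\gamma$ in a finite quotient differs from $1$'' is false unless $\gamma$ is a topological generator: if $\gamma=\gamma_0^{p^m u}$ with $m\geq1$ then $\gamma$ maps to $1$ in $\Gamma/\Gamma^{p^N}$ for all $N\leq m$. Second, and more seriously, the phrase ``invertible after inverting $t'$'' makes no sense at a finite level of $\Gamma$: in $k[T_0]/(T_0^{p^N})$ the element $T_0$ is nilpotent, so localising at it gives the zero ring. Reducing modulo open subgroups of $\Gamma$ is simply the wrong move here; the reduction in the proof of Lemma~\ref{lem:ore set is right and left} you cite involves only the ideals of $\Lambda$ and the subgroups contained in $H$, and here $H$ is trivial.

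The paper's argument is much shorter and avoids both pitfalls. After reducing modulo $\Jac(\Lambda)$ one has $\Lambda$ finite semi-simple; a Morita equivalence (tensor with $\prod_i k_i^{n_i}$) then reduces to $\Lambda=\prod_i k_i$, a finite product of finite fields. Now $\Lambda[[\Gamma]]=\prod_i k_i[[\Gamma]]$ is a product of integral domains, and $S$ is precisely the set of non-zero-divisors. It remains only to see that $f(\gamma)$ has nonzero image in each component $k_i[[\Gamma]]$. This is where $\gamma\neq1$ enters: since $\Gamma\isomorph\Int_p$ is torsion-free, $\gamma\neq1$ has infinite order, so the powers $1,\gamma,\gamma^2,\dots$ are distinct group elements and therefore $k_i$-linearly independent in $k_i[\Gamma]\subset k_i[[\Gamma]]$. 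Thus $f(\gamma)=\sum_j f_j\gamma^j=0$ would force all $f_j=0$, contradicting $f(0)=f_0\in\Lambda^\times$. That is the entire content; no passage to finite quotients of $\Gamma$ is needed.

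Two minor points: your flatness/freeness remark about $\Lambda[[\Gamma]]$ over $\Lambda[t]$ is unnecessary (tensoring strictly perfect complexes with any bimodule is exact, since the modules are projective) and likely false in the non-noetherian case. And separating out a noetherian case is not needed: the argument above works uniformly.
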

\begin{proof}
It suffices to show that for any complex $\cmplx{P}$ in $\cat{SP}^{w_t}(\Lambda[t])$, the complex
\[
\cmplx{Q}\coloneqq\Lambda[[\Gamma]]\tensor_{\Lambda[t]}\cmplx{P}
\]
is perfect as complex of $\Lambda$-modules. We can check this after factoring out the Jacobson radical of $\Lambda$ \cite[Prop. 4.8]{Witte:MCVarFF}. Hence, we may assume that $\Lambda$ is semi-simple, i.\,e.\
\[
\Lambda=\prod_{i=1}^m M_{n_i}(k_i)
\]
where $M_{n_i}(k_i)$ is the algebra of $n_i\times n_i$-matrices over a finite field $k_i$ of characteristic $p$. By the Morita theorem, the tensor product over $\Lambda$ with the $\prod_i k_i$-$\Lambda$-bimodule
\[
\prod_{i=1}^m k_i^{n_i}
\]
induces equivalences of categories
\begin{align*}
\cat{SP}^{w_t}(\Lambda[t])&\mto\cat{SP}^{w_t}\left(\prod_{i=1}^n k_i[t]\right),\\
\cat{SP}^{w_H}(\Lambda[[\Gamma]])&\mto\cat{SP}^{w_H}(\prod_{i=1}^n k_i[[\Gamma]]),
\end{align*}
with $H\subset\Gamma$ being the trivial subgroup.
Hence, we are reduced to the case
\[
\Lambda=\prod_{i=1}^m k_i.
\]
In this case, the set $S\subset \Lambda[[\Gamma]]$ defined in \eqref{eqn:Venjakobs Ore set} consists of all non-zero divisors of $\Lambda[[\Gamma]]$, i.\,e.\ all elements with non-trivial image in each component $k_i[[\Gamma]]$. Since $\Lambda[[\Gamma]]$ is commutative, this is trivially a left denominator set. Moreover, the complex $\cmplx{Q}$ is perfect as complex of $\Lambda$-modules precisely if its cohomology groups are $S$-torsion. On the other hand, as a trivial case of Proposition~\ref{prop:S_t denominator set}, we know that $S_t$ is a left denominator set and that the cohomology groups of $\cmplx{P}$ are $S_t$-torsion. Since $f(0)$ is a unit in $\Lambda$ for each $f\in S_t$, the element $f(\gamma)$ has clearly non-trivial image in each component $k_i[[\Gamma]]$. Hence, $\ev_\gamma$ maps $S_t$ to $S$  and $\cmplx{Q}$ is indeed perfect as complex of $\Lambda$-modules.
\end{proof}

\comment{
To do: Put this in a separate paper and explain its relevance to higher dimensional knot theory.
}

% ----------------------------------------------------------------
\bibliographystyle{amsalpha}
\bibliography{Literature}

\newcommand{\etalchar}[1]{$^{#1}$}
\providecommand{\bysame}{\leavevmode\hbox to3em{\hrulefill}\thinspace}
\providecommand{\MR}{\relax\ifhmode\unskip\space\fi MR }
% \MRhref is called by the amsart/book/proc definition of \MR.
\providecommand{\MRhref}[2]{%
  \href{http://www.ams.org/mathscinet-getitem?mr=#1}{#2}
}
\providecommand{\href}[2]{#2}
\begin{thebibliography}{CFK{\etalchar{+}}05}

\bibitem[AGV72a]{SGA4-2}
M.~Artin, A.~Grothendieck, and J.L. Verdier, \emph{Th\'eorie des topos et
  cohomologie \'etale des sch\'emas ({SGA} 4-2)}, Lecture Notes in Mathematics,
  no. 270, Springer, Berlin, 1972.

\bibitem[AGV72b]{SGA4-3}
\bysame, \emph{Th\'eorie des topos et cohomologie \'etale des sch\'emas ({SGA}
  4-3)}, Lecture Notes in Mathematics, no. 305, Springer, Berlin, 1972.

\bibitem[Bau91]{Bau:CombHom}
H.~J. Baues, \emph{Combinatorial homotopy and $4$-dimensional complexes}, de
  Gruyter Expositions in Mathematics, no.~2, Walter de Gruyter \& Co., Berlin,
  1991.

\bibitem[Bur09]{Burns:AlgebraicLfunctions}
D.~Burns, \emph{Algebraic $p$-adic {$L$}-functions in non-commutative {I}wasawa
  theory}, Publ. RIMS Kyoto \textbf{45} (2009), 75--88.

\bibitem[Bur15]{Burns:MCinNCIwasawaTh+RelConj}
\bysame, \emph{On main conjectures in non-commutative {I}wasawa theory and
  related conjectures}, J. Reine Angew. Math. \textbf{698} (2015), 105--159.

\bibitem[CFK{\etalchar{+}}05]{CFKSV}
J.~Coates, T.~Fukaya, K.~Kato, R.~Sujatha, and O.~Venjakob, \emph{The {$GL_2$}
  main conjecture for elliptic curves without complex multiplication}, Publ.
  Math. Inst. Hautes Etudes Sci. (2005), no.~101, 163--208.

\bibitem[CL73]{CoatesLichtenbaum:lAdicZetaFunctions}
J.~Coates and S.~Lichtenbaum, \emph{On {$l$}-adic zeta functions}, Ann. of
  Math. (2) \textbf{98} (1973), 498--550.

\bibitem[CSSV13]{CSSV:Survey}
J.~Coates, P.~Schneider, R.~Sujatha, and O.~Venjakob (eds.),
  \emph{Noncommutative {I}wasawa main conjectures over totally real fields},
  Springer Proceedings in Mathematics \& Statistics, vol.~29, Springer,
  Heidelberg, 2013, Papers from the Workshop held at the University of
  M{\"u}nster, M{\"u}nster, April 25--30, 2011.

\bibitem[Del87]{Del:DC}
P.~Deligne, \emph{Le d{\'e}terminant de la cohomologie}, Contemporary
  Mathematics \textbf{67} (1987), 93--177.

\bibitem[FK06]{FK:CNCIT}
T.~Fukaya and K.~Kato, \emph{A formulation of conjectures on {$p$}-adic zeta
  functions in non-commutative {I}wasawa theory}, Proceedings of the St.
  Petersburg Mathematical Society (Providence, RI), vol. XII, Amer. Math. Soc.
  Transl. Ser. 2, no. 219, American Math. Soc., 2006, pp.~1--85.

\bibitem[Fu11]{LeiFu:EtaleCohomology}
L.~Fu, \emph{Etale cohomology theory}, Nankai Tracts in Mathematics, vol.~13,
  World Scientific Publishing Co. Pte. Ltd., Hackensack, NJ, 2011.

\bibitem[GP15]{GreitherPopescu:EquivariantMC}
C.~Greither and C.~Popescu, \emph{An equivariant main conjecture in {I}wasawa
  theory and applications}, J. Algebraic Geom. \textbf{24} (2015), no.~4,
  629--692.

\bibitem[GP17]{GreitherPopescu:AbstractMotivesAndTatesClass}
\bysame, \emph{Abstract $\ell$-adic $1$-motives and {T}ate's class},
  \href{https://arxiv.org/abs/1710.02596}{\texttt{arXiv:1710.02596}}, 2017.

\bibitem[Gre83]{Greenberg:ArtinLfunctions}
R.~Greenberg, \emph{On {$p$}-adic {A}rtin {$L$}-functions}, Nagoya Math. J.
  \textbf{89} (1983), 77--87. \MR{692344}

\bibitem[Gro60]{EGA1}
A.~Grothendieck, \emph{{\'E}l{\'e}ments de g{\'e}om{\'e}trie alg{\'e}brique:
  {I.} {L}e langage des sch{\'e}mas}, no.~4, Inst. Hautes \'Etudes Sci. Publ.
  Math., 1960.

\bibitem[GW04]{GW:NoncommNoethRings}
K.~R. Goodearl and R.~B. Warfield, \emph{An introduction to noncommutative
  noetherian rings}, 2 ed., London Math. Soc. Student Texts, no.~61, Cambridge
  Univ. Press, Cambridge, 2004.

\bibitem[Kak13]{Kakde2}
M.~Kakde, \emph{The main conjecture of {I}wasawa theory for totally real
  fields}, Invent. Math. \textbf{193} (2013), no.~3, 539--626.

\bibitem[Kat06]{Kato:HeisenbergType}
K.~Kato, \emph{Iwasawa theory for totally real fields for {G}alois extensions
  of {H}eisenberg type}, preprint, 2006.

\bibitem[KW01]{KiehlWeissauer:WeilConjectures}
R.~Kiehl and R.~Weissauer, \emph{Weil conjectures, perverse sheaves and
  {$l$}'adic {F}ourier transform}, Ergebnisse der Mathematik und ihrer
  Grenzgebiete. 3. Folge, vol.~42, Springer-Verlag, Berlin, 2001.

\bibitem[Mih16]{Mihailescu:MuInvariant}
P.~Mih{\u{a}}ilescu, \emph{On the vanishing of {I}wasawa's constant {$\mu$} for
  the cyclotomic {$\mathbb{Z}_p$}-extensions of {CM} number fields},
  \href{https://arxiv.org/abs/1409.3114}{\texttt{arXiv:1409.3114v2}}, February
  2016.

\bibitem[Mil80]{Milne:EtCohom}
J.~S. Milne, \emph{Etale cohomology}, Princeton Mathematical Series, no.~33,
  Princeton University Press, New Jersey, 1980.

\bibitem[Mil06]{Milne:ADT}
\bysame, \emph{Arithmetic duality theorems}, second ed., BookSurge, LLC,
  Charleston, SC, 2006. \MR{2261462 (2007e:14029)}

\bibitem[MT07]{MT:1TWKTS}
F.~Muro and A.~Tonks, \emph{The {1}-type of a {W}aldhausen {K}-theory
  spectrum}, Advances in Mathematics \textbf{216} (2007), no.~1, 178--211.

\bibitem[MT08]{MT:OnK1WaldCat}
\bysame, \emph{On {$K_1$} of a {W}aldhausen category}, K-Theory and
  Noncommutative Geometry, EMS Series of Congress Reports, 2008, pp.~91--116.

\bibitem[MTW15]{MTW:DeterminantFunctors}
Fernando Muro, Andrew Tonks, and Malte Witte, \emph{On determinant functors and
  {$K$}-theory}, Publ. Mat. \textbf{59} (2015), no.~1, 137--233.

\bibitem[Nic13]{Nickel:EIwTh}
A.~Nickel, \emph{Equivariant {I}wasawa theory and non-abelian {S}tark-type
  conjectures}, Proc. Lond. Math. Soc. (3) \textbf{106} (2013), no.~6,
  1223--1247.

\bibitem[NSW00]{NSW:CohomNumFields}
J.~Neukirch, A.~Schmidt, and K.~Wingberg, \emph{Cohomology of number fields},
  Grundlehren der mathematischen Wissenschaften, no. 323, Springer Verlag,
  Berlin Heidelberg, 2000.

\bibitem[Oli88]{Oliver:WhiteheadGroups}
R.~Oliver, \emph{Whitehead groups of finite groups}, London Mathematical
  Society lecture notes series, no. 132, Cambridge University Press, Cambridge,
  1988.

\bibitem[RW11]{RW:MainConjecture}
J.~Ritter and A.~Weiss, \emph{On the ``main conjecture'' of equivariant
  {I}wasawa theory}, J. Amer. Math. Soc. \textbf{24} (2011), no.~4, 1015--1050.

\bibitem[Sch79]{Schn:Galoiskohomologiegruppen}
P.~Schneider, \emph{{\"U}ber gewisse {G}aloiscohomologiegruppen}, Math. Z.
  \textbf{260} (1979), 181--205.

\bibitem[SV13]{SchneiderVenjakob:SK1}
P.~Schneider and O.~Venjakob, \emph{{$SK_1$} and {L}ie algebras}, Math. Ann.
  \textbf{357} (2013), no.~4, 1455--1483.

\bibitem[TT90]{ThTr:HAKTS+DC}
R.~W. Thomason and T.~Trobaugh, \emph{Higher algebraic {$K$}-theory of schemes
  and derived categories}, The Grothendieck Festschrift, vol. III, Progr.
  Math., no.~88, Birkh{\"a}user, 1990, pp.~247--435.

\bibitem[Wal85]{Wal:AlgKTheo}
F.~Waldhausen, \emph{Algebraic {$K$}-theory of spaces}, Algebraic and Geometric
  Topology (Berlin Heidelberg), Lecture Notes in Mathematics, no. 1126,
  Springer, 1985, pp.~318--419.

\bibitem[War93]{Warner:TopRings}
S.~Warner, \emph{Topological rings}, North-Holland Mathematical Studies, no.
  178, Elsevier, Amsterdam, 1993.

\bibitem[Was97]{Wa:ICF}
L.~C. Washington, \emph{Introduction to cyclotomic fields}, 2 ed., Graduate
  Texts in Mathematics, no.~83, Springer-Verlag, New York, 1997.

\bibitem[Wit]{Witte:zetaisos}
M.~Witte, \emph{On {$\zeta$}-isomorphisms for totally real fields}, in
  preparation.

\bibitem[Wit08]{Witte:PhD}
\bysame, \emph{Noncommutative {I}wasawa main conjectures for varieties over
  finite fields}, Ph.D. thesis, Universit{\"a}t Leipzig, 2008,
  \url{http://d-nb.info/995008124/34}.

\bibitem[Wit13a]{Witte:Survey}
\bysame, \emph{Noncommutative main conjectures of geometric {I}wasawa theory},
  Noncommutative Iwasawa Main Conjectures over Totally Real Fields
  (Heidelberg), PROMS, no.~29, Springer, 2013, pp.~183--206.

\bibitem[Wit13b]{Witte:Splitting}
\bysame, \emph{On a localisation sequence for the {K}-theory of skew power
  series rings}, J. K-Theory \textbf{11} (2013), no.~1, 125--154.

\bibitem[Wit13c]{Witte:NCIMCFuncField}
\bysame, \emph{On a noncommutative {I}wasawa main conjecture for function
  fields}, preprint, 2013.

\bibitem[Wit14]{Witte:MCVarFF}
\bysame, \emph{On a noncommutative {I}wasawa main conjecture for varieties over
  finite fields}, J. Eur. Math. Soc. (JEMS) \textbf{16} (2014), no.~2,
  289--325.

\bibitem[Wit16]{Witte:UnitLfunctions}
\bysame, \emph{Unit {$L$}-functions for {\'e}tale sheaves of modules over
  noncommutative rings}, Journal de th\'eorie des nombres de Bordeaux
  \textbf{28} (2016), no.~1, 89--113.

\bibitem[Wit17]{Witte:Habil}
\bysame, \emph{Non-commutative {I}wasawa theory for global fields},
  Habilitationsschrift, Ruprecht-Karls-Universit\"at Heidelberg, 2017.

\bibitem[WY92]{WeibYao:Localization}
C.~Weibel and D.~Yao, \emph{Localization for the {$K$}-theory of noncommutative
  rings}, Algebraic {$K$}-Theory, Commutative Algebra, and Algebraic Geometry,
  Contemporary Mathematics, no. 126, AMS, 1992, pp.~219--230.

\end{thebibliography}
\end{document}
% ----------------------------------------------------------------